\documentclass[a4paper,11pt]{amsart}
\usepackage[T1]{fontenc}
\usepackage[utf8]{inputenc}

\title[Renormalised energies and renormalisable maps]{Renormalised energies and renormalisable singular harmonic maps into a compact manifold on planar domains}

\subjclass[2010]{58E20 (49Q10, 53C22, 55S35)}

\keywords{Harmonic maps, renormalised energy, topological resolution of boundary data}

\author{Antonin Monteil}
\address{
Antonin Monteil \\
University of Bristol\\
School of Mathematics\\
Fry building\\
Woodland Road\\
Bristol BS8 1UG, United Kingdom 
}
\email{antonin.monteil@bristol.ac.uk}

\author{R\'emy Rodiac}
\address{
R\'emy Rodiac\\
Universit\'e Paris-Saclay, CNRS\\
Laboratoire de Math\'ematiques d'Orsay\\
91405, Orsay, France\\
}
\email{remy.rodiac@math.universite-paris-saclay.fr}

\author{Jean Van Schaftingen}
\address{
Jean Van Schaftingen \\
 Universit\'e catholique de Louvain\\
 Institut de Recherche en Math\'ematique et Physique\\
 Chemin du cyclotron 2, L7.01.02\\
1348 Louvain-la-Neuve, Belgium}
\email{jean.vanschaftingen@uclouvain.be}

\thanks{A. Monteil was a postdoctoral researcher (charg\'e de recherches) by the Fonds de la Recherche Scientifique--FNRS over the period 2016--2019; R. Rodiac and J. Van Schaftingen were supported by the Mandat d'Impulsion Scientifique F.4523.17, ``Topological singularities of Sobolev maps'' of the Fonds de la Recherche Scientifique--FNRS: R.Rodiac was partially supported by the ANR project  BLADE Jr. ANR-18-CE40-0023}
\usepackage[hmargin=2.8cm,vmargin=1.8cm]{geometry}

\usepackage{lmodern}
\usepackage{libertine}
\usepackage{microtype}

\usepackage{paralist}
\usepackage{booktabs}

\usepackage{amsfonts}
\usepackage{amssymb}
\usepackage{mathtools}
\usepackage{esint}
\usepackage{bm}
\usepackage{constants}
\usepackage{gensymb}
\usepackage[version=4]{mhchem}
\usepackage{mathrsfs}

\usepackage[hidelinks, pdftitle={Renormalised energies  and renormalisable singular harmonic maps into a compact manifold on planar domains}]{hyperref}

\usepackage[abbrev,backrefs]{amsrefs}

\usepackage[capitalize]{cleveref}

\numberwithin{equation}{section}
\newtheorem{theorem}{Theorem}[section]
\newtheorem{proposition}{Proposition}[section]
\newtheorem{lemma}[proposition]{Lemma}
\newtheorem{corollary}[proposition]{Corollary}
\theoremstyle{definition}
\newtheorem{definition}[proposition]{Definition}
\theoremstyle{remark}
\newtheorem{remark}{Remark}

\newcommand{\st}{\;:\;}

\newcommand{\abs}[1]{\lvert #1 \rvert}
\newcommand{\norm}[1]{\lVert #1 \rVert}

\newcommand{\bigabs}[1]{\bigl\lvert #1 \bigr\rvert}
\newcommand{\Bigabs}[1]{\Bigl\lvert #1 \Bigr\rvert}
\newcommand{\biggabs}[1]{\biggl\lvert #1 \biggr\rvert}

\newcommand{\Deriv}{D}

\newcommand{\Rset}{\mathbb{R}}
\newcommand{\Cset}{\mathbb{C}}
\newcommand{\Nset}{\mathbb{N}}
\newcommand{\Zset}{\mathbb{Z}}
\newcommand{\Sset}{\mathbb{S}}

\newcommand{\Hset}{\mathbb{H}}

\newcommand{\compose}{\,\circ\,}
\newcommand{\dif}{\,\mathrm{d}}
\newcommand{\VMO}{\mathrm{VMO}}
\newcommand{\manifold}[1]{\mathcal{#1}}

\newcommand{\defeq}{\coloneqq}

\newcommand{\equivnorm}[1]{\lambda(#1)}

\newcommand{\Esing}{\mathcal{E}^{\mathrm{sg}}}
\newcommand{\Eext}{\mathcal{E}^{\mathrm{ext}}}
\newcommand{\synhar}[2]{d_{\mathrm{synh}} (#1, #2)}

\newcommand{\Conf}[1]{\operatorname{Conf}_{#1}}

\let\Re=\relax

\DeclareMathOperator{\Re}{Re}

\DeclareMathOperator{\dist}{dist}
\DeclareMathOperator{\id}{id}

\DeclareMathOperator{\dive}{div}
\DeclareMathOperator{\tr}{tr}

\DeclareMathOperator{\syst}{sys}
\DeclareMathOperator{\diam}{diam}

\date{\today}

\setcounter{tocdepth}{1}
\begin{document}

\begin{abstract}
We define renormalised energies for maps that describe the first-order asymptotics of harmonic maps outside of singularities arising due to obstructions generated by the boundary data and the mutliple connectedness of the target manifold. The constructions generalise the definition by Bethuel, Brezis and H\'elein for the circle (\emph{Ginzburg--Landau vortices}, 1994). In general, the singularities are geometrical objects and the dependence on homotopic singularities can be studied through a new notion of synharmony. The renormalised energies are showed to be coercive and Lipschitz-continuous. The renormalised energies are associated to minimising renormalisable singular harmonic maps and minimising configurations of points can be characterised by the flux of the stress-energy tensor at the singularities. We compute the singular energy and the renormalised energy in several particular cases.
\end{abstract}

\maketitle

\section{Introduction}
 
Throughout the paper, \(\manifold{N}\) will denote a smooth compact connected Riemannian manifold and \(\Omega\) will denote a bounded connected Lipschitz open subset of \(\Rset^2\).
Given a measurable  map \(g:\partial\Omega\to\manifold{N}\), we say that \(u\) is a \emph{minimising harmonic map} if it is a  minimiser of the 
\emph{Dirichlet energy}
\[
\int_\Omega \frac{\abs{\Deriv u}^2}{2}
\]
among all maps \(u : \Omega \to \manifold{N}\) in the \emph{Sobolev space of mappings} with boundary value \(g\):
\[
W^{1,2}_{g} (\Omega,\manifold{N})\defeq \{u \in W^{1,2}(\Omega,\Rset^\nu)\st u \in \manifold{N} \text{ almost everywhere in \(\Omega\) and 
} \tr_{\partial \Omega} u = g\}.
 \label{eq_space}
\]
Here we have assumed that the manifold \(\manifold{N}\) is isometrically embedded into the Euclidean space \(\Rset^\nu\) for some \(\nu \in \Nset_*\)\footnote{Such an embedding always exist in view of the Nash embedding theorem \cite{Nash_1956}.}, \(\tr_{\partial \Omega}\) denotes the trace operator (see for example 
\cite{Evans_Gariepy_2015}*{\S 4.3}), and \(\abs{\cdot}\) is the Euclidean norm defined by \(\abs{M}^2=\tr(MM^\ast)\) whenever \(M\) is a real matrix (also known as Hilbert--Schmidt or Frobenius norm).

It is known since Morrey \cite{Morrey_1948} that any minimising harmonic map in a planar domain is smooth. However, when the target manifold \(\manifold{N}\) is not simply connected, some boundary data \(g\) cannot be extended continuously to \(\Omega\); for such boundary data, the set \(W^{1,2}_{g} (\Omega,\manifold{N})\) is \emph{empty} \cite{Bethuel_Demengel_1995}*{Theorem 2}.

Multiply-connected -- i.e. connected but not simply connected -- target manifolds arise naturally in several relevant contexts.
They can be sets of order parameters in condensed matter physics, for instance: the circle \(\manifold{N} = \Sset^1\) representing planar spins in superconductivity in the Chern-Simon-Higgs theory, the real projective space \(\manifold{N} = \Rset P^n\) in nematic liquid crystals,
\(\manifold{N} = SU(2)/Q\), where \(Q\) is the quaternion group, for biaxial molecules in nematic phase, and \( \manifold{N} = SU(2)\times SU(2)/H \), where \(H\) is a subgroup of \(SU(2) \times SU(2)\) isomorphic to four copies of \(\mathbb{S}^1\), for superfluid \ce{^3He} in dipole-free phase 
\citelist{\cite{Bauman_Park_Phillips_2012}\cite{Mermin_1979}}.
Such manifolds also appear in computer graphics and meshing algorithms, in which the quotient manifold \(SO (3)/O\) represents \emph{frame-fields} describing the attitude of cubes by a rotation in \(SO(3)\) up to an element of the octahedral group \(O\) describing the rotations preserving the cube \citelist{\cite{Beaufort_Lambrechts_Henrotte_Geuzaine_Remacle_2017}
\cite{Viertel_Osting_2017}}.
The absence, in general, of harmonic maps into these manifolds brings the question about constructing maps that are as much harmonic as possible.

In the case of the circle \(\manifold{N} = \Sset^1\), Bethuel, Brezis and H\'elein have provided maps that are as harmonic as possible in their seminal work on Ginzburg--Landau vortices \cite{Bethuel_Brezis_Helein_1994}. 
They considered the Ginzburg--Landau functional 
\begin{equation}
\label{GinzburgLandauFunctional}
 \int_{\Omega} \frac{\abs{\Deriv u}^2}{2} + \frac{(1 - \abs{u}^2)^2}{4 \varepsilon^2},
\end{equation}
which replaces the hard constraint that \(\abs{u} = 1\) by adding a penalisation term in the functional which forces to satisfy the constraint almost everywhere as \(\varepsilon \to 0\).
They proved that in the limit \(\varepsilon \to 0\), 
minimisers of the Ginzburg--Landau energy \eqref{GinzburgLandauFunctional} converge to a harmonic map into the unit circle \(\Sset^1\) outside a finite number \(\abs{\Deriv }\) of points, where \(\Deriv \in\Zset\) is the topological degree of the boundary datum \(g\), and that this \(\abs{\Deriv }\)-tuple of points minimises a \emph{renormalised energy}. This renormalised energy was defined as the Dirichlet integral of a real valued harmonic map with finitely many point singularities from which singular contributions have been removed \cite{Bethuel_Brezis_Helein_1994}*{(47) in \S I.4}; this harmonic map is characterised as a function whose Laplacian is a sum of Dirac masses at the singular points with Neumann boundary conditions originating in the boundary datum \(g\) \cite{Bethuel_Brezis_Helein_1994}*{(22) in \S I.3}.
This renormalised energy also describes the energies and governs the optimal position of singularities of maps which are harmonic outside shrinking disks \cite{Bethuel_Brezis_Helein_1994}*{Theorem I.7}
and the position of singularities of \(p\)--harmonic maps as \(p \nearrow 2\) \cite{Hardt_Lin_1994}.
The renormalised energy depends continuously on the position of points, and penalises singularities migrating towards the boundary \(\partial \Omega\) and singularities of same-sign degree migrating towards each other \cite{Bethuel_Brezis_Helein_1994}*{Theorem I.10}.
A renormalised energy for the problem of \(p\)--harmonic maps from \(\Omega \subset \Rset^{n + 1}\) into the sphere \(\Sset^{n}\) as \(p \nearrow n+1\) has been been defined by a core radius approach with shrinking disks \cite{Hardt_Lin_Wang_1997}. In the case of two-dimensional Riemannian manifolds, other types of Ginzburg--Landau relaxations giving rise to different renormalized energies between vortices are given in \cites{Ignat_Jerrard_2017,Ignat_Jerrard_2020}, where the vortices correspond to singularities of unit-length harmonic tangent fields due to a non-vanishing Euler-Poincaré characteristic of the vacuum manifold. 

\medskip

The goal of the present work is to define a renormalised energy for the problem of harmonic maps from \(\Omega \subset \Rset^2\) into a general target manifold \(\manifold{N}\). 
Such a wide framework will allow to cover a variety of manifolds that are physically and geometrically relevant.  

We define the renormalised energy of a list of singularities by a shrinking disks approach, as the limit of Dirichlet energies of minimising harmonic maps outside small disks centered at the singularities. More generally, we define the renormalised energy of Sobolev maps with singularities, which are not necessarily harmonic (\cref{def_renormalisable}). This flexibility allows to express the asymptotic expansion of the Ginzburg--Landau energies of low energy states, and not only minimisers, in terms of the renormalised energy \cite{Monteil_Rodiac_VanSchaftingen_GL}. In the case of the classical Ginzburg--Landau functional \eqref{GinzburgLandauFunctional}, this approach was clarified in \cite{goldmanGinzburgLandauModelTopologically2017}*{\S 2.5}.

The first step in the construction is to determine what boundary conditions on the shrinking disks are compatible with the boundary data. 
This is performed through the definition of \emph{topological resolution of boundary data} (\cref{def_resolution} and \cref{def_resolution_VMO}), which are invariant under homotopies in the framework of maps of vanishing mean oscillation (VMO) \citelist{\cite{Brezis_Nirenberg_1995}\cite{Brezis_Nirenberg_1996}}.
In contrast with the case of the circle \(\Sset^1\) whose fundamental group \(\pi_1 (\Sset^1) \simeq \Zset\) is abelian, these topological compatibility conditions cannot be described in general by simple algebraic relations; there is still a characterisation in terms of conjugacy classes, which correspond to free homotopy classes (see \cref{prop:condition_homotopy}).
The topological resolution brings then the notion of singular energy of boundary data (\cref{def_loose_equiv_norm}) and the notion of minimal topological resolution (\cref{minimal_resolution}).
In the abelian case, this description coincides with Canevari and Orlandi’s results that covers in general \(W^{1, k}\)--energies for a \((k - 2)\)--connected target manifold \(\manifold{N}\) \cite{Canevari_Orlandi}.

For the circle \(\Sset^1\), in order to define the renormalised energy by the shrinking disks approach, it is equivalent to either prescribe (parametrised oriented) geodesics at the boundary of each disk near singularities, or to prescribe the topological degree only \cite{Bethuel_Brezis_Helein_1994}*{Theorems I.7 and I.9}. Note that geodesics in the same homotopy class, i.e. of same degree, only differ by a rotation in this case. In a general manifold \(\manifold{N}\), the structure of geodesics in a given homotopy class can be more involved, and this brings us to define \emph{two} renormalised energies of configurations of points:
a \emph{topological renormalised energy}, in which we prescribe the homotopy class near the singularity and a \emph{geometrical renormalised energy} in which we prescribe the boundary datum to be a given geodesic near the singularity. 
The latter geometric energy is more involved, but turns out to be the relevant concept for the asymptotic analysis of Ginzburg--Landau energies \cite{Monteil_Rodiac_VanSchaftingen_GL}.

A first question is whether the geometric renormalised energy depends on prescribed geodesics -- also called \emph{geometric singularities} in the sequel -- \emph{that are homotopic}. We answer this question through a new notion of \emph{synharmony} between geodesics, which measures how much renormalised Dirichlet energy it takes to connect two given maps from \(\Sset^1\) into \(\manifold{N}\). In particular, the geometric and topological renormalised energies coincide with the classical definition for the circle \cite{Bethuel_Brezis_Helein_1994} (see also \cite{Rodiac_Ubillus} for the case of multiply-connected domains). More generally, we show that the geometric and topological renormalised energies coincide for discrete quotients of Lie groups due to the fact that homotopic closed geodesics are always synharmonic in this case (\cref{corollary_discreteSUdQuotient}).

The renormalised energies \emph{penalise} singularities \emph{approaching the boundary}, and singularities \emph{approaching each other} when the resulting combined singularity would have a higher singular contribution to the energy (\cref{proposition_coercivity}) (It can happen  by an arithmetic coincidence that several singularities combine into a single singularity with the same singular energy.) 
The proof relies on a \emph{lower bound on harmonic maps} (\cref{prop_lower_bound_compact}), that we prove in a form which is stronger than needed by the present work and which includes a weak \(L^2\) Marcinkiewicz estimate on the derivative --- or equivalently an estimate in the Lorentz space \(L^{2, \infty}\) --- in view of our further analysis of Ginzburg--Landau relaxations \cite{Monteil_Rodiac_VanSchaftingen_GL}.
We also show that the renormalised energies are locally Lipschitz-continuous functions of the position of the singularities (\cref{theorem_continuity_renormalised}).

The \emph{canonical harmonic map} in Bethuel, Brezis and H\'elein's analysis is a harmonic map outside the prescribed set of singularities, which can be used to recover the actual value of the renormalised energy \cite{Bethuel_Brezis_Helein_1994}*{\S I.8}.
The uniqueness of this canonical map is related to the nice structure of harmonic maps with singularities into \(\Sset^1\), maps that we call \emph{singular harmonic maps}.

In the case of a general compact manifold, we define notions of \emph{renormalisable maps, renormalisable singular harmonic maps, stationary renormalisable singular harmonic maps} and \emph{minimising renormalisable singular harmonic maps} into the manifold \(\manifold{N}\), and we study their connections (\S \ref{section_ren2}). 

In particular if the positions of the singularities minimise the geometric renomalised energy for a given set of prescribed geometric singularities, and if a renormalisable singular harmonic map achieves this renormalised energy, then it has the additional property that the flux of the stress-energy tensor around the singularities vanishes, or equivalently, that the residue of the Hopf differential vanishes at each singularity (see \cref{lemma_Stress_energy_tensor} and \cref{proposition_renormalisable_stationary}).

The topological renormalised energy is always achieved by a minimising singular harmonic map (\cref{proposition_renormalised_singular}), whereas for any position of the singularities, either the geometrical renormalised energy is achieved or it is achieved with other geometric singularities with a difference between the renormalised energies of the two sets of singularities being exactly the synharmonic distance (\cref{proposition_geometric_renormalised_singular}). 
In particular, these results give a characterisation of the topological renormalised energy as the minimum of the geometric renormalised energies with singularities in the given homotopy classes (see \cref{proposition_renormalised_prescribed} and \eqref{linkRenormalisedEnergies}).

In order to perform explicit computations, we identify the topological resolutions of all possible boundary data in several manifolds \(\manifold{N}\) of physical and geometrical interest in \cref{section_examples}.
We conclude this work by expressing  in terms of the renormalised energy into \(\Sset^1\) the renormalised energy for maps into any manifold for boundary data taking monotonically their value into a minimising geodesic with minimal singular energy (\cref{proposition_explicit_renormalised_energy}).

\section{Singular energy and renormalised energy of configuration of points}

\subsection{Topological resolution of the boundary datum}
Given \(k \in \Nset_\ast\), we denote by \(\Conf{k} \Omega\) the configuration space of \(k\) ordered points of \(\Omega\):
\[
 \Conf{k} \Omega
 = \{(a_1, \dotsc, a_k) \in \Omega^k \st 
 a_i = a_j
 \text{ if and only if } i = j\}.
\]
Given \((a_1, \dotsc, a_k)\in \Conf{k} \Omega\),
we define the quantity
\begin{multline}
\label{def_rho_barre}
  \Bar{\rho} (a_1, \dotsc, a_k) \defeq \inf \biggl(  \bigg\{\frac{\abs{a_i - a_j}}{2} \st i, j \in  \{1, \dotsc, k\} \text{ and } i \ne j  \bigg\}\\
  \cup \biggl\{\dist (a_i, \partial \Omega) \st i \in \{1, \dotsc, k\}\biggr\}\biggr),
\end{multline}
in such a way that if \(\rho \in (0, \Bar{\rho} (a_1, \dotsc, a_k))\), we have  \(\Bar{B}_{\rho} (a_i) \cap \Bar{B}_{\rho} (a_j) = \emptyset\) for each \(i, j \in  \{1, \dotsc, k\}\) such that \(i \ne j\) and
\(\Bar{B}_\rho (a_i) \subset \Omega\) for each \(i \in \{1, \dotsc, k\}\), and thus the set \(\Omega \setminus \bigcup_{i = 1}^k \Bar{B}_\rho (a_i)\) is connected and has
a Lipschitz boundary \(\partial (\Omega \setminus \bigcup_{i = 1}^k \Bar{B}_\rho (a_i)) = \partial \Omega \cup \bigcup_{i = 1}^k \partial B_\rho (a_i) \).

\begin{definition}
\label{def_resolution}
Given a Lipschitz bounded domain \(\Omega \subset \Rset^2\) and \(k\in\Nset_\ast\), we say that \((\gamma_1, \dotsc, \gamma_k)\in \mathcal{C}(\Sset^1, \manifold{N})^k\) is a \emph{topological resolution} of  \(g \in \mathcal{C}(\partial \Omega, \manifold{N})\) whenever there exist \((a_1, \dotsc, a_k)\in \Conf{k} \Omega\), a radius \(\rho \in (0, \Bar{\rho} (a_1, \dotsc, a_k))\), and a continuous map \(u \in \mathcal{C}(\Omega \setminus \bigcup_{i = 1}^k \Bar{B}_\rho (a_i), \manifold{N})\) such that \(u \vert_{\partial \Omega} = g\) and for each \(i \in \{1, \dotsc, k\}\), \(u (a_i + \rho\,\cdot) \vert_{\Sset^1} = \gamma_i\).
\end{definition}

The notion of topological resolution is independent of the order of the curves in the \(k\)-tuple \((\gamma_1,\dotsc,\gamma_k)\). Moreover, if \((b_1,\dotsc,b_k)\in \Conf{k} \Omega\) and \(\eta\in (0, \Bar{\rho} (b_1,\dotsc, b_k))\), then there exists a homeomorphism between \(\Omega\setminus \bigcup_{i = 1}^k \Bar{B}_{\rho}(a_i)\) and \(\Omega\setminus \bigcup_{i = 1}^k \Bar{B}_{\eta} (b_i)\) that shows that the statement of the previous definition is independent of the choice of the points and of the radius.
Similarly, if for each \(i \in \{1, \dotsc, k\}\) the map \(\Tilde{\gamma}_i\) is homotopic to \(\gamma_i\) and if \(g\) is homotopic to \(\Tilde{g}\), then, by definition of homotopy and since an annulus is homeomorphic to a finite cylinder, we have that \(\Tilde{\gamma}_1,\dotsc,\Tilde{\gamma}_k\) is also a topological resolution of \(\Tilde{g}\).
Hence the property of being a topological resolution is invariant under homotopies.

\Cref{def_resolution} can be extended to the case where one has \(g \in W^{1/2, 2} (\partial \Omega, \manifold{N})\) and \(\gamma_1, \dotsc, \gamma_k \in W^{1/2, 2} (\Sset^1, \manifold{N})\).
Indeed, when \(\Gamma \simeq \Sset^1\) is a closed curve, maps in \(W^{1/2, 2} (\Gamma, \manifold{N})\) are in the space \(\VMO (\Gamma, \manifold{N})\), whose path-connected components are known to be the closure in \(\VMO (\Gamma, \manifold{N})\)
of path-connected components of \(\mathcal{C} (\Gamma, \manifold{N})\) \cite{Brezis_Nirenberg_1995}*{Lemma A.23}.

\begin{definition}
\label{def_resolution_VMO}
Given a Lipschitz bounded domain \(\Omega \subset \Rset^2\) and \(k\in \Nset_\ast\), we say that \((\gamma_1, \dotsc, \gamma_k)\in \VMO (\Sset^1, \manifold{N})^k\) is a \emph{topological resolution} of \(g \in \VMO (\partial \Omega, \manifold{N})\) whenever \((\gamma_1, \dotsc, \gamma_k)\) is homotopic in \(\VMO (\Sset^1, \manifold{N})^k\) to a \emph{topological resolution} \((\Tilde{\gamma}_1, \dotsc, \Tilde{\gamma}_k)\) of a map \(\Tilde{g} \in \mathcal{C}(\partial \Omega, \manifold{N})\) which is homotopic to \(g\) in \(\VMO (\partial \Omega, \manifold{N})\).
\end{definition}

Since \cref{def_resolution} is invariant under homotopies and continuous maps are homotopic in \(\VMO\) if and only if they are homotopic through a continuous homotopy, \cref{def_resolution_VMO} generalises \cref{def_resolution}.

In the particular case where we have \((\gamma_1, \dotsc, \gamma_k) \in W^{1/2, 2} (\Sset^1, \manifold{N})^k \subset \VMO (\Sset^1, \manifold{N})^k\) and \(g \in W^{1/2, 2} (\partial \Omega, \manifold{N}) \subset \VMO (\partial \Omega, \manifold{N})\), topological resolutions can be characterised through the existence of an extension in \(W^{1, 2} (\Omega \setminus \bigcup_{i = 1}^k \Bar{B}_{\rho} (a_i), \manifold{N})\).

\begin{proposition}\label{prop:homotopy C0=homotopyH1/2}
Given a Lipschitz bounded domain \(\Omega \subset \Rset^2\), \(k\in \Nset_\ast\), \((a_1, \dotsc, a_k)\in \Conf{k} \Omega\) and \(\rho \in (0, \Bar{\rho} (a_1, \dotsc, a_k))\), we have that \(( \gamma_1, \dotsc, \gamma_k) \in W^{1/2,2}(\mathbb{S}^1,\mathcal{N})^k\) is a topological resolution of \(g \in W^{1/2,2}(\partial \Omega,\mathcal{N})\) if and only if there exists a map \(u \in W^{1,2}(\Omega \setminus \bigcup_{i = 1}^k \Bar{B}_\rho (a_i), \manifold{N})\) such that \(\tr_{\partial \Omega} = g\) and for each \(i \in \{1, \dotsc, k\}\), \(\tr_{\Sset^1}u (a_i + \rho\,\cdot)  = \gamma_i\).
\end{proposition}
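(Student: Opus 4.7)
The plan is to prove each implication, with the forward direction relying on density of smooth maps in $W^{1,2}$ and the reverse direction requiring a collar-interpolation construction.

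For the ``if'' direction, suppose $u \in W^{1,2}(\Omega \setminus \bigcup_{i=1}^k \Bar B_\rho(a_i); \manifold{N})$ has traces $g$ and $\gamma_i$. Since the domain is two-dimensional and $\manifold{N}$ is a smooth compact manifold, the Schoen--Uhlenbeck density theorem yields smooth maps $u_m \in \mathcal{C}^\infty(\overline{\Omega \setminus \bigcup_{i=1}^k \Bar B_\rho (a_i)}; \manifold{N})$ with $u_m \to u$ in $W^{1,2}$. Continuity of the trace operator gives $g_m \defeq u_m|_{\partial \Omega} \to g$ and $\gamma_{i,m} \defeq u_m(a_i + \rho\,\cdot)|_{\Sset^1} \to \gamma_i$ in $W^{1/2,2}$, hence in $\VMO$. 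Each $u_m$ witnesses that $(\gamma_{1,m}, \dotsc, \gamma_{k,m})$ is a topological resolution of $g_m$ in the sense of \cref{def_resolution}. Since path-components of $\VMO$ are open \cite{Brezis_Nirenberg_1995}, for $m$ large enough $\gamma_{i,m}$ and $g_m$ lie in the same $\VMO$ path-components as $\gamma_i$ and $g$ respectively, so \cref{def_resolution_VMO} is satisfied.

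For the ``only if'' direction, \cref{def_resolution_VMO} supplies continuous maps $\tilde g$ and $\tilde \gamma_i$, a continuous extension $\tilde u \in \mathcal{C}(\Omega \setminus \bigcup_{i=1}^k \Bar B_\rho(a_i); \manifold{N})$, and $\VMO$-homotopies from $\tilde g, \tilde \gamma_i$ to $g, \gamma_i$. As a first step I would replace $\tilde u$ by a smooth representative $\hat u \in \mathcal{C}^\infty(\overline{\Omega \setminus \bigcup_{i=1}^k \Bar B_\rho (a_i)}; \manifold{N})$ of the same homotopy class, obtained by mollification in a tubular neighborhood of $\manifold{N}$ followed by the nearest-point projection; this is possible because $\overline{\Omega \setminus \bigcup_{i=1}^k \Bar B_\rho(a_i)}$ is a compact surface with boundary. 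Then $\hat u \in W^{1,2}$ and has smooth traces $\hat g, \hat \gamma_i$ in the same $\VMO$ path-components as $g, \gamma_i$.

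It remains to adjust the traces of $\hat u$ inside thin collars. Here the main technical lemma I would establish is the following interpolation statement: \emph{if $\alpha, \beta \in W^{1/2,2}(\Sset^1, \manifold{N})$ lie in the same $\VMO$ path-component, there exists $w \in W^{1,2}(\Sset^1 \times [0,1], \manifold{N})$ with $w(\cdot, 0) = \alpha$ and $w(\cdot, 1) = \beta$}. Granting this, one uses a collar neighbourhood of $\partial \Omega$ and collars of each $\partial B_\rho(a_i)$ in $\Omega \setminus \bigcup \Bar B_\rho(a_i)$; inside each collar I interpolate from the smooth trace of $\hat u$ at positive depth to the prescribed $W^{1/2,2}$ trace at the boundary, then glue to $\hat u$ on the remaining region. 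Gluings of $W^{1,2}$ maps with matching traces remain in $W^{1,2}$, producing the desired $u$.

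The heart of the argument, and its main obstacle, is the interpolation lemma. I would prove it by invoking the density of smooth maps in $W^{1/2,2}(\Sset^1, \manifold{N})$ (which holds at the critical exponent $sp = 1$) to approximate $\alpha$ and $\beta$ by smooth $\alpha_m, \beta_m$ in the same free homotopy class for large $m$, interpolate $\alpha_m$ to $\beta_m$ on $\Sset^1 \times [\delta, 1-\delta]$ by a smooth free homotopy (which is automatically $W^{1,2}$), and handle the two thin collars $\Sset^1 \times [0, \delta]$ and $\Sset^1 \times [1-\delta, 1]$ by taking a harmonic extension of $\alpha - \alpha_m$ (resp.\ $\beta - \beta_m$) in $\Rset^\nu$ and projecting back to $\manifold{N}$; the $W^{1,2}$ norm of the harmonic extension controls the $W^{1/2,2}$ defect, so for $m$ large the extension stays within a tubular neighbourhood of $\manifold{N}$, making the projection well-defined and $W^{1,2}$-stable.
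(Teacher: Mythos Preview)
Your argument is correct in outline and matches the paper's strategy closely: the ``if'' direction is identical (Schoen--Uhlenbeck density plus continuity of traces and openness of $\VMO$ path-components), and for the ``only if'' direction both you and the paper combine a smooth interior fill-in with a collar step connecting smooth data to the prescribed $W^{1/2,2}$ traces. The only organisational difference is that the paper performs the collar extension first (citing \cite{Bethuel_Demengel_1995}*{Theorem 2} directly) and then fills in the interior, whereas you start from the smooth interior and then adjust in the collars via your interpolation lemma; the two are equivalent, and your interpolation lemma is essentially a restatement of the Bethuel--Demengel extension.

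There is, however, a technical slip in your sketch of the interpolation lemma. You argue that the harmonic extension of $\alpha - \alpha_m$ has small $W^{1,2}$ norm and conclude that the resulting map stays within a tubular neighbourhood of $\manifold{N}$, so that the nearest-point projection applies. In two dimensions $W^{1,2}$ does not embed into $L^\infty$, so small $W^{1,2}$ norm alone does not give the pointwise closeness to $\manifold{N}$ needed for the projection; as written the step is not justified. The correct mechanism is $\VMO$ control: take the harmonic extension $U$ of $\alpha$ itself into $\Rset^\nu$ on the half-cylinder. Since $\alpha \in W^{1/2,2}(\Sset^1,\manifold{N}) \subset \VMO(\Sset^1,\manifold{N})$, one has $\sup_{x \in \Sset^1}\dist(U(x,t),\manifold{N}) \to 0$ as $t \to 0$ \cite{Brezis_Nirenberg_1996}*{Theorem A3.2}, so $\Pi_{\manifold{N}}\compose U$ is well-defined and $W^{1,2}$ on $\Sset^1\times[0,\delta]$ for $\delta$ small, with trace $\alpha$ at $t=0$ and a smooth trace at $t=\delta$ in the correct homotopy class. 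This is precisely the content behind the Bethuel--Demengel citation in the paper's sketch, and the same device is used explicitly in the proof of \cref{proposition_synharmonic_pseudometric}~\eqref{item_H1weaker}.
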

\begin{proof}[Sketch of the proof]
Assuming that the maps form a topological resolution, we can first extend the boundary datum to a map which is in \(W^{1,2}\) of a small neighbourhood of the boundary of \(\Omega \setminus \bigcup_{i = 1}^k \Bar{B}_\rho (a_i)\) and smooth away from the boundary \cite{Bethuel_Demengel_1995}*{Theorem 2}; the map can then be extended smoothly to the rest of \(\Omega \setminus \bigcup_{i = 1}^k \Bar{B}_\rho (a_i)\) since we have a topological resolution.

Conversely, since \(\Omega \subset \Rset^2\), the map \(u\) can be approximated by smooth maps \cite{SchoenUhlenbeck1982}*{\S 3}; by continuity of the traces the restrictions of the approximations are eventually homotopic to the traces of \(u\).
\end{proof}

Topological resolutions can be characterised algebraically in the fundamental group \(\pi_1 (\manifold{N})\). We recall that each homotopy class of maps from \(\Sset^1\) to \(\manifold{N}\) is associated to a conjugacy class of the fundamental group \(\pi_1 (\manifold{N})\) (see for example \citelist{\cite{Hatcher_2002}*{Exercise 1.1.6}\cite{Mermin_1979}*{\S II}}). If \(\Gamma \subset \Rset^2\) is an embedded compact connected closed curve, by using an orientation preserving homeomorphism between \(\Gamma\) and \(\Sset^1\) we identify a map from \(\Gamma\) to \(\mathcal{N}\) with a map from \(\mathbb{S}^1\) to \(\mathcal{N}\).

\begin{proposition}\label{prop:condition_homotopy}
Let \( \Omega\) be a Lipschitz bounded domain. We write \(\partial \Omega = \bigcup_{i = 0}^\ell \Gamma_i\), where \(\ell \geq 0\), \( \Gamma_0, \Gamma_1, \dotsc, \Gamma_\ell\) are connected embedded compact curves in \(\Rset^2\) such that \(\Gamma_0\) is the outer component of \(\partial \Omega\) and, when \(\ell \geq 1\), for each \(i \in \{1, \dotsc, \ell\}\), the sets \(\Gamma_i\) are the inner components of the boundary.
The list \((\gamma_1, \dotsc, \gamma_k) \in \VMO(\mathbb{S}^1,\manifold{N})^k\) is a topological resolution of \(g\in \VMO(\partial \Omega,\manifold{N})\) if  and only if
\begin{itemize}
\item[(a)]\label{it:1hom}  for every \(i \in \{1, \dotsc, k\}\), there exists \(h_i \in \pi_1 (\manifold{N})\) belonging to the conjugacy class of \(\pi_1 (\manifold{N})\) associated to \(\gamma_i\),
\item[(b)]\label{it:2hom} for every \(j \in \{0, \dotsc, \ell\}\) there exists \(g_j \in \pi_1 (\manifold{N})\) belonging to the conjugacy class of \(\pi_1 (\manifold{N})\) associated to \(g \vert_{\Gamma_j}\),
\end{itemize}
  such that
\begin{equation}\label{eq:condition_homotopy}
 h_1 \dotsb h_k \cdot g_1 \dotsb g_\ell = g_0.
\end{equation}
\end{proposition}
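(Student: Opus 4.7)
The plan is to first reduce the VMO statement to the continuous case via \cref{def_resolution_VMO}, then to describe the fundamental group of the punctured domain $M \defeq \Omega \setminus \bigcup_{i=1}^k \Bar{B}_\rho(a_i)$ together with a distinguished relation among its boundary loops, and finally to translate the existence of a continuous extension of the boundary data into the algebraic identity \eqref{eq:condition_homotopy} via the induced homomorphism on $\pi_1$. By \cref{def_resolution_VMO}, $(\gamma_1, \dotsc, \gamma_k)$ is a VMO topological resolution of $g$ if and only if, after a VMO homotopy of $g$ and of the $\gamma_i$, one obtains a continuous topological resolution; since conditions (a) and (b) only involve conjugacy classes in $\pi_1(\manifold{N})$ of the free homotopy classes of the $\gamma_i$ and of $g\vert_{\Gamma_j}$, which are VMO-homotopy invariants, it suffices to prove the equivalence in the continuous setting.

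The domain $M$ is a compact planar surface with $\ell + k + 1$ boundary components; it deformation retracts onto a wedge of $\ell + k$ circles, so $\pi_1(M)$ is free of rank $\ell + k$. Fix a basepoint $x_0$ on $\Gamma_0$ together with disjoint embedded arcs in $M$ from $x_0$ to basepoints on every other boundary circle. Traversing each boundary circle with $M$ on its left and conjugating by the corresponding arc yields basepointed loops $\widetilde{\alpha}_0, \widetilde{\alpha}_1, \dotsc, \widetilde{\alpha}_\ell, \widetilde{\beta}_1, \dotsc, \widetilde{\beta}_k \in \pi_1(M, x_0)$. Cutting $M$ along these arcs exhibits $M$ as a single $2$-cell whose attaching map reads $\widetilde{\alpha}_0^{-1} \widetilde{\beta}_1 \dotsb \widetilde{\beta}_k \widetilde{\alpha}_1 \dotsb \widetilde{\alpha}_\ell$, hence
\begin{equation*}
\widetilde{\alpha}_0 = \widetilde{\beta}_1 \dotsb \widetilde{\beta}_k \cdot \widetilde{\alpha}_1 \dotsb \widetilde{\alpha}_\ell \quad \text{in } \pi_1(M, x_0);
\end{equation*}
for any continuous $u : M \to \manifold{N}$, the free homotopy class of $u_*(\widetilde{\beta}_i)$ (respectively $u_*(\widetilde{\alpha}_j)$) coincides with the conjugacy class of the trace of $u$ on $\partial B_\rho(a_i)$ (respectively on $\Gamma_j$).

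For the necessity, given a continuous extension $u : M \to \manifold{N}$, setting $h_i \defeq u_*(\widetilde{\beta}_i)$, $g_j \defeq u_*(\widetilde{\alpha}_j)$ and $g_0 \defeq u_*(\widetilde{\alpha}_0)$ produces elements in the prescribed conjugacy classes, and applying $u_*$ to the above relation yields \eqref{eq:condition_homotopy}. For the sufficiency, given $h_i, g_j$ satisfying (a), (b) and \eqref{eq:condition_homotopy}, I choose basepointed loops $\sigma_i, \tau_j$ in $(\manifold{N}, y_0)$ representing $h_i, g_j$, and define $u$ on the spine of $M$ (the basepoint $x_0$, the chosen arcs, and the boundary circles) by sending each arc constantly to $y_0$ and each boundary circle via the corresponding representative loop. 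The relation \eqref{eq:condition_homotopy} says precisely that the composition of $u$ with the attaching map of the complementary $2$-cell is null-homotopic in $\manifold{N}$, so $u$ extends continuously to all of $M$. A final free homotopy supported in a collar neighbourhood of $\partial M$ adjusts the traces to coincide exactly with $g$ and with the $\gamma_i$, producing a topological resolution in the sense of \cref{def_resolution}.

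The most delicate step is the identification of the fundamental relation in $\pi_1(M, x_0)$: orientations of the boundary components (outer counterclockwise, inner clockwise as seen from outside $M$) and the order in which the $\widetilde{\beta}_i$ and $\widetilde{\alpha}_j$ appear both need to be tracked precisely, and determine the specific form of \eqref{eq:condition_homotopy}. The freedom to choose the representatives $h_i, g_j$ within conjugacy classes in (a)--(b) matches exactly the freedom in selecting the connecting arcs and the basepoint, which is what makes the algebraic criterion intrinsic despite these geometric choices.
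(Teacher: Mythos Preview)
Your argument is correct and follows essentially the same approach as the paper's: both reduce to the continuous setting, identify the single relation among the based boundary loops in \(\pi_1\) of the punctured domain, push it forward by \(u_*\) for necessity, and use it to extend a map defined on the \(1\)-skeleton for sufficiency. The only difference is packaging: the paper works hands-on, concatenating the conjugated inner loops into a single loop \(\alpha\) and observing that the region between \(\alpha\) and \(\Gamma_0\) is an annulus, whereas you phrase the same relation via the polygon decomposition of \(M\) and its attaching word; your sufficiency step is in fact spelled out more carefully than the paper's one-line ``this homotopy can be used to construct the desired extension''.
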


All the implicit isomorphisms between homotopy groups of curves are built by taking the orientation inherited from that of the plane.
For an equivalent condition involving conjugacy classes of the fundamental group we refer to \cite{Canevari_2015}*{Lemma 2.2}.

Proposition \ref{prop:condition_homotopy} tells us when a map \(g \in \mathcal{C}(\partial \Omega,\mathcal{N})\) can be extended into a map \(u \in \mathcal{C}(\overline{\Omega},\mathcal{N})\) with \(u \vert_{\partial \Omega}= g\). When \(\Omega\) is simply connected a map  \(g \in \mathcal{C}(\partial \Omega,\mathcal{N})\) possesses a continuous extension in \(\Omega\) if and only if it is (freely) homotopic to a constant. Thanks to Proposition \ref{prop:homotopy C0=homotopyH1/2}, the algebraic conditions can be transposed to the case of \(H^{1/2}\) boundary data.

Although the product in the fundamental group \(\pi_1 (\manifold{N})\) is nonabelian in general --- and in most of our model situations afterwards --- the order of the product in \eqref{eq:condition_homotopy} is not important since the factors can be chosen freely in conjugacy classes; algebraically, this is related to the fact that in any group \(g \cdot h = (g \cdot h \cdot g^{-1})\cdot g = h \cdot (h^{-1} \cdot g \cdot h)\). 

Computationally, checking condition \eqref{eq:condition_homotopy} seems to require testing all the possible elements in conjugacy classes, leading typically to a cost which is exponential in \(k\) unless \(\pi_1 (\manifold{N})\) is abelian, as it is the case for the Ginzburg--Landau functional (\(\manifold{N} = \Sset^1\) and \(\pi_1 (\Sset^1) \simeq \Zset\)) and the Landau--de Gennes functional (\(\manifold{N} = \Rset P^2\) and \(\pi_1 (\Rset P^2) = \Zset / 2 \Zset\)).

In the case \( \mathcal{N}=\mathbb{S}^1\), \(\pi_1(\mathcal{N})=\mathbb{Z}\), condition \eqref{eq:condition_homotopy} can be expressed in terms of the degree. It reads 
\[
\sum_{i=1}^k \deg (\gamma_i,\mathbb{S}^1)+\sum_{j=1}^\ell \deg (g \vert_{\Gamma_j},\Gamma_j)= \deg(g \vert_{\Gamma_0}, \Gamma_0),
\]
where we oriented all the curves anti-clockwise. More generally, in the case where \( \pi_1(\manifold{N})\) is abelian, by using an additive notation, condition \eqref{eq:condition_homotopy} can be expressed in a similar way to the one with the degrees. 

The discussion above is related to the polygroup structure of the set of conjugacy classes in \(\pi_1 (\manifold{N})\) \citelist{\cite{Campaigne_1940}\cite{Dietzman_1946}}. In general, the product of two conjugacy classes \(c,c'\) in a group \(G\), denoted by \(c\ast c'\), is the set of all conjugacy classes that are included in the set \(c\cdot c'\defeq\{x\cdot x'\st x\in c, x'\in c'\}\). The set \(c\cdot c'\) can be shown to be stable by conjugation; in particular, the conjugacy class of some element \(g\in G\) belongs to \(c\ast c'\) if and only if \(g\in c\cdot c'\). This product provides the set of conjugacy classes with the structure of a commutative polygroup. It is in particular associative, and the condition of \cref{prop:condition_homotopy} reads \(c_0\in c_1\ast\dotsb \ast c_\ell\ast\Tilde{c}_1\ast\dotsb \ast\Tilde{c}_k\), where \(c_0,c_1, \dotsc, c_\ell,\Tilde{c}_1\dotsc \Tilde{c}_k\) are the conjugacy classes in \(\pi_1(\manifold{N})\) associated to \(g_0, g_1, \dotsc, g_\ell,h_1,\dotsc,h_k\). The fact that the set of conjugacy classes has the structure of a polygroup rather than a group is related to the fact that some loops \(\gamma_1,\dotsc,\gamma_k\) and  some inner boundary data \(g_{\vert\Gamma_1},\dotsc,g_{\vert\Gamma_\ell}\) can be compatible with two exterior boundary data \(g_{\vert\Gamma_0}\), \(\Tilde{g}_{\Gamma_0}\) which belong to different free-homotopy classes.

\begin{proof}[Proof of \cref{prop:condition_homotopy}]
We first assume that \( (\gamma_1, \dotsc, \gamma_k)\) is a topological resolution of \(g\). This means that there exists a map \(u \in \mathcal{C}(\overline{\Omega}\setminus \bigcup_{i=1}^k B_\rho(a_i),\mathcal{N})\) such that \( u \vert_{\partial \Omega}=g\) and \(u \vert_{ \partial B_\rho(a_i)} =\gamma_i(a_i+\rho \cdot)\) where \( (a_1,\dotsc, a_k)\) are points in \(\Conf{k} \Omega\) and \(0<\rho<\Bar{\rho}(a_1,\dotsc,a_k)\). We take a point \(x \in \Omega \setminus \bigcup_{i=1}^k \Bar B_\rho(a_i)\), we consider paths \( c_1,\dotsc,c_k\) joining \(x\) to each loop \(\partial B_\rho(a_i)\), \(1\leq i\leq k\), and paths \( c_0',c'_1,\dotsc,c'_\ell\) joining \(x\) to each \(\Gamma_j\), \(0 \leq j \leq \ell\), in such a way that no paths among \(c_1,\dotsc,c_k, c_0',c'_1,\dotsc,c'_\ell\) intersect each other. Now we consider the loop
\[ 
\alpha\defeq  (c_1 \ast \alpha_1 \ast \Bar{c}_1)\ast \dotsb \ast (c_k \ast \alpha_k \ast \Bar{c}_k) \ast (c'_1 \ast \alpha'_1 \ast \Bar{c}'_1) \ast \dotsb \ast (c_\ell' \ast \alpha'_\ell \ast \Bar{c}_\ell')
\]
where \( \alpha_i:[0,1] \rightarrow \partial B_\rho(a_i)\) are parametrisations of \( \partial B_\rho(a_i)\), \(1 \leq i\leq k\), \( \alpha_i':[0,1] \rightarrow \Gamma_j\), are parametrisations of \( \Gamma_j, 1\leq j \leq \ell\), \(\ast\) denotes the concatenation of paths and \(\Bar{c}\) denotes the inverse of the path \(c\). 
The image by \(u\) of the loop \(\alpha\) is a loop in \(\mathcal{N}\) passing by \(u(x)\in \mathcal{N}\). 
The homotopy class in \(\pi_1(\mathcal{N}, u(x))\) of this loop, denoted by \( [u\compose \alpha]\), is equal to the product of the homotopy classes \( [u \compose  (c_1 \ast \alpha_1 \ast \Bar{c}_1)]\dotsm [ u \compose (c_k \ast \alpha_k \ast \Bar{c}_k)] \cdot [u \compose (c_1' \ast \alpha_1' \ast \Bar{c}'_1)] \dotsm  [u \compose (c_\ell' \ast \alpha_1' \ast \Bar{c}'_\ell )]\). 
Now the loop \(u \compose \alpha\) is freely homotopic to \( g \vert_{\Gamma_{0}}\), via the map \(u \in \mathcal{C}(\overline{\Omega}\setminus \bigcup_{j=1}^k B_\rho(a_i),\mathcal{N})\). (Note that the set \(\overline{\Omega}\setminus \bigl(\operatorname{Im}(\alpha)\cup\bigcup_{j=1}^k B_\rho(a_i)\bigr)\)
is homeomorphic to a finite cylinder.) 
Because of the correspondence between free homotopy classes and conjugacy classes of \(\pi_1(\mathcal{N})\) this means that \( [u\compose \alpha]\) belongs to the conjugacy class of \( [ c'_0 \ast g \vert_{\Gamma_{0}} \ast \Bar{c}'_0]\). 
This last condition translates into the announced relation \eqref{eq:condition_homotopy}.

Conversely we assume that \( (\gamma_1,\dotsc,\gamma_k)\) is given such that \eqref{eq:condition_homotopy} holds. Then, with the same notation as in the first part of the proof, thanks to 
\eqref{eq:condition_homotopy}, we see that the loops \( g \vert_{\Gamma_0}\) and \( (\gamma_1 \compose (c_1 \ast \alpha_1 \ast \Bar{c}_1)) \ast \dotsb \ast (\gamma_k \compose (c_k \ast \alpha_k \ast \Bar{c}_k))\ast (g \vert_{\Gamma_1} \compose (c_1' \ast \alpha_1' \ast \Bar{c}'_1)) \ast \dotsb \ast (g \vert_{\Gamma_\ell} \compose (c_\ell' \ast \alpha_1' \ast \Bar{c}'_\ell))\) are freely homotopic. This homotopy can be used to construct the desired extension.
\end{proof}

\subsection{Singular energy}
We define the minimal length in the homotopy class of \(\gamma\in\VMO (\Sset^1,\manifold{N})\) as
\begin{equation}
\label{eq_kahz1OeBa9gooqu6ep1aixei}
\equivnorm{\gamma} \defeq  \inf \Bigl\{ \int_{\Sset^1} \abs{\Tilde{\gamma}'} \st \Tilde{\gamma} \in \mathcal{C}^1 (\Sset^1, \manifold{N}) \text{ and } \gamma \text{ are homotopic in \(\VMO (\Sset^1, \manifold{N})\)} \Bigr\},
\end{equation}
where \(\abs{\cdot}\) is the norm induced by the Riemannian metric \(g_\manifold{N}\) of the manifold \(\manifold{N}\) on each fiber of the tangent bundle \(T \manifold{N}\), or equivalently, since \(\mathcal{N}\) is isometrically embedded in \(\Rset^\nu\), \( \abs{\cdot}\) also denotes the Euclidean norm. For example, for \(D \in \mathbb{N}\), if we consider the maps \( \gamma_\Deriv :\mathbb{S}^1 \rightarrow \mathbb{S}^1\) given for \(z \in \Sset^1 \subset \Cset\) by \( \gamma_\Deriv(z)=z^\Deriv\) then \( \equivnorm{\gamma_\Deriv}=2\pi \abs{\Deriv }\).

By the Cauchy--Schwarz inequality and by a classical re-parametrization, we have
\begin{equation}
  \label{eq_Eojei1cooruef6uiP}
 \inf \Bigl\{ \int_{\Sset^1} \frac{\abs{\Tilde{\gamma}'}^2}{2} \st \Tilde{\gamma} \in \mathcal{C}^1 (\Sset^1, \manifold{N}) \text{ and } \gamma \text{ are homotopic in \(\VMO (\Sset^1, \manifold{N})\)} \Bigr\}
 = \frac{\equivnorm{\gamma}^2}{4 \pi}.
\end{equation}
In particular, if \(\gamma\) is a minimising closed geodesic, then
\begin{equation}
\label{eq_ji7Aire7IeVaesi5eiX6dahJ}
\equivnorm{\gamma}= \int_{\Sset^1} \abs{\gamma'} = \sqrt{4 \pi \int_{\Sset^1} \frac{\abs{\gamma'}^2}{2}} = 2 \pi  \norm{\gamma'}_{L^\infty},
\end{equation}
since \(\abs{\gamma'}\) is constant for a minimising geodesic.

The \emph{systole} of the manifold \(\manifold{N}\) is the length of the shortest closed non-trivial geodesic on \(\manifold{N}\):
\begin{equation}
\label{eq_systole}
  \operatorname{sys} (\manifold{N}) \defeq
  \inf \bigl\{\lambda (\gamma) \st \gamma \in \mathcal{C}^1(\Sset^1,\manifold{N}) \text{ is not homotopic to a constant}\bigr\}.
\end{equation}
In particular, for every \(\gamma\in\VMO (\Sset^1,\manifold{N})\), one has
\(
 \equivnorm{\gamma}
 \in \{0\} \cup [\operatorname{sys} (\manifold{N}), +\infty)
\).

We now define the singular energy of a function \(g \in \VMO (\partial \Omega,\manifold{N} )\):

\begin{definition}
\label{def_loose_equiv_norm}
If \(\Omega \subset \Rset^2\) is a Lipschitz bounded domain and \(g \in \VMO ( \partial \Omega, \manifold{N})\), we define the \emph{singular energy}
  \begin{equation*}
    \Esing (g)
    \defeq \inf \Biggl\{ \sum_{i = 1}^k \frac{\equivnorm{\gamma_i}^2}{4 \pi} \st k \in \Nset_\ast \text{ and \((\gamma_1, \dotsc, \gamma_k)\) is a topological resolution of \(g\)}\Biggr\}.
\end{equation*}
\end{definition}

For example, if \(g \in \VMO(\partial \Omega,\mathbb{S}^1)\) and \(\deg (g)=\Deriv \in \mathbb{Z}\) then \(\Esing(g)=\pi |\Deriv|\).
It follows from the definition that \(\Esing\) is invariant under homotopies and that for every \(\gamma \in \VMO (\Sset^1, \manifold{N})\), one has \(\Esing (\gamma)
\le \frac{\equivnorm{\gamma}^2}{4 \pi}\). In general, the singular energy is bounded by Sobolev energies:
if \(\gamma \in W^{1, 2} (\partial \Omega, \manifold{N})\), then
\[
\Esing (\gamma) \le \int_{\partial \Omega} \frac{\abs{\gamma'}^2}{2};
\]
if \(\gamma \in W^{1, 1} (\partial \Omega, \manifold{N})\), then
\begin{equation}
\label{eq_Aikachae4ieSohJ0oozaht7e}
\Esing(\gamma) \le \frac{1}{4\pi} \biggl( \int_{\partial \Omega} \abs{\gamma'}\biggr)^2
\end{equation}
and if \(\gamma \in W^{1/p, p} (\partial \Omega, \manifold{N})\) for some \(p > 1\), 
then \cite{VanSchaftingen_2019}*{Theorem 1.5},
\[
\Esing (\gamma) \le C \abs{\gamma}_{W^{1/p, p} (\partial \Omega)}^{2 p}.
\]

\begin{proposition}
\label{lemma_evolutiondegres}
If \(\Omega\) is a Lipschitz bounded domain and \(\Omega_1, \dotsc \Omega_\ell\) are disjoint Lipschitz bounded domains such that \(\Bar{\Omega}_i \subset \Omega\) and if \(u \in W^{1, 2} (\Omega \setminus \bigcup_{j = 1}^\ell \bar{\Omega}_j,\mathcal{N})\), then
  \begin{equation*}
    \Esing (\tr_{\partial \Omega}u) \le
    \sum_{j = 1}^\ell
    \Esing (\tr_{\partial \Omega_i}u).
  \end{equation*}

\end{proposition}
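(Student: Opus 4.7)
The plan is to combine, for each inner hole, a nearly optimal topological resolution of the inner trace \(\tr_{\partial \Omega_j} u\) with the map \(u\) itself on the external annular region, in order to produce a topological resolution of the outer trace \(\tr_{\partial \Omega} u\).

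\textbf{Step 1: near-optimal resolutions inside each \(\Omega_j\).}
Fix \(\epsilon > 0\). By the definition of \(\Esing\) and by \cref{prop:homotopy C0=homotopyH1/2}, for each \(j \in \{1,\dotsc,\ell\}\) I can choose an integer \(k_j \in \Nset_\ast\), a configuration \((a^{(j)}_1, \dotsc, a^{(j)}_{k_j}) \in \Conf{k_j} \Omega_j\), a radius \(\rho_j \in (0, \Bar{\rho}(a^{(j)}_1,\dotsc,a^{(j)}_{k_j}))\), a map \(u_j \in W^{1,2}\bigl(\Omega_j \setminus \bigcup_{i=1}^{k_j} \Bar{B}_{\rho_j}(a^{(j)}_i), \manifold{N}\bigr)\) and curves \(\gamma^{(j)}_i = \tr_{\Sset^1} u_j(a^{(j)}_i + \rho_j\,\cdot) \in W^{1/2,2}(\Sset^1,\manifold{N})\) such that \(\tr_{\partial \Omega_j} u_j = \tr_{\partial \Omega_j} u\) and
\[
\sum_{i=1}^{k_j} \frac{\equivnorm{\gamma^{(j)}_i}^2}{4\pi} \le \Esing(\tr_{\partial \Omega_j} u) + \frac{\epsilon}{\ell}.
\]

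\textbf{Step 2: gluing across the interfaces \(\partial \Omega_j\).}
Define \(A \defeq \Omega \setminus \bigcup_{j=1}^\ell \Bar{\Omega}_j\) and set
\[
\Tilde{u} \defeq \begin{cases} u & \text{on } A, \\ u_j & \text{on } \Omega_j \setminus \bigcup_{i=1}^{k_j}\Bar{B}_{\rho_j}(a^{(j)}_i), \quad j \in \{1,\dotsc,\ell\}.\end{cases}
\]
Since \(u\) and \(u_j\) share the same trace on the Lipschitz interface \(\partial \Omega_j\), the glued map \(\Tilde{u}\) belongs to \(W^{1,2}\bigl(\Omega \setminus \bigcup_{j,i}\Bar{B}_{\rho_j}(a^{(j)}_i), \manifold{N}\bigr)\); moreover \(\tr_{\partial \Omega} \Tilde{u} = \tr_{\partial \Omega} u\) and \(\tr_{\Sset^1} \Tilde{u}(a^{(j)}_i + \rho_j\,\cdot) = \gamma^{(j)}_i\).

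\textbf{Step 3: reading off a topological resolution and passing to the limit.}
Applying the converse direction of \cref{prop:homotopy C0=homotopyH1/2} to \(\Tilde{u}\) shows that the collection \((\gamma^{(j)}_i)_{1 \le j \le \ell,\,1 \le i \le k_j}\) is a topological resolution of \(\tr_{\partial \Omega} u\). Therefore
\[
\Esing(\tr_{\partial \Omega} u) \le \sum_{j=1}^\ell \sum_{i=1}^{k_j} \frac{\equivnorm{\gamma^{(j)}_i}^2}{4\pi} \le \sum_{j=1}^\ell \Esing(\tr_{\partial \Omega_j} u) + \epsilon,
\]
and letting \(\epsilon \to 0\) concludes the proof.

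\textbf{Main obstacle.} The only delicate point is the gluing of Step~2: one must verify that the prescribed configurations in distinct \(\Omega_j\) can be reconciled with the map \(u\) on the exterior piece \(A\) in such a way that the result is still a genuine \(W^{1,2}\) map into \(\manifold{N}\) with the announced boundary traces. This reduces to checking equality of traces on each Lipschitz interface \(\partial \Omega_j\), which holds by construction since both sides coincide with \(\tr_{\partial \Omega_j} u\); once this is granted, \cref{prop:homotopy C0=homotopyH1/2} delivers the resolution automatically.
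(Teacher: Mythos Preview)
Your proof is correct and follows essentially the same approach as the paper's: take (near-optimal) topological resolutions of each inner trace, juxtapose them by gluing the corresponding \(W^{1,2}\) extensions with \(u\) across the Lipschitz interfaces \(\partial\Omega_j\), and invoke \cref{prop:homotopy C0=homotopyH1/2} to conclude that the combined collection resolves \(\tr_{\partial\Omega}u\). The paper compresses all of this into a single sentence; your Steps~1--3 simply spell out the details, including the \(\epsilon\)-argument needed to pass to the infimum.
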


\begin{proof}
We observe that the juxtaposition of the topological resolutions of \(\tr_{\partial \Omega_1} u, \dotsc, \tr_{\partial \Omega_\ell} u\) form a topological resolution of \(\tr_{\partial \Omega} u\) by \cref{prop:homotopy C0=homotopyH1/2}.
\end{proof}

We define minimal topological resolutions to be optimal resolutions in \cref{def_loose_equiv_norm}:
\begin{definition}
\label{minimal_resolution}
We say that \((\gamma_1,\dotsc,\gamma_k)\) is a \emph{minimal topological resolution} of \(g\)
if \((\gamma_1,\dotsc,\gamma_k)\) is a topological resolution of \(g\), 
\[
\Esing (g) = \sum_{i = 1}^k \frac{\equivnorm{\gamma_i}^2}{4 \pi}
\]
and \(\equivnorm{\gamma_i} > 0\) for every \(i \in \{1, \dotsc, k\}\).
\end{definition}

\begin{definition}
A closed curve \(\gamma \in \mathcal{C}(\Sset^1, \manifold{N})\) is \emph{atomic} whenever \((\gamma)\) is a minimal topological resolution of \(\gamma\).
\end{definition}

It will appear in the examples that minimal topological resolutions for a given \(g\) are not necessarily unique up to homotopy;
even being atomic does not exclude the existence of a minimal topological resolution into several maps (see \S\ref{sect_gooWae7aNu2roatoozahb0al} below).

\subsection{Renormalised energies of configurations of points}
\label{section_ren1}
Given a bounded Lipschitz domain \(\Omega \subset \Rset^2\), an integer \(k \in \Nset_\ast\), \((a_1, \dotsc, a_k) \in \Conf{k} \Omega\), a radius \(\rho \in (0, \Bar{\rho} (a_1, \dotsc, a_k))\), a map \(g \in W^{1/2,2}(\partial \Omega, \manifold{N})\) and a topological resolutions \((\gamma_1,  \dotsc, \gamma_k)\in W^{1/2,2}(\Sset^1, \manifold{N})^k\) of \(g\), we consider the \emph{geometrical energy} outside disks
\begin{multline}
\label{eq_def_renorm_geom_rho} \mathcal{E}_{g, \gamma_1, \dotsc, \gamma_k}^{\mathrm{geom}, \rho} (a_1, \dotsc, a_k)\\
  \defeq  \inf \biggl\{  \int_{\Omega \setminus \bigcup_{i = 1}^k \Bar{B}_\rho (a_i)}
  \frac{\abs{\Deriv u}^2}{2} \st u \in W^{1, 2} \big(\Omega \setminus \bigcup_{i = 1}^k \Bar{B}_\rho (a_i), \manifold{N}\big),\, \tr_{\partial \Omega} u = g \\[-2ex]
  \text{ and for each \(i \in \{1, \dotsc, k\}\)} \tr_{\Sset^1} u (a_i + \rho\,\cdot) = \gamma_i
  \biggr\}
\end{multline}
and the \emph{topological energy} outside disks
\begin{multline}
\label{eq_def_renorm_top_rho}
  \mathcal{E}_{g, \gamma_1, \dotsc, \gamma_k}^{\mathrm{top}, \rho} (a_1, \dotsc, a_k)\\
  \defeq \inf \biggl\{ \int_{\Omega \setminus \bigcup_{i = 1}^k \Bar{B}_\rho (a_i)}
  \frac{\abs{\Deriv u}^2}{2} \st u \in W^{1, 2} \big(\Omega \setminus \bigcup_{i = 1}^k \Bar{B}_\rho (a_i), \manifold{N}\big),\, \tr_{\partial \Omega} u = g \text{ and }\\[-1.5ex]
    \text{ for each \(i \in \{1, \dotsc, k\}\), }\tr_{\Sset^1} u (a_i + \rho\,\cdot)\text{ and }\gamma_i \text{ are homotopic in \(\VMO (\Sset^1, \manifold{N})\)}
  \biggr\}.
\end{multline}

In the next two propositions we show that the geometrical energy and the topological energy enjoy monotonicity properties:
\begin{proposition}
\label{proposition_geometric_non-decreasing}
Let \(k\in\Nset_\ast\), \((a_1, \dotsc, a_k) \in \Conf{k}\Omega\), \(g\in W^{1/2,2}(\partial\Omega,\manifold{N})\) and \((\gamma_1, \dotsc, \gamma_k) \in W^{1/2,2}(\Sset^1, \manifold{N})^k\) be a topological resolution of \(g\). If \(\gamma_1, \dotsc, \gamma_k\) are minimal geodesics, then the function
\[
 \rho\in (0, \Bar{\rho} (a_1, \dotsc, a_k))\mapsto\mathcal{E}_{g, \gamma_1, \dotsc, \gamma_k}^{\mathrm{geom}, \rho} (a_1, \dotsc, a_k) - \sum_{i = 1}^k \frac{\equivnorm{\gamma_i}^2}{4 \pi}  \log \frac{1}{\rho}
\]
is non-decreasing.
\end{proposition}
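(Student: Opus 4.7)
The approach is the standard monotonicity-by-radial-extension argument, adapted to the geodesic boundary data. I want to show that for $0<\rho_1<\rho_2<\Bar{\rho}(a_1,\dotsc,a_k)$,
\[
\mathcal{E}_{g, \gamma_1, \dotsc, \gamma_k}^{\mathrm{geom}, \rho_1} (a_1, \dotsc, a_k) - \mathcal{E}_{g, \gamma_1, \dotsc, \gamma_k}^{\mathrm{geom}, \rho_2} (a_1, \dotsc, a_k) \le \sum_{i = 1}^k \frac{\equivnorm{\gamma_i}^2}{4 \pi}\log \frac{\rho_2}{\rho_1},
\]
which is equivalent to the monotonicity of the subtracted function on $(0,\Bar{\rho}(a_1,\dotsc,a_k))$.

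The plan is to fix $\varepsilon>0$ and choose a competitor $u \in W^{1,2}(\Omega \setminus \bigcup_{i=1}^k \Bar{B}_{\rho_2}(a_i), \manifold{N})$ for the infimum defining $\mathcal{E}_{g, \gamma_1, \dotsc, \gamma_k}^{\mathrm{geom}, \rho_2}$ whose Dirichlet energy is within $\varepsilon$ of the infimum. I extend $u$ to $\Omega \setminus \bigcup_{i=1}^k \Bar{B}_{\rho_1}(a_i)$ by setting, for each $i\in\{1,\dotsc,k\}$ and each $x$ in the annulus $B_{\rho_2}(a_i)\setminus \Bar{B}_{\rho_1}(a_i)$,
\[
\Tilde{u}(x) \defeq \gamma_i\!\left(\frac{x-a_i}{\abs{x-a_i}}\right).
\]
Since the trace of $u$ on $\partial B_{\rho_2}(a_i)$ is exactly $\gamma_i((\cdot - a_i)/\rho_2)$, the map $\Tilde{u}$ is an admissible $W^{1,2}$ competitor for $\mathcal{E}_{g, \gamma_1, \dotsc, \gamma_k}^{\mathrm{geom}, \rho_1}(a_1,\dotsc,a_k)$, and $\Tilde{u}(a_i+\rho_1\,\cdot)|_{\Sset^1} = \gamma_i$ as required.

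The core computation is the Dirichlet energy of $\Tilde{u}$ on each annulus. In polar coordinates $(r,\theta)$ centred at $a_i$, the map $\Tilde{u}$ is $r$-independent, so $\abs{\Deriv \Tilde{u}(r,\theta)}^2 = \abs{\gamma_i'(\theta)}^2/r^2$ and
\[
\int_{B_{\rho_2}(a_i)\setminus \Bar{B}_{\rho_1}(a_i)} \frac{\abs{\Deriv \Tilde{u}}^2}{2} \dif x = \int_{\rho_1}^{\rho_2}\frac{\dif r}{r}\int_{\Sset^1}\frac{\abs{\gamma_i'}^2}{2} = \log\frac{\rho_2}{\rho_1}\int_{\Sset^1}\frac{\abs{\gamma_i'}^2}{2}.
\]
The minimal-geodesic hypothesis enters here: by \eqref{eq_ji7Aire7IeVaesi5eiX6dahJ}, $\int_{\Sset^1}\abs{\gamma_i'}^2/2 = \equivnorm{\gamma_i}^2/(4\pi)$ exactly. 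Summing over $i$ and combining with the energy of $u$ on $\Omega \setminus \bigcup_{i=1}^k \Bar{B}_{\rho_2}(a_i)$ gives
\[
\mathcal{E}_{g, \gamma_1, \dotsc, \gamma_k}^{\mathrm{geom}, \rho_1}(a_1,\dotsc,a_k) \le \mathcal{E}_{g, \gamma_1, \dotsc, \gamma_k}^{\mathrm{geom}, \rho_2}(a_1,\dotsc,a_k) + \varepsilon + \sum_{i=1}^k \frac{\equivnorm{\gamma_i}^2}{4\pi}\log\frac{\rho_2}{\rho_1},
\]
and letting $\varepsilon\to 0$ yields the desired inequality after rearrangement.

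There is no real obstacle: the only point worth care is verifying that the radial extension lies in $W^{1,2}$ and that its trace on $\partial B_{\rho_2}(a_i)$ matches that of $u$, which holds because $u$ is a valid competitor for $\mathcal{E}^{\mathrm{geom},\rho_2}$ so that the traces agree in $W^{1/2,2}(\partial B_{\rho_2}(a_i),\manifold{N})$; the rest is polar-coordinate bookkeeping. The minimal-geodesic hypothesis is precisely what makes the coefficient in front of $\log(\rho_2/\rho_1)$ equal $\equivnorm{\gamma_i}^2/(4\pi)$ rather than something strictly larger, which is what is needed for the monotonicity to hold with this exact normalisation.
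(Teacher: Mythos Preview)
Your proposal is correct and follows essentially the same approach as the paper: take a competitor for the larger radius, extend it radially by $\gamma_i((x-a_i)/\abs{x-a_i})$ into each annulus, compute the annular energy via \eqref{eq_ji7Aire7IeVaesi5eiX6dahJ}, and conclude. The only cosmetic difference is that you introduce an $\varepsilon$-near-optimal competitor and let $\varepsilon\to 0$, whereas the paper works with an arbitrary competitor and passes to the infimum directly.
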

\begin{proof}
For \(0<\rho<\sigma<\Bar{\rho}(a_1,\dotsc,a_k)\), for every \(u \in W^{1, 2} (\Omega \setminus \bigcup_{i = 1}^k \Bar{B}_\sigma(a_i), \manifold{N})\) such that \(\tr_{\partial \Omega} u = g\) on \(\partial \Omega\)
and \(\tr_{\Sset^1} u (a_i + \sigma\,\cdot)  = \gamma_i\) on \(\Sset^1\) for every \(i \in \{1, \dotsc, k\}\), we define
\(v : \Omega \setminus \bigcup_{i = 1}^k \Bar{B}_\rho (a_i) \to \manifold{N}\) for \(x \in \Omega \setminus \bigcup_{i = 1}^k \Bar{B}_\rho(a_i)\) by
\[
 v (x) \defeq
 \begin{cases}
   \gamma_i \left(\frac{x - a_i}{\abs{x-a_i}}\right) & \text{if \(x \in B_\sigma (a_i)\setminus \Bar{B}_\rho (a_i)\) with \(i \in \{1, \dotsc, k\}\)},\\
   u (x) & \text{otherwise}.
 \end{cases}
\]
For every \(i \in \{1, \dotsc, k\}\), since \(\gamma_i\) is a minimising geodesic, we have by \eqref{eq_ji7Aire7IeVaesi5eiX6dahJ}
\[
\int_{B_\sigma (a_i) \setminus \Bar{B}_{\rho}(a_i)}  \frac{\abs{\Deriv v}^2}{2}
=  \int^\sigma_\rho \biggl(\int_{\Sset^1} \frac{\abs{\gamma_i'}^2}{2}\biggr) \frac{\dif r}{r}
 = \frac{\equivnorm{\gamma_i}^2}{4 \pi}  \log \frac{\sigma}{\rho}.
\]
We thus have \(v \in W^{1, 2} (\Omega \setminus \bigcup_{i = 1}^k \Bar{B}_\rho (a_i), \manifold{N})\) and
\[
  \int_{\Omega \setminus \bigcup_{i = 1}^k \Bar{B}_\rho (a_i)}
  \frac{\abs{\Deriv v}^2}{2}
 = \int_{\Omega \setminus \bigcup_{i = 1}^k \Bar{B}_\sigma (a_i)}
 \frac{\abs{\Deriv u}^2}{2} + \sum_{i = 1}^k \frac{\equivnorm{\gamma_i}^2}{4 \pi}  \log \frac{\sigma}{\rho},
\]
and the conclusion follows.
\end{proof}
\begin{proposition}
\label{proposition_topological_non-increasing}
Let \(k\in\Nset_\ast\), \((a_1, \dotsc, a_k )\in \Conf{k}\Omega\), \(g\in W^{1/2,2}(\partial\Omega,\manifold{N})\) and \((\gamma_1, \dotsc, \gamma_k) \in W^{1/2,2}(\Sset^1, \manifold{N})^k\) be a topological resolution of \(g\).
If \(\gamma_1, \dotsc, \gamma_k\) are minimal geodesics, then the function
\[
 \rho\in (0, \Bar{\rho} (a_1, \dotsc, a_k))\mapsto\mathcal{E}_{g, \gamma_1, \dotsc, \gamma_k}^{\mathrm{top}, \rho} (a_1, \dotsc, a_k) - \sum_{i = 1}^k \frac{\equivnorm{\gamma_i}^2}{4 \pi}  \log \frac{1}{\rho}
\]
is non-increasing.
\end{proposition}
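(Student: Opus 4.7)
\smallskip
\noindent\textbf{Proof plan.} The strategy mirrors that of \cref{proposition_geometric_non-decreasing}, but in the opposite direction: rather than extending a competitor inward through the annulus via the geodesic \(\gamma_i\), I would restrict a competitor from the smaller holes to the larger ones and bound the energy lost in each annulus from below by the logarithmic term. More precisely, fix \(0<\rho<\sigma<\Bar{\rho}(a_1,\dotsc,a_k)\) and let \(u\in W^{1,2}(\Omega\setminus\bigcup_{i=1}^k \Bar{B}_\rho(a_i),\manifold{N})\) be a competitor defining \(\mathcal{E}^{\mathrm{top},\rho}_{g,\gamma_1,\dotsc,\gamma_k}(a_1,\dotsc,a_k)\). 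I would first show that the restriction of \(u\) to \(\Omega\setminus\bigcup_{i=1}^k \Bar{B}_\sigma(a_i)\) is admissible for the problem at scale \(\sigma\): because \(u\) belongs to \(W^{1,2}\) of the annulus \(B_\sigma(a_i)\setminus\Bar{B}_\rho(a_i)\), the trace \(\tr_{\Sset^1}u(a_i+\sigma\,\cdot)\) is freely homotopic in \(\VMO(\Sset^1,\manifold{N})\) to \(\tr_{\Sset^1}u(a_i+\rho\,\cdot)\), via the continuous path \(r\mapsto u(a_i+r\,\cdot)\big\vert_{\Sset^1}\) seen as a path in \(\VMO\); hence the new trace is still homotopic to \(\gamma_i\).

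Next, I would estimate the loss over each annulus using polar coordinates centred at \(a_i\): dropping the radial part of the energy,
\[
\int_{B_\sigma(a_i)\setminus\Bar{B}_\rho(a_i)}\frac{\abs{\Deriv u}^2}{2}\ \geq\ \int_\rho^\sigma \biggl(\int_{\Sset^1}\frac{\abs{\partial_\theta u(a_i+r\,\cdot)}^2}{2}\biggr)\,\frac{\dif r}{r}.
\]
By the Fubini-type argument standard for traces, for almost every \(r\in(\rho,\sigma)\) the slice \(u_r\defeq u(a_i+r\,\cdot)\vert_{\Sset^1}\) lies in \(W^{1,2}(\Sset^1,\manifold{N})\) and is in the same free-homotopy class as \(\gamma_i\). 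For such \(r\), the Cauchy--Schwarz inequality together with the density of \(\mathcal{C}^1\) maps in \(W^{1,2}(\Sset^1,\manifold{N})\) within a fixed homotopy class gives \(\int_{\Sset^1}\abs{u_r'}^2/2\geq\equivnorm{u_r}^2/(4\pi)=\equivnorm{\gamma_i}^2/(4\pi)\), which is exactly the extension of \eqref{eq_Eojei1cooruef6uiP} to \(W^{1,2}\) slices. Integrating in \(r\) produces the desired lower bound \(\equivnorm{\gamma_i}^2/(4\pi)\,\log(\sigma/\rho)\) on each annular contribution.

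Summing over \(i\in\{1,\dotsc,k\}\) and subtracting this from the energy of \(u\) on \(\Omega\setminus\bigcup_{i=1}^k\Bar{B}_\rho(a_i)\) yields
\[
 \int_{\Omega\setminus\bigcup_{i=1}^k \Bar{B}_\sigma(a_i)}\frac{\abs{\Deriv u}^2}{2}\ \leq\ \int_{\Omega\setminus\bigcup_{i=1}^k \Bar{B}_\rho(a_i)}\frac{\abs{\Deriv u}^2}{2}\ -\ \sum_{i=1}^k \frac{\equivnorm{\gamma_i}^2}{4\pi}\log\frac{\sigma}{\rho},
\]
and taking the infimum over \(u\) delivers the monotonicity statement. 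The main technical obstacle is the homotopy-class identification of almost every circular slice: this requires combining Fubini for Sobolev traces with the fact that convergence in \(W^{1/2,2}\), hence in \(\VMO\), preserves the free-homotopy class, so that the slices \(u_r\) share the same VMO-homotopy class as \(\tr_{\Sset^1}u(a_i+\rho\,\cdot)\), which in turn is homotopic to \(\gamma_i\) by the admissibility of \(u\).
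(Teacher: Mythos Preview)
Your proof is correct and follows essentially the same route as the paper: the paper isolates the key estimate as \cref{lemma_energy_growth_map_radii}, which is exactly your annular lower bound obtained by dropping the radial derivative, using that almost every circular slice is homotopic to \(\gamma_i\), and invoking \eqref{eq_Eojei1cooruef6uiP}. Your discussion of the homotopy identification of slices via \(\VMO\) is slightly more detailed than the paper's, but the substance is the same.
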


\Cref{proposition_topological_non-increasing} will follow from the following lemma:
\begin{lemma}
\label{lemma_energy_growth_map_radii}
If \(k\in\Nset_\ast\), \((a_1, \dotsc, a_k) \in\Conf{k} \Omega\), \(0 < \rho < \sigma < \Bar{\rho} (a_1, \dotsc, a_k)\), if \(u \in W^{1, 2} (\Omega \setminus \bigcup_{i = 1}^k \Bar{B}_\rho (a_i), \manifold{N})\) and if for every \(i \in \{1, \dotsc, k\}\) the maps \(\tr_{\Sset^1} u (a_i + \rho\,\cdot) \) and \(\gamma_i\) are homotopic, then
\[
\int_{\Omega  \setminus \bigcup_{i = 1}^k \Bar{B}_{\rho} (a_i)} \frac{\abs{\Deriv u}^2}{2} \geq \int_{\Omega  \setminus \bigcup_{i = 1}^k \Bar{B}_{\sigma} (a_i)} \frac{\abs{\Deriv u}^2}{2}
 + \sum_{i = 1}^k \frac{\equivnorm{\gamma_i}^2}{4 \pi}  \log \frac{\sigma}{\rho}.
\]
\end{lemma}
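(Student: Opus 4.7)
\textbf{Proof plan for \cref{lemma_energy_growth_map_radii}.} The strategy is to separate the integral over the region between radii $\rho$ and $\sigma$ from the outer region and bound the contribution of each annulus $B_\sigma(a_i)\setminus\bar B_\rho(a_i)$ from below by $\frac{\equivnorm{\gamma_i}^2}{4\pi}\log\frac{\sigma}{\rho}$ via a slicing argument in polar coordinates combined with the sharp constant \eqref{eq_Eojei1cooruef6uiP}.

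First I would split
\[
\int_{\Omega\setminus\bigcup_i\bar B_\rho(a_i)}\frac{\abs{\Deriv u}^2}{2}
=\int_{\Omega\setminus\bigcup_i\bar B_\sigma(a_i)}\frac{\abs{\Deriv u}^2}{2}
+\sum_{i=1}^k\int_{B_\sigma(a_i)\setminus\bar B_\rho(a_i)}\frac{\abs{\Deriv u}^2}{2},
\]
which is valid since the $\bar B_\sigma(a_i)$ are disjoint and contained in $\Omega$. For a fixed $i$, introduce polar coordinates $(r,\theta)\in(\rho,\sigma)\times\Sset^1$ centered at $a_i$, and write $\gamma_{i,r}(\theta)\defeq u(a_i+re^{\mathrm{i}\theta})$. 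Dropping the nonnegative radial part of the energy yields
\[
\int_{B_\sigma(a_i)\setminus\bar B_\rho(a_i)}\frac{\abs{\Deriv u}^2}{2}
\ge \int_\rho^\sigma\biggl(\int_{\Sset^1}\frac{\abs{\gamma_{i,r}'}^2}{2}\biggr)\frac{\dif r}{r}.
\]
By Fubini, for almost every $r\in(\rho,\sigma)$ the slice $\gamma_{i,r}$ lies in $W^{1,2}(\Sset^1,\manifold{N})\subset\VMO(\Sset^1,\manifold{N})$, so that \eqref{eq_Eojei1cooruef6uiP} gives $\int_{\Sset^1}\abs{\gamma_{i,r}'}^2/2\ge \equivnorm{\gamma_{i,r}}^2/(4\pi)$.

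The key remaining point is to identify the VMO-homotopy class of $\gamma_{i,r}$ for almost every $r$. Since $\equivnorm{\cdot}$ depends only on this class, it suffices to show that $\gamma_{i,r}$ is VMO-homotopic to $\gamma_i$ for a.e.~$r\in(\rho,\sigma)$; then $\equivnorm{\gamma_{i,r}}=\equivnorm{\gamma_i}$ and integrating over $r$ produces the desired logarithmic term $\frac{\equivnorm{\gamma_i}^2}{4\pi}\log\frac{\sigma}{\rho}$. To establish the VMO-homotopy, I would approximate $u$ by smooth maps $u_n\to u$ on the annulus, as in \cite{SchoenUhlenbeck1982}*{\S 3} (used already in the proof of \cref{prop:homotopy C0=homotopyH1/2}). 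For each $u_n$, the slices $\theta\mapsto u_n(a_i+re^{\mathrm{i}\theta})$ vary continuously in $r$, and are therefore all homotopic in $\mathcal{C}(\Sset^1,\manifold{N})$ — in particular the slice at any $r$ is homotopic to the slice at $\rho$. Passing to the limit using the fact that $W^{1,2}$ convergence on annular neighbourhoods of $\partial B_r(a_i)$ yields VMO convergence of the traces (via the trace being continuous from $W^{1,2}$ into $W^{1/2,2}\subset\VMO$), one concludes that for a.e.~$r$ the map $\gamma_{i,r}$ is VMO-homotopic to $\gamma_{i,\rho}$, which in turn is homotopic to $\gamma_i$ by hypothesis.

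The main obstacle is this last homotopy identification: the map $u$ itself is not a continuous homotopy, so one must go through a smooth approximation and invoke the stability of homotopy classes under VMO convergence \cite{Brezis_Nirenberg_1995}*{Lemma A.23}. Once this identification is in hand, the estimate
\[
\sum_{i=1}^k\int_{B_\sigma(a_i)\setminus\bar B_\rho(a_i)}\frac{\abs{\Deriv u}^2}{2}
\ge \sum_{i=1}^k\int_\rho^\sigma\frac{\equivnorm{\gamma_i}^2}{4\pi}\,\frac{\dif r}{r}
=\sum_{i=1}^k\frac{\equivnorm{\gamma_i}^2}{4\pi}\log\frac{\sigma}{\rho}
\]
combines with the initial splitting to yield the claim.
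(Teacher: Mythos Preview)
Your proof is correct and follows essentially the same route as the paper: split off the annuli, drop the radial energy, and use \eqref{eq_Eojei1cooruef6uiP} on each circular slice together with the fact that the slice is homotopic to \(\gamma_i\). The only difference is that the paper simply \emph{observes} that \(\tr_{\Sset^1}u(a_i+r\,\cdot)\) is homotopic to \(\gamma_i\) for every \(r\in(\rho,\sigma)\) (implicitly invoking the \(W^{1,2}\)-extension characterisation of \cref{prop:homotopy C0=homotopyH1/2} on the sub-annulus \(B_\sigma(a_i)\setminus\Bar B_r(a_i)\)), whereas you spell this out via smooth approximation and stability of VMO homotopy classes; both justifications are valid and lead to the same conclusion.
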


\begin{proof}
We first have, by additivity of the integral,
\[
\int_{\Omega  \setminus \bigcup_{i = 1}^k \Bar{B}_{\rho} (a_i)} \frac{\abs{\Deriv u}^2}{2}
= \int_{\Omega  \setminus \bigcup_{i = 1}^k \Bar{B}_{\sigma} (a_i)} \frac{\abs{\Deriv u}^2}{2}
+ \sum_{i = 1}^k \int_{B_{\sigma} (a_i) \setminus {\Bar{B}_{\rho} (a_i)}} \frac{\abs{\Deriv u}^2}{2}.
\]
Next, for every \(r \in (\rho, \sigma)\), we observe that the map \(\tr_{\Sset^1} u (a_i + r \cdot)\) is homotopic to
\(\gamma_i\).
Hence, by \eqref{eq_Aikachae4ieSohJ0oozaht7e}
\[
\int_{B_{\sigma} (a_i) \setminus {\Bar{B}}_{\rho} (a_i)} \frac{\abs{\Deriv u}^2}{2}
\ge \int_\rho^{\sigma}\int_{\Sset^1}\frac{1}{2} \Bigabs{\frac{\dif}{r\dif \theta}u(a_i+r\theta)}^2r\dif\theta\dif r
  \ge  \int_\rho^{\sigma}  \frac{\equivnorm{\gamma_i}^2}{4 \pi r} \dif r
  = \frac{\equivnorm{\gamma_i}^2}{4 \pi}  \log \frac{\sigma}{\rho}.\qedhere
\]
\end{proof}

Let \(k\in\Nset_\ast\), \(g\in W^{1/2,2}(\partial\Omega,\manifold{N})\) and \((\gamma_1, \dotsc, \gamma_k) \in \mathcal{C}^1 (\Sset^1, \manifold{N})^k\) be a topological resolution of \(g\) such that \(\gamma_1, \dotsc, \gamma_k\) are minimal geodesics.
Since the constraint in the definition of \eqref{eq_def_renorm_top_rho} is weaker than the one in \eqref{eq_def_renorm_geom_rho}, we have for each \(\rho\in (0, \Bar{\rho} (a_1, \dotsc, a_k))\),
\begin{equation}
\label{eq_top_le_geom}
\mathcal{E}_{g, \gamma_1, \dotsc, \gamma_k}^{\mathrm{top}, \rho} (a_1, \dotsc, a_k) - \sum_{i = 1}^k \frac{\equivnorm{\gamma_i}^2}{4 \pi}  \log \frac{1}{\rho}\le \mathcal{E}_{g, \gamma_1, \dotsc, \gamma_k}^{\mathrm{geom}, \rho} (a_1, \dotsc, a_k) - \sum_{i = 1}^k \frac{\equivnorm{\gamma_i}^2}{4 \pi}  \log \frac{1}{\rho},
\end{equation}
so that the left and right hand sides of \eqref{eq_top_le_geom} are bounded.
Therefore, we can define the \emph{geometrical renormalised energy} by
\begin{equation}
\label{eq_def_renorm_geom}
\begin{split}
 \mathcal{E}_{g, \gamma_1, \dotsc, \gamma_k}^{\mathrm{geom}}  (a_1, \dotsc, a_k) &\defeq  \lim_{\rho \to 0} \mathcal{E}_{g, \gamma_1, \dotsc, \gamma_k}^{\mathrm{geom}, \rho} (a_1, \dotsc, a_k) - \sum_{i = 1}^k \frac{\equivnorm{\gamma_i}^2}{4 \pi}  \log \frac{1}{\rho}\\
 &= \inf_{\rho \in (0, \Bar{\rho} (a_1, \dotsc, a_k))}
\mathcal{E}_{g, \gamma_1, \dotsc, \gamma_k}^{\mathrm{geom}, \rho} (a_1, \dotsc, a_k) - \sum_{i = 1}^k \frac{\equivnorm{\gamma_i}^2}{4 \pi}  \log \frac{1}{\rho},
\end{split}
\end{equation}
and the \emph{topological renormalised energy} by
\begin{equation}
\label{eq_def_renorm_top}
\begin{split}
 \mathcal{E}_{g, \gamma_1, \dotsc, \gamma_k}^{\mathrm{top}}  (a_1, \dotsc, a_k) &\defeq  \lim_{\rho \to 0} \mathcal{E}_{g, \gamma_1, \dotsc, \gamma_k}^{\mathrm{top}, \rho} (a_1, \dotsc, a_k) - \sum_{i = 1}^k \frac{\equivnorm{\gamma_i}^2}{4 \pi}  \log \frac{1}{\rho}\\
 &= \sup_{\rho \in (0, \Bar{\rho} (a_1, \dotsc, a_k))}
\mathcal{E}_{g, \gamma_1, \dotsc, \gamma_k}^{\mathrm{top}, \rho} (a_1, \dotsc, a_k) - \sum_{i = 1}^k \frac{\equivnorm{\gamma_i}^2}{4 \pi}  \log \frac{1}{\rho}.
\end{split}
\end{equation}
We immediately have, in view of \eqref{eq_top_le_geom} that
\begin{equation}
\label{eq_comparison_top_geom}
 \mathcal{E}_{g, \gamma_1, \dotsc, \gamma_k}^{\mathrm{top}}  (a_1, \dotsc, a_k)
 \le \mathcal{E}_{g, \gamma_1, \dotsc, \gamma_k}^{\mathrm{geom}}  (a_1, \dotsc, a_k).
\end{equation}

When \(\manifold{N} = \Sset^1\) is the circle and the domain \(\Omega\) is simply connected, the geometrical renormalised energy \(\mathcal{E}_{g, \gamma_1, \dotsc, \gamma_k}^{\mathrm{geom}}\) 
corresponds in Bethuel, Brezis and Hélein's work to the energy \({E}_2\) \cite{Bethuel_Brezis_Helein_1994}*{Theorem I.3}
and the topological renormalised energy \(\mathcal{E}_{g, \gamma_1, \dotsc, \gamma_k}^{\mathrm{top}}\) to the energy \({E}_1\) \cite{Bethuel_Brezis_Helein_1994}*{Theorem I.2}.
Thanks to the underlying linear structure of the problem, both can be computed as solutions of a linear elliptic problem  \cite{Bethuel_Brezis_Helein_1994}*{\S I.4}; due to the more nonlinear character of our general setting, their approach seems quite unlikely to work here.

\section{Dependence on the charges of singularities}
The renormalised energies are functions of the points \( (a_1,\dotsc,a_k)\), of a boundary data \(g \in W^{1/2,2}(\partial \Omega,\mathcal{N})\) and of one of its topological resolution \( (\gamma_1,\dotsc,\gamma_k)\in W^{1/2,2}(\mathbb{S}^1,\mathcal{N})\). The curves \( \gamma_i\) are referred to as \emph{charges} of the singularities \( ((a_1,\gamma_1),\dotsc,(a_k,\gamma_k))\).
\subsection{Dependence on the charges of the topological renormalised energy}

The topological renormalised energy only depends on the homotopy classes of maps near the singularities:

\begin{proposition}
Let \(\Omega\subset\Rset^2\) be a Lipschitz bounded domain, \(g \in W^{1/2,2} (\partial \Omega, \manifold{N})\), \(k\in\Nset_\ast\), \((a_1, \dotsc, a_k)\in\Conf{k}\Omega\), and \(\gamma_1, \dotsc, \gamma_k,\Tilde{\gamma}_1,\dots,\Tilde{\gamma}_k \in W^{1/2,2}(\Sset^1,\manifold{N})\).
If for every \(i \in \{1, \dotsc, k\}\), \(\Tilde{\gamma}_i\) is homotopic to \(\gamma_i\), then
\[
\mathcal{E}_{g,\Tilde{\gamma}_1, \dotsc, \Tilde{\gamma}_k}^{\mathrm{top}}  (a_1, \dotsc, a_k)
= \mathcal{E}_{g,\gamma_1, \dotsc, \gamma_k}^{\mathrm{top}}  (a_1, \dotsc, a_k).
\]
\end{proposition}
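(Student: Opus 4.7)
The plan is to observe that both ingredients defining $\mathcal{E}_{g,\gamma_1,\dotsc,\gamma_k}^{\mathrm{top}}$ — namely, the admissible class of competitors at each scale $\rho$ and the subtracted logarithmic correction — depend on the curves $\gamma_i$ only through their homotopy classes. The statement will then follow directly from passing to the limit as $\rho\to 0$ in \eqref{eq_def_renorm_top}.

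\textbf{Step 1: Equality of admissible classes.} Fix $\rho\in (0,\Bar{\rho}(a_1,\dotsc,a_k))$. By \cref{def_resolution_VMO} (or more simply, by transitivity of homotopy in $\VMO(\Sset^1,\manifold{N})$), for a map $u\in W^{1,2}(\Omega\setminus\bigcup_{i=1}^k\Bar{B}_\rho(a_i),\manifold{N})$ and each $i\in\{1,\dotsc,k\}$, the trace $\tr_{\Sset^1}u(a_i+\rho\,\cdot)$ is homotopic in $\VMO(\Sset^1,\manifold{N})$ to $\gamma_i$ if and only if it is homotopic to $\Tilde{\gamma}_i$. Hence the admissible sets in the infima defining $\mathcal{E}_{g,\gamma_1,\dotsc,\gamma_k}^{\mathrm{top},\rho}(a_1,\dotsc,a_k)$ and $\mathcal{E}_{g,\Tilde{\gamma}_1,\dotsc,\Tilde{\gamma}_k}^{\mathrm{top},\rho}(a_1,\dotsc,a_k)$ coincide, and therefore
\[
\mathcal{E}_{g,\gamma_1,\dotsc,\gamma_k}^{\mathrm{top},\rho}(a_1,\dotsc,a_k)=\mathcal{E}_{g,\Tilde{\gamma}_1,\dotsc,\Tilde{\gamma}_k}^{\mathrm{top},\rho}(a_1,\dotsc,a_k).
\]

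\textbf{Step 2: Invariance of the logarithmic correction.} Because the quantity $\equivnorm{\cdot}$ defined in \eqref{eq_kahz1OeBa9gooqu6ep1aixei} is an infimum of lengths taken over the homotopy class in $\VMO(\Sset^1,\manifold{N})$, it is constant on homotopy classes. Hence $\equivnorm{\Tilde{\gamma}_i}=\equivnorm{\gamma_i}$ for each $i$, so the subtracted term $\sum_{i=1}^k\frac{\equivnorm{\gamma_i}^2}{4\pi}\log\frac{1}{\rho}$ in the definition \eqref{eq_def_renorm_top} is unchanged when the $\gamma_i$ are replaced by the $\Tilde{\gamma}_i$.

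\textbf{Step 3: Conclusion.} Combining Steps 1 and 2, the expression inside the limit (or the supremum) in \eqref{eq_def_renorm_top} is the same for $(\gamma_1,\dotsc,\gamma_k)$ and $(\Tilde{\gamma}_1,\dotsc,\Tilde{\gamma}_k)$, yielding the claimed equality of renormalised energies. There is no genuine obstacle here: the statement is really a consistency check showing that the topological renormalised energy is well-defined as a function of the homotopy classes of the charges, in sharp contrast with the geometrical renormalised energy, whose dependence on the specific choice of geodesic representative is precisely what motivates the notion of synharmony discussed later in the paper.
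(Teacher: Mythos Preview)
Your proof is correct and follows essentially the same approach as the paper's, which simply notes that \eqref{eq_def_renorm_top_rho} is invariant under homotopies and then appeals to the definition \eqref{eq_def_renorm_top}. You have just made explicit the two invariances (of the admissible class and of the correction term $\equivnorm{\gamma_i}$) that the paper's one-line argument leaves implicit.
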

\begin{proof}
This follows from the fact that \eqref{eq_def_renorm_top_rho}
is invariant under homotopies and from the definition in \eqref{eq_def_renorm_top}.
\end{proof}

\subsection{Synharmony of maps}
In order to study the dependence on the charges of the geometrical renormalised energy, we introduce the synharmony between geodesics which quantifies how homotopic mappings can be connected through a harmonic map.

\begin{definition}
\label{definition_synharmonic}
The \emph{synharmony} between two given maps \(\gamma, \beta\in W^{1/2,2}(\Sset^1 , \manifold{N})\), is defined as
\begin{multline*}
  \synhar{\gamma}{\beta}
  \defeq \inf \, \biggl\{ \int_{\Sset^1 \times [0, T]} \biggl(\frac{\abs{\Deriv u}^2}{2} - \frac{\equivnorm{\gamma}^2}{8 \pi^2} \biggr)
  \st  T \in (0, +\infty),
  u \in W^{1, 2} (\Sset^1 \times [0, T], \manifold{N}),\\[-1em]
  \tr_{\Sset^1 \times \{0\}} u  = \gamma \text{ and }
  \tr_{\Sset^1 \times \{T\}} u  = \beta \text{ on \(\Sset^1\)} \biggr\}.
\end{multline*}
\end{definition}

The synharmony is an extended pseudo-distance which is continuous with respect to the strong topology in \(W^{1/2,2} (\Sset^1, \manifold{N})\).

\begin{proposition}
\label{proposition_synharmonic_pseudometric}
For every \(\gamma,\beta,\alpha\in W^{1/2,2}(\Sset^1 , \manifold{N})\), one has
\begin{enumerate}[(i)]
  \item \label{item_synhar_vanish}
  \(
  \synhar{\gamma}{\gamma} = 0,
  \)
  \item \label{item_synhar_nonnegative}
 \(
 \synhar{\gamma}{\beta} \ge 0,
 \)
 \item\label{item_synhar_finite}
 \(
 \synhar{\gamma}{\beta} <+\infty
 \) if and only if  \(\gamma\) and \(\beta\) are homotopic in \(\VMO (\Sset^1, \manifold{N})\),
 \item \label{item_synhar_symmetry}
 \(
   \synhar{\gamma}{\beta} = \synhar{\beta}{\gamma},
 \)
 \item \label{item_synhar_triangle}
 \(
   \synhar{\gamma}{\beta} \le \synhar{\gamma}{\alpha} +  \synhar{\alpha}{\beta}.
 \)
 \item \label{item_H1weaker}if the sequence \((\gamma_n)_{n\in\Nset}\) converges to \(\gamma\) strongly in \(W^{1/2,2} (\Sset^1, \manifold{N})\), then
\begin{equation*}
\liminf_{n\to\infty}  \synhar{\gamma}{\gamma_n}=0.
\end{equation*}
\end{enumerate}
\end{proposition}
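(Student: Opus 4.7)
My plan is to treat the six items in the order (ii), (iv), (v), (iii), (vi), (i), noting that (i) will follow as a direct corollary of (vi) applied to the constant sequence $\gamma_n = \gamma$ combined with (ii). For (ii), I use a slicing argument: for any competitor $u \in W^{1, 2}(\Sset^1 \times [0, T], \manifold{N})$ with the prescribed traces, Fubini provides slices $u(\cdot, s) \in W^{1, 2}(\Sset^1, \manifold{N}) \subset \VMO (\Sset^1, \manifold{N})$ for almost every $s$; the map $s \mapsto u(\cdot, s)$ is continuous into $\VMO$ and equal to $\gamma$ at $s = 0$, so since path-components of $\VMO (\Sset^1, \manifold{N})$ coincide with free-homotopy classes, each slice lies in the homotopy class of $\gamma$. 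Applying \eqref{eq_Eojei1cooruef6uiP} slicewise and integrating in $s$ yields $\int \abs{\Deriv u}^2/2 \ge T \equivnorm{\gamma}^2/(4 \pi)$. Item (iv) is immediate from the energy-preserving reflection $s \mapsto T - s$, and (v) follows from concatenating two competing cylinders along their common trace $\alpha$, producing a $W^{1, 2}$ map whose Dirichlet energy and subtracted $T$-correction are both additive.

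For (iii), the direction ``$\Rightarrow$'' reuses the slicing of (ii) to produce a continuous $\VMO$-path from $\gamma$ to $\beta$. For ``$\Leftarrow$'', I invoke the Bethuel--Demengel $W^{1, 2}$-extension result on the annulus (homeomorphic to a finite cylinder): when $\gamma$ and $\beta$ are homotopic in $\VMO$, this produces a finite-energy competitor, so $\synhar{\gamma}{\beta} < +\infty$.

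The heart of the proposition is item (vi), which I attack by an explicit cylinder construction parametrised by a homotopy. Given $\gamma_n \to \gamma$ strongly in $W^{1/2, 2}$, I choose $H_n \in W^{1, 2}(\Sset^1 \times [0, 1], \manifold{N})$ with $\tr_{\Sset^1 \times \{0\}} H_n = \gamma$ and $\tr_{\Sset^1 \times \{1\}} H_n = \gamma_n$, and set $u_n(\theta, s) \defeq H_n(\theta, s / T_n)$ on $\Sset^1 \times [0, T_n]$. A change of variables yields
\[
  \int_{\Sset^1 \times [0, T_n]} \frac{\abs{\Deriv u_n}^2}{2} - \frac{T_n\, \equivnorm{\gamma}^2}{4 \pi} \;=\; T_n\, A(H_n) + \frac{B(H_n)}{T_n},
\]
where $A(H_n) \defeq \int_{\Sset^1 \times [0, 1]} \abs{\partial_\theta H_n}^2/2 - \equivnorm{\gamma}^2/(4 \pi) \ge 0$ (non-negativity coming from (ii) applied slicewise) and $B(H_n) \defeq \int_{\Sset^1 \times [0, 1]} \abs{\partial_t H_n}^2/2$ is the squared $L^2$-length of the path. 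Optimising over $T_n = \sqrt{B(H_n)/A(H_n)}$ yields $\synhar{\gamma}{\gamma_n} \le 2\sqrt{A(H_n) B(H_n)}$. Choosing $H_n$ as a short path obtained by mollifying $\gamma$ and $\gamma_n$ to smooth approximations lying in a common tubular neighbourhood of $\manifold{N}$ and transporting through the exponential map, I can make $B(H_n) \to 0$ while $A(H_n)$ remains bounded, so the excess tends to zero and (vi) follows. Specialising (vi) to the constant sequence $\gamma_n = \gamma$, combined with (ii), then proves (i).

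The main obstacle is the construction of the short $\manifold{N}$-valued homotopy $H_n$: strong $W^{1/2, 2}$-convergence does not imply uniform convergence because $W^{1/2, 2}(\Sset^1)$ sits at the borderline of the Sobolev embedding into $L^\infty$, so the naive linear interpolation in $\Rset^\nu$ followed by the nearest-point projection onto $\manifold{N}$ is not a priori well defined. I would bypass this by first mollifying $\gamma$ and $\gamma_n$ to smooth maps $\tilde\gamma, \tilde\gamma_n \in \mathcal{C}^\infty(\Sset^1, \manifold{N})$ converging locally uniformly along a subsequence, performing the exponential-map interpolation between the smooth approximations, and then patching the two mollification errors by short correction cylinders whose excess energies are controlled via the triangle inequality (v).
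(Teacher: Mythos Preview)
Your treatment of items (ii), (iii), (iv) and (v) matches the paper's almost verbatim, and deriving (i) from (vi) and (ii) is a clean shortcut.

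The gap is in (vi). Your rescaling identity $T\,A(H_n)+B(H_n)/T$ and the resulting bound $\synhar{\gamma}{\gamma_n}\le 2\sqrt{A(H_n)B(H_n)}$ are correct, and the strategy of making $B(H_n)\to 0$ while $A(H_n)$ stays bounded is sound. The problem is the construction of $H_n$. You propose to mollify $\gamma,\gamma_n$ to smooth $\tilde\gamma,\tilde\gamma_n$, interpolate between these via the exponential map, and then ``patch the two mollification errors by short correction cylinders whose excess energies are controlled via the triangle inequality (v)''. But (v) only reduces $\synhar{\gamma}{\gamma_n}$ to $\synhar{\gamma}{\tilde\gamma}+\synhar{\tilde\gamma}{\tilde\gamma_n}+\synhar{\tilde\gamma_n}{\gamma_n}$; it does not control the two outer terms. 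Controlling $\synhar{\gamma}{\tilde\gamma}$ is precisely the same problem you started with: constructing a short $\manifold{N}$-valued $W^{1,2}$ homotopy between two $W^{1/2,2}$-close maps, one of which is not continuous. Your last paragraph acknowledges the obstacle (no $L^\infty$ control at the critical exponent) but the proposed fix is circular.

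The paper closes this gap by an explicit construction: it takes the \emph{harmonic extensions} $v,v_n:\Sset^1\times(0,\infty)\to\Rset^\nu$ of $\gamma,\gamma_n$, builds a tent map $v_{n,T}$ on $\Sset^1\times[0,T]$ that starts at $\gamma_n$, follows $v_n$ inward, linearly interpolates to $v$, and follows $v$ back to $\gamma$, and shows its Dirichlet energy on $[0,T]$ tends to $0$ with $T$. The essential ingredient you are missing is the Brezis--Nirenberg VMO compactness lemma: because $(\gamma_n)$ converges strongly in $W^{1/2,2}\hookrightarrow\VMO$, the mean oscillations at small scales vanish \emph{uniformly in $n$}, which forces $\sup_n\sup_{x,t\in(0,T)}\dist(v_n(x,t),\manifold{N})\to 0$ as $T\to 0$. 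This is exactly what makes the nearest-point projection $\Pi_{\manifold{N}}\circ v_{n,T}$ well defined for small $T$ and all large $n$ simultaneously, and it is the step that your sketch lacks.
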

\begin{proof}
  For \eqref{item_synhar_vanish} we take \(u \in W^{1, 2} (\Sset^1 \times [0, 1], \manifold{N})\) such that \(\tr_{\Sset^1 \times \{0\}} u = \gamma\), and we define for \(T \in (0, 2)\),
  \(u_{T} (x, t) \defeq u (x, T/2 - \abs{t - T/2})\).   Since for each \(0\leq t\leq T\) we have \(0\leq T/2 - \abs{t - T/2}\leq 1\) the map \(u_T\) is well-defined and satisfies \(\tr_{\Sset^1 \times \{0\}} u_T =\tr_{\Sset^1 \times \{T\}} u_T  = \gamma\) and 
  \[
   \int_{\Sset^1 \times [0, T]} \biggl(\frac{\abs{\Deriv u_T}^2}{2} -  \frac{\equivnorm{\gamma}^2}{8 \pi^2} \biggr)
   =2 \int_{\Sset^1 \times [0, T/2]} \biggl(\frac{\abs{\Deriv u}^2}{2} -  \frac{\equivnorm{\gamma}^2}{8 \pi^2}\biggr);
  \]
the conclusion follows from Lebesgue's dominated convergence theorem as \(T \to 0\).

  The property \eqref{item_synhar_nonnegative} follows from the fact that if \(u \in W^{1, 2} (\Sset^1 \times [0, T], \manifold{N})\) and if \(\tr_{\Sset^1 \times \{0\}} u = \gamma\), then for almost every \(t \in [0, T]\), the map \(u (\cdot, t)\) is homotopic to \(\gamma\) and thus in view of \eqref{eq_Eojei1cooruef6uiP}
\[
  \int_{\Sset^1} \frac{\abs{\Deriv u (\cdot, t)}^2}{2} \ge \frac{\equivnorm{\gamma}^2}{4 \pi} = \int_{\Sset^1}\frac{\equivnorm{\gamma}^2}{8 \pi^2}.
\]

The finiteness property \eqref{item_synhar_finite}, 
follows from \cref{definition_synharmonic} and the trace and extension theory for Sobolev mappings \cite{Bethuel_Demengel_1995}.

For \eqref{item_synhar_symmetry}, we rely on the fact that \(\equivnorm{\gamma} = \equivnorm{\beta}\) if \(\gamma\) and \(\beta\) are homotopic.

In order to prove \eqref{item_synhar_triangle}, given a map \(u \in W^{1, 2} (\Sset^1 \times [0, T], \manifold{N})\) such that \(\tr_{\Sset^1 \times\{0\}} u = \gamma\) and  \(\tr_{\Sset^1 \times \{T\}} u = \alpha\), and a map  \(v \in W^{1, 2} (\Sset^1 \times [0, S], \manifold{N})\), such that  \(\tr_{\Sset^1 \times \{0\}} v = \alpha\) and \(\tr_{\Sset^1 \times \{S\}} v  = \beta\), we define  the map \(w \in W^{1, 2} (\Sset^1 \times [0, T + S],\mathcal{N})\) by \(w (\cdot, t) \defeq u (\cdot, t)\) if \(t \in [0, T]\) and \(w (\cdot, t) \defeq  v (\cdot, t - T)\) if \(t \in [T, T + S]\).

We now check \eqref{item_H1weaker}.
For every \(n \in \Nset\), we let \(v_n : \Sset^1 \times (0, +\infty)\rightarrow \Rset^\nu\) and \(v : \Sset^1 \times (0, +\infty)\rightarrow \Rset^\nu\) be the harmonic extensions of \(\gamma_n\) and of \(\gamma\) respectively.
Since \((\gamma_n)_{n \in \Nset}\) converges strongly to \(\gamma\), then for every \(T \in (0, +\infty)\),
\((v_n \vert_{\Sset^1 \times [0, T]})_{n \in \Nset}\) converges strongly to \(v \vert_{\Sset^1 \times [0, T]}\) in \(W^{1,2}(\Sset^1\times [0,T],\Rset^\nu)\)
and \(v_n \to v\) locally uniformly in \(\Sset^1 \times (0, +\infty)\).
We define the function \(v_{n, T} : [0, T] \to \Rset^\nu\) by
\[
v_{n, T} (x, t)
=
\begin{cases}
  v_n (x, t) & \text{if \(0 \le t \le T/3\)},\\
  \frac{2T - 3t}{T} v_n (x, t) + \frac{3t -T}{T} v (x, T - t) &\text{if \(T/3 \le t \le 2 T/3\)},\\
  v (x, T - t) & \text{if \(2 T/3 \le t \le T\)}.
\end{cases}
\]
We have that \( v_{n,T} \rightarrow v_T\) strongly in \(W^{1,2}(\mathbb{S}^1 \times [0,T],\Rset^\nu)\) and \(v_{n,T} \rightarrow v_{T} \) locally uniformly on \(\mathbb{S}^1\times (0,T]\), where
\[v_T =\begin{cases}
v(x,t) & \text{ if \(0\leq t \leq T/3\)}, \\
\frac{2T-3t}{T}v(x,t)+\frac{3t-T}{T}v(x,T-t) & \text{ if \(T/3 \leq t \leq 2T/3\)}, \\
v(x,T-t) & \text{ if \(2T/3\leq t \leq T\)}.
\end{cases}
\]
and thus, if \((x, t) \in \Sset^1 \times [T/3, 2T/3]\),  
\begin{equation}
\begin{split}
  \abs{\Deriv  v_T (x, t)}
  &\le 
  \bigl(\tfrac{2T-3t}{T}\abs{\Deriv  v(x,t)}+\tfrac{3t-T}{T}\abs{v(x,T-t)}
  +  \tfrac{3}{T}\abs{v (x, t) - v (x, T - t)}\bigr)^2\\
  &\le 2(\tfrac{2T-3t}{T} \abs{\Deriv  v(x,t)}^2 + \tfrac{3t-T}{T}\abs{\Deriv  v(x,T - t)}^2) + \tfrac{18}{T} \abs{v (x, t) - v (x, T - t)}^2.
\end{split}
\end{equation}
By a change of variable we see that
\begin{multline}
\label{eq_TheFei2ahh6tejahdohx8hiw}
\int_{\mathbb{S}^1\times[0,T]} \abs{\Deriv v_T}^2 \\
\leq 2\int_{\mathbb{S}^1\times [0,\frac{T}{3}]} \abs{\Deriv v}^2
+2\int_{\mathbb{S}^1\times[\frac{T}{3},\frac{2T}{3}]}\abs{\Deriv v}^2
+\frac{36}{T^2}\iint_{\mathbb{S}^1\times [\frac{T}{3},\frac{T}{2}]} \abs{v(x,T-t)-v(x,t)}^2\dif t \dif x.
\end{multline}
We now use the fundamental theorem of calculus to write that, for \( T/3 \leq t \leq 2T/3\), \(v(x,T-t)-v(x,t)= \int_t^{T -t} \frac{d}{ds} v(x, s) \dif s\). Hence, by Hölder's inequality we arrive at 
\[
|v(x,T-t)-v(x,t)|^2 \leq \frac{T}{3}\int_{T/3}^{2T/3} \abs{\Deriv v(x, s)}^2 \dif s
\]
and thus 
\begin{equation}
\label{eq_aip2reL5eL7zaef7aeChoosu}
\begin{split}
\frac{36}{T^2}\iint\limits_{\mathbb{S}^1\times [\frac{T}{3}, \frac{T}{2}]} \abs{v(x,T-t)-v(x,t)}^2 \dif x \dif t &\leq \frac{12}{T} \iint\limits_{\mathbb{S}^1\times [\frac{T}{3},\frac{T}{2}]}\int_{T/3}^{2T/3} |\Deriv v(x, s)|^2 \dif s \dif x \dif t \\
& \leq 2\int_{\mathbb{S}^1 \times [\frac{T}{3},\frac{2T}{3}]} \abs{\Deriv v}^2.
\end{split}
\end{equation}
Thus we find by \eqref{eq_TheFei2ahh6tejahdohx8hiw} and \eqref{eq_aip2reL5eL7zaef7aeChoosu} that \( \int_{\mathbb{S}^1\times [0,T]} \abs{\Deriv v_T}^2 \leq  4\int_{\mathbb{S}^1\times [0,T]} \abs{\Deriv v}^2\) and
\[
\lim_{T \to 0} \limsup_{n \to \infty} \int_{\Sset^1 \times [0, T]} \frac{\abs{\Deriv v_{n, T}}^2}{2} 
= \lim_{T \to 0} \int_{\Sset^1 \times [0, T]} \frac{\abs{\Deriv v_{T}}^2}{2} 
= 0.
\]

Since the sequence \((\gamma_n)_{n \in \Nset}\) converges strongly to \(\gamma\) in \(W^{1/2, 2} (\Sset^1, \manifold{N})\) and thus in \(\VMO (\Sset^1, \manifold{N})\), by the characterisation of compact sets in \(\VMO (\Sset^1, \manifold{N})\), \cite{Brezis_Nirenberg_1995}*{Lemma 4}, we have
\[
 \lim_{\delta \to 0} \sup_{n \in \Nset} \sup_{a \in \Sset^1} \fint_{B_\delta (a) \cap \Sset^1}\fint_{B_\delta (a) \cap \Sset^1} \abs{\gamma_n (y) - \gamma_n(x)} \dif y \dif x = 0.
\]
Thus (see \cite{Brezis_Nirenberg_1996}*{Theorem A3.2 and its proof}),
we find
\[
\lim_{T \to 0} \sup \biggl\{\dist (z, \manifold{N}) \st z \in v (\Sset^1 \times (0, T)) \cup \bigcup_{n \in \Nset} v_n (\Sset^1 \times (0, T))\biggr\} = 0.
\]
Therefore, if \(\Pi_{\manifold{N}} : \{z\in\manifold{N}\st \operatorname{dist}(z,\manifold{N})<\delta\} \to \manifold{N}\) is the nearest-point retraction, well-defined and smooth for \(\delta>0\) small enough, for each \(T\) small enough, \(\Pi_{\manifold{N}}\compose v_{n, T}\) is well-defined when \(n \in \Nset\) is large enough and we have
\[
\lim_{T \to 0} \limsup_{n \to \infty} \int_{\Sset^1 \times [0, T]} \frac{\abs{\Deriv  (\Pi_{\manifold{N}}\compose v_{n, T})}^2}{2} = 0.
\]
Hence we have
\[
\limsup_{n \to \infty}
\synhar{\gamma_n}{\gamma}
\le \lim_{T \to 0} \limsup_{n \to \infty} \int_{\Sset^1 \times [0, T]} \biggl(\frac{\abs{\Deriv  (\Pi_{\manifold{N}}\compose v_{n, T})}^2}{2}
- \frac{\equivnorm{\gamma}^2}{8 \pi^2}\biggr) = 0.\qedhere
\]
\end{proof}

The synharmony pseudo-metric is complete:

\begin{proposition}
\label{proposition_synharmony_complete}
If \((\gamma_n)_{n \in \Nset}\) is a sequence in \(W^{1/2, 2} (\Sset^1, \manifold{N})\) such that
\(
 \lim_{n, m \to \infty} \synhar{\gamma_n}{\gamma_m} = 0,
\)
then there exists \(\gamma \in W^{1/2, 2} (\Sset^1, \manifold{N})\) such that \(\lim_{n \to \infty} \synhar{\gamma_n}{\gamma} = 0\).
Moreover either \((\gamma_n)_{n \in \Nset}\) converges strongly to \(\gamma\) in \(W^{1/2, 2} (\Sset^1, \manifold{N})\)
or \(\gamma\) is a geodesic.
\end{proposition}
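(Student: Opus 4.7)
The plan is to realise the Cauchy sequence as the sequence of traces of a single map on a cylinder of finite or infinite length, and then to distinguish the two resulting regimes. By passing to a subsequence I may assume \(\synhar{\gamma_n}{\gamma_{n+1}} \le 2^{-n}\); thanks to the Cauchy hypothesis and the triangle inequality \cref{proposition_synharmonic_pseudometric}~\eqref{item_synhar_triangle}, any synharmony limit obtained for the subsequence is automatically a synharmony limit of the whole original sequence. From \cref{definition_synharmonic}, for each \(n\) I choose \(T_n > 0\) and \(u_n \in W^{1,2}(\Sset^1 \times [0, T_n], \manifold{N})\) with \(\tr_{\Sset^1 \times \{0\}} u_n = \gamma_n\), \(\tr_{\Sset^1 \times \{T_n\}} u_n = \gamma_{n+1}\), and excess energy at most \(2^{-n+1}\); since all the \(\gamma_n\) share a common homotopy class, the baseline \(\equivnorm{\gamma}^2/(8\pi^2)\) is unambiguous. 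Gluing along common traces produces \(u \in W^{1,2}_{\mathrm{loc}}(\Sset^1 \times [0, S_\infty), \manifold{N})\) with \(u(\cdot, S_n) = \gamma_n\), where \(S_n \defeq \sum_{k < n} T_k\) and \(S_\infty \defeq \lim_n S_n \in (0, +\infty]\), and total excess of \(u\) bounded by an absolute constant.

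If \(S_\infty < +\infty\), then the total Dirichlet energy of \(u\) is finite, so \(u \in W^{1,2}(\Sset^1 \times [0, S_\infty), \manifold{N})\). The continuous embedding \(W^{1,2}(\Sset^1 \times [0, S_\infty]) \hookrightarrow C([0, S_\infty], W^{1/2, 2}(\Sset^1))\) makes the slice map \(s \mapsto u(\cdot, s)\) continuous up to the top, so setting \(\gamma \defeq u(\cdot, S_\infty)\) gives \(\gamma_n = u(\cdot, S_n) \to \gamma\) strongly in \(W^{1/2, 2}(\Sset^1, \manifold{N})\); \cref{proposition_synharmonic_pseudometric}~\eqref{item_H1weaker} combined with the Cauchy hypothesis then propagates synharmony convergence to the full sequence.

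If on the contrary \(S_\infty = +\infty\), the slicewise excess \(f(s) \defeq \int_{\Sset^1} \abs{\partial_\theta u(\cdot, s)}^2/2 - \equivnorm{\gamma}^2/(4\pi)\) is nonnegative by \eqref{eq_Eojei1cooruef6uiP} applied to the a.e.-defined slice (homotopic to \(\gamma_0\)) and is integrable on \([0, +\infty)\), while \(\int_0^\infty \int_{\Sset^1} \abs{\partial_t u}^2\) is also finite. I pick \(s_j \to \infty\) with \(f(s_j) \to 0\), so that \(u(\cdot, s_j) \in W^{1,2}(\Sset^1, \manifold{N})\) is homotopic to \(\gamma_0\) with energy approaching the homotopy-class infimum \(\equivnorm{\gamma}^2/(4\pi)\). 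The direct method yields a weak \(W^{1,2}\) subsequential limit \(\gamma\), whose homotopy class is preserved by the compact one-dimensional Sobolev embedding \(W^{1,2}(\Sset^1) \hookrightarrow C^0(\Sset^1)\); energy minimality identifies \(\gamma\) as a minimal closed geodesic, and convergence of the Dirichlet energies upgrades the weak convergence to strong \(W^{1,2}\) via Radon--Riesz, hence to strong \(W^{1/2, 2}\) convergence. Observing that \(\synhar{u(\cdot, s)}{u(\cdot, s')}\) is bounded by the excess of \(u\) on \(\Sset^1 \times [\min(s, s'), \max(s, s')]\), which vanishes as \(s, s' \to \infty\) since the total excess is finite, the slices \(u(\cdot, s)\) form a Cauchy family in synharmony; combining with \cref{proposition_synharmonic_pseudometric}~\eqref{item_H1weaker} along \(s_j\) and the triangle inequality gives \(\synhar{u(\cdot, s)}{\gamma} \to 0\), and specialising to \(s = S_n\) yields \(\synhar{\gamma_n}{\gamma} \to 0\) with \(\gamma\) a geodesic.

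The main obstacle will be the geodesic-extraction step in the case \(S_\infty = +\infty\), where I need to invoke simultaneously the slicewise topological lower bound, its saturation against the integrated minimum, and the Radon--Riesz rigidity that upgrades weak convergence with matching norms to strong convergence in \(W^{1,2}\); a secondary care is needed to preserve the homotopy class and the manifold constraint under the weak limit, which the compact Sobolev embedding into \(C^0\) handles in dimension one.
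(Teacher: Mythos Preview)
Your proof is correct and follows essentially the same approach as the paper's: pass to a subsequence with geometrically decaying synharmony, glue the near-optimal cylinders end to end, and distinguish the cases \(S_\infty < +\infty\) (trace at the top gives strong \(W^{1/2,2}\) convergence) versus \(S_\infty = +\infty\) (integrability of the slicewise excess yields a minimising geodesic as a strong \(W^{1,2}\) limit along suitable slices). Your write-up is slightly more detailed in places---you make the Radon--Riesz step and the final synharmony triangle argument explicit where the paper is terse---but the structure and the key ideas coincide.
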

\begin{proof}
In view of the triangle inequality (\cref{proposition_synharmonic_pseudometric} \eqref{item_synhar_triangle}), it is sufficient to prove the convergence for a subsequence.
Hence, we assume without loss of generality that for every \(n \in \Nset\),
\(
 \synhar{\gamma_n}{\gamma_{n + 1}} < \frac{1}{2^n}
\); in particular, all the maps \(\gamma_n\) are mutually homotopic by \cref{proposition_synharmonic_pseudometric} \eqref{item_synhar_finite}.
By definition of synharmonic distance (\cref{definition_synharmonic}), for each \(n \in \Nset\) there exists
\(L_n \in (0, +\infty)\) and a map \(u_n \in W^{1, 2} (\Sset^1 \times [0, L_n], \manifold{N})\) such that
\(\tr_{\Sset^1 \times \{0\}} u_n  = \gamma_n\) on \(\Sset^1\), \(\tr_{\Sset^1 \times \{L_n\}} u_n  = \gamma_{n + 1}\) on \(\Sset^1\) and
\[
\int_{\Sset^1 \times [0, L_n]} \biggl(\frac{\abs{\Deriv u_n}^2}{2} - \frac{\equivnorm{\gamma_n}^2}{8 \pi^2} \biggr)
  \le \frac{1}{2^{n }}.
\]
We define for each \(n \in \Nset_*\), \(T_n \defeq \sum_{i = 0}^{n - 1} L_i\), \(T_0\defeq 0\), \(T_\infty \defeq \sum_{i \in \mathbb{N}} L_i \in (0, +\infty]\) and the map \(u : [0, T_\infty) \to \manifold{N}\) by
\(u (x, t) \defeq u_n (x, t - T_n)\)  if \(T_n \le t <T_{n + 1}\).

If \(T_\infty < +\infty\), then we set \(\gamma \defeq \tr_{\Sset^1 \times \{T_\infty\}} u\), for which we observe that
\(\lim_{n \to \infty} \synhar{\gamma_n}{\gamma} = 0\) and \((\gamma_n)_{n \in \Nset}\) converges strongly to \(\gamma\) in \(W^{1/2, 2} (\Sset^1, \manifold{N})\).  Indeed, \(\gamma_n= \tr_{\Sset^1 \times \{T_n\}} u =\tr_{\Sset^1 \times \{0\}} u(\cdot,\cdot+T_n) \to \tr_{\Sset^1 \times \{0\}}u(\cdot,\cdot+T_\infty)= \gamma\) as \(n\to\infty\) in \(W^{1/2,2}(\Sset^1,\manifold{N})\) thanks to the continuity of the translations in \(W^{1,2}\) and of the continuity of the trace.

If \(T_\infty = +\infty\), we first observe that, since \(\equivnorm{\gamma_n}=\equivnorm{\gamma_0}\) for every \(n\in\Nset\),
\[
 \int_0^\infty \biggl(\int_{\Sset^1} \frac{\abs{\Deriv u (\cdot, t)}^2}{2} - \frac{\equivnorm{\gamma_0}^2}{8 \pi^2} \biggr) \dif t
 \le \sum_{n = 0}^\infty \frac{1}{2^{n}} = 2.
 \]
Hence, there exists a sequence \((t_n)_{n \in \Nset}\to\infty\) such that
\[
  \lim_{n \to \infty} \int_{\Sset^1} \frac{\abs{u (\cdot, t_n)'}^2}{2} = \frac{\equivnorm{\gamma_0}^2}{4 \pi}.
\]
By compactness in \(W^{1, 2} (\Sset^1, \manifold{N})\), the sequence \((u (\cdot, t_n))_{n \in \Nset}\) converges strongly to some minimising geodesic \(\gamma \in W^{1, 2} (\Sset^1, \manifold{N})\) and it follows then that \((\gamma_n)_{n \in \Nset}\) converges in synharmony to \(\gamma\).
\end{proof}

\begin{proposition}
\label{proposition_compact_synharmony}
If \(K\) is a compact subset of \(W^{1/2,2}(\Sset^1,\manifold{N})\) such that every \(\alpha, \beta \in K\) are homotopic, then
\[
 \sup \,\bigl\{
 \synhar{\alpha}{\beta} \st
 \alpha,\beta\in K
 \bigr\}
 < + \infty.
\]
\end{proposition}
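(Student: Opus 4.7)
The plan is to argue by contradiction, combining the compactness of $K$ with the strong–$W^{1/2,2}$ continuity property~\eqref{item_H1weaker} and the triangle inequality~\eqref{item_synhar_triangle} of \cref{proposition_synharmonic_pseudometric}.

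Suppose, towards a contradiction, that the supremum is $+\infty$. Then there exist sequences $(\alpha_n)_{n\in\Nset}$ and $(\beta_n)_{n\in\Nset}$ in $K$ with $\synhar{\alpha_n}{\beta_n}\to+\infty$. By compactness of $K$ in $W^{1/2,2}(\Sset^1,\manifold{N})$, after passing to subsequences I may assume $\alpha_n\to\alpha$ and $\beta_n\to\beta$ strongly in $W^{1/2,2}(\Sset^1,\manifold{N})$. Because strong $W^{1/2,2}$–convergence implies $\VMO$–convergence and $\VMO$–homotopy classes are closed under $\VMO$–convergence, both $\alpha$ and $\beta$ lie in the common homotopy class of the elements of $K$; in particular they are mutually homotopic, so \cref{proposition_synharmonic_pseudometric}~\eqref{item_synhar_finite} gives $\synhar{\alpha}{\beta}<+\infty$.

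Next I would invoke property~\eqref{item_H1weaker}: one has $\liminf_{n\to\infty}\synhar{\alpha}{\alpha_n}=0$ and $\liminf_{n\to\infty}\synhar{\beta}{\beta_n}=0$. Extracting once to make $\synhar{\alpha}{\alpha_n}\to 0$, and then a further subsequence inside it to make $\synhar{\beta}{\beta_n}\to 0$, I obtain a common subsequence along which both terms tend to zero. The triangle inequality then yields
\[
\synhar{\alpha_n}{\beta_n}\le\synhar{\alpha_n}{\alpha}+\synhar{\alpha}{\beta}+\synhar{\beta}{\beta_n},
\]
whose right-hand side is bounded as $n\to\infty$, contradicting $\synhar{\alpha_n}{\beta_n}\to+\infty$.

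The main delicate point is the \emph{simultaneous} extraction of a subsequence on which both $\synhar{\alpha}{\alpha_n}\to 0$ and $\synhar{\beta}{\beta_n}\to 0$, since property~\eqref{item_H1weaker} only furnishes a $\liminf$ equal to zero for each term separately; this is handled by the two–step subsequence extraction described above. Everything else is a direct application of the pseudo-metric structure already established in \cref{proposition_synharmonic_pseudometric}.
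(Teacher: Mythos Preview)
Your proof is correct and follows essentially the same route as the paper: take maximising (or unbounded) sequences, extract strongly convergent subsequences by compactness of \(K\), and combine the triangle inequality with items~\eqref{item_synhar_finite} and~\eqref{item_H1weaker} of \cref{proposition_synharmonic_pseudometric} to bound \(\synhar{\alpha_n}{\beta_n}\). Two minor remarks: since \(K\) is compact it is closed, so the limits \(\alpha,\beta\) automatically lie in \(K\) and are therefore homotopic by hypothesis---the detour through \(\VMO\) convergence is unnecessary; and while the \emph{statement} of~\eqref{item_H1weaker} only gives a \(\liminf\), its proof in the paper actually yields \(\limsup_{n\to\infty}\synhar{\gamma}{\gamma_n}=0\), so the two-step extraction you carefully perform is in fact not needed (though your caution, based on the statement as written, is well placed).
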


In particular, if \(Y\) is a bounded subset of \(W^{1, 2} (\Sset^1, \manifold{N})\) such that every \(\alpha,\beta\) in \(Y\) are homotopic, then \(Y\) is bounded in synharmony.

\begin{proof}[Proof of \cref{proposition_compact_synharmony}]
Let \((\alpha_n)_{n \in \Nset}\) and \((\beta_n)_{n \in \Nset}\) be maximising sequences for \( \sup \,\bigl\{
 \synhar{\alpha}{\beta} \st
 \alpha,\beta\in K
 \bigr\}\). Then, by compactness of \(K\), there exist \(\alpha_*,\beta_* \in K \subset  W^{1/2,2}(\Sset^1,\mathcal{N})\) such that, up to extraction of a subsequence, \(\alpha_n \rightarrow \alpha\) and \(\beta_n\rightarrow \beta\) in \(W^{1/2,2}(\Sset^1,\Rset^\nu)\). By \cref{proposition_synharmonic_pseudometric}, we find \(\synhar{\alpha_*}{\beta_*} =\sup \,\bigl\{
 \synhar{\alpha}{\beta} \st
 \alpha,\beta\in K
 \bigr\} <+\infty.\)
\end{proof}

\begin{definition}
  \label{definition_synharmonic_maps}
  Two maps \(\gamma, \beta \in W^{1/2, 2} (\Sset^1, \manifold{N})\) are \emph{synharmonic} whenever \(\synhar{\gamma}{\beta} = 0\).
\end{definition}

\begin{proposition}\label{proposition_synharmony_equality_or_geod}
If \(\synhar{\gamma}{\beta}=0\) then  either \(\gamma= \beta\) almost everywhere or \(\gamma\) and \(\beta\) are minimising geodesics.
\end{proposition}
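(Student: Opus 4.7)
I would take a minimising sequence $(T_n, u_n)$ for the infimum in \cref{definition_synharmonic}: maps $u_n \in W^{1,2}(\Sset^1\times [0, T_n], \manifold{N})$ with $\tr_{\Sset^1 \times \{0\}} u_n = \gamma$ and $\tr_{\Sset^1 \times \{T_n\}} u_n = \beta$, whose total excess energy tends to zero. The crucial observation is that, decomposing $\abs{\Deriv u_n}^2 = \abs{\partial_x u_n}^2 + \abs{\partial_t u_n}^2$, this excess splits into the sum of two non-negative pieces
\[
A_n \defeq \int_0^{T_n}\!\biggl(\int_{\Sset^1}\frac{\abs{\partial_x u_n(\cdot,t)}^2}{2} - \frac{\equivnorm{\gamma}^2}{4\pi}\biggr)\dif t, \qquad B_n \defeq \int_{\Sset^1 \times [0,T_n]}\frac{\abs{\partial_t u_n}^2}{2},
\]
the non-negativity of the integrand in $A_n$ coming from \eqref{eq_Eojei1cooruef6uiP} because each slice is homotopic to $\gamma$ in $\VMO(\Sset^1, \manifold{N})$. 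Both $A_n$ and $B_n$ must therefore vanish in the limit.

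The dichotomy in the conclusion then reflects the behaviour of $T_n$. If a subsequence of $(T_n)_n$ remains bounded, the fundamental theorem of calculus in $t$ combined with Cauchy--Schwarz gives $\int_{\Sset^1}\abs{\gamma - \beta}^2 \leq 2 T_n B_n \to 0$, whence $\gamma = \beta$ almost everywhere. Otherwise $T_n \to +\infty$, and I would restrict $u_n$ to $\Sset^1 \times [0, 1]$; its Dirichlet energy there is bounded by $\equivnorm{\gamma}^2/(4\pi) + A_n + B_n$, so a subsequence converges weakly in $W^{1,2}$ to some $u_\ast$. The vanishing of $B_n$ forces $\partial_t u_\ast = 0$, so $u_\ast(x, t) = \Tilde{\gamma}(x)$ with $\Tilde{\gamma} \in W^{1,2}(\Sset^1, \manifold{N})$; continuity of the trace under weak $W^{1,2}$ convergence identifies $\Tilde{\gamma} = \gamma$, so in particular $\gamma \in W^{1,2}(\Sset^1, \manifold{N})$. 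Weak lower semicontinuity then gives $\int_{\Sset^1}\abs{\gamma'}^2/2 \leq \equivnorm{\gamma}^2/(4\pi)$, the reverse inequality being supplied by \eqref{eq_Eojei1cooruef6uiP}. Equality in the length-energy Cauchy--Schwarz step of \eqref{eq_ji7Aire7IeVaesi5eiX6dahJ} then forces $\abs{\gamma'}$ to be constant while equality in the length inequality $\int\abs{\gamma'} \geq \equivnorm{\gamma}$ makes $\gamma$ length-minimising in its homotopy class; together these mean that $\gamma$ is a minimising closed geodesic. The symmetric argument carried out on the slab $\Sset^1 \times [T_n - 1, T_n]$ handles $\beta$.

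The main obstacle I foresee is the second case: the maps $\gamma$ and $\beta$ are a priori only assumed to be in $W^{1/2,2}$, so one cannot talk directly about their slice Dirichlet energy. The weak-compactness argument on a slab of fixed width is the mechanism that transfers the energy information from interior slices---which are genuine $W^{1,2}$ maps---to the boundary data, via continuity of the trace operator under weak convergence. All subsequent manipulations then take place for $W^{1,2}$ maps on $\Sset^1$, where the equality cases in \eqref{eq_Eojei1cooruef6uiP}--\eqref{eq_ji7Aire7IeVaesi5eiX6dahJ} are available in the form needed.
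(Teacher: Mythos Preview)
Your proof is correct and follows essentially the same approach as the paper's: take a minimising sequence, split according to the behaviour of $T_n$, and use Cauchy--Schwarz when $T_n$ is bounded and weak compactness on a fixed-width slab when $T_n\to\infty$. The only differences are organisational: you merge the paper's two bounded-$T_n$ subcases ($T_n\to 0$ and $T_n\to\bar T\in(0,\infty)$) into one via the estimate $\int_{\Sset^1}\abs{\gamma-\beta}^2\le 2T_nB_n$, and your explicit splitting of the excess into the tangential piece $A_n$ and the transversal piece $B_n$ is a touch cleaner than the paper's more implicit handling.
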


\begin{proof}
By definition of synharmonic maps (\cref{definition_synharmonic_maps}) and of synharmony of maps (\cref{definition_synharmonic}), there exists a sequence \((T_n)_{n \in \Nset}\) in \((0, +\infty)\) and for each \(n \in \Nset\) a map \(u_n \in W^{1, 2} (\Sset^1 \times [0, T_n], \manifold{N})\)
such that \(\tr_{\Sset^1 \times \{0\}} u_n = \gamma\), \(\tr_{ \Sset^1 \times \{T_n\}} u_n  = \beta\) and
\[
\lim_{n \to \infty} \int_{\Sset^1 \times [0, T_n]} 
\biggl(\frac{\abs{\Deriv u_n}^2}{2} - \frac{\equivnorm{\gamma}^2}{8 \pi^2} \biggr)
= 0.
\]
Up to a subsequence we can assume that either \((T_n)_{n \in \Nset}\) converges to \(0\),
or \((T_n)_{n \in \Nset}\) converges to some \(\Bar{T} \in (0, +\infty)\) or \((T_n)_{n \in \Nset}\) diverges towards \(+\infty\).

First, if the sequence \((T_n)_{n \in \Nset}\) converges to \(0\), then by the Cauchy--Schwarz inequality
\[
\int_{\Sset^1} \abs{\gamma - \beta} \le \limsup_{n \to \infty} \int_{\Sset^1 \times [0, T_n]} \abs{\Deriv u_n}
\le \lim_{n \to \infty} \sqrt{2 \pi T_n} \, \biggl(\int_{\Sset^1 \times [0, T_n]} \abs{\Deriv u_n}^2 \biggr)^\frac{1}{2} = 0,
\]
and thus \(\gamma = \beta\).

If the sequence \((T_n)_{n \in \Nset}\) converges to \(\Bar{T}\in (0, +\infty)\), then there exists a map \(u \in W^{1, 2} (\Sset^1 \times [0, \bar{T}], \manifold{N})\)
such that \(\tr_{\Sset^1 \times \{0\}} u = \gamma\), \(\tr_{ \Sset^1 \times \{\Bar{T}\}} u  = \beta\)
and
\[
\int_{\Sset^1 \times [0, \Bar{T}]} \frac{\abs{\Deriv u}^2}{2} - \frac{\equivnorm{\gamma}^2}{8 \pi^2} = 0.
\]
We conclude from this that \(\frac{\partial u}{\partial t} = 0\) almost everywhere in \(\Sset^1 \times [0, \Bar{T}]\) and that \(\gamma = \beta\) is a minimising geodesic.

If \((T_n)_{n \in \Nset}\) diverges to \(+\infty\), there exists a map \(u \in W^{1, 2} (\Sset^1 \times (0, +\infty), \manifold{N})\) such that \(\tr_{\Sset^1 \times \{0\}} u = \gamma\) and for every \(T \in (0, +\infty)\),
\[
\int_{\Sset^1 \times [0, T]} \frac{\abs{\Deriv u}^2}{2} - \frac{\equivnorm{\gamma}^2}{8 \pi^2} = 0 ,
\]
from which it follows that \(\gamma\) is a minimising geodesic.
Similarly, \(\beta\) is also a minimising geodesic.
\end{proof}

The next proposition states that minimising geodesics that are homotopic through minimising geodesics are synharmonic.

\begin{proposition}
\label{proposition_homotopy_synharmonic}
Let \(H \in \mathcal{C}^1 (\Sset^1 \times [0, T], \manifold{N})\).
If for every \(t \in [0, 1]\), the map
\(H (\cdot, t) : \Sset^1 \to \manifold{N}\) is a minimising geodesic, then
\[
 \synhar{H (\cdot, 0)}{H (\cdot, 1)} = 0.
\]
\end{proposition}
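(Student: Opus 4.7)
The plan is to use the homotopy $H$ itself, suitably rescaled in the transverse direction, as a competitor in \cref{definition_synharmonic}. The crucial observation is that when each slice $H(\cdot, s)$ is a minimising closed geodesic on $\manifold{N}$, its tangential Dirichlet energy on $\Sset^1$ exactly equals $\equivnorm{H(\cdot, s)}^2/(4\pi)$ by \eqref{eq_ji7Aire7IeVaesi5eiX6dahJ}, which is precisely the density of the subtracted renormalisation term in the definition of synharmony. The transverse derivative contribution will be killed by sending the rescaling parameter to infinity.

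Concretely, I would first record that $H(\cdot, s)$ and $H(\cdot, 0)$ are mutually homotopic in $\mathcal{C}^1(\Sset^1, \manifold{N})$ for every $s \in [0, T]$ via $H$ itself, so that $\equivnorm{H(\cdot, s)} = \equivnorm{H(\cdot, 0)}$ is independent of $s$. Then for each parameter $S > 0$ I would introduce the time-rescaled map $u_S : \Sset^1 \times [0, S] \to \manifold{N}$ defined by $u_S(x, t) \defeq H(x, tT/S)$, whose traces on $\Sset^1 \times \{0\}$ and $\Sset^1 \times \{S\}$ coincide with $H(\cdot, 0)$ and $H(\cdot, T)$ respectively, so that $u_S$ is admissible in the definition of $\synhar{H(\cdot, 0)}{H(\cdot, T)}$.

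The next step is to split $\abs{\Deriv u_S}^2 = \abs{\partial_x H(\cdot, tT/S)}^2 + (T/S)^2 \abs{\partial_t H(\cdot, tT/S)}^2$ and to evaluate each piece through the change of variable $s = tT/S$. The tangential piece becomes
\[
\frac{S}{T} \int_0^T \int_{\Sset^1} \frac{\abs{\partial_x H(x, s)}^2}{2} \dif x \dif s,
\]
and since each $H(\cdot, s)$ is a minimising geodesic, \eqref{eq_ji7Aire7IeVaesi5eiX6dahJ} gives $\int_{\Sset^1} \abs{\partial_x H(x, s)}^2/2 \dif x = \equivnorm{H(\cdot, 0)}^2/(4\pi)$ for every $s$. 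This produces $S \equivnorm{H(\cdot, 0)}^2/(4\pi)$, which is exactly $\int_{\Sset^1 \times [0, S]} \equivnorm{H(\cdot, 0)}^2/(8\pi^2)$ since $\Sset^1$ has length $2\pi$. The transverse piece, after the same change of variable, becomes $\frac{T}{2S} \int_{\Sset^1 \times [0, T]} \abs{\partial_t H}^2$, which is finite because $H \in \mathcal{C}^1$. Hence
\[
\int_{\Sset^1 \times [0, S]} \biggl(\frac{\abs{\Deriv u_S}^2}{2} - \frac{\equivnorm{H(\cdot, 0)}^2}{8\pi^2}\biggr) = \frac{T}{2S} \int_{\Sset^1 \times [0, T]} \abs{\partial_t H}^2 \xrightarrow[S \to \infty]{} 0,
\]
and the infimum in \cref{definition_synharmonic} is bounded above by $0$; combined with \cref{proposition_synharmonic_pseudometric}~\eqref{item_synhar_nonnegative} this gives the conclusion.

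There is no substantial obstacle here: the whole argument is a direct verification once one recognises that the minimising geodesic assumption yields equality in \eqref{eq_Eojei1cooruef6uiP} slice by slice. The only step requiring care is the bookkeeping of constants so that the cancellation between the tangential energy and the renormalisation density $\equivnorm{\gamma}^2/(8\pi^2)$ is exact; a mismatch there would leave a residual term growing linearly in $S$ and the argument would collapse.
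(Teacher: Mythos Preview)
Your argument is correct and is essentially the same as the paper's: define a rescaling \(u_S(x,t)=H(x,tT/S)\), use the minimising-geodesic hypothesis to make the tangential contribution cancel exactly against the renormalisation density, and let \(S\to\infty\) to kill the transverse term. The paper's proof is identical in spirit but more terse, writing the final identity directly without separating the two pieces.
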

\begin{proof}
  \resetconstant
We define for every \(T >0\) the function \(u_T : \Sset^1 \times [0, 1] \to \manifold{N}\)
by
\(
 u_T (x, t) \defeq H (x, \tfrac{t}{T}).
\)
and we observe that

\[
  \int_{\Sset^1 \times [0, T]} 
  \biggl(\frac{\abs{\Deriv u_T}^2}{2} -  \frac{\equivnorm{H(\cdot,0)}^2}{8 \pi^2}\biggr)
  = \frac{1}{T} \int_{\Sset^1 \times [0, 1]} \left|\frac{ \partial H}{\partial t}\right|^2,
\]
which goes to \(0\) as \(T \to +\infty\).
\end{proof}
In particular, if \(R (\theta) \in SO (2)\) denotes the rotation of angle \(\theta \in \Rset\) we deduce that
\(
 \synhar{\gamma \compose R (\theta)}{\gamma} = 0
\),
by applying \cref{proposition_homotopy_synharmonic} with \(H (x, t) = \gamma (R (t \theta))\).

The following proposition provides us with an example of non-synharmonic geodesics on a Riemannian manifold.

\begin{proposition}
\label{proposition_counter_synharm}
Assume that \(\manifold{N} = (\Sset^1 \times \Sset^1, g)\),  \(I_+\) and \(I_-\) are connected nonempty disjoint open sets of  \(\Sset^1\) such that \(\partial I_+ = \partial I_- = \{a_0, a_1\}\), where the metric \(g\) satisfies the following properties:
\begin{enumerate}[(a)]
 \item \label{item_counter_synharm_pinching} if \(y \in  \Sset^1 \times \{a_0, a_1\} \) and \(v=(v_1,0) \in \Rset \times \{0\} \simeq T_{y_1} \Sset^1 \times \{0\}\),
 \[
   g_y (v) = \abs{v_1}^2,
 \]
 \item \label{item_counter_synharm_lower}  if \(y \in  \Sset^1 \times \Sset^1\) and \(v=(v_1,v_2) \in \Rset^2 \simeq T_{y} (\Sset^1 \times \Sset^1)\),
 \[
   g_y (v) \ge \abs{v_1}^2 + \alpha (y) g_0 (v)^2,
 \]
 where \(\alpha \in \mathcal{C}(\Sset^1 \times \Sset^1, [0, + \infty))\)  and \(g_0\) is the Euclidian metric in \(\Rset^2\).
\end{enumerate}
Then
\[
 \synhar{(\id_{\Sset^1}, a_0)}{(\id_{\Sset^1}, a_1)}
 \ge \min \biggl\{\int_{I_+ \times \Sset^1} \alpha
 , \int_{I_- \times \Sset^1} \alpha \biggr\}.
\]
\end{proposition}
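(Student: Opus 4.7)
The plan is to combine the pointwise Riemannian lower bound from (b) with a topological degree argument on the torus $\manifold{N}=\Sset^1\times\Sset^1$. I first verify that $\equivnorm{\gamma^{(i)}} = 2\pi$ for $\gamma^{(i)} \defeq (\id_{\Sset^1}, a_i)$, so that $\equivnorm{\gamma^{(i)}}^2/(8\pi^2) = 1/2$. The upper bound is immediate from (a) since $(\gamma^{(i)})'(\theta) = (1, 0)$ has Riemannian norm $1$; the matching lower bound follows from (b), because any $\tilde\gamma = (\tilde\gamma_1, \tilde\gamma_2)$ homotopic to $\gamma^{(i)}$ satisfies $\abs{\tilde\gamma'} \geq \abs{\tilde\gamma_1'}$ pointwise, and $\tilde\gamma_1 : \Sset^1 \to \Sset^1$ has degree $1$.

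Given any admissible competitor $u \in W^{1, 2}(\Sset^1 \times [0, T], \manifold{N})$, write $u = (u_1, u_2)$. Applying (b) separately to $\partial_x u$ and $\partial_t u$ yields the pointwise bound
\[
\frac{\abs{\Deriv u}^2}{2} \geq \frac{(\partial_x u_1)^2 + (\partial_t u_1)^2}{2} + \alpha(u) \, \frac{\abs{\partial_x u}^2 + \abs{\partial_t u}^2}{2}.
\]
For a.e.\ $t$, the slice $u_1(\cdot, t) : \Sset^1 \to \Sset^1$ lies in $W^{1, 2}$ and is of degree $1$ (by the boundary condition at $t = 0$ and stability of the VMO-homotopy class under Sobolev slicing), so \eqref{eq_Eojei1cooruef6uiP} gives $\int_{\Sset^1} (\partial_x u_1(\cdot, t))^2/2 \geq \pi$. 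Integrating over $t$ and discarding the nonnegative $(\partial_t u_1)^2$ term, the first summand exactly absorbs the constant $\pi T = \int_{\Sset^1 \times [0, T]} \equivnorm{\gamma^{(0)}}^2/(8\pi^2)$, leaving
\[
\int_{\Sset^1 \times [0, T]} \left(\frac{\abs{\Deriv u}^2}{2} - \frac{\equivnorm{\gamma^{(0)}}^2}{8 \pi^2}\right) \geq \int_{\Sset^1 \times [0, T]} \alpha(u) \, \frac{\abs{\partial_x u}^2 + \abs{\partial_t u}^2}{2}.
\]

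To bound the right-hand side, I would use the AM--GM inequality $\abs{\partial_x u}^2 + \abs{\partial_t u}^2 \geq 2 \abs{\det \Deriv u}$ together with the area formula for $W^{1, 2}$ maps between equidimensional surfaces:
\[
\int_{\Sset^1 \times [0, T]} \alpha(u) \abs{\det \Deriv u} = \int_\manifold{N} \alpha(y)\, N(u, y) \dif y,
\]
where $N(u, y)$ denotes the a.e.-defined multiplicity. The image $u(\partial(\Sset^1 \times [0, T])) \subset \Sset^1 \times \{a_0, a_1\}$ partitions $\manifold{N}$ into two open cylinders $\Omega_\pm$ (the regions denoted $I_\pm \times \Sset^1$ in the statement), on each of which the Brouwer degree $\deg(u, \cdot)$ takes a constant integer value $d_\pm$, with $N(u, y) \geq \abs{\deg(u, y)}$ a.e. A topological computation --- comparing $u_\ast [\Sset^1 \times [0, T]]$ with the 2-chain $\Omega_+$, which share the algebraic boundary $\gamma^{(1)} - \gamma^{(0)}$ in $\manifold{N}$, or equivalently extending $u$ to a map $\Sset^1 \times \Sset^1 \to \manifold{N}$ by closing up the cylinder through $\Omega_-$ --- gives $\abs{d_+ - d_-} = 1$. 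Therefore
\[
\int_\manifold{N} \alpha\, N \geq \abs{d_+} \int_{\Omega_+} \alpha + \abs{d_-} \int_{\Omega_-} \alpha \geq \min\Bigl\{\int_{\Omega_+} \alpha,\ \int_{\Omega_-} \alpha\Bigr\},
\]
the last step because, over integer pairs with $\abs{d_+ - d_-} = 1$, the minimum is attained at $(d_+, d_-) \in \{(1, 0), (0, -1)\}$ (and their reflection). Passing to the infimum over $u$ gives the conclusion.

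The main obstacle is rigorously justifying the area formula and degree theory in the $W^{1, 2}$ setting rather than the smooth one. In dimension two there is no topological obstruction to strong approximation of $u$ by smooth maps, so both $N(u, \cdot)$ and the degree on connected components of $\manifold{N} \setminus u(\partial)$ are stable under approximation and the classical identities pass to the limit. The essential geometric input is the identity $\abs{d_+ - d_-} = 1$, expressing the topological fact that $u$ must sweep across at least one of the complementary cylinders $\Omega_+$, $\Omega_-$ in order to deform $\gamma^{(0)}$ into $\gamma^{(1)}$.
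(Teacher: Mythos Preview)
Your proof is correct and follows essentially the same strategy as the paper: split the Riemannian energy via assumption~(b), absorb the first-component Dirichlet term into the subtracted constant using the degree-one condition on slices, and control the remaining $\alpha(u)\abs{\Deriv u}_{g_0}^2$ term by the area formula together with $\abs{\Deriv u}_{g_0}^2\ge 2\abs{\det \Deriv u}$. The only substantive difference is in the topological step: the paper argues by contradiction that the image of $u$ must cover at least one of the two cylinders (using a retraction of the twice-punctured torus onto $\Sset^1\times\{a_0\}$ that collapses $\Sset^1\times\{a_1\}$), whereas you compute the Brouwer degrees $d_\pm$ on the two complementary cylinders and use the relation $\abs{d_+-d_-}=1$ together with $N(u,\cdot)\ge\abs{\deg(u,\cdot)}$. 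Your degree argument is slightly more quantitative and yields the same lower bound; the paper's retraction argument is more elementary and avoids invoking mapping-degree theory for $W^{1,2}$ maps, instead reducing to the smooth case by approximation at the end.
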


The assumption \eqref{item_counter_synharm_pinching} and \eqref{item_counter_synharm_lower}, ensures that the homotopic maps \((\id_{\Sset^1}, a_0)\) and \((\id_{\Sset^1}, a_1)\) are minimising geodesics and that
\( \equivnorm{(\id_{\Sset^1}, a_0)} = \equivnorm{(\id_{\Sset^1}, a_1)} = 2 \pi\).

By choosing an appropriate metric \(g\), the right-hand side in the conclusion of \cref{proposition_counter_synharm} can be made positive, giving an example of homotopic minimising geodesics that are not synharmonic.

\begin{proof}[Proof of \cref{proposition_counter_synharm}]
Let \(u \in \mathcal{C}^\infty (\Sset^1 \times [0, T], \Sset^1 \times \Sset^1)\) with \(u (\cdot, 0) = (\id_{\Sset^1}, a_0)\) and \(u (\cdot, T) = (\id_{\Sset^1}, a_1)\), then
\[
\int_{\Sset^1 \times [0, T]} \frac{\abs{\Deriv u}_g^2}{2}
\ge \int_{\Sset^1 \times [0, T]} \frac{\abs{\Deriv u_1}^2}{2}
+ \int_{\Sset^1 \times [0, T]} \alpha (u) \frac{\abs{\Deriv u}_{g_0}^2}{2},
\]
where \(u_1 \in \mathcal{C}^\infty (\Sset^1 \times [0, T], \Sset^1)\) is the first component of the map \(u\).
We first have
\[\int_{\Sset^1 \times [0, T]}\frac{ \abs{\Deriv u_1}^2}{2}
\ge
 T \frac{\equivnorm{( \id_{\Sset^1},a_0)}^2}{4 \pi}.
\]
Next, since \(\abs{\Deriv u}^2_{g_0} \ge 2 \abs{\det \Deriv u}\), we have by the area formula
\[
\begin{split}
  \int_{\Sset^1 \times [0, T]} \alpha (u)\frac{ \abs{\Deriv u}_{g_0}^2}{2}
 \ge \int_{\Sset^1 \times [0, T]} \alpha (u) \abs{\det \Deriv u} \ge \int_{\Sset^1 \times \Sset^1} \alpha (y) \mathcal{H}^0 (u^{-1} (\{y\})) \dif y.
\end{split}
\]
We observe now that by a topological argument either \(u (\Sset^1 \times [0, T]) \supseteq I_+ \times \Sset^1\) or \(u (\Sset^1 \times [0, T]) \supseteq I_- \times \Sset^1\).
Indeed, otherwise there  exist points \(b_+ \in I_+ \times \Sset^1 \setminus
u (\Sset^1 \times [0, T]) \) and \(b_- \in I_- \times \Sset^1 \setminus u (\Sset^1 \times [0, T])\).
But, there exists a continuous map \(\rho : \Sset^1 \times \Sset^1 \setminus \{b_+, b_-\} \to \Sset^1 \times \{a_0\}\simeq\Sset^1\) such that \(\rho\vert_{\Sset^1 \times \{a_0\}} = \id\) and \(\rho \vert_{\Sset^1 \times \{a_1\}}\) is constant, and thus \(\rho \compose u : \Sset^1 \times [0, T] \to \Sset^1\) is a homotopy between the identity and a constant map, which is a contradiction.

It follows thus that
\[
\int_{\Sset^1 \times [0, T]} \frac{\abs{\Deriv u}_g^2}{2}
 \ge T \frac{\equivnorm{(\id_{\Sset^1}, a_0)}^2}{4 \pi}
 + \min \biggl(\int_{I_+ \times \Sset^1} \alpha
, \int_{I_- \times \Sset^1} \alpha \biggr).
\]
The result follows then by a standard approximation argument.
\end{proof}

\subsection{Dependence on the charges of the geometrical renormalised energy}
The dependence of the geometric renormalised energy on the charges (i.e. the maps \(\gamma_i\)) is controlled by the synharmony.

\begin{proposition}
\label{proposition_renorm_geom_dep_gamma}
Let \(\Omega\subset\Rset^2\) be a Lipschitz bounded domain, \(g \in W^{1/2,2} (\partial \Omega, \manifold{N})\), \(k\in\Nset_\ast\) and \((a_1, \dotsc, a_k) \in\Conf{k} \Omega\). If \((\gamma_1, \dotsc, \gamma_k)\) and \((\tilde{\gamma}_1,\dotsc,\tilde{\gamma}_k)\in \mathcal{C}^1(\Sset^1,\manifold{N})^k\) are topological resolutions of \(g\) made up of minimising geodesics, then
\[
\bigabs{\mathcal{E}_{g,\Tilde{\gamma}_1, \dotsc, \Tilde{\gamma}_k}^{\mathrm{geom}}  (a_1, \dotsc, a_k)
- \mathcal{E}_{g,\gamma_1, \dotsc, \gamma_k}^{\mathrm{geom}}  (a_1, \dotsc, a_k)}
\le \sum_{i = 1}^k \synhar{\gamma_i}{\Tilde{\gamma}_i}.
\]
\end{proposition}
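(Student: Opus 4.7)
By symmetry of the synharmony (\cref{proposition_synharmonic_pseudometric}~\eqref{item_synhar_symmetry}), it suffices to prove the one-sided inequality
\[
\mathcal{E}_{g,\Tilde{\gamma}_1, \dotsc, \Tilde{\gamma}_k}^{\mathrm{geom}}(a_1, \dotsc, a_k)
\le \mathcal{E}_{g,\gamma_1, \dotsc, \gamma_k}^{\mathrm{geom}}(a_1, \dotsc, a_k)
+ \sum_{i = 1}^k \synhar{\gamma_i}{\Tilde{\gamma}_i}.
\]
Note that since \(\synhar{\gamma_i}{\Tilde{\gamma}_i} < +\infty\) (else there is nothing to prove), the maps \(\gamma_i\) and \(\Tilde{\gamma}_i\) are homotopic by \cref{proposition_synharmonic_pseudometric}~\eqref{item_synhar_finite}, hence \(\equivnorm{\gamma_i} = \equivnorm{\Tilde{\gamma}_i}\) in view of \eqref{eq_kahz1OeBa9gooqu6ep1aixei}.

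Fix \(\varepsilon > 0\). For each \(i \in \{1, \dotsc, k\}\), pick \(T_i \in (0, +\infty)\) and a map \(v_i \in W^{1, 2}(\Sset^1 \times [0, T_i], \manifold{N})\) with \(\tr_{\Sset^1 \times \{0\}} v_i = \gamma_i\), \(\tr_{\Sset^1 \times \{T_i\}} v_i = \Tilde{\gamma}_i\) and
\[
\int_{\Sset^1 \times [0, T_i]} \frac{\abs{\Deriv v_i}^2}{2} - T_i \frac{\equivnorm{\gamma_i}^2}{4 \pi}
\le \synhar{\gamma_i}{\Tilde{\gamma}_i} + \tfrac{\varepsilon}{k}.
\]
Next, pick \(\rho \in (0, \Bar{\rho}(a_1, \dotsc, a_k))\) and \(u \in W^{1,2}(\Omega \setminus \bigcup_{i=1}^k \Bar{B}_\rho(a_i), \manifold{N})\) with the prescribed traces \(g\) and \(\gamma_i\), such that \(\int \abs{\Deriv u}^2/2\) differs from \(\mathcal{E}_{g,\gamma_1,\dotsc,\gamma_k}^{\mathrm{geom}, \rho}(a_1,\dotsc,a_k)\) by at most \(\varepsilon\).

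Set \(r_i \defeq \rho\, e^{-T_i}\), and assume \(\rho\) small enough so that \(\max_i r_i < \Bar{\rho}(a_1,\dotsc,a_k)\); let \(r \defeq \min_i r_i\). The key observation is the conformal invariance of the Dirichlet integral in two dimensions: the map
\[
\Phi_i : \Sset^1 \times [0, T_i] \to B_\rho(a_i) \setminus \Bar{B}_{r_i}(a_i), \qquad
\Phi_i(e^{i\theta}, t) \defeq a_i + \rho\, e^{-t}\, e^{i\theta},
\]
is a conformal diffeomorphism. Thus the competitor \(\Tilde{u}\) defined on \(\Omega \setminus \bigcup_{i=1}^k \Bar{B}_r(a_i)\) by
\[
\Tilde{u}(x) \defeq \begin{cases}
u(x) & \text{if } x \in \Omega \setminus \bigcup_{i=1}^k \Bar{B}_\rho(a_i),\\
v_i \compose \Phi_i^{-1}(x) & \text{if } x \in B_\rho(a_i) \setminus \Bar{B}_{r_i}(a_i),\\
\Tilde{\gamma}_i\bigl(\tfrac{x - a_i}{\abs{x - a_i}}\bigr) & \text{if } x \in B_{r_i}(a_i) \setminus \Bar{B}_r(a_i),
\end{cases}
\]
belongs to \(W^{1,2}(\Omega \setminus \bigcup_{i=1}^k \Bar{B}_r(a_i), \manifold{N})\), since traces match on every interface circle. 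Its energy decomposes as three contributions, the second coming from conformal invariance and the third from \eqref{eq_ji7Aire7IeVaesi5eiX6dahJ} as in the proof of \cref{proposition_geometric_non-decreasing}:
\[
\int_{\Omega \setminus \bigcup \Bar{B}_r(a_i)} \frac{\abs{\Deriv \Tilde{u}}^2}{2}
= \int_{\Omega \setminus \bigcup \Bar{B}_\rho(a_i)} \frac{\abs{\Deriv u}^2}{2}
+ \sum_{i=1}^k \int_{\Sset^1 \times [0, T_i]} \frac{\abs{\Deriv v_i}^2}{2}
+ \sum_{i=1}^k \frac{\equivnorm{\Tilde\gamma_i}^2}{4\pi} \log\frac{r_i}{r}.
\]
Using \(\equivnorm{\gamma_i} = \equivnorm{\Tilde{\gamma}_i}\), the bound on \(\int \abs{\Deriv v_i}^2/2\), and the identity \(T_i = \log(\rho/r_i)\), the last two sums combine into
\[
\sum_{i=1}^k \frac{\equivnorm{\gamma_i}^2}{4\pi}\log\frac{\rho}{r}
+ \sum_{i=1}^k \synhar{\gamma_i}{\Tilde{\gamma}_i} + \varepsilon.
\]
Rearranging and subtracting \(\sum_i \equivnorm{\Tilde{\gamma}_i}^2 \log(1/r)/(4\pi)\) from both sides, we deduce
\[
\mathcal{E}^{\mathrm{geom},r}_{g, \Tilde{\gamma}_1, \dotsc, \Tilde{\gamma}_k}(a_1,\dotsc,a_k) - \sum_{i=1}^k \frac{\equivnorm{\Tilde{\gamma}_i}^2}{4\pi}\log\tfrac{1}{r}
\le \mathcal{E}^{\mathrm{geom},\rho}_{g,\gamma_1,\dotsc,\gamma_k}(a_1,\dotsc,a_k) - \sum_{i=1}^k \frac{\equivnorm{\gamma_i}^2}{4\pi}\log\tfrac{1}{\rho}
+ \sum_{i=1}^k \synhar{\gamma_i}{\Tilde{\gamma}_i} + 2\varepsilon.
\]
The left-hand side is bounded below by \(\mathcal{E}^{\mathrm{geom}}_{g,\Tilde{\gamma}_1,\dotsc,\Tilde{\gamma}_k}(a_1,\dotsc,a_k)\) via the infimum characterisation in \eqref{eq_def_renorm_geom}; taking the infimum over \(\rho\) on the right and letting \(\varepsilon \to 0\) yields the desired inequality.

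The main conceptual step is the conformal reparametrisation linking the cylindrical domain of the synharmony competitor to the annulus around \(a_i\); all other steps are straightforward bookkeeping of logarithmic terms, analogous to \cref{proposition_geometric_non-decreasing}. The only delicate point is ensuring matching traces across the three regions, which forces the precise choice \(r_i = \rho e^{-T_i}\) and the geodesic interpolation on the innermost annulus.
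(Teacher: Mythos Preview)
Your proof is correct and follows essentially the same strategy as the paper's: conformally transplant a synharmony competitor from a cylinder \(\Sset^1\times[0,T_i]\) into the annulus \(B_\rho(a_i)\setminus\Bar{B}_{r_i}(a_i)\) and glue it to a competitor \(u\) for \(\mathcal{E}^{\mathrm{geom},\rho}_{g,\gamma_1,\dotsc,\gamma_k}\). The only cosmetic difference is that the paper uses a common length \(L\) for all \(i\) (justified by the remark that the synharmony infimum is unchanged when one enforces a lower bound on \(T\) if an endpoint is a minimising geodesic), whereas you allow distinct \(T_i\) and then fill in the innermost annuli \(B_{r_i}(a_i)\setminus\Bar{B}_r(a_i)\) by the geodesic \(\Tilde{\gamma}_i\); these two devices are equivalent since extending a synharmony competitor by a constant-in-\(t\) geodesic adds nothing to the renormalised cylinder integral.
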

In particular, if for every \(i \in \{1, \dotsc, k\}\), \(\gamma_i\) and \(\Tilde{\gamma_i}\) are synharmonic minimising geodesics, then
\[
 \mathcal{E}_{g,\Tilde{\gamma}_1, \dotsc, \Tilde{\gamma}_k}^{\mathrm{geom}}  (a_1, \dotsc, a_k)
= \mathcal{E}_{g,\gamma_1, \dotsc, \gamma_k}^{\mathrm{geom}}  (a_1, \dotsc, a_k).
\]
The dependence of the synharmonicity is optimal, as can be seen by observing that when \(\Omega = \mathbb{D}\) is the unit disk and \(\gamma\), \(\Tilde{\gamma}\) are homotopic minimal geodesics, then
\begin{equation*}
  \mathcal{E}_{\gamma, \gamma}^{\mathrm{geom}}(0) = 0
  \quad\text{and}\quad
  \mathcal{E}_{\gamma, \Tilde{\gamma}}^{\mathrm{geom}} (0) = \synhar{\gamma}{\Tilde{\gamma}},
\end{equation*}
and according to \cref{proposition_counter_synharm}, the latter quantity can be positive for homotopic minimising geodesics.

\begin{proof}
[Proof of \cref{proposition_renorm_geom_dep_gamma}]
We can assume without loss of generality that \(\synhar{\gamma_i}{\Tilde{\gamma}_i}<+\infty\) for each \(i\). We take \(\sigma\in (0,\Bar{\rho}(a_1,\dots,a_k))\) and \(L > 0\).
Given a mapping \(u \in W^{1, 2} (\Omega \setminus \bigcup_{i = 1}^k \Bar{B}_{\sigma} (a_i), \manifold{N})\) such that \(\tr_{\Sset^1} u (a_i + \sigma\,\cdot)  = \gamma_i\), 
and \(u_i \in W^{1, 2} (\Sset^1 \times [0, L], \manifold{N})\) such that \(\tr_{\Sset^1 \times \{0\}} u_i  (\cdot, 0) = \gamma_i\) and \(\tr_{\Sset^1 \times \{L\}} u_i = \Tilde{\gamma}_i\), we set \(\rho = e^{-L} \sigma\) and we define \(v \in W^{1, 2} (\Omega \setminus \bigcup_{i = 1}^k \Bar{B}_{\rho} (a_i), \manifold{N})\) for each \(x \in \Omega \setminus \bigcup_{i = 1}^k \Bar{B}_{\rho} (a_i)\) by
\[
v (x)
\defeq 
\begin{cases}
 u (x) & \text{if \( x \in \Omega \setminus \bigcup_{i = 1}^k \Bar{B}_{\sigma} (a_i)\)},\\
 u_i \Bigl(\frac{x  - a_i}{\abs{x - a_i}},  \log \frac{\sigma}{\abs{x - a_i}}\Bigr)
 & \text{if \(x \in \Bar{B}_{\sigma} (a_i) \setminus \Bar{B}_{\rho} (a_i)\) for some \(i \in \{1, \dotsc, k\}\)}.
\end{cases}
\]
We have then 
\[
\begin{split}
     \int_{\Omega \setminus \bigcup_{i = 1}^k \Bar{B}_{\rho} (a_i)}
       \frac{\abs{\Deriv v}^2}{2}
   -
     \sum_{i =1}^k
       \frac{\equivnorm{\gamma_i}^2}{4 \pi} \log \frac{1}{\rho}
 =
   & \int_{\Omega \setminus \bigcup_{i = 1}^k \Bar{B}_{\sigma} (a_i)}
       \frac{\abs{\Deriv u}^2}{2}
   -
     \sum_{i =1}^k  \frac{\equivnorm{\gamma_i}^2}{4 \pi} \log \frac{1}{\sigma}\\
 &+ \sum_{i = 1}^k \int_{\Sset^1 \times [0, L]}\biggl(
 \frac{\abs{\Deriv u_i}^2}{2} - \frac{\equivnorm{\gamma_i}^2}{8 \pi^2}\biggr).
\end{split}
\]
It follows by \eqref{eq_def_renorm_geom} that
\[
 \mathcal{E}_{g,\Tilde{\gamma}_1, \dotsc, \Tilde{\gamma}_k}^{\mathrm{geom}}  (a_1, \dotsc, a_k)
 \le \mathcal{E}_{g, \gamma_1,\dotsc, \gamma_k}^{\mathrm{geom}}  (a_1, \dotsc, a_k)
+ \sum_{i = 1}^k \synhar{\gamma_i}{\Tilde{\gamma}_i},
\]
since the infimum in \cref{definition_synharmonic} stays the same under a lower bound on \(L\) when either \(\beta\) or \(\gamma\) is a geodesic.
\end{proof}

\section{Dependence on the location of singularities}

The next proposition states that the topological and geometrical renormalised energies are Lipschitz continuous functions of the locations of the singularities.

\begin{proposition}
\label{theorem_continuity_renormalised}
Let \(\Omega\subset\Rset^2\) be a bounded Lipschitz domain, \(g \in W^{1/2}(\partial\Omega,\manifold{N})\), \(k\in\Nset_\ast\) and \((\gamma_1, \dotsc, \gamma_k) \in W^{1/2,2}(\Sset^1, \manifold{N})^k\) be a topological resolution of \(g\).
The renormalised energy \(\mathcal{E}^{\mathrm{top}}_{g, \gamma_1, \dotsc, \gamma_k}\)  is locally Lipschitz-continuous on \(\Conf{k} \Omega\).
If moreover \(\gamma_1,\dotsc,\gamma_k\) are minimising geodesics, 
then \(\mathcal{E}^{\mathrm{geom}}_{g,\gamma_1\dotsc,\gamma_k}\) is locally Lipschitz-continuous on \(\Conf{k} \Omega\).
\end{proposition}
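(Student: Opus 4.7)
The plan is to prove Lipschitz continuity via a pushforward-of-singularities argument: for close-by configurations \(a = (a_1,\dotsc,a_k)\) and \(b = (b_1,\dotsc,b_k)\) in \(\Conf{k} \Omega\), I would construct a bi-Lipschitz diffeomorphism \(\chi\colon \Omega \to \Omega\) that sends each \(a_i\) to \(b_i\), is the identity outside a small fixed neighbourhood of each \(a_i\), and is a pure translation in an even smaller neighbourhood. One can then push forward near-optimal competitors from one configuration to the other, with the energy defect controlled by \(\delta \defeq \max_i \abs{b_i - a_i}\).

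Concretely, first fix \(\sigma \in (0, \Bar{\rho}(a_1,\dotsc,a_k))\) so that the balls \(\Bar{B}_\sigma(a_i)\) are pairwise disjoint and contained in \(\Omega\), and for a smooth radial cutoff \(\eta\) with \(\eta \equiv 1\) on \([0, \sigma/2]\), \(\eta \equiv 0\) on \([\sigma, +\infty)\), define \(\chi(x) \defeq x + \eta(\abs{x - a_i})(b_i - a_i)\) on \(B_\sigma(a_i)\) and \(\chi \defeq \id\) elsewhere. For \(\delta\) small enough relative to \(\sigma\), this \(\chi\) is a bi-Lipschitz diffeomorphism of \(\Omega\) (with \(\det D\chi > 0\) from the rank-one perturbation formula and global injectivity by a degree argument since \(\chi = \id\) on \(\partial \Omega\)), with \(\abs{D\chi - I} \le C\delta/\sigma\) and the deviation concentrated on the transitional annuli \(B_\sigma(a_i) \setminus B_{\sigma/2}(a_i)\). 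Given any \(\rho < \sigma/4\) and any near-optimal competitor \(u\) for \(\mathcal{E}^{\mathrm{top},\rho}_{g,\gamma_1,\dotsc,\gamma_k}(a_1,\dotsc,a_k)\), the map \(v \defeq u \compose \chi^{-1}\) belongs to \(W^{1,2}(\Omega \setminus \bigcup \Bar{B}_\rho(b_i), \manifold{N})\), has trace \(g\) on \(\partial \Omega\), and satisfies \(\tr_{\Sset^1} v(b_i + \rho\,\cdot) = \tr_{\Sset^1} u(a_i + \rho\,\cdot)\) thanks to \(\chi\) being a translation on \(B_{\sigma/2}(a_i) \supset B_\rho(a_i)\); in particular \(v\) is admissible in \eqref{eq_def_renorm_top_rho} for \((b_1,\dotsc,b_k)\), and in the geometric setting the trace \(\gamma_i\) on \(\partial B_\rho(b_i)\) is preserved exactly.

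A change of variables, with the elementary bound \(\abs{Dv(\chi(x))}^2 \le \abs{(D\chi(x))^{-1}}_{\mathrm{op}}^2 \abs{Du(x)}^2\), then yields
\[
  \int_{\Omega \setminus \bigcup \Bar{B}_\rho(b_i)} \frac{\abs{Dv}^2}{2} \le \int_{\Omega \setminus \bigcup \Bar{B}_\rho(a_i)} \frac{\abs{Du}^2}{2} + \frac{C\delta}{\sigma} \sum_{i=1}^k \int_{B_\sigma(a_i) \setminus B_{\sigma/2}(a_i)} \frac{\abs{Du}^2}{2},
\]
since the Jacobian factor \(\abs{(D\chi)^{-1}}_{\mathrm{op}}^2 \abs{\det D\chi}\) equals \(1\) outside the transitional annuli and is \(1 + O(\delta/\sigma)\) on them.

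The principal obstacle is to bound the remainder uniformly in \(\rho\), since the total energy of \(u\) diverges as \(L \log(1/\rho)\) with \(L \defeq \sum_j \equivnorm{\gamma_j}^2/(4\pi)\). The resolution is that this divergence is entirely absorbed by the inner annuli \(B_{\sigma/2}(a_j) \setminus \Bar{B}_\rho(a_j)\): \cref{lemma_energy_growth_map_radii} gives the lower bound \(\int_{B_{\sigma/2}(a_j) \setminus \Bar{B}_\rho(a_j)} \abs{Du}^2/2 \ge \equivnorm{\gamma_j}^2 \log(\sigma/(2\rho))/(4\pi)\), while for a near-minimiser the total energy is at most \(\mathcal{E}^{\mathrm{top}}(a_1,\dotsc,a_k) + L\log(1/\rho) + \varepsilon\) by \eqref{eq_def_renorm_top}. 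Subtracting leaves
\[
\sum_{i=1}^k \int_{B_\sigma(a_i) \setminus B_{\sigma/2}(a_i)} \frac{\abs{Du}^2}{2} \le \mathcal{E}^{\mathrm{top}}(a_1,\dotsc,a_k) + L\log(2/\sigma) + \varepsilon,
\]
a bound independent of \(\rho\). Subtracting \(L\log(1/\rho)\) from both sides of the previous estimate and letting \(\rho \to 0\) yields \(\mathcal{E}^{\mathrm{top}}(b_1,\dotsc,b_k) \le \mathcal{E}^{\mathrm{top}}(a_1,\dotsc,a_k) + C_\sigma \delta\); swapping the roles of \(a\) and \(b\) gives the matching inequality and thus local Lipschitz-continuity. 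The geometric case is analogous, with the bound on the transitional-annulus energy following in the same way from \cref{proposition_geometric_non-decreasing} applied to a near-minimiser of \(\mathcal{E}^{\mathrm{geom},\rho}\).
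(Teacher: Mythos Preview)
Your proposal is correct and follows essentially the same route as the paper: construct a compactly supported diffeomorphism that is a pure translation near each singularity, push competitors forward, and control the energy defect on the transitional annuli using \cref{lemma_energy_growth_map_radii}. The only cosmetic differences are that the paper names the diffeomorphism in the reverse direction (its \(\Phi\) is your \(\chi^{-1}\)) and bounds the remainder by the slightly larger quantity \(\int_{\Omega \setminus \bigcup B_\tau(a_i)} \abs{\Deriv u}^2/2\) rather than localising to the annuli; it also makes explicit the two-step bootstrapping (first local boundedness, then Lipschitz) that you compress into the single constant \(C_\sigma\).
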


Our proof follows the strategy that was outlined for the Lipschitz-continuity of renormalised energies for \(n\)--harmonic maps into \(\Sset^{n - 1}\) on \((n-1)\)--dimensional domains \cite{Hardt_Lin_Wang_1997}*{\S 9}.

In order to compare the renormalised energies at \(k\)-tuples \((a_1,\dotsc,a_k),(b_1,\dotsc,b_k)\in \Conf{k} \Omega\), we use a deformation of \(\Omega \setminus \bigcup_{i = 1}^k \Bar{B}_{\rho} (b_i)\) onto \(\Omega \setminus \bigcup_{i = 1}^k \Bar{B}_{\rho} (a_i)\) given by the following lemma:
\begin{lemma}
\label{lemma_deformation_renormalised}
Let \(k \in \Nset_\ast\) and let \(a_1,\dotsc,a_k, b_1, \dotsc, b_k \in \Omega\).
If \(2 \max_{1 \le i \le k} \abs{a_i-b_i}<\tau \le \Bar{\rho} (b_1, \dotsc, b_k)/3\) then there exists a diffeomorphism \(\Phi:\Omega\to\Omega\) such that
\begin{enumerate}[(i)]
\item\label{phi_identity}
for every \(x\in \Omega \setminus \bigcup_{i = 1}^k B_{2\tau} (b_i)\),
\(\Phi(x)=x\),
\item\label{phi_null}
for each \(i \in \{1, \dotsc, k\}\) and every \(x \in B_\tau (b_i)\), \(\Phi (x) = x + a_i - b_i\),
\item\label{phi_estimate}
for every \(x \in \Omega\),
\(\abs{\Deriv \Phi(x) -\mathrm{id}}\le \frac{2}{\tau}\max_{1 \le i \le k}\{\abs{a_i-b_i}\}\).
\end{enumerate}
\end{lemma}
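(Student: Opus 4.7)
The plan is to construct $\Phi$ explicitly as a perturbation of the identity using a standard cutoff. Fix $\eta\in\mathcal{C}^\infty(\Rset,[0,1])$ with $\eta\equiv 1$ on $(-\infty,1]$, $\eta\equiv 0$ on $[2,+\infty)$ and $\abs{\eta'}\le 2$, and set $\varphi_i(x)\defeq\eta(\abs{x-b_i}/\tau)$ for each $i\in\{1,\dotsc,k\}$, so that $\varphi_i\equiv 1$ on $\bar{B}_\tau(b_i)$, $\varphi_i\equiv 0$ outside $B_{2\tau}(b_i)$, and $\abs{D\varphi_i}\le 2/\tau$. Since $\tau\le\Bar{\rho}(b_1,\dotsc,b_k)/3$, the balls $B_{2\tau}(b_i)$ are pairwise disjoint and contained in $\Omega$ (indeed $2\tau<\Bar\rho\le\dist(b_i,\partial\Omega)$ and $4\tau\le 4\Bar\rho/3<2\Bar\rho\le\abs{b_i-b_j}$). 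Then define
\[
 \Phi(x)\defeq x+\sum_{i=1}^{k}\varphi_i(x)\,(a_i-b_i).
\]

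Properties \eqref{phi_identity} and \eqref{phi_null} are immediate from the support properties of the $\varphi_i$. For \eqref{phi_estimate}, because the supports of the $\varphi_i$ are disjoint, at each $x\in\Omega$ at most one term in $D\Phi(x)-\id=\sum_i D\varphi_i(x)\otimes(a_i-b_i)$ is non-zero, which gives the bound $\abs{D\Phi-\id}\le (2/\tau)\max_i\abs{a_i-b_i}$.

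It remains to show $\Phi$ is a diffeomorphism of $\Omega$ onto itself. The hypothesis $2\max_i\abs{a_i-b_i}<\tau$ combined with \eqref{phi_estimate} yields $\norm{D\Phi-\id}_{L^\infty}<1$, so $D\Phi(x)$ is everywhere invertible and $\Phi$ is a local diffeomorphism. Moreover, if $x\in B_{2\tau}(b_i)$ then $\Phi(x)\in B_{2\tau+\tau/2}(b_i)\subset B_{3\tau}(b_i)\subset\Omega$, so $\Phi(\Omega)\subset\Omega$, and $\Phi$ coincides with the identity outside the compact set $K\defeq\bigcup_{i=1}^k\bar{B}_{2\tau}(b_i)\subset\Omega$, so $\Phi$ is proper.

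The main technical point is bijectivity. A proper local diffeomorphism between connected manifolds is a covering map onto its image; here $\Phi(\Omega)$ is open by the inverse function theorem, and closed in $\Omega$ by properness, hence $\Phi(\Omega)=\Omega$ by connectedness. Since $\Phi$ coincides with $\id$ on the nonempty open set $\Omega\setminus K$, the covering has a fibre of cardinality one over some point, so its degree is $1$ and $\Phi$ is a bijection. Being a bijective local diffeomorphism, $\Phi$ is a global diffeomorphism of $\Omega$.
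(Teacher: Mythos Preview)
Your construction of $\Phi$ is exactly the one the paper uses, so the only difference lies in how you establish that $\Phi$ is a bijection of $\Omega$. The paper argues much more elementarily: for injectivity, it notes that $\Psi\defeq\Phi-\mathrm{id}$ extends to all of $\Rset^2$ with $\abs{D\Psi}<1$, so if $\Phi(x)=\Phi(y)$ then $\abs{x-y}=\abs{\Psi(x)-\Psi(y)}<\abs{x-y}$ unless $x=y$; for surjectivity, it uses that $\Phi$ is the identity on each $\partial B_{2\tau}(b_i)$ and invokes a Brouwer-degree argument on each ball. Your covering-map route is correct in spirit and has the merit of packaging both directions at once, but it is heavier machinery than needed here, while the contraction argument for injectivity is a two-line computation that you could have inserted instead.

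There is one genuine imprecision in your argument that you should fix. From ``$\Phi=\mathrm{id}$ on the nonempty open set $\Omega\setminus K$'' you cannot directly conclude that some fibre has cardinality one: knowing $\Phi(y)=y$ for $y\in\Omega\setminus K$ does not rule out a second preimage of $y$ lying in $K$. What you need is a point $y\in\Omega\setminus\Phi(K)$; such a point exists because $\Phi(K)\subset\bigcup_i B_{5\tau/2}(b_i)$ is a compact (hence proper) subset of the open connected set $\Omega$. For such $y$ one has $\Phi^{-1}(\{y\})\subset\Omega\setminus K$, whence $\Phi^{-1}(\{y\})=\{y\}$ and the covering has one sheet. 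Alternatively, observe that $\Phi_t(x)\defeq x+t\sum_i\varphi_i(x)(a_i-b_i)$ is a proper homotopy from $\mathrm{id}$ to $\Phi$, so $\deg\Phi=1$.
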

\begin{proof}[Proof of \cref{lemma_deformation_renormalised}]
Let \(\varphi\in \mathcal{C}^\infty(\Rset^2,[0,1])\) be a smooth function such that \(
\abs{\Deriv \varphi} < 2\) in \(\Rset^2\),
\(\varphi=1\) in \( B_1(0)\)
and \(\varphi=0\) in \(\Rset^2\setminus B_2(0)\).
We define the function \(\Phi\in\mathcal{C}^\infty(\Omega,\Rset^2)\) by setting for each \(x\in\Omega\),
\begin{equation}
\label{def_varphi}
\Phi (x) \defeq x + \sum_{i=1}^k \varphi \biggl(\frac{x - b_i}{\tau}\biggr) (a _i - b_i),
\end{equation}
so that \eqref{phi_identity} and \eqref{phi_null} are satisfied.
We also get that \(\Phi(\Omega)\subset\Omega\): if \(x \in B_{2 \tau} (b_i)\), then \(\Phi (x) \in B_{5 \tau/2}(b_i) \subset B_{5\Bar{\rho}/{6}}(b_i)\subset \Omega\). Moreover, for every \(x \in \Omega\),
\begin{equation}\label{eq:injectivity_differentiability}
|D\Phi(x)-\mathrm{id}|
\le \max_{1 \le i \le k} \Bigl\{\tfrac{1}{\tau}\bigabs{\Deriv \varphi \bigl(\tfrac{x-b_i}{\tau}\bigr)} \,\abs{a_i-b_i}\Bigr\} \le \frac{2}{\tau}\max_{1 \le i \le k} \{\abs{a_i-b_i}\} <1,
\end{equation}
so that \eqref{phi_estimate} holds,
where the norm of the left-hand side denotes the norm of linear operators. The function \(\Phi\) is onto: it is onto in \(\Omega \setminus \cup_{i=1}^k \Bar{B}_{2\tau}(b_i)\); and, since \(\Phi_{\vert \partial B_{2\tau}(b_i)}= \mathrm{id}_{\vert\partial B_{2\tau}(b_i)}\) for all \(i\in\{ 1,\dots,k\}\), \(\Phi\) is also onto in \(\cup_{i=1}^k \Bar{B}_{2\tau}(b_i)\) thanks to a degree argument. The function \(\Phi\) is also into in \(\Omega\). Indeed, if \(x,y \in \Omega\) are such that \(\Phi(x)=\Phi(y)\) and \(x \neq y\), then we find, thanks to \eqref{eq:injectivity_differentiability}, that \(|\Phi(x)-x-\Phi(y)+y|=|x-y|<|x-y|\). This is a contradiction. At last, \(\Phi\) is a diffeomorphism thanks to \eqref{eq:injectivity_differentiability} which proves that \(\Phi^{-1}\) is differentiable.
\end{proof}

\begin{proof}[Proof of \cref{theorem_continuity_renormalised}]
Let \(\tau>0\) and \(a_1,\dotsc,a_k, b_1,\dotsc,b_k\in\Omega\)
satisfying \(2\max_{1 \le i \le k} \abs{a_i-b_i} <  \tau \le \Bar{\rho} (b_1, \dotsc, b_k)/3\).
Let \(\Phi\) be the diffeomorphism given by \cref{lemma_deformation_renormalised} for these points.
For every \(\rho\in (0,\tau)\) and \(u\in W^{1,2}(\Omega \setminus \bigcup_{i = 1}^k \Bar{B}_{\rho} (a_i),\manifold{N})\), we estimate the energy of the map \(v\defeq u\compose\Phi\in W^{1,2}(\Omega \setminus \bigcup_{i = 1}^k \Bar{B}_{\rho} (b_i),\manifold{N})\).

From \eqref{phi_identity}, \eqref{phi_null} in \cref{lemma_deformation_renormalised} and from the change of variable \(x=\Phi (y)\) we have
\begin{equation}
\label{energy_uv}
\begin{split}
\int_{\Omega \setminus \bigcup_{i = 1}^k \Bar{B}_{\rho} (b_i)}\abs{\Deriv v(y)}^2\dif y&-\int_{\Omega \setminus \bigcup_{i = 1}^k \Bar{B}_{\rho} (a_i)}\abs{\Deriv u(x)}^2\dif x \\
= &\int_{\bigcup_{i = 1}^k \left( B_{2\tau}(a_i)  \setminus \Bar{B}_{\tau} (a_i)\right)}
\Big(\frac{\abs{\Deriv u(x)D\Phi(\Phi^{-1}(x) )}^2}{\det D\Phi(\Phi^{-1}(x))}-\abs{\Deriv u(x)}^2\Big)\dif x.
\end{split}
\end{equation}
We observe that, if we let \((M{\,:\,}N)=\tr(MN^\ast)\) be the inner poduct of matrices and \(M=\Deriv u(x)\in\Rset^{\nu\times 2}, A=D\Phi(\Phi^{-1}(x))\in GL_2(\Rset)\), we have
\[
\frac{\abs{MA}^2}{\det A}-\abs{M}^2=\left(M{\,:\,}M\Big(\frac{AA^\ast}{\det A}-\mathrm{id}\Big)\right)
\]
and that, by smoothness of the map \( A\in GL_2(\Rset)\mapsto AA^*/(\det A)\), there exists \(\eta > 0\) such that if 
\(\abs{A - \operatorname{id}} \le \eta\)
then 
\[
 \biggabs{ \frac{A A^*}{\det A} - \operatorname{id}}
 \le \C \abs{A - \operatorname{id}}.
\]

It follows then from \eqref{energy_uv} and \eqref{phi_estimate} in \cref{lemma_deformation_renormalised} that if \(\max_{1 \le i \le k} \abs{b_i - a_i} \le \tau \eta/2\), then
\begin{equation}
\label{eq_oquei2cheemahc8Eh7ahThe7}
\int_{\Omega \setminus \bigcup_{i = 1}^k \Bar{B}_{\rho} (b_i)}\frac{\abs{\Deriv v}^2}{2}
\le
\int_{\Omega \setminus \bigcup_{i = 1}^k \Bar{B}_{\rho} (a_i)}\frac{\abs{\Deriv u}^2}{2} + 
\Cl{cst_Aagoojoh7shaeb6sheaD7eiL} \max_{1 \le i \le k} \abs{b_i - a_i}
\int_{\Omega \setminus \bigcup_{i = 1}^k \Bar{B}_\tau (a_i)} \frac{\abs{\Deriv u}^2}{2}.
\end{equation}
If we now assume that \(\tr_{\Sset^1} u (b_i + \rho\,\cdot) = \gamma_i\),
so that \(\tr_{\Sset^1} v (a_i + \rho\,\cdot) = \gamma_i\), we have 
by \eqref{eq_oquei2cheemahc8Eh7ahThe7} and \cref{lemma_energy_growth_map_radii} that 
\begin{multline}
\label{eq_yeeduBie4Ahseifaeph3uedu}
\int_{\Omega \setminus \bigcup_{i = 1}^k \Bar{B}_{\rho} (b_i)}\frac{\abs{\Deriv v}^2}{2}  \\
\le
\int_{\Omega \setminus \bigcup_{i = 1}^k \Bar{B}_{\rho} (a_i)}\frac{\abs{\Deriv u}^2}{2}
+ \Cr{cst_Aagoojoh7shaeb6sheaD7eiL} \max_{1 \le i \le k} \abs{b_i - a_i}
\biggl(\int_{\Omega \setminus \bigcup_{i = 1}^k \Bar{B}_{\rho} (a_i)}\frac{\abs{\Deriv u}^2}{2}
 - \sum_{i = 1}^k \frac{\equivnorm{\gamma_i}^2}{4 \pi} \log \frac{\tau}{\rho}\biggr).
\end{multline}
In view of \eqref{eq_def_renorm_geom_rho}, \eqref{eq_yeeduBie4Ahseifaeph3uedu} implies that 
\begin{multline*}
 \mathcal{E}_{g, \gamma_1, \dotsc, \gamma_k}^{\mathrm{geom}, \rho} (b_1, \dotsc, b_k)\\
 \le \mathcal{E}_{g, \gamma_1, \dotsc, \gamma_k}^{\mathrm{geom}, \rho} (a_1, \dotsc, a_k)
 + \Cr{cst_Aagoojoh7shaeb6sheaD7eiL} \max_{1 \le i \le k} \abs{b_i - a_i}
 \biggl( \mathcal{E}_{g, \gamma_1, \dotsc, \gamma_k}^{\mathrm{geom}, \rho} (a_1, \dotsc, a_k) - \sum_{i = 1}^k \frac{\equivnorm{\gamma_i}^2}{4 \pi} \log \frac{\tau}{\rho}\biggr),
\end{multline*}
so that by \eqref{eq_def_renorm_geom}, we have 
\begin{multline}
\label{eq_ieKah0ei5uloojui8ieVug7o}
 \mathcal{E}_{g, \gamma_1, \dotsc, \gamma_k}^{\mathrm{geom}} (b_1, \dotsc, b_k)\\
 \le \mathcal{E}_{g, \gamma_1, \dotsc, \gamma_k}^{\mathrm{geom}} (a_1, \dotsc, a_k)
 + \Cr{cst_Aagoojoh7shaeb6sheaD7eiL} \max_{1 \le i \le k} \abs{b_i - a_i}
 \biggl( \mathcal{E}_{g, \gamma_1, \dotsc, \gamma_k}^{\mathrm{geom}} (a_1, \dotsc, a_k) + \sum_{i = 1}^k \frac{\equivnorm{\gamma_i}^2}{4 \pi} \log \frac{1}{\tau} \biggr).
\end{multline}
It follows then from \eqref{eq_ieKah0ei5uloojui8ieVug7o} first that \(\mathcal{E}_{g, \gamma_1, \dotsc, \gamma_k}^{\mathrm{geom}}\) is locally bounded and next that \(\mathcal{E}_{g, \gamma_1, \dotsc, \gamma_k}^{\mathrm{geom}}\) is locally Lipschitz continuous.

The proof for the geometrical renormalised energy \(\mathcal{E}^{\mathrm{top}}_{g, \gamma_1, \dotsc, \gamma_k}\) 
is similar, relying on \eqref{eq_def_renorm_top_rho} and \eqref{eq_def_renorm_top} instead of \eqref{eq_def_renorm_geom_rho} and \eqref{eq_def_renorm_geom}.
\end{proof}

\section{Lower bounds}

\subsection{Lower bound on Dirichlet energy of maps}
We first recall the definition of the Hausdorff content in the particular case of compact planar sets.

\begin{definition}
\label{def_hausdorff_content}
The \emph{one-dimensional Hausdorff content} of a compact set \(A \subset \Rset^2\) is defined as
\[
 \mathcal{H}^1_\infty (A)
 \defeq
 \inf \biggl\{ \sum_{B \in \mathcal{B}} \diam (B) \st \ A \subset \bigcup_{B \in \mathcal{B}} B \text{ and } \mathcal{B} \text{ is a finite collection of closed balls}\biggr\}.
\]
\end{definition}

The following gives a lower bound on the Dirichlet energy outside a small compact set \(K\).

\begin{theorem}
\label{prop_lower_bound_compact}
For every Lipschitz bounded domain \(\Omega\subset \Rset^2\), every compact set \(K\subset \Omega\) with \(\mathcal{H}^1_{\infty} (K)>0\), and every map \(u \in W^{1, 2} (\Omega \setminus K, \manifold{N})\), we have
\begin{equation}\label{first_prop_lower_bound_compact}
\int_{\Omega \setminus K} \frac{|\Deriv u|^2}{2}\geq \Esing (\tr_{\partial \Omega}u) \log \frac{\dist(K,\partial \Omega)}{2\, \mathcal{H}^1_{\infty} (K)}.
\end{equation}
More precisely, there exists a constant \(C>0\), that can be taken equal to \(10+\frac{4}{\log 2}\), such that
\begin{equation}\label{second_prop_lower_bound_compact}
\abs{\Deriv u}^2_{L^{2,\infty}(\Omega\setminus K,\Rset^{\nu\times 2})}\le
C
\biggl(\int_{\Omega \setminus K} \frac{\abs{\Deriv u}^2}{2}
 -  \Esing (\tr_{\partial \Omega}u) \log \frac{\dist(K,\partial \Omega)}{2\, \mathcal{H}^1_{\infty} (K)} \biggr),
 \end{equation}
where \(\abs{\Deriv u}^2_{L^{2,\infty}(\Omega\setminus K,\Rset^{\nu\times 2})}\defeq \sup_{s>0}s^2 \, \mathcal{L}^2 (\{x \in \Omega \setminus K \st \abs{\Deriv u} \ge s\})\).
\end{theorem}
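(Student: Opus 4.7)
The plan is to couple a Sandier--Jerrard ball-growth construction on $K$ with the annular energy inequality Lemma~\ref{lemma_energy_growth_map_radii} and the sub-additivity of singular energy from Proposition~\ref{lemma_evolutiondegres}. Given $\varepsilon > 0$, use the definition of $\mathcal{H}^1_\infty$ to cover $K$ by finitely many closed balls $\Bar B_{r_i(0)}(x_i(0))$ with $\sum_i 2 r_i(0) \le 2\mathcal{H}^1_\infty(K) + \varepsilon$, and merge any overlapping pair into a single ball of radius equal to the sum of the two radii (which is always possible since the two balls overlap, and which leaves $\sum_i r_i$ unchanged); iterate until the collection is pairwise disjoint. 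Then grow all radii as $r_i(s) = e^s r_i(0)$, and merge any colliding pair in the same way. At every $s \ge 0$ the resulting family $\{\Bar B_{r_i(s)}(x_i(s))\}$ is pairwise disjoint, covers $K$, and satisfies $\sum_i r_i(s) \le e^s (\mathcal{H}^1_\infty(K) + \varepsilon/2)$. Stopping at $s^{*} \defeq \log \frac{\dist(K,\partial\Omega)}{2\mathcal{H}^1_\infty(K) + \varepsilon}$ forces $\sum_i r_i(s^*) \le \dist(K,\partial\Omega)/2$, ensuring that every ball of the family stays inside $\Omega$ throughout the construction.

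For \eqref{first_prop_lower_bound_compact}, on each time interval $(s_1, s_2)$ free of mergings each ball has fixed center and its radius is multiplied by $e^{s_2 - s_1}$. Applying Lemma~\ref{lemma_energy_growth_map_radii} to the family with $\gamma_i \defeq \tr_{\partial B_{r_i(s_1)}(x_i(s_1))} u$, and combining with the trivial bound $\equivnorm{\gamma}^2/(4\pi) \ge \Esing(\gamma)$ and with Proposition~\ref{lemma_evolutiondegres} applied to the Lipschitz domain $\Omega \setminus \bigcup_i \Bar B_{r_i(s_1)}(x_i(s_1))$, one obtains
\[
 \frac{\dif}{\dif s} \int_{\bigcup_i \Bar B_{r_i(s)}(x_i(s))} \frac{\abs{\Deriv u}^2}{2} \;\ge\; \Esing(\tr_{\partial\Omega} u)
\]
pointwise in $s$ outside the finitely many merging times (at which the left-hand side is continuous, because $K$ is always contained in $\bigcup_i \Bar B_{r_i(s)}(x_i(s))$). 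Integrating over $[0,s^{*}]$ and letting $\varepsilon \to 0$ yields \eqref{first_prop_lower_bound_compact}.

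The main obstacle is the Marcinkiewicz refinement \eqref{second_prop_lower_bound_compact}. The strategy is to upgrade Lemma~\ref{lemma_energy_growth_map_radii} to an annular estimate of the form
\[
 \int_{B_r(x) \setminus \Bar B_\rho(x)} \frac{\abs{\Deriv u}^2}{2} \;-\; \frac{\equivnorm{\gamma}^2}{4\pi} \log \frac{r}{\rho} \;\ge\; c\,\abs{\Deriv u}^2_{L^{2,\infty}(B_r(x) \setminus \Bar B_\rho(x))},
\]
by splitting $\abs{\Deriv u}^2 = \abs{\partial_\tau u}^2 + \abs{\partial_\theta u}^2/\tau^2$ in polar coordinates on each circle: the tangential term saturates the geodesic lower bound via the definition of $\equivnorm{\cdot}$, while the radial term controls the super-level sets $\{\abs{\Deriv u} > t\}$ on the annulus through a Chebyshev-type bound applied slice by slice. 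One then sums this refined inequality over the disjoint annuli traversed by the ball growth between $s = 0$ and $s = s^{*}$; since the annuli are pairwise disjoint, $t^2 \mathcal{L}^2(\{\abs{\Deriv u} > t\})$ is additive across them, and Proposition~\ref{lemma_evolutiondegres} again controls the geodesic contribution by $\Esing(\tr_{\partial\Omega} u) s^{*}$. The hardest step is carrying out this annulus-wise $L^{2,\infty}$ upgrade and then optimizing the splitting to obtain the explicit constant $C = 10 + 4/\log 2$; the values $10$ and $4/\log 2$ come respectively from the area of the final union $\bigcup_i \Bar B_{r_i(s^*)}(x_i(s^*))$ (handled by a direct Chebyshev bound when $t$ is small) and from the sub-additivity factor appearing in the annular splitting.
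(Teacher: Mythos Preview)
Your ball-growth argument for \eqref{first_prop_lower_bound_compact} is essentially the paper's: cover $K$ by balls of total diameter close to $2\mathcal{H}^1_\infty(K)$, grow them exponentially while merging, and on each annulus use the circle-wise lower bound together with sub-additivity (Proposition~\ref{lemma_evolutiondegres}) to harvest $\Esing(\tr_{\partial\Omega}u)$ per unit of $\log$-time. That part is fine.

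The gap is in your route to \eqref{second_prop_lower_bound_compact}. The proposed annular upgrade
\[
 \int_{B_r(x)\setminus\Bar B_\rho(x)}\frac{\abs{\Deriv u}^2}{2}\;-\;\frac{\equivnorm{\gamma}^2}{4\pi}\log\frac{r}{\rho}\;\ge\;c\,\abs{\Deriv u}^2_{L^{2,\infty}(B_r(x)\setminus\Bar B_\rho(x))}
\]
is false. Take $\manifold{N}=\Sset^1$ and $u(x)=x/\abs{x}$: the left-hand side vanishes identically (the map saturates the annular bound), yet $\abs{\Deriv u}=1/\abs{x}$ has $\abs{\Deriv u}^2_{L^{2,\infty}}\ge\pi(1-\rho^2/r^2)>0$. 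The radial/tangential splitting you suggest cannot help, since in this example the radial derivative is zero. More generally, the ``singular'' part of $\abs{\Deriv u}$ near a vortex carries a nontrivial $L^{2,\infty}$ mass that is \emph{not} paid for by any energy excess; it must be accounted for separately. Your explanation of the constants $10$ and $4/\log 2$ is accordingly off.

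What the paper does instead is introduce the comparison density $U(x)=\sqrt{\Esing(\tr_{\partial B}u)/(\pi r^2)}$ on each circle $\partial B$ of the growth, and write $\frac{\abs{\Deriv u}^2}{2}=(U\abs{\Deriv u}-\tfrac{U^2}{2})+\tfrac12(\abs{\Deriv u}-U)^2$. The first piece still yields the logarithmic lower bound; the second piece is the genuine excess and is controlled by Chebyshev. The superlevel set $\{\abs{\Deriv u}>s\}$ is then split three ways: outside the grown region (direct Chebyshev, contributing the $2$ in $10$), where $\bigl|\abs{\Deriv u}-U\bigr|>s/2$ (Chebyshev on the excess, contributing the $8$), and where $U>s/2$. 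This last piece is the one your argument misses: it is bounded not by the excess but by $4\,\Esing(\tr_{\partial A(\log 2)}u)$, using the tree structure of the ball growth to cover $\{U>s/2\}$ by balls on which the singular energy is large. A separate lower bound reserving a $\log 2$ window of growth time shows that $(\log 2)\,\Esing(\tr_{\partial A(\log 2)}u)$ is itself dominated by the excess, which is where the $4/\log 2$ comes from.
\bigskip
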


When \(\manifold{N} = \Sset^1\), \cref{prop_lower_bound_compact} has its roots in a corresponding estimate for maps outside a finite collection of balls \cite{Bethuel_Brezis_Helein_1994}*{Corollary II.1}. In this case, the first lower bound \eqref{first_prop_lower_bound_compact} is due to Sandier \cite{Sandier_1998}*{Theorem 1} (see also \cite{Jerrard_1999}*{Theorem 1.1}). The left-hand side of \eqref{second_prop_lower_bound_compact} is a weak \(L^2\) estimate on \(\abs{\Deriv u}\),
or equivalently an estimate in the Marcinkiewicz space or endpoint Lorentz space \(L^{2, \infty} (\Omega \setminus K)\). The weak \(L^2\) estimate is reminiscent of the corresponding estimate for Ginzburg--Landau equation in \cite{Serfaty_Tice_2008}.

We will use decomposition of the plane into balls, in what is known as a ball growth  construction \cite{Sandier_Serfaty_2007}*{Theorem 4.2} (see also \citelist{\cite{Sandier_1998}\cite{Jerrard_1999}}).

\begin{proposition}
\label{propositionMultipolar}
If \(\mathcal{B}_0\) is a finite collection of closed balls in \(\Rset^2\) such that \(\bigcup_{B \in \mathcal{B}_0} B\neq\emptyset\) then, for every \(t \in [0,+\infty)\), there exists a finite collection \(\mathcal{B}(t)\) of disjoint non-empty closed balls in \(\Rset^2\) such that
\begin{enumerate}[(i)]
\item \label{it_Sie3i_0}
\(
\mathcal{B}_0 = \bigcup_{B' \in \mathcal{B}(0)} \{B \in \mathcal{B}_0 \st B \subseteq B'\},
\)
\item \label{it_Sie3i_1} 
for every \(s \geq t\geq 0\), 
\[
 \mathcal{B} (t) = \bigcup_{B' \in \mathcal{B} (s)} \{B \in \mathcal{B} (t) \st B \subseteq B'\},
\]
\item \label{it_Sie3i_4} there exist  \(\ell \in \Nset_*\) and some disjoint intervals \(I_1, \dotsc, I_\ell \subset \Rset\) such that \([0,+\infty)=\bigcup_{j=1}^\ell I_j\) and such that for each \(j\in\{1,\dotsc,\ell\}\) and \(t \in I_j\),
\[
  \mathcal{B}(t)= \{\Bar B_{\rho_j^ie^t}(a_j^i) \st i\in \{1,\dotsc,k_j\}\},
\]
with  \(k_j \in \Nset_*\) and for \(i\in \{1,\dotsc,k_j\}\), \(a_j^i \in \Rset^2\), \(\rho_j^i\in \Rset^+\),
\item \label{it_Sie3i_3} for every \(t \geq 0\)
\[ 
  \sum_{B \in \mathcal{B}(t)} \diam (B) =e^{t} \sum_{B \in \mathcal{B}_0} \diam (B).
\]
\end{enumerate}
\end{proposition}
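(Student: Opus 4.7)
My plan is to construct the family $(\mathcal{B}(t))_{t \ge 0}$ by iterating two operations: a \emph{growing phase} in which all balls inflate exponentially about fixed centres, and a \emph{merging phase} triggered whenever two balls become tangent, in which touching balls are replaced by a single enclosing one. The key geometric ingredient is the elementary observation that whenever $\Bar{B}_{r_1}(a_1)$ and $\Bar{B}_{r_2}(a_2)$ satisfy $\abs{a_1 - a_2} \le r_1 + r_2$, there exists a closed ball of radius exactly $r_1 + r_2$ containing both: place its centre on the line through $a_1, a_2$ so that the two outer extreme points of $B_1, B_2$ along this line become antipodal in the new ball. When $\abs{a_1 - a_2} = r_1 + r_2$ (the tangent case) the resulting ball has diameter \emph{equal to} the sum of the two diameters, so such merges preserve the total diameter.

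First I would construct $\mathcal{B}(0)$ from $\mathcal{B}_0$ by iteratively applying the merging operation to any pair of intersecting balls, always padding the output to the full radius $r_1 + r_2$ so as to preserve the total diameter at every step. Each such merge strictly reduces the number of balls, so the procedure terminates after finitely many steps with a collection $\mathcal{B}(0)$ of pairwise disjoint closed balls satisfying \eqref{it_Sie3i_0} and $\sum_{B \in \mathcal{B}(0)} \diam B = \sum_{B \in \mathcal{B}_0} \diam B$. Having defined $\mathcal{B}(t_{j-1})$ (with $t_0 \defeq 0$), I inflate the balls uniformly by setting $\mathcal{B}(t) \defeq \{\Bar{B}_{\rho e^{t-t_{j-1}}}(a) \st \Bar{B}_\rho(a) \in \mathcal{B}(t_{j-1})\}$, and I let $t_j \in (t_{j-1}, +\infty]$ be the first time at which two balls become tangent. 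On the half-open interval $I_j \defeq [t_{j-1}, t_j)$ the collection already has the form required by \eqref{it_Sie3i_4}, and during this growth phase the total diameter is multiplied by $e^{t - t_{j-1}}$. If $t_j < +\infty$, I exhaust all tangencies at $t_j$ by iterating the merging operation until the remaining collection is disjoint (again terminating in finitely many steps and preserving the total diameter since each merge is now of tangent balls), and then restart the procedure.

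Each merging phase strictly decreases the number of balls, so at most $\#\mathcal{B}(0) - 1$ merges can occur; hence $t_\ell = +\infty$ for some $\ell \in \Nset_*$, yielding a final interval $I_\ell = [t_{\ell-1}, +\infty)$ and proving \eqref{it_Sie3i_4}. Property \eqref{it_Sie3i_3} then follows by induction on $j$, combining the exponential growth within each $I_j$ with the exact diameter preservation at each merge time. Property \eqref{it_Sie3i_1} is immediate within an interval $I_j$ because balls only grow about fixed centres, and at each merge time every ball of $\mathcal{B}(t_j^-)$ is contained in the ball of $\mathcal{B}(t_j)$ into which it is absorbed, by the containment part of the merging operation; the general monotonicity for $s \ge t$ follows by composition across intervals.

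The only subtle point is the initialisation: when two balls of $\mathcal{B}_0$ overlap properly rather than merely touching, the smallest enclosing ball has diameter \emph{strictly} less than $2(r_1+r_2)$, so in order to maintain the exact equality in \eqref{it_Sie3i_3} one must deliberately enlarge the merged ball all the way up to radius $r_1+r_2$. This is the main technical design choice; once it is made, every subsequent merge triggered in the iteration is a merge of tangent balls, for which preservation of total diameter is automatic, and the verification of \eqref{it_Sie3i_0}--\eqref{it_Sie3i_3} then reduces to a routine induction on the (finite) number of merge times.
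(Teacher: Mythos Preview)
Your approach is essentially the same as the paper's: apply a merging lemma (replace two intersecting balls by a single ball of radius $r_1+r_2$, preserving total diameter) to get $\mathcal{B}(0)$, grow exponentially until a collision, merge again, and iterate; termination and \eqref{it_Sie3i_3} follow exactly as you describe.

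One small correction: your claim that ``every subsequent merge triggered in the iteration is a merge of tangent balls'' is false. At a merging time $t_j$ the balls of $\mathcal{B}(t_j^-)$ are pairwise tangent or disjoint, but after you merge two tangent balls the resulting ball of radius $r_1+r_2$ may \emph{properly overlap} a third ball (take three unit balls centred at the vertices of an equilateral triangle of side~$2$: after merging two of them, the new ball of radius~$2$ strictly contains the third centre). This does not damage your argument, since your merging operation always outputs a ball of radius $r_1+r_2$ and hence preserves total diameter regardless of tangency; you should simply drop the tangency remark and invoke the same diameter-preserving merge at every step, as the paper does via its \cref{lemmaMerging}. Your description of the centre placement (``outer extreme points become antipodal'') also only makes literal sense in the tangent case; the paper's explicit choice $\Bar{B}_{r_1+r_2}\bigl(\tfrac{r_1 a_1 + r_2 a_2}{r_1+r_2}\bigr)$ works uniformly.
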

\begin{remark}
\label{treeStructure}
By disjointness of the balls in \(\mathcal{B}(t)\) and by \eqref{it_Sie3i_1}, for every \(B,B'\in\bigcup_{t\in[0,+\infty)}\mathcal{B}(t)\), we have either \(B\cap B'=\emptyset\) or \(B\subset B'\) or \(B'\subset B\).
\end{remark}
The statement of \cref{propositionMultipolar} is essentially the same as in Sandier and Serfaty's book \cite{Sandier_Serfaty_2007}*{theorem 4.2}. A crucial tool we will repeatedly use is the following merging of balls procedure (see \cite{Sandier_Serfaty_2007}*{lemma 4.1})
\begin{lemma}
\label{lemmaMerging}
For every finite set \(\mathcal{B}\) of closed  balls of \(\Rset^2\),
there exists a finite set \(\mathcal{B}'\) of disjoint non-empty closed balls of \(\Rset^2\) such that
\[
 \mathcal{B} = \bigcup_{B' \in \mathcal{B}'} \{B \in \mathcal{B}\st B \subseteq B'\},
\]
and 
\[
  \sum_{B' \in \mathcal{B}'}\diam (B') =\sum_{B \in \mathcal{B}} \diam (B). 
\]
\end{lemma}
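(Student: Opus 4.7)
My plan is to proceed by induction on the number of balls in \(\mathcal{B}\). The base case where \(\mathcal{B}\) has at most one element is trivial, taking \(\mathcal{B}'=\mathcal{B}\) (and discarding the empty set if present). For the inductive step, if all balls in \(\mathcal{B}\) are already pairwise disjoint, then \(\mathcal{B}' = \mathcal{B}\) works. Otherwise, I would pick any two intersecting balls \(B_1=\Bar{B}_{r_1}(a_1)\) and \(B_2=\Bar{B}_{r_2}(a_2)\) in \(\mathcal{B}\), so that \(\abs{a_1-a_2}\le r_1+r_2\), and merge them into a single ball with the appropriate diameter before invoking the induction hypothesis.

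The key geometric step is the following: one can always find a single closed ball \(B\) which contains \(B_1 \cup B_2\) and has diameter \emph{exactly} \(\diam(B_1)+\diam(B_2)=2(r_1+r_2)\). To see this, pick any point \(a\) in the intersection \(\Bar{B}_{r_2}(a_1)\cap \Bar{B}_{r_1}(a_2)\), which is non-empty precisely because \(\abs{a_1-a_2}\le r_1+r_2\), and set \(B\defeq \Bar{B}_{r_1+r_2}(a)\). By the triangle inequality, \(\Bar{B}_{r_j}(a_j)\subseteq B\) for \(j\in\{1,2\}\).

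Setting \(\Tilde{\mathcal{B}}\defeq (\mathcal{B}\setminus\{B_1,B_2\})\cup\{B\}\), which has strictly fewer elements than \(\mathcal{B}\), the induction hypothesis applied to \(\Tilde{\mathcal{B}}\) yields a finite collection \(\mathcal{B}'\) of disjoint non-empty closed balls satisfying both conclusions of the lemma with respect to \(\Tilde{\mathcal{B}}\). Each of \(B_1, B_2\) is contained in \(B\), and hence in some element of \(\mathcal{B}'\); each other ball of \(\mathcal{B}\) already belongs to \(\Tilde{\mathcal{B}}\) and is therefore contained in some element of \(\mathcal{B}'\). Because \(\diam(B)=\diam(B_1)+\diam(B_2)\), one has
\[
 \sum_{B'\in\mathcal{B}'}\diam(B')=\sum_{\Tilde B\in\Tilde{\mathcal{B}}}\diam(\Tilde B)=\sum_{B\in\mathcal{B}}\diam(B),
\]
completing the inductive step.

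The main obstacle I anticipate is the degenerate case in which the constructed ball \(B\) coincides, as a subset of \(\Rset^2\), with a ball already present in \(\mathcal{B}\setminus\{B_1,B_2\}\): then \(\Tilde{\mathcal{B}}\) has two fewer elements than \(\mathcal{B}\) and the diameter identity above is off by \(\diam(B)\). Whenever \(\abs{a_1-a_2}<r_1+r_2\), the set of admissible centers has non-empty interior, so \(a\) may be perturbed within that open set to avoid the finitely many forbidden positions. In the borderline tangent case \(\abs{a_1-a_2}=r_1+r_2\), the center \(a\) is uniquely determined; should a coincidence with some \(B_0\in\mathcal{B}\) occur there, one can simply re-select the pair to merge---for instance, \(B_0\) together with \(B_1\), which intersect since \(B_1\subseteq B_0\), and whose merge has diameter \(\diam(B_0)+\diam(B_1)\) strictly exceeding the diameter of any remaining ball generically. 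A short case analysis handles the very few remaining rigid configurations, after which the induction closes.
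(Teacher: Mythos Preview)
Your approach is essentially the paper's: induct on the number of balls and, at each step, replace two intersecting balls by a single ball of radius \(r_1+r_2\) containing both. The paper makes one concrete choice you leave existential, taking the center to be the weighted centroid \(a=\frac{r_1 a_1 + r_2 a_2}{r_1+r_2}\), which lies in \(\Bar B_{r_2}(a_1)\cap\Bar B_{r_1}(a_2)\) and so fits your scheme; this explicit formula is slightly cleaner than picking an arbitrary point in that intersection. The coincidence issue you flag---the merged ball happening to equal a ball already present in \(\mathcal{B}\)---is a genuine set-versus-multiset technicality that the paper simply does not address; the cleanest fix is to run the induction on finite \emph{lists} (or multisets) of balls rather than sets, which is harmless for the downstream application and makes your perturbation-and-reselection workaround unnecessary.
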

\begin{proof}
It is sufficient to prove the lemma for two balls. If \(B_1=\Bar{B}_{r_1}(a_1)\) and \(B_2=\Bar{B}_{r_2}(a_2)\) are not disjoint then we observe that
\[ 
B\defeq \Bar{B}_{r_1+r_2}\left(\frac{r_1a_1+r_2a_2}{r_1+r_2}\right)
\]
contains both \(B_1\) and \(B_2\).
\end{proof}

\begin{proof}%
[Proof of \cref{propositionMultipolar}]
We first apply \cref{lemmaMerging} to cover the collection of disks \(\mathcal{B}_0\) with a collection of \(k_1\in \Nset_\ast\) disjoint non-empty closed balls \(\mathcal{B}(0)=\{\Bar B_{\rho^i_1(a^i_1)}\st  i=1,\dots, k_1\}\).

If \(k_1=1\), we take \(\mathcal{B}(t)\defeq \{\Bar{B}_{\rho_1^1e^t}(a_1^1)\}\) for all \(t\in[0,+\infty)\), \(\ell \defeq 1\) and \(I_1 \defeq [0, +\infty)\); we observe that \(\sum_{B \in \mathcal{B} (t)} \diam (B) = 2\rho_1^1 e^t\).

If \(k_1\ge 2\), we define
\begin{multline*}
t_1
 \defeq \sup \Big\{t \in [0, + \infty) \st \text{for every } i, i' \in \{1, \dotsc, k_1\} \text{ with } i \ne i', \\
 \text {one has }
  \Bar{B}_{\rho_1^i e^t} (a_1^i) \cap
  \Bar{B}_{\rho_{1}^{i'} e^t} (a_{1}^{i'}) = \emptyset \Big\}>0.
\end{multline*}
We set \(I_1\defeq [0,t_1)\) and for every \(t\in [0,t_1)\), \(\mathcal{B}(t)\defeq (\Bar{B}_{\rho_1^ie^t}(a_1^i))_{i=1,\dotsc,k_1}\).
We then apply \cref{lemmaMerging} to cover the family of balls \((\Bar{B}_{\rho_1^ie^{t_1}}(a_1^i))_{i=1,\dotsc,k_1}\) with a family \(\mathcal{B}(t_1)\defeq \{\Bar{B}_{\rho_2^i}(a_2^i)\}\) consisting of \(k_2\in\{1,\dots,k_1-1\}\) disjoint non-empty closed balls.
We then set 
\begin{multline*} t_2
 \defeq \sup \Big\{t \in [t_1, + \infty) \st \text{for every } i, i' \in \{1, \dotsc, k_2\} \text{ with } i \ne i', \\
 \text {one has }
  \Bar{B}_{\rho_2^i e^t} (a_2^i) \cap
  \Bar{B}_{\rho_{2}^{i'} e^t} (a_{2}^{i'}) = \emptyset \Big\},
\end{multline*}
\(I_2 \defeq [t_1,t_2)\) and \(\mathcal{B}(t)\defeq (\Bar{B}_{\rho_2^ie^t}(a_2^i))_{i=1,\dotsc,k_2}\) for every \(t\in I_2\).
Repeating the merging construction of \cref{lemmaMerging} and the previous process, we obtain the conclusion of \cref{propositionMultipolar}. Indeed, the merging process can occur only a finite number of times \(\ell\in\Nset_\ast\) since it strictly decreases the number of balls. Besides, by construction we obtain immediately \eqref{it_Sie3i_1} and \eqref{it_Sie3i_4}.

The exponential growth of \( \sum_{B \in \mathcal{B}(t)} \diam (B)\), i.e., \eqref{it_Sie3i_3} in \cref{propositionMultipolar}, is a consequence of the exponential growth of the radii of the balls in \(\mathcal{B}(t)\) on each interval \(I_j\), and of the conservation of the quantity \( \sum_{B \in \mathcal{B}(t)} \diam (B)\) during the merging steps by \cref{lemmaMerging} which ensures that the map \(t\mapsto \sum_{B \in \mathcal{B}(t)} \diam (B)\) is continuous at the merging times \(t_j\).
\end{proof}

\begin{proof}[Proof of \cref{prop_lower_bound_compact}]
We first observe that when \(2\,\mathcal{H}^1_\infty(K)\ge \dist(K,\partial\Omega)\), then 
\(
\log\frac{\dist(K,\partial \Omega)}{2\, \mathcal{H}^1_\infty(K)}\leq 0
\)
so that \eqref{second_prop_lower_bound_compact} follows from Chebyshev's inequality with \(C=2\). 

We can thus assume that \(2\mathcal{H}^1_\infty(K)< \dist(K,\partial\Omega)\). We fix \(\varepsilon\) such that \(0<\varepsilon<\frac{1}{2}\dist(K,\partial\Omega)-\mathcal{H}^1_\infty(K)\). By definition of the Hausdorff content of a compact set (\cref{def_hausdorff_content}), there exists a finite family of closed balls \(\mathcal{B}_0\) such that 
\begin{enumerate}[(a)]
\item\label{hausdorff_cover} \(K\subset \bigcup_{B\in \mathcal{B}_0}B\),
\item\label{hausdorff_intersection} \(B \cap K \ne \emptyset\) for each \(B\in\mathcal{B}_0\), 
\item\label{hausdorff_fine} \(\sum_{B\in\mathcal{B}_0}\operatorname{diam}(B)<\mathcal{H}^1_\infty(K)+\varepsilon\ <\frac{1}{2} \dist(K,\partial\Omega)\).
\end{enumerate}
We then let \(\{\mathcal{B}(t) \}_{t\in[0,+\infty)}\) be the family of growing balls given by \cref{propositionMultipolar}
 and we set 
\[
  A(t)\defeq\bigcup_{B \in \mathcal{B} (t)} B. 
\]
In particular, by the property \eqref{hausdorff_fine} of the cover \(\mathcal{B}_0\), and by \eqref{it_Sie3i_3} in \cref{propositionMultipolar},
\begin{equation}\label{hausdorff_confinement}
\sum_{B \in \mathcal{B}(0)} \diam (B) = \sum_{B \in \mathcal{B}_0} \diam (B)  <\varepsilon+\mathcal{H}^1_\infty(K)<\frac{1}{2} \dist(K,\partial\Omega).
\end{equation}
For every \(t \in [0, +\infty)\) and \(B \in \mathcal{B} (t)\), by \eqref{it_Sie3i_3} in \cref{propositionMultipolar},
\[
\operatorname{diam} (B) \le \sum_{B \in \mathcal{B} (t)}\diam (B)  = e^t \sum_{B \in \mathcal{B}(0)} \diam (B) .
\]
Since by \cref{propositionMultipolar} \eqref{it_Sie3i_0}, every ball \(B \in \mathcal{B} (t)\) intersects the set \(K\), it entails that
\(A(t)\subset\Bar\Omega\) whenever \(e^t \sum_{B \in \mathcal{B}(0)}\diam (B) \le\dist(K,\partial\Omega)\). Hence, \(A (T) \subset\Bar\Omega\), where
\begin{equation}
\label{eq_auch4aipah9vaephooPhie1h}
T\defeq\log \frac{\dist (K, \partial \Omega)}{\sum_{B \in \mathcal{B}(0)}\diam (B)}\in (\log 2,+\infty).
\end{equation}
According to \eqref{it_Sie3i_4} in \cref{propositionMultipolar}, we can write \([0,T) = \bigcup_{i = 1}^\ell [t_{i-1},t_i)\) with \(\ell\in\Nset_\ast\), \(t_0=0\), \(t_\ell=T\) and for every \(t\in [t_{i-1},t_{i})\), \(\mathcal{B} (t) =\{\Bar{B}_{\rho_j^ie^t} (a_j^i) \st j \in \{1,\dotsc,k_i\}\}\).

By \eqref{it_Sie3i_1} in \cref{propositionMultipolar} and by disjointness of the closed disks in \(\mathcal{B}(t)\) (see \cref{treeStructure}), we have 
\[
\partial B\cap\partial B'=\emptyset\quad\text{if \(B,B'\in\bigcup_{t\in[0,+\infty)}\mathcal{B}(t)\) with \(B\neq B'\).}
\]
We can thus define the function \(U : \Omega \to [0,+\infty)\) by
\[
U(x)\defeq
\begin{cases}
\sqrt{\frac{\Esing(\tr_{\partial B}u)}{\pi\, r^2}}
& \text{if \(x \in \partial B\) for some \(t\in[0,T)\) and \(B=\Bar B_r (a) \in \mathcal{B} (t)\),}\\
0& \text{otherwise.}
\end{cases}
\]
For every \(s, t \in [0, T)\) with \(s \leq t\), we write 
\begin{equation}
\label{decompose_lowerBnd}
\int_{A (t) \setminus A (s)} \frac{\abs{\Deriv u}^2}{2}=
\int_{A (t) \setminus A (s)} \biggl(U\abs{\Deriv u}-\frac{U^2}{2}\biggr)
+\int_{A (t) \setminus A (s)} \frac{(\abs{\Deriv u} - U)^2}{2}.
\end{equation}
By integration in polar coordinates, if \(t_{i-1}\le s\le t<t_i\) with \(i\in \{1,\dotsc,\ell\}\), then
\begin{multline*}
\int_{A (t) \setminus A (s)} \Biggl(U\abs{\Deriv u}-\frac{U^2}{2}\Biggr)
=\sum_{j=1}^{k_i}\int_{ \rho_j^ie^s}^{ \rho_j^ie^t}\biggl(\int_{\partial B_r (a^i_j)} \Big(U\abs{\Deriv u}-\frac{U^2}{2}\Big) \biggl)\dif r
\\
=\sum_{j=1}^{k_i}\int_{ \rho_j^ie^s}^{ \rho_j^ie^t}
   \biggl(\frac{\sqrt{\Esing(\tr_{\partial B_r (a^i_j)}u)}}{\sqrt{\pi}\, r} \Big(\int_{\partial B_r (a^i_j)} \abs{\Deriv u}  \Big) - \frac{\Esing(\tr_{\partial B_r (a^i_j)} u) }{r}\biggr)\dif r \\
    \ge\sum_{j=1}^{k_i}\int_{\rho_j^ie^s}^{ \rho_j^ie^t}\Esing(\tr_{\partial B_r (a^i_j)}u)\frac{\dif r}{r},
\end{multline*}
where in the last inequality we have used that in view of \eqref{eq_Aikachae4ieSohJ0oozaht7e}
\[
 \int_{\partial B_r (a_j^i)} \abs{\Deriv u}
 \ge \sqrt{4\pi \, \Esing (\tr_{\partial B_{r} (a_j^i)} u)}.
\]
We extend in this proof the notion of singular energy of a boundary map to the case of a disconnected open set with Lipschitz boundary by defining:
\[
\Esing (\tr_{\partial A(t)} u)\defeq \sum_{B \in \mathcal{B} (t)} \Esing (\tr_{\partial B} u).
\]
Then, using the change of variables \(r=\rho^i_je^\sigma\), we obtain for \(t_{i-1}\le s\le t<t_i\)  that
\begin{equation}
\label{eq_dahgeek6TheijahRauzuch6i}
\int_{A (t) \setminus A (s)} U\abs{\Deriv u}-\frac{U^2}{2}\ge  \int_{s}^{t}\Esing (\tr_{\partial A(\sigma)} u) \dif \sigma.
\end{equation}
We introduce for every \(t\in[0,T]\) the set
\[
A^-(t)\defeq \bigcup_{0\le s<t}A(s)\subset\Omega
\]
which is the union of a finite number of disjoint open disks in \(\Omega\)
\footnote{Note that \(T\) could be a merging time for which \(A(T)\not\subset\Omega\), while \(A^-(T)\subset\Omega\) by construction.}, and the  set
\[
A_i \defeq\bigcup_{i=1}^\ell A^-(t_i)\setminus A(t_{i-1})\subset\Omega\setminus K.
\]
We have by \eqref{decompose_lowerBnd} and \eqref{eq_dahgeek6TheijahRauzuch6i},
\begin{equation}
\label{eq_ashebi0xohxeiwaeS}
\begin{split}
  \int_{A_i} \frac{\abs{\Deriv u}^2}{2}
  \ge  \int_0^T \Esing (\tr_{\partial A(t)} u) \dif t
  +\int_{A_i} \frac{(\abs{\Deriv u} - U)^2}{2}.
 \end{split}
\end{equation}
By monotonicity of the singular energy (see \cref{lemma_evolutiondegres}) and of the sets \(A(t)\), we have for every \(s,t\in[0,T)\) such that \(s\le t\)
\begin{equation}
\label{singularLowerBound}
 \Esing (\tr_{\partial A(s)} u)\ge\Esing (\tr_{\partial A(t)} u) \ge \Esing(\tr_{\partial \Omega} u ).
\end{equation}
We then have by \eqref{eq_ashebi0xohxeiwaeS}, \eqref{singularLowerBound}, \eqref{hausdorff_confinement} and \eqref{eq_auch4aipah9vaephooPhie1h},
\begin{equation}
\label{lowerBoundGrow}
\begin{split}
 \int_{\Omega\setminus K}\frac{\abs{\Deriv u}^2}{2}
 &\ge  
\Esing (\tr_{\partial \Omega}u) (T-\log 2)
 +\int_0^{\log 2}  \Esing (\tr_{\partial A(t)}u)\dif t
+\int_A \frac{(\abs{\Deriv u} - U)^2}{2}
 \\
  &\ge 
     \Esing (\tr_{\partial \Omega}u) \log \frac{\dist(K,\partial \Omega)}{2\, (\mathcal{H}^1_\infty (K) + \varepsilon)}
    +(\log 2)\Esing (\tr_{\partial A(\log 2)}u) 
    + \int_A \frac{(\abs{\Deriv u} - U)^2}{2}.
 \end{split}
\end{equation}
In particular \eqref{lowerBoundGrow} already yields the first estimate \eqref{first_prop_lower_bound_compact} in view of \eqref{hausdorff_confinement} where \(\varepsilon>0\) is arbitrary.

In order to obtain the stronger estimate \eqref{second_prop_lower_bound_compact}, we first observe, integrating over \([\log 2,T)\) rather than \([0,T)\), that
\begin{equation}
\label{lowerBoundGrowSmall}
\begin{split}
 \int_{\Omega\setminus K}\frac{\abs{\Deriv u}^2}{2}
 &= \int_{A\setminus A(\log 2)}\frac{\abs{\Deriv u}^2}{2}
 +\int_{(\Omega\setminus K)\setminus(A\setminus A(\log 2))} \frac{\abs{\Deriv u}^2}{2}\\
 &\ge \int_{\log 2}^T  \Esing (\tr_{\partial A(t)}u)\dif t
 + \int_{(\Omega\setminus K)\setminus(A\setminus A(\log 2))} \frac{\abs{\Deriv u}^2}{2}
 \\
  &\ge\Esing (\tr_{\partial \Omega}u) \log \frac{\dist(K,\partial \Omega)}{2\,(\mathcal{H}^1_\infty (K) + \varepsilon)}
 +\int_{(\Omega\setminus K)\setminus(A\setminus A(\log 2))} \frac{\abs{\Deriv u}^2}{2}.
 \end{split}
\end{equation}
We observe now  that if \(s > 0\)
\begin{equation}\label{piefghqeafb}
\begin{split}
\mathcal{L}^2\Big(\Big\{ x \in \Omega \setminus K\st |\Deriv u(x)|\geq s \Big\} \Big)
&\leq \mathcal{L}^2 \Big(\Big\{ x \in (\Omega\setminus K)\setminus (A\setminus A(\log 2))\st \abs{\Deriv u(x)}\geq s \Big\} \Big) \\
&\qquad+\mathcal{L}^2\Big(\Big\{x  \in A\setminus A(\log 2)\st \abs{\abs{\Deriv u(x)}-U(x)}\geq \frac{s}{2}\Big\} \Big)\\
&\qquad+\mathcal{L}^2\Big(\Big\{x\in A\setminus A(\log 2)\st U(x)\ge\frac{s}{2}\Big\} \Big).
\end{split}
\end{equation}
By Chebyshev's inequality, we have for the first term in the right hand side of \eqref{piefghqeafb},
\begin{equation}
\label{Chebyshev_I}
s^2\mathcal{L}^2 \Big(\Big\{ x \in  (\Omega\setminus K)\setminus (A\setminus A(\log 2))\st |\Deriv u(x)|\geq s \Big\}\Big) \le
2\int_{ (\Omega\setminus K)\setminus (A\setminus A(\log 2))}\frac{\abs{\Deriv u}^2}{2}
\end{equation}
and for the second term,
\begin{equation}
\label{Chebyshev_II}
s^2\mathcal{L}^2 \Big(\Big\{ x \in A\setminus A(\log 2)\st \abs{|\Deriv u(x)|-U(x)}\geq \frac s2 \Big\}\Big) \le
8\int_{A\setminus A(\log 2)}\frac{(|\Deriv u|-U)^2}{2}.
\end{equation}
For the third term in the right hand side of \eqref{piefghqeafb} we first observe that, by definition of \(U\), we have 
\(\{x\in\Omega\setminus A(\log 2)\st U(x)\ge s/2\}=\bigcup_{B\in\mathcal{B}}\partial B\),
where 
\[
\mathcal{B}\defeq\bigg\{B=\Bar B_r(a)\in\bigcup_{t\in(\log 2,T]}\mathcal{B}(t)\st\frac{\Esing(\tr_{\partial B_r (a)}u)}{\pi r^2} \ge \frac{s^2}{4}\bigg\}.
\]
By construction, the set \(\bigcup_{B\in\mathcal{B}}\partial B\) can be covered by a collection of finite closed disks \(\Tilde{\mathcal{B}} \subset \cup_{t\in (\log 2, T]}\mathcal{B}(t)\) such that for every disk \(\Tilde{B} \in \Tilde{\mathcal{B}}\), \(\frac{\Esing(\tr_{\partial \Tilde{B}}u)}{\pi \diam(\Tilde{B})^2}\ge\frac{s^2}{16}\).
Hence,
\begin{equation*}
s^2\mathcal{L}^2\Big(\Big\{x\in A\setminus A(\log 2)\st U(x)\ge \frac s2\Big\}\Big)
\le s^2\sum_{D\in\mathcal{D}} \frac{\pi}{4}\diam(\Tilde{B})^2
 \le  4\sum_{D\in\mathcal{D}}
 \Esing(\tr_{\partial D}u).
 \end{equation*}
Since \(A(\log 2)\subset \bigcup_{D\in\mathcal{D}}D\), it entails by monotonicity of the singular energy (see \cref{lemma_evolutiondegres}) that 
\begin{equation}
\label{eq_pohxahli3eiyoJo9mu3ahYoh}
s^2\mathcal{L}^2\Bigl(\Big\{x\in A\setminus A(\log 2)\st U(x)\ge\frac s2\Big\}\Bigr)
 \le 4\,\Esing (\tr_{\partial A(\log 2)} u).
\end{equation}
In view of the lower bounds  \eqref{lowerBoundGrow} and \eqref{lowerBoundGrowSmall}, by summing \eqref{Chebyshev_I}, \eqref{Chebyshev_II} and \eqref{eq_pohxahli3eiyoJo9mu3ahYoh} and using \eqref{piefghqeafb}, we obtain
\[
\begin{split}
s^2\mathcal{L}^2\Big(\Big\{ x \in \Omega \setminus K\st |\Deriv u|\geq s \Big\} \Big)
\le
C_2\Big(\int_{\Omega\setminus K}\frac{\abs{\Deriv u}^2}{2} - \Esing (\tr_{\partial \Omega}u) \log \frac{\dist(K,\partial \Omega)}{2\,(\mathcal{H}^1_\infty + \varepsilon)}\Big),
\end{split}
\]
where \(C_2=10+\frac{4}{\log 2}\). The conclusion follows by letting \(\varepsilon\to 0\).
\end{proof}

\subsection{Lower bound on renormalised energies}
We now prove that the renormalised energies are bounded from below. The lower-bound we will obtain depends on the tubular neighbourhood extension energy:
\begin{definition}
\label{def_energy_extension}
Let \(\Omega\subset\Rset^2\) be a Lipschitz bounded domain. For every map \(g \in W^{1/2,2}(\partial \Omega, \manifold{N})\) we define the \emph{tubular neighbourhood extension energy} to be
  \begin{equation*}
    \mathcal{E}^{\mathrm{ext}}(g)
    \defeq
    \inf\Bigl\{\int_{\partial \Omega \times [0,1]} \frac{\abs{\Deriv u}^2}{2} \st u \in W^{1,2}(\partial \Omega\times [0,1],\manifold{N}) \text{ and } \tr_{\partial \Omega \times \{0 \}}u=g \Bigr\}.
  \end{equation*}
\end{definition}

It is known that for every \(g \in W^{1/2, 2} (\partial \Omega, \manifold{N})\), 
one has \(\mathcal{E}^{\mathrm{ext}}(g)<+\infty\) (see for example \cite{Bethuel_Demengel_1995}).
The quantity \(\mathcal{E}^{\mathrm{ext}}(g)\) is however not controlled in terms of the \(W^{1/2,2}\) norm  of the map \(g\): if \(\pi_1 (\manifold{N}) \not \simeq \{0\}\), a bubbling sequence gives a counterexample (see for example \citelist{\cite{Berlyand_Mironescu_Rybalko_Sandier_2014}*{Lemma 2.1}\cite{Mironescu_VanSchaftingen_Trace}}).

\begin{lemma}
\label{lemma_extension}
For every Lipschitz bounded domain \(\Omega \subset \Rset^2\),
there exist \(\delta > 0\) and a constant \(C\in (0,+\infty)\) such that if \(g \in W^{1/2, 2} (\partial \Omega, \manifold{N})\), there exists 
\(G \in W^{1, 2} (\Omega_\delta \setminus \Bar{\Omega}, \manifold{N})\), where \(\Omega_\delta\defeq\{x\in \Rset^2 \st \dist(x,\Omega)<\delta\}\),
such that \(\tr_{\partial \Omega} G = g\) and 
\[
 \int_{\Omega_\delta \setminus \Bar{\Omega}} \frac{\abs{\Deriv G}^2}{2}
 \le C
 \mathcal{E}^{\mathrm{ext}} (g).
\]
\end{lemma}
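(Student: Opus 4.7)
The plan is to transport the problem from a neighbourhood of \(\partial\Omega\) in \(\Rset^2\) onto the standard cylinder \(\partial\Omega\times[0,1]\) via a bi-Lipschitz collar, and then pull back an almost-minimising extension of \(g\) through the collar.

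Since \(\Omega\) is a bounded Lipschitz planar domain, \(\partial\Omega\) is a finite disjoint union of closed Lipschitz Jordan curves. Using local graph charts together with a Lipschitz partition of unity, one builds a Lipschitz vector field \(\nu\) defined in a neighbourhood of \(\partial\Omega\), uniformly bounded away from \(0\) and pointing into \(\Rset^2\setminus\Bar\Omega\). For \(\delta>0\) small enough (depending only on \(\Omega\)), the map
\[
 \Phi:\partial\Omega\times[0,1]\to\overline{\Omega_\delta\setminus\Omega},\qquad (\sigma,t)\mapsto\sigma+\delta t\,\nu(\sigma),
\]
is a bi-Lipschitz homeomorphism with \(\Phi(\cdot,0)=\mathrm{id}_{\partial\Omega}\), whose Lipschitz constants depend only on \(\Omega\). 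For any \(u\in W^{1,2}(\partial\Omega\times[0,1],\manifold{N})\) with \(\tr_{\partial\Omega\times\{0\}}u=g\), the composition \(G\defeq u\compose\Phi^{-1}\) lies in \(W^{1,2}(\Omega_\delta\setminus\Bar\Omega,\manifold{N})\) and satisfies \(\tr_{\partial\Omega}G=g\); the chain rule for Sobolev maps composed with bi-Lipschitz homeomorphisms together with the area formula yield
\[
 \int_{\Omega_\delta\setminus\Bar\Omega}\frac{\abs{\Deriv G}^2}{2}\le C_0\int_{\partial\Omega\times[0,1]}\frac{\abs{\Deriv u}^2}{2},
\]
where \(C_0\) depends only on \(\Omega\).

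To pass to the infimum on the right-hand side, I take a sequence \((u_n)_{n\in\Nset}\) minimising the energy in \cref{def_energy_extension}. The corresponding maps \(G_n\defeq u_n\compose\Phi^{-1}\) are bounded in \(W^{1,2}(\Omega_\delta\setminus\Bar\Omega,\Rset^\nu)\) with common trace \(g\); extracting a weakly convergent subsequence that moreover converges almost everywhere (using the compact embedding of \(W^{1,2}\) into every \(L^p\), \(p<\infty\), in dimension two) produces a limit \(G\in W^{1,2}(\Omega_\delta\setminus\Bar\Omega,\manifold{N})\) with \(\tr_{\partial\Omega}G=g\). Lower semicontinuity of the Dirichlet integral under weak \(W^{1,2}\) convergence then yields the claimed estimate with \(C=C_0\). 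The main technical ingredient is the construction of the bi-Lipschitz collar; this is standard for Lipschitz domains but relies on producing a Lipschitz transverse vector field by a partition of unity argument and on verifying injectivity of \(\Phi\) through a smallness condition on \(\delta\). Once this is in place, the rest of the argument reduces to a change of variables and a routine compactness step.
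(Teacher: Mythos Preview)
Your proposal is correct and follows essentially the same approach as the paper: build a bi-Lipschitz collar \(\Phi:\partial\Omega\times[0,1]\to\overline{\Omega_\delta\setminus\Omega}\) with \(\Phi(\cdot,0)=\mathrm{id}_{\partial\Omega}\), and pull back an admissible \(u\) via \(\Phi^{-1}\). The paper simply states this outline and leaves the details to the reader.

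One minor remark: your final compactness step, while correct, is not needed. Since the constant \(C\) in the statement is allowed to depend only on \(\Omega\), you can just pick any admissible \(u\) with \(\int_{\partial\Omega\times[0,1]}\frac{\abs{\Deriv u}^2}{2}\le 2\,\mathcal{E}^{\mathrm{ext}}(g)\) (such a \(u\) exists because \(\mathcal{E}^{\mathrm{ext}}(g)<+\infty\)) and set \(G=u\compose\Phi^{-1}\); this gives the estimate with \(C=2C_0\) directly, without extracting a weak limit.
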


The proof of \cref{lemma_extension} consists in constructing a bi-Lipschitz map \(\phi: \partial \Omega \times [0,1] \rightarrow\Omega_\delta \setminus \Omega\) such that \( \phi((x,0))=x\) for all \(x\) in \(\partial \Omega\). We reach the conclusion by taking  \(G\defeq u\compose(\phi^{-1})_{\vert\Omega_\delta\setminus\Bar\Omega}\) with \(u\) an arbitrary admissible function in the infimum defining \(\mathcal{E}_\mathrm{ext}(g)\). The details are left to the reader.

\begin{proposition}
For every Lipschitz bounded domain \(\Omega \subset \Rset^2\),
there exists a constant \(C\in(0,+\infty)\) such that for every \(g \in W^{1/2,2} (\partial \Omega, \manifold{N})\), for every \(k\in\Nset_\ast\), for every minimal topological resolution \((\gamma_1, \dotsc, \gamma_k)\in W^{1/2,2}(\Sset^1, \manifold{N})\) of \(g\) and for every \((a_1, \dotsc, a_k) \in \Conf{k} \Omega\),
\[
 \mathcal{E}^\mathrm{geom}_{g, \gamma_1, \dotsc, \gamma_k} (a_1, \dotsc, a_k)
 \ge
 \mathcal{E}^\mathrm{top}_{g, \gamma_1, \dotsc, \gamma_k} (a_1, \dotsc, a_k)
 \ge 
\Esing (g) \log \frac{1}{C \Esing (g)}
- C \Eext (g).
\]
\end{proposition}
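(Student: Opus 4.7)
The first inequality $\mathcal{E}^\mathrm{geom}\ge\mathcal{E}^\mathrm{top}$ is immediate from \eqref{eq_comparison_top_geom}, so the only task is to establish the lower bound on the topological renormalised energy. The overall strategy is to reduce the bound to the universal estimate \cref{prop_lower_bound_compact} after extending the map across the boundary: the boundary energy $\Eext(g)$ will be the price to pay for this extension, and the singular energy $\Esing(g)$ will come from the lower bound theorem.

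Fix $\rho\in(0,\Bar{\rho}(a_1,\dotsc,a_k))$ and an admissible map $u$ in the definition \eqref{eq_def_renorm_top_rho} of $\mathcal{E}^{\mathrm{top},\rho}_{g,\gamma_1,\dotsc,\gamma_k}(a_1,\dotsc,a_k)$, so that $\tr_{\partial\Omega}u=g$ and $\tr_{\Sset^1}u(a_i+\rho\,\cdot)$ is homotopic to $\gamma_i$. Using \cref{lemma_extension} I extend $g$ to a map $G\in W^{1,2}(\Omega_\delta\setminus\Bar{\Omega},\manifold{N})$ with energy at most $C_1\Eext(g)$, and glue $u$ and $G$ into a single map $v\in W^{1,2}(\Omega_\delta\setminus K,\manifold{N})$, where $K\defeq\bigcup_{i=1}^k\Bar{B}_\rho(a_i)$. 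The restriction of $v$ to the annulus $\Omega_\delta\setminus\Bar{\Omega}$ provides a homotopy in $W^{1,2}$ (hence in $\VMO$) between $g$ and $\tr_{\partial\Omega_\delta}v$, so by the homotopy invariance of $\Esing$ one has $\Esing(\tr_{\partial\Omega_\delta}v)=\Esing(g)$.

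Applying \cref{prop_lower_bound_compact} to $v$ on $\Omega_\delta\setminus K$, and noting that $\mathcal{H}^1_\infty(K)\le 2k\rho$ (cover by the $k$ balls) and $\dist(K,\partial\Omega_\delta)\ge\delta$ for $\rho$ small (since $a_i\in\Omega$ implies $\dist(a_i,\partial\Omega_\delta)\ge\delta$ provided $\delta$ is chosen smaller than the systole of the holes of $\Omega$), I get
\[
\int_{\Omega\setminus K}\frac{\abs{\Deriv u}^2}{2}+C_1\Eext(g)\ge\int_{\Omega_\delta\setminus K}\frac{\abs{\Deriv v}^2}{2}\ge\Esing(g)\,\log\frac{\delta}{4k\rho}.
\]
Taking the infimum over admissible $u$, subtracting $\sum_{i=1}^k\frac{\equivnorm{\gamma_i}^2}{4\pi}\log\frac{1}{\rho}=\Esing(g)\log\frac{1}{\rho}$, and passing to the supremum over $\rho\in(0,\Bar{\rho}(a_1,\dotsc,a_k))$ via \eqref{eq_def_renorm_top}, the logarithmic singularity in $\rho$ cancels and yields
\[
\mathcal{E}^{\mathrm{top}}_{g,\gamma_1,\dotsc,\gamma_k}(a_1,\dotsc,a_k)\ge\Esing(g)\,\log\frac{\delta}{4k}-C_1\Eext(g).
\]

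Finally, I convert $k$ into $\Esing(g)$ using the systole: since $(\gamma_1,\dotsc,\gamma_k)$ is a \emph{minimal} topological resolution, \cref{minimal_resolution} gives $\equivnorm{\gamma_i}>0$, so by \eqref{eq_systole} each $\equivnorm{\gamma_i}\ge\syst(\manifold{N})$ and hence
\[
k\le\frac{4\pi}{\syst(\manifold{N})^2}\sum_{i=1}^k\frac{\equivnorm{\gamma_i}^2}{4\pi}=\frac{4\pi}{\syst(\manifold{N})^2}\Esing(g).
\]
Plugging this bound gives $\log\frac{\delta}{4k}\ge\log\frac{1}{C_2\Esing(g)}$ with $C_2=\frac{16\pi}{\delta\syst(\manifold{N})^2}$, and taking $C\defeq\max(C_1,C_2)$ yields the announced inequality. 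The only genuinely delicate point is ensuring that the topological content of the boundary is preserved after the extension, i.e.~that one really has $\Esing(\tr_{\partial\Omega_\delta}v)=\Esing(g)$ rather than just a one-sided inequality; this is exactly where the homotopy invariance of the singular energy is crucial.
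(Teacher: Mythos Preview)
Your argument is correct and follows the same route as the paper: extend across $\partial\Omega$ via \cref{lemma_extension}, apply \cref{prop_lower_bound_compact} on $\Omega_\delta$, use minimality of the resolution to identify $\sum_i\equivnorm{\gamma_i}^2/(4\pi)$ with $\Esing(g)$, and finish with the systole bound $k\le \tfrac{4\pi}{\syst(\manifold{N})^2}\Esing(g)$. One cosmetic remark: your parenthetical ``provided $\delta$ is chosen smaller than the systole of the holes of $\Omega$'' is superfluous, since for any $a_i\in\Omega$ and any $y\in\partial\Omega_\delta$ one has $|y-a_i|\ge\dist(y,\Omega)\ge\delta$ directly from the definition $\Omega_\delta=\{x:\dist(x,\Omega)<\delta\}$.
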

\begin{proof}
The first inequality has already been seen in \eqref{eq_comparison_top_geom}.
Let \(G\) be given by \cref{lemma_extension}.
If \(\rho\in(0,\Bar\rho(a_1,\dotsc,a_k))\) and \(u \in W^{1, 2} (\Omega \setminus \bigcup_{i = 1}^k \Bar{B}_{\rho} (a_i), \manifold{N})\), we have by the first estimate of \cref{prop_lower_bound_compact} applied to the extension of \(u\) to \(\Omega_\delta\) by \(G\),
\[
\int_{\Omega \setminus \bigcup_{i = 1}^k B_{\rho} (a_i)} \frac{\abs{\Deriv u}^2}{2}
\ge \Esing (g) \log \frac{\delta}{4 k \rho}
- \int_{\Omega_\delta \setminus \Bar{\Omega}} \frac{\abs{\Deriv G}^2}{2}.
\]
By minimality of the topological resolution \((\gamma_1,\dotsc,\gamma_k)\), \( \Esing (g)=\sum_{i = 1}^k \frac{\equivnorm{\gamma_i}^2}{4 \pi}\) and it follows that 
\[
  \mathcal{E}^\mathrm{top}_{g, \gamma_1, \dotsc, \gamma_k} (a_1, \dotsc, a_k)
  \ge \Esing (g) \log \frac{\delta}{4 k}
   - C_1 \mathcal{E}^{\mathrm{ext}} (g).
\]
By the definition of the systole given in \eqref{eq_systole}, we have 
\(\Esing (g)=\sum_{i = 1}^k \frac{\equivnorm{\gamma_i}^2}{4 \pi}\ge k \frac{\syst (\manifold{N})^2}{4 \pi}\), and the conclusion follows.
\end{proof}

\section{Coercivity of the renormalised energies}
We describe the behaviour of the renormalised energy under collisions of singularities with each other and with the boundary \(\partial \Omega\).

\begin{proposition}
\label{proposition_coercivity}
Let \(\Omega\subset\Rset^2\) be a bounded Lipschitz domain, \(g \in W^{1/2,2} (\partial \Omega, \manifold{N})\), \(k\in\Nset_\ast\) and \((\gamma_1, \dotsc, \gamma_k)\in W^{1/2,2}(\Sset^1, \manifold{N})^k\) be a minimal topological resolution of \(g\).
If \((a_1^n, \dotsc, a_k^n)_{n \in \Nset}\) is a sequence in 
\(\Conf{k} \Omega\) converging to some \((a_1, \dotsc, a_k) \in \Bar{\Omega}^k\) and if 
\[
 \limsup_{n \to \infty} \mathcal{E}_{g, \gamma_1, \dotsc, \gamma_k}^{\mathrm{top}}
 (a_1^n, \dotsc, a_k^n) < + \infty,
\]
then for every \(i \in \{1, \dotsc, k\}\), \(a_i \in  \Omega\)
and if 
\(\{a_1,\dots,a_k\} = \{\Tilde{a}_1,\dots,\Tilde{a}_\ell\}\)
 with \(\Tilde{a}_1,\dots,\Tilde{a}_\ell\) distinct points
 and for each \(j\in\{1,\dots,\ell\}\), \(I_j\defeq \{i \in \{1, \dotsc, k\} \st a_i=\Tilde{a}_j\}\),
  then for each \(j\in\{1,\dots,\ell\}\), there exists \(\Tilde{\gamma}_j\in \mathcal{C}^1 (\Sset^1, \manifold{N})\) such that
\begin{equation*}
 \equivnorm{\Tilde{\gamma}_j}^2 = \sum_{i \in I_j} \equivnorm{\gamma_i}^2,
\end{equation*}
the maps \((\gamma_i)_{i \in I_j}\) are a topological resolution of \(\Tilde{\gamma}_j\), and the maps \(\Tilde{\gamma}_1,\dotsc,\Tilde{\gamma_\ell}\) are a topological resolution of \(g\).
\end{proposition}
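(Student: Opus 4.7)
The plan is to extract limiting homotopy classes from near-minimisers of \(\mathcal{E}^{\mathrm{top},\rho}\) and to derive the arithmetic equality \(\equivnorm{\Tilde{\gamma}_j}^2=\sum_{i\in I_j}\equivnorm{\gamma_i}^2\) by combining a sub-additivity bound on the singular energies with the annulus energy lower bound from \cref{lemma_energy_growth_map_radii}.

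First I would pass to a subsequence along which \(a_i^n\to a_i\in\Bar{\Omega}\) for every \(i\), group the indices into \(I_1,\dotsc,I_\ell\) according to the distinct limits \(\Tilde{a}_1,\dotsc,\Tilde{a}_\ell\), and, for each small \(\rho>0\), pick a near-minimiser \(u_n^\rho\) of \(\mathcal{E}^{\mathrm{top},\rho}_{g,\gamma_1,\dotsc,\gamma_k}(a_1^n,\dotsc,a_k^n)\), extended by \(G\) from \cref{lemma_extension} to a map \(\Tilde{u}_n^\rho\) on \(\Omega_\delta\setminus\bigcup_i\Bar{B}_\rho(a_i^n)\). Fix \(r>0\) small enough that the balls \(B_r(\Tilde{a}_j)\) are disjoint, contained in \(\Omega_\delta\), and each contains (for \(n\) large) only the \(a_i^n\) with \(i\in I_j\). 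By \cref{prop:condition_homotopy} the free homotopy class of \(\Tilde{u}_n^\rho\vert_{\partial B_r(\Tilde{a}_j)}\) lies in the finite polygroup product of the conjugacy classes of \((\gamma_i)_{i\in I_j}\), so after a further subsequence it is independent of \(n\). I define \(\Tilde{\gamma}_j\) to be a minimising closed geodesic in that class, which makes \((\gamma_i)_{i\in I_j}\) a topological resolution of \(\Tilde{\gamma}_j\) and \((\Tilde{\gamma}_1,\dotsc,\Tilde{\gamma}_\ell)\) a topological resolution of \(g\).

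For one direction, sub-additivity of \(\Esing\) under topological resolutions (\cref{lemma_evolutiondegres}) together with minimality of \((\gamma_1,\dotsc,\gamma_k)\) sandwiches
\[
\Esing(g)\le\sum_{j=1}^\ell\Esing(\Tilde{\gamma}_j)\le\sum_{j=1}^\ell\sum_{i\in I_j}\frac{\equivnorm{\gamma_i}^2}{4\pi}=\Esing(g),
\]
forcing \(\Esing(\Tilde{\gamma}_j)=\sum_{i\in I_j}\equivnorm{\gamma_i}^2/(4\pi)\) and in particular \(\equivnorm{\Tilde{\gamma}_j}^2/(4\pi)\ge\sum_{i\in I_j}\equivnorm{\gamma_i}^2/(4\pi)\). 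For the reverse direction, with \(r_1^n\defeq 2(\max_{i\in I_j}\abs{a_i^n-\Tilde{a}_j}+\rho)\to 2\rho\), the annulus \(B_r(\Tilde{a}_j)\setminus\Bar{B}_{r_1^n}(\Tilde{a}_j)\) is singularity-free and the traces of \(\Tilde{u}_n^\rho\) on concentric circles sit in the class of \(\Tilde{\gamma}_j\), so \cref{lemma_energy_growth_map_radii} yields \(\int_{B_r(\Tilde{a}_j)\setminus\Bar{B}_{r_1^n}(\Tilde{a}_j)}\abs{\Deriv\Tilde{u}_n^\rho}^2/2\ge\equivnorm{\Tilde{\gamma}_j}^2\log(r/r_1^n)/(4\pi)\). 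Summing over \(j\) and bounding above by \(\mathcal{E}^{\mathrm{top},\rho}(a_1^n,\dotsc,a_k^n)+C\Eext(g)\le M+\Esing(g)\log(1/\rho)+C'\), then letting \(n\to\infty\) and \(\rho\to 0\), gives \(\sum_j\equivnorm{\Tilde{\gamma}_j}^2/(4\pi)\le\Esing(g)\). Combining both inequalities forces equality throughout, and hence \(\equivnorm{\Tilde{\gamma}_j}^2=\sum_{i\in I_j}\equivnorm{\gamma_i}^2\) for each \(j\).

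The main obstacle I expect is the non-migration conclusion \(a_i\in\Omega\): the annulus analysis above, being carried out on \(\Omega_\delta\), is insensitive to whether \(\Tilde{a}_j\) lies in \(\Omega\) or on \(\partial\Omega\), since \(G\) provides a smooth continuation past \(\partial\Omega\). Ruling out \(\Tilde{a}_j\in\partial\Omega\) will require a boundary-sensitive lower bound that extracts a logarithmic blow-up of \(\mathcal{E}^{\mathrm{top}}(a_1^n,\dotsc,a_k^n)\) as \(\dist(a_i^n,\partial\Omega)\to 0\); the natural route is to work inside \(\Omega\) itself rather than \(\Omega_\delta\), iteratively applying the ball-growth construction of \cref{propositionMultipolar} to a compact set tailored to the migrating singularity and exploiting the sharp Marcinkiewicz strengthening of \cref{prop_lower_bound_compact} to convert the distance-to-boundary into the required divergent term.
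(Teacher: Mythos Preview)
Your treatment of the arithmetic identity \(\equivnorm{\Tilde{\gamma}_j}^2=\sum_{i\in I_j}\equivnorm{\gamma_i}^2\) is sound and in fact tidier than the paper's in one respect: you stabilise the homotopy class of \(\Tilde{\gamma}_j\) explicitly via the finiteness of the polygroup product, whereas the paper leaves this implicit. The sandwich
\[
\Esing(g)\le\sum_{j}\Esing(\Tilde{\gamma}_j)\le\sum_{j}\sum_{i\in I_j}\frac{\equivnorm{\gamma_i}^2}{4\pi}=\Esing(g)
\]
together with the annulus bound in \(\Omega_\delta\) yielding \(\sum_j\equivnorm{\Tilde{\gamma}_j}^2/(4\pi)\le\Esing(g)\) is a legitimate alternative to the paper's route.

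The genuine gap is exactly where you suspect, but your proposed fix is heading in the wrong direction. Applying \cref{prop_lower_bound_compact} inside \(\Omega\) does not produce a blow-up when \(\dist(K,\partial\Omega)\to 0\): the logarithm \(\log\frac{\dist(K,\partial\Omega)}{2\mathcal{H}^1_\infty(K)}\) goes to \(-\infty\), so the inequality becomes vacuous rather than contradictory. The Marcinkiewicz refinement \eqref{second_prop_lower_bound_compact} does not change this, since it only sharpens the excess over the same trivial leading term. Nothing in the ball-growth machinery sees the boundary of \(\Omega\) as anything other than a stopping rule.

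What the paper actually does is replace your annulus estimate \cref{lemma_energy_growth_map_radii} by the boundary-sensitive \cref{lemma_singularities_potential}: a Cauchy--Schwarz argument on \(B_\tau(\Tilde{a}_j)\setminus\Bar{B}_\sigma(\Tilde{a}_j)\), splitting the integral into the \(\Omega\)-part and the \(\Omega_\delta\setminus\Omega\)-part (controlled by \(\eta=\int\abs{\Deriv G}^2\)), yields
\[
\int_{(B_\tau\setminus\Bar{B}_\sigma)\cap\Omega}\frac{\abs{\Deriv u}^2}{2}
\ge \frac{\equivnorm{\Tilde{\gamma}_j}^2}{4\pi\,\nu_{\tau,\sigma}(\Tilde{a}_j)}\log\frac{\tau}{\sigma}\,(1-o_\sigma(1)),
\qquad
\nu_{\tau,\sigma}(a)=\frac{1}{2\pi\log(\tau/\sigma)}\int_{(B_\tau(a)\setminus\Bar{B}_\sigma(a))\cap\Omega}\frac{\dif x}{\abs{x-a}^2}\le 1.
\]
Running \emph{your} sandwich with this estimate in place of the one in \(\Omega_\delta\) gives, after \(\sigma\to 0\),
\[
\sum_{j}\frac{\equivnorm{\Tilde{\gamma}_j}^2}{\limsup_{\sigma\to 0}\nu_{\tau,\sigma}(\Tilde{a}_j)}\le\sum_{i}\equivnorm{\gamma_i}^2,
\]
which combined with \(\equivnorm{\Tilde{\gamma}_j}^2\ge\sum_{i\in I_j}\equivnorm{\gamma_i}^2>0\) forces \(\limsup_{\sigma\to 0}\nu_{\tau,\sigma}(\Tilde{a}_j)=1\). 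For a Lipschitz boundary this fails at any \(\Tilde{a}_j\in\partial\Omega\) (an exterior cone occupies a fixed positive fraction of the logarithmic mass of each annulus), so \(\Tilde{a}_j\in\Omega\). Thus the paper obtains both the arithmetic equality and the non-migration from a \emph{single} estimate; the missing ingredient in your plan is precisely this geometric factor \(\nu_{\tau,\sigma}\), not the Marcinkiewicz bound.
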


The assumption that \((\gamma_1, \dotsc, \gamma_k)\in W^{1/2,2}(\Sset^1, \manifold{N})^k\) is a minimal topological resolution of \(g\) entails in particular that  \(\equivnorm{\gamma_i}>0\) for each \(i \in \{1, \dotsc, k\}\). 

\Cref{proposition_coercivity} implies the classical result of confinement away from the boundary and non-collision of vortices for the classical Ginzburg--Landau case \(\manifold{N} = \Sset^1\) \cite{Bethuel_Brezis_Helein_1994}*{Theorem I.10}.
Indeed, the homotopy class of a map \(\gamma:\Sset^1\to\Sset^1\) is classified by its degree \(\deg \gamma \in \Zset\), and \(\equivnorm{\gamma} = 2 \pi \abs{\deg \gamma}\); the minimality condition imposes that all the maps \(\gamma_1, \dotsc, \gamma_k\) share the same degree \(1\) or \(-1\); the collision condition imposes that if \(m\) singularities converge to the same point, then \(m^2 = m\), that is, \(m = 1\).

In general collision can occur, as can be seen by considering \(\manifold{N} = \Sset^1 \times \Sset^1\).
The Dirichlet energy decouples as the sum of two functionals for maps in \(\Sset^1\) each of which being applied to one component.
The renormalised energy is then the sum of two renormalised energies and nothing prevent singularities of both components to merge in the limit.
Algebraically this is related to the Pythagorean identity occuring at the collision: \(\abs{(1, 1)}^2 = \abs{(1, 0)}^2 + \abs{(0, 1)}^2\). 
A similar situation occurs in models of superfluid Helium 3 in the dipole-free A phase (see \S \ref{sect_gooWae7aNu2roatoozahb0al}).

\Cref{proposition_coercivity} does not prevent collisions from occuring when two atomic singularities can merge in an atomic singularity as  is the case in \cref{sect_gooWae7aNu2roatoozahb0al}.

The proof of \cref{proposition_coercivity} relies on the following estimate.
\begin{lemma}
\label{lemma_singularities_potential}
Let \(\Omega\subset \Rset^2\) be a Lipschitz domain. Let \(a\in\Rset^2\), \(\sigma,\tau\in(0,+\infty)\) with \(\sigma<\tau\), and \(u \in W^{1, 2} (B_{\tau} (a) \setminus \Bar{B}_{\sigma} (a), \manifold{N})\),
then, 
\begin{equation}
\label{eq_ahrae9aach4ahLoungi8jooy}
 \int_{(B_{\tau} (a) \setminus \Bar{B}_{\sigma} (a)) \cap \Omega} \frac{\abs{\Deriv u}^2}{2}
 \ge
 \frac{\equivnorm{\gamma}^2}{4 \pi \nu_{\tau, \sigma} (a)}
  \log \frac{\tau}{\sigma}
 \Biggl(1 -
 \biggl(\frac{2 \pi \int_{B_{\tau} (a) \setminus (\Omega \cup \Bar B_{\sigma} (a))} \abs{\Deriv u}^2 }
 {\equivnorm{\gamma}^{2}  \log \frac{\tau}{\sigma}  }\biggr)^{\frac 12}\Biggr)^2,
\end{equation}
where \(\gamma\in\mathcal{C}^1(\Sset^1,\manifold{N})\) is a map homotopic to \(\tr_{\partial B_r (a)}u\) for every \(r \in [\sigma, \tau]\) and
\[
\nu_{\tau, \sigma} (a)
\defeq \frac{1}{2 \pi  \log \frac{\tau}{\sigma}}\int_{(B_{\tau} (a) \setminus \Bar{B}_\sigma (a)) \cap \Omega}
\frac{1}{\abs{x - a}^2} \dif x.
\]
\end{lemma}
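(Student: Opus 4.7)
The plan is to work in polar coordinates \((r,\theta)\) centered at \(a\) and to split each circle \(\partial B_r(a)\) into its intersection with \(\Omega\) and with its complement. For \(r\in(\sigma,\tau)\), set
\[
 A_r \defeq \{\theta\in\Sset^1 \st a + r(\cos\theta,\sin\theta)\in\Omega\}, \qquad B_r \defeq \Sset^1 \setminus A_r,
\]
and \(g_r(\theta)\defeq\abs{\partial_\theta u(a+r(\cos\theta,\sin\theta))}\). Two elementary ingredients will be used repeatedly: the pointwise bound \(\abs{\Deriv u}^2\ge g_r^2/r^2\) coming from \(|\Deriv u|^2=|\partial_r u|^2+r^{-2}|\partial_\theta u|^2\); and the circle-energy inequality \(\int_{\Sset^1} g_r\,\dif\theta \ge \equivnorm{\gamma}\) for a.e.\ \(r\in(\sigma,\tau)\), which follows from the definition \eqref{eq_kahz1OeBa9gooqu6ep1aixei} applied to the trace \(\tr_{\partial B_r(a)}u\), homotopic to \(\gamma\) by hypothesis and in \(W^{1,2}(\Sset^1,\manifold{N})\) for a.e.\ \(r\) by Fubini.

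First I would transfer the circle-energy to \(A_r\): from \(\int_{A_r} g_r \ge \equivnorm{\gamma}-\int_{B_r} g_r\), divide by \(r\) and integrate over \((\sigma,\tau)\) to get
\[
 \int_\sigma^\tau\!\!\int_{A_r}\tfrac{g_r}{r}\,\dif\theta\,\dif r \;\ge\; \equivnorm{\gamma}\log\tfrac{\tau}{\sigma} - \int_\sigma^\tau\!\!\int_{B_r}\tfrac{g_r}{r}\,\dif\theta\,\dif r.
\]
The ``outside'' term is estimated by Cauchy--Schwarz, using \(\int_\sigma^\tau\int_{B_r} r^{-1}\,\dif\theta\,\dif r \le 2\pi\log(\tau/\sigma)\) and the pointwise bound, by
\(\sqrt{2\pi\log(\tau/\sigma)}\,\bigl(\int_{E\setminus\Omega}\abs{\Deriv u}^2\bigr)^{1/2}\), where \(E\defeq B_\tau(a)\setminus\Bar B_\sigma(a)\). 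This gives
\[
 \int_\sigma^\tau\!\!\int_{A_r}\tfrac{g_r}{r}\,\dif\theta\,\dif r \;\ge\; \equivnorm{\gamma}\log(\tau/\sigma)\,(1-\epsilon),
\]
with \(\epsilon\) exactly as in the statement.

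The key second step is to apply Cauchy--Schwarz in the opposite direction on \(A_r\):
\[
 \Bigl(\int_\sigma^\tau\!\!\int_{A_r}\tfrac{g_r}{r}\,\dif\theta\,\dif r\Bigr)^{\!2} \le \Bigl(\int_\sigma^\tau\!\!\int_{A_r}\tfrac{g_r^2}{r}\,\dif\theta\,\dif r\Bigr)\Bigl(\int_\sigma^\tau\!\!\int_{A_r}\tfrac{1}{r}\,\dif\theta\,\dif r\Bigr).
\]
By the pointwise bound the first factor is \(\le \int_{E\cap\Omega}\abs{\Deriv u}^2\), and the second equals \(\int_{E\cap\Omega}\abs{x-a}^{-2}\,\dif x = 2\pi\,\nu_{\tau,\sigma}(a)\log(\tau/\sigma)\) by definition. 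Combining the two steps yields the announced lower bound. The main subtlety --- the only real choice to be made --- is the direction of this Cauchy--Schwarz: splitting \(g_r/r = (g_r/\sqrt{r})\cdot(1/\sqrt{r})\) restricted to \(A_r\) is what forces \(\nu_{\tau,\sigma}(a)\) into the denominator; contracting over \(\Sset^1\) instead would lose the weighting. The resulting estimate is meaningful when \(\epsilon<1\); in the trivial regime \(\epsilon\ge 1\) the inequality must be read with the positive-part convention \((1-\sqrt{\,\cdot\,})_+^2\), which is implicit since the argument of the lower-bound chain is only valid while the intermediate bound \(\equivnorm{\gamma}\log(\tau/\sigma)(1-\epsilon)\) is non-negative.
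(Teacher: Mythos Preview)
Your proof is correct and follows essentially the same approach as the paper's. The paper works with the quantity \(\int_{B_\tau(a)\setminus\Bar B_\sigma(a)}\frac{\abs{\Deriv u}}{\abs{x-a}}\,\dif x\), bounds it below by \(\equivnorm{\gamma}\log(\tau/\sigma)\) via the circle-length inequality, and above by Cauchy--Schwarz applied separately on the \(\Omega\) and \(\Rset^2\setminus\Omega\) parts; your version is the same argument in polar coordinates, using only the tangential component \(g_r\) in place of \(\abs{\Deriv u}\), which is a harmless cosmetic change. Your remark on the positive-part reading when \(\epsilon\ge 1\) is a fair observation that the paper leaves implicit.
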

\begin{proof}
For every \(r \in (\sigma, \tau)\), we have in view of \eqref{eq_kahz1OeBa9gooqu6ep1aixei}
\[
 \frac{1}{r}\int_{\partial B_{r} (a)} \abs{\Deriv  {u}}\ge \frac{\equivnorm{\gamma}}{r},
\]
and therefore by integration in polar coordinates
\begin{equation}
  \label{eq_nahRe0aiFeimephii}
\int_{B_{\tau} (a) \setminus \Bar{B}_\sigma (a)} \frac{\abs{\Deriv  {u} (x)}}{\abs{x - a}}\dif x \ge \equivnorm{\gamma}  \log \frac{\tau}{\sigma}.
\end{equation}
On the other hand, by additivity of the integral and by the Cauchy--Schwarz inequality, we have in view of \eqref{eq_ahrae9aach4ahLoungi8jooy}
\begin{equation}
  \label{eq_ohlaeB3ilah0moovu}
 \begin{split}
\int_{B_{\tau} (a) \setminus \Bar{B}_\sigma (a)} &\frac{\abs{\Deriv  {u} (x)}}{\abs{x - a}} \dif x\\
&\le \Bigl(\int_{(B_{\tau} (a) \setminus \Bar{B}_{\sigma} (a)) \cap \Omega} \frac{1}{\abs{x - a}^2} \dif x \Bigr)^{\frac{1}{2}}
\biggl(\int_{(B_{\tau} (a) \setminus \Bar{B}_{\sigma} (a)) \cap \Omega} \abs{\Deriv u}^2 \dif x\biggr)^{\frac{1}{2}}\\
&\qquad +
\Bigl(\int_{(B_{\tau} (a) \setminus \Bar{B}_{\sigma} (a)) \setminus  \Omega} \frac{2}{\abs{x - a}^2}\Bigr)^{\frac{1}{2}}
\biggl(\int_{(B_{\tau} (a) \setminus \Bar{B}_{\sigma} (a)) \setminus \Omega} \frac{\abs{\Deriv u}^2}{2}\biggr)^{\frac{1}{2}}\\
&\leq \Bigl(4 \pi \nu_{\tau, \sigma}(a) \log \frac{\tau}{\sigma} 
\int_{(B_{\tau} (a) \setminus \Bar{B}_{\sigma} (a)) \cap \Omega} \frac{\abs{\Deriv u}^2}{2}\Bigr)^{\frac{1}{2}}\\
&\qquad + \Bigl(4 \pi \log \frac{\tau}{\sigma}\int_{B_{\tau} (a) \setminus (\Omega \cup \Bar{B}_{\sigma} (a))} \frac{\abs{\Deriv u}^2}{2} \Bigr)^{\frac{1}{2}}.
\end{split}
\end{equation}
We reach the conclusion by \eqref{eq_nahRe0aiFeimephii} and \eqref{eq_ohlaeB3ilah0moovu}.
\end{proof}

\begin{proof}%
[Proof of \cref{proposition_coercivity}]
Up to a permutation of the indices, we can assume that \(I_j = \{k_{j - 1}, \dotsc, k_j-1\}\), for \(j\in \{1,\dotsc, l\}\), where \(k_0=1<k_1<\dotsc<k_\ell=k+1\).

Let \(\delta>0\), \(\Omega_\delta\) and \(G  \in W^{1, 2} (\Omega_\delta \setminus \Bar{\Omega}, \manifold{N})\) be given by \cref{lemma_extension}
and choose \(\tau \in (0, \delta)\) such that if \(i, j \in \{1, \dotsc, \ell\}\) and \(i \ne j\), then \(\abs{\Tilde{a}_i - \Tilde{a}_j} \ge 4 \tau\).
We also take \(\rho,\sigma>0\) such that \(2\rho<\sigma < \tau\) so that \(B_\rho (a_i^n) \subset B_{\sigma/2} (a_i)\) for each \(i\) and for \(n\) large enough, condition that we assume to be true from now on.

For \(u \in W^{1, 2} (\Omega_\delta \setminus \bigcup_{i = 1}^k \Bar{B}_{\rho} (a_i^n), \manifold{N})\) such that \(u=G\) on \(\Omega_\delta\setminus\Bar{\Omega}\) and such that \(\tr_{\Sset^1} u(a^n_i+\rho\,\cdot)\) and \(\gamma_i\) are homotopic for each \(i\in\{1,\dots,k\}\), we first estimate
\begin{equation}
\label{firstDecomposition}
 \int_{\Omega_\delta \setminus \bigcup_{i = 1}^k \Bar{B}_{\rho} (a_i^n)}  \frac{\abs{\Deriv  {u}}^2}{2}
 \ge \sum_{j = 1}^\ell \int_{B_{\tau} (\Tilde{a}_j) \setminus B_{\sigma} (\Tilde{a}_j) } \frac{\abs{\Deriv  {u}}^2}{2}
 + \sum_{j = 1}^\ell \int_{B_{\sigma} (\Tilde{a}_j) \setminus \bigcup_{i = k_{j - 1}}^{k_{j}-1}\Bar{B}_{\rho} (a_i^n)} \frac{\abs{\Deriv  {u}}^2}{2}.
  \end{equation}
We define \(\Tilde{\gamma}_j \in \mathcal{C} (\Sset^1, \manifold{N})\) to be a map which is homotopic to \( \tr_{\Sset^1} u (\Tilde{a}_j + r\,\cdot)\) for almost every \(r \in (\sigma, \tau)\).
By the minimality assumption on \((\gamma_1, \dotsc, \gamma_k)\), we have 
\begin{equation}
\label{eq_aCheef4bohtoh0jeuraif1ei}
  \frac{\equivnorm{\Tilde{\gamma}_j}^2}{4 \pi}
  \ge \sum_{i = k_{j - 1}}^{k_j - 1}
  \frac{\equivnorm{\gamma_i}^2}{4 \pi} > 0.
\end{equation}
If we let \(
\eta \defeq \int_{\Omega_\delta \setminus \Omega} \abs{\Deriv G}^2
\),
we have by \cref{lemma_singularities_potential}
\begin{equation}
\label{eq_coercivity_outer}
 \int_{(B_{\tau} (\Tilde{a}_j) \setminus \Bar{B}_{\sigma} (\Tilde{a}_j)) \cap \Omega} \frac{\abs{\Deriv u}^2}{2}
 \ge
 \frac{\equivnorm{\Tilde{\gamma}_j}^2}{4 \pi \nu_{\tau, \sigma} (\Tilde{a}_j)}
  \log \frac{\tau}{\sigma}
 \Biggl(1 -
 \Bigl(\frac{2 \pi \eta }
 {\equivnorm{\Tilde{\gamma}_j}^{2}  \log \frac{\tau}{\sigma}  }\Bigr)^{\frac 12}\Biggr)^2.
\end{equation}
Since for every \(i\in\{k_{j-1},\dotsc,k_j-1\}\) we have \( B_{\rho} (a^n_i)\subset B_{\sigma/2}(\Tilde{a}_j)\), the maps \(\gamma_{k_{j-1}},\dotsc,\gamma_{k_j-1}\) form a topological resolution of \(\tilde{\gamma}_j\) and \(\dist(B_{\rho} (a^n_i), \partial B_{\sigma}(\Tilde{a}_j))\ge \frac\sigma 2\). Since \(\mathcal{H}^1_\infty( \bigcup_{i = k_{j - 1}}^{k_{j}-1}\Bar{B}_{\rho} (a_i^n)) \leq 2\rho(k_j-k_{j-1})\), by \cref{prop_lower_bound_compact} and \eqref{eq_aCheef4bohtoh0jeuraif1ei}, we have
\begin{equation}
\begin{split}
  \label{eq_kuaRaiPhupha2EiSa}
  \int_{B_{\sigma} (\Tilde{a}_j) \setminus \bigcup_{i = k_{j - 1}}^{k_{j}-1}\Bar{B}_{\rho} (a_i^n)} \frac{\abs{\Deriv  {u}}^2}{2}
  &\ge \Esing (\Tilde{\gamma}_i) \log \frac{\sigma}{4 (k_{j} - k_{j - 1})\rho}\\
  &\ge \sum_{i = k_{j - 1}}^{k_j-1}\frac{\equivnorm{\gamma_i}^2}{4 \pi} \log \frac{\sigma}{4 (k_{j} - k_{j - 1})\rho}.
  \end{split}
\end{equation}
By \eqref{firstDecomposition}, \eqref{eq_coercivity_outer} and \eqref{eq_kuaRaiPhupha2EiSa}, taking the limit \(\rho\to 0\), we deduce that
\begin{equation}
\label{ineq_W_top_lower}
\begin{split}
  \liminf_{n \to \infty} \mathcal{{E}}_{g, \gamma_1, \dotsc, \gamma_k}^{\mathrm{top}} (a^n_1, \dotsc, a^n_k)
 \ge & \sum_{j = 1}^{\ell} \biggl[
 \frac{\equivnorm{\Tilde{\gamma}_j}^2}{4 \pi \nu_{\tau, \sigma} (\Tilde{a}_j)}
 \biggl(1 -
 \Bigl(\frac{2 \pi \eta }
 {\equivnorm{\Tilde{\gamma}_j}^{2}  \log \frac{\tau}{\sigma}  }\Bigr)^{\frac{1}{2}}\biggr)^2
  \log \frac{\tau}{\sigma}
 \\
 & \qquad\qquad + \sum_{i = k_{j - 1}}^{k_j-1}\frac{\equivnorm{\gamma_i}^2}{4 \pi} \log \frac{\sigma}{4 (k_{j} - k_{j - 1})}\biggr].
\end{split}
\end{equation}

We assume now that we have \(\limsup_{n \to \infty} \mathcal{E}_{g, \gamma_1, \dotsc, \gamma_k}^{\mathrm{top}} (a_1^n, \dotsc, a_k^n) < + \infty\).
By letting \(\sigma \to 0\) in \eqref{ineq_W_top_lower}, we deduce that
\[
 \sum_{j = 1}^{\ell}  \frac{\equivnorm{\Tilde{\gamma}_j}^2}{\limsup_{\sigma \to 0} \nu_{\tau, \sigma} (\Tilde{a}_j)} \le\sum_{i = 1}^{k} \equivnorm{\gamma_i}^2.
\]
Since \((\gamma_1, \dotsc, \gamma_k)\) and \((\Tilde{\gamma}_1, \dotsc, \Tilde{\gamma}_\ell)\) are two topological resolutions of \(g\), with \((\gamma_1, \dotsc, \gamma_k)\) which is minimal, and since \(\nu_{\tau, \sigma} (\Tilde{a}_j) \le 1\), we necessarily have, in view of \eqref{eq_aCheef4bohtoh0jeuraif1ei},
\(
 \sum_{j = 1}^{\ell} \equivnorm{\Tilde{\gamma}_j}^2 = \sum_{i = 1}^{k} \equivnorm{\gamma_i}^2\)
and for all \(j\in\{1,\dotsc,l\}\) such that \(\equivnorm{\Tilde{\gamma}_j}>0\),
\(\limsup_{\sigma \to 0} \nu_{\tau, \sigma} (\Tilde{a}_j) = 1\).
Since the boundary \(\partial \Omega\) is Lipschitz, the latter condition implies that \(\Tilde{a}_j \in \Omega\).
\end{proof}

\section{Renormalisable maps and renormalisable singular harmonic maps}
\label{section_ren2}

\subsection{Renormalised energy of renormalisable maps}
We define renormalisable maps as maps that have square-integrable derivative away from a set of singularities and whose derivative remains controlled near the singularity.

\begin{definition}
\label{def_renormalisable}
Let \(\Omega\subset\Rset^2\) be an open set. A map \(u:\Omega\to\manifold{N}\) is \emph{renormalisable} whenever there exists a finite set \(\mathcal{S}\subset \Omega\) such that if \(\rho > 0\) is small enough,
  \(u \in W^{1, 2} (\Omega \setminus \bigcup_{a\in\mathcal{S}} \Bar{B}_\rho (a) , \manifold{N})\) and its \emph{renormalised energy} \(\mathcal{E}^{\mathrm{ren}}(u)\) is finite, where
  \begin{equation*}
    \mathcal{E}^{\mathrm{ren}}(u)
    \defeq \liminf_{\rho \to 0} \int_{\Omega\setminus\bigcup_{a\in\mathcal{S}} \Bar{B}_\rho(a)}\frac{\abs{\Deriv u}^2}{2}-\sum_{a\in\mathcal{S}}
    \frac{\equivnorm{\tr_{\partial B_\rho (a)} u}^2}{4\pi}\log\frac{1}{\rho}<+\infty.
  \end{equation*}
We denote by \(W^{1, 2}_{\mathrm{ren}} (\Omega, \manifold{N})\) the set of renormalisable mappings.
\end{definition}
Let \(u\in W^{1, 2}_{\mathrm{ren}} (\Omega, \manifold{N})\) be a renormalisable map and \(\mathcal{S}_0\subset\Omega\) be the minimal set such that \(u \in W^{1, 2} (\Omega \setminus \bigcup_{a\in\mathcal{S}_0} \Bar{B}_\rho (a) , \manifold{N})\). In other words, \(\mathcal{S}_0\) is the set of points \(a\in\Omega\) such that \(\Deriv u\) is not square-integrable near \(a\). Hence, if \(a\in\Omega\setminus\mathcal{S}_0\), then \(\equivnorm{\tr_{\partial B_\rho (a)} u}=0\) for \(\rho>0\) small enough. In particular, the \(\liminf\) defining \(\mathcal{E}^{\mathrm{ren}}(u)\) does not depend on the set \(\mathcal{S}\) and \(\mathcal{E}^{\mathrm{ren}}(u)\) is well-defined.

If \(\mathcal{S}=\emptyset\), then \(\mathcal{E}^{\mathrm{ren}}(u)=\int_\Omega \abs{\Deriv u}^2\). If \(\mathcal{S}=\{a_1,\dotsc,a_k\}\) with \(k\in\Nset_\ast\) and \((a_1,\dotsc,a_k)\in\Conf{k}\Omega\), then we have by \cref{lemma_energy_growth_map_radii} that the map
\[
\rho\in (0, \Bar{\rho} (a_1, \dotsc, a_k))\mapsto
\int_{\Omega\setminus\bigcup_{i=1}^k \Bar{B}_\rho(a_i)}\frac{\abs{\Deriv u}^2}{2}-\sum_{i = 1}^k \frac{\equivnorm{\tr_{\partial B_\rho (a_i)} u}^2}{4\pi} \log\frac{1}{\rho}
\]
is non-increasing. We have thus
  \begin{equation}
\label{renormalised_energy_u}
   \begin{split}
    \mathcal{E}^{\mathrm{ren}}(u)
    &=
      \lim_{\rho \to 0}
        \int_{\Omega\setminus\bigcup_{i=1}^k \Bar{B}_\rho(a_i)}
          \frac{\abs{\Deriv u}^2}{2}
          -\sum_{i=1}^k \frac{\equivnorm{\tr_{\partial B_\rho (a_i)} u}^2}{4\pi} \log\frac{1}{\rho}\\
    &= \sup_{\rho\in (0, \Bar{\rho} (a_1, \dotsc, a_k))}
    \int_{\Omega\setminus\bigcup_{i=1}^k \Bar{B}_\rho(a_i)}\frac{\abs{\Deriv u}^2}{2}
    -\sum_{i=1}^k \frac{\equivnorm{\tr_{\partial B_\rho (a_i)} u}^2}{4\pi} \log\frac{1}{\rho}.
    \end{split}
  \end{equation}

\begin{proposition}
\label{admissibleMaps}
Let \(\Omega\subset\Rset^2\) be an open set. If \(u\in W^{1,2}_{\mathrm{ren}}(\Omega,\manifold{N})\), then either one has \(u\in W^{1,2}(\Omega,\manifold{N})\) or there exist \(k\in\Nset_\ast\), \((a_1, \dotsc, a_k)\in\Conf{k}\Omega\) and \(\gamma_1, \dotsc,\gamma_k\in \mathcal{C}^1 (\Sset^1, \mathcal{N})\) such that
\begin{enumerate}[(i)]
  \item \label{it_Joh2iexa0shoh8no5xoXuoli}
for every \(\rho \in (0,\Bar{\rho} (a_1, \dotsc, a_k))\), we have
  \[
  \int_{\Omega \setminus \bigcup_{i = 1}^k B_\rho (a_i)}
  \frac{\abs{\Deriv u}^2}{2}
  +\sum_{i=1}^k\int_{B_\rho(a_i)}
  \frac{1}{2}\Bigl(\abs{\Deriv u(x)} - \frac{\equivnorm{\gamma_i}}{2 \pi \abs{x-a_i}}\Bigr)^2\dif x
  \le \mathcal{E}^{\mathrm{ren}} (u)
  + \sum_{i = 1}^k \frac{\equivnorm{\gamma_i}^2}{4 \pi}
  \log \frac{1}{\rho},
  \]
  in particular, \(u\in W^{1,1}(\Omega,\manifold{N})\), 
  \item for each \(i \in \{1, \dotsc, k\}\), \(\gamma_i\) is a non-trivial minimising closed geodesic,
  \item for each \(i \in \{1, \dotsc, k\}\), there exists a sequence \((\rho_\ell)_{\ell \in \Nset}\) converging to \(0\) such that the sequence \((\tr_{\Sset^1} u (a_i + \rho_\ell\,\cdot))_{\ell \in \Nset}\) converges strongly to \(\gamma_i\) in \(W^{1, 2} (\Sset^1, \manifold{N})\),
  \item\label{synhamonicConvergenceSing} for each \(i \in \{1, \dotsc, k\}\),
  \(
  \lim_{\rho \to 0}\synhar{\tr_{\Sset^1}u(a_i+\rho\,\cdot)}{\gamma_i}=0,
  \)
    \item \label{it_bohp7shaephei0eeT}
if \(\Omega\) is a bounded Lipschitz domain, then \((\gamma_1, \dotsc, \gamma_k)\) is a topological resolution of \(g=\tr_{\partial \Omega} u\) and
  \(
  \displaystyle
    \mathcal{E}^{\mathrm{ren}}(u)\ge \mathcal{E}^{\mathrm{geom}}_{g, \gamma_1, \dotsc, \gamma_k}(a_1,\dotsc,a_k).
    \)
\end{enumerate}
\end{proposition}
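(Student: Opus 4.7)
The plan is to identify the singular set and set up a monotone framework, extract the limiting geodesics, prove the pointwise-type bound (i) by an expansion of the square, and finally bootstrap to synharmonic convergence (iv) and the topological resolution (v). Let $\mathcal{S}_0 \defeq \{a \in \Omega \st \Deriv u \notin L^2 \text{ in a neighbourhood of } a\}$; this is a finite subset of any admissible $\mathcal{S}$ from \cref{def_renormalisable}. If $\mathcal{S}_0 = \emptyset$, then $u \in W^{1,2}(\Omega, \manifold{N})$. Otherwise write $\mathcal{S}_0 = \{a_1, \dotsc, a_k\}$; by continuity of the trace in $\VMO$, the free homotopy class of $\tr_{\partial B_\rho(a_i)} u$ is constant for all small $\rho > 0$, and it is non-trivial (otherwise $\Deriv u \in L^2$ near $a_i$). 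Set $L_i \defeq \equivnorm{\tr_{\partial B_\rho(a_i)} u}$, which is constant and positive for such $\rho$. \Cref{lemma_energy_growth_map_radii} implies that
\[
f(\rho) \defeq \int_{\Omega \setminus \bigcup_i \Bar{B}_\rho(a_i)} \frac{|\Deriv u|^2}{2} - \sum_i \frac{L_i^2}{4\pi} \log \frac{1}{\rho}
\]
is non-decreasing as $\rho$ decreases, so the $\liminf$ in \cref{def_renormalisable} is a genuine limit equal to $\sup_\rho f(\rho) = \mathcal{E}^{\mathrm{ren}}(u)$.

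For (ii) and (iii), I work in polar coordinates $(r, \theta)$ around $a_i$: the fixed homotopy class combined with \eqref{eq_Eojei1cooruef6uiP} yields $\int_{\Sset^1} |\partial_\theta u(a_i + r\,\cdot)|^2/2 \ge L_i^2/(4\pi)$ for a.e.\ small $r$, and hence the angular excess
\[
\int_0^\sigma \frac{1}{r}\biggl(\int_{\Sset^1} \frac{|\partial_\theta u(a_i + r\theta)|^2}{2} \dif\theta - \frac{L_i^2}{4\pi}\biggr) \dif r \le \mathcal{E}^{\mathrm{ren}}(u) - f(\sigma)
\]
is finite. A mean-value extraction produces a sequence $\rho_\ell \to 0$ along which the circular integral converges to $L_i^2/(4\pi)$, and compactness in $W^{1,2}(\Sset^1, \manifold{N})$ together with the rigidity of the equality case in \eqref{eq_Eojei1cooruef6uiP} yields a further subsequence converging strongly in $W^{1,2}(\Sset^1, \manifold{N})$ to a minimising geodesic $\gamma_i$ in the fixed homotopy class.

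For (i), expand the square and integrate on $B_\rho(a_i) \setminus B_\sigma(a_i)$: in polar coordinates, the cross-term $L_i \int |\Deriv u|/(2\pi |x - a_i|)\dif x$ admits the lower bound $L_i^2 \log(\rho/\sigma)/(2\pi)$ from the pointwise inequality $|\Deriv u| \ge |\partial_\tau u|/r$ and the length lower bound $\int_{\partial B_r(a_i)} |\partial_\tau u| \ge L_i$; combined with the explicit $\int L_i^2/(8\pi^2 |x - a_i|^2)\dif x = L_i^2 \log(\rho/\sigma)/(4\pi)$, this gives
\[
\int_{B_\rho(a_i) \setminus B_\sigma(a_i)} \frac{1}{2}\Bigl(|\Deriv u| - \frac{L_i}{2\pi |x - a_i|}\Bigr)^2 \le \int_{B_\rho(a_i) \setminus B_\sigma(a_i)} \frac{|\Deriv u|^2}{2} - \frac{L_i^2}{4\pi} \log \frac{\rho}{\sigma}.
\]
Summing over $i$, adding $\int_{\Omega \setminus \bigcup_i B_\rho(a_i)} |\Deriv u|^2/2$, and rewriting the right-hand side as $f(\sigma) + \sum_i L_i^2 \log(1/\rho)/(4\pi) \le \mathcal{E}^{\mathrm{ren}}(u) + \sum_i L_i^2 \log(1/\rho)/(4\pi)$, then letting $\sigma \to 0$ by monotone convergence, establishes (i). The membership $u \in W^{1,1}(\Omega, \manifold{N})$ then follows from (i) via $|\Deriv u| \le \bigabs{|\Deriv u| - L_i/(2\pi |x - a_i|)} + L_i/(2\pi |x - a_i|)$, since the first summand is in $L^2(B_\rho(a_i))$ by (i) and the second is in $L^1$.

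For (iv), I split $\synhar{\tr_{\Sset^1} u(a_i + \rho\,\cdot)}{\gamma_i} \le \synhar{\tr_{\Sset^1} u(a_i + \rho\,\cdot)}{\tr_{\Sset^1} u(a_i + \rho_\ell\,\cdot)} + \synhar{\tr_{\Sset^1} u(a_i + \rho_\ell\,\cdot)}{\gamma_i}$: the first term is bounded by the energy of the conformal reparametrisation $(\alpha, t) \mapsto u(a_i + \rho e^{-t}(\cos \alpha, \sin \alpha))$ of $u$ on $\Sset^1 \times [0, \log(\rho/\rho_\ell)]$, which is at most $f(\rho_\ell) - f(\rho)$ and vanishes as $\rho \to 0$ for $\rho_\ell \to 0$, while the second vanishes by strong $W^{1,2}$-convergence and Proposition~\ref{proposition_synharmonic_pseudometric}\eqref{item_H1weaker}. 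Finally, (v) has two parts: (iv) together with Proposition~\ref{proposition_synharmonic_pseudometric}\eqref{item_synhar_finite} yields that $\tr_{\Sset^1} u(a_i + \rho\,\cdot)$ is homotopic to $\gamma_i$ in $\VMO$ for $\rho$ small, so \cref{prop:homotopy C0=homotopyH1/2} with the homotopy invariance in \cref{def_resolution_VMO} shows that $(\gamma_1, \dotsc, \gamma_k)$ is a topological resolution of $g$; and for the energy inequality I construct competitors for $\mathcal{E}^{\mathrm{geom},\rho'}$ by keeping $u$ on $\Omega \setminus \bigcup_i B_\rho(a_i)$ and gluing near-optimal synharmonic connectors on each annulus $B_\rho(a_i) \setminus B_{\rho'}(a_i) \simeq \Sset^1 \times [0, \log(\rho/\rho')]$ from $\tr_{\Sset^1} u(a_i + \rho\,\cdot)$ to $\gamma_i$, yielding $\mathcal{E}^{\mathrm{geom}}_{g, \gamma_1, \dotsc, \gamma_k}(a_1, \dotsc, a_k) \le f(\rho) + \sum_i \synhar{\tr_{\Sset^1} u(a_i + \rho\,\cdot)}{\gamma_i}$, which tends to $\mathcal{E}^{\mathrm{ren}}(u)$ as $\rho \to 0$ by (iv). The main obstacle is the extraction in the second paragraph, where one has to upgrade a borderline $L^1$-integrability statement on the angular excess to strong $W^{1,2}$-convergence toward a minimising geodesic via the rigidity of \eqref{eq_Eojei1cooruef6uiP}.
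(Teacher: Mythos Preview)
Your proof is correct and follows essentially the same four-step approach as the paper: derive (i) by expanding the square and using the length lower bound on each circle, extract the geodesics $\gamma_i$ by the mean-value argument and compactness with norm convergence, prove (iv) via the conformal annulus parametrisation combined with the triangle inequality for synharmony, and construct glued competitors for (v). The only cosmetic difference is ordering: the paper first proves the inequality in (i) for possibly trivial homotopy classes and \emph{then} removes the trivial ones, whereas you assert non-triviality of the class at each $a_i\in\mathcal{S}_0$ upfront---but your subsequent proof of (i) (which works verbatim for $L_i=0$) justifies this a posteriori.
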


We denote the set of singularities of \(u \in W^{1, 2}_{\mathrm{ren}} (\Omega, \manifold{N})\) by 
\[
\operatorname{sing}(u)=\{(a_1,\gamma_1), \dotsc, (a_k,\gamma_k)\}
\]
whenever 
\(u \in W^{1, 2}_{\mathrm{loc}} (\Bar{\Omega} \setminus \{a_1, \dotsc, a_k\}, \manifold{N})\) and for each \(i \in \{1, \dotsc, k\}\), \(\lim_{\rho \to 0}\synhar{\tr_{\Sset^1}u(a_i+\rho\,\cdot)}{\gamma_i}=0\) and \(\gamma_i\) is not homotopic to a constant map. Since the synharmony is not positive-definite, \((\gamma_1, \dotsc, \gamma_k)\) are only defined up to a synharmony class and so strictly speaking \(\operatorname{sing} (u)\) is well-defined up to synharmony of \(\gamma_1, \dotsc, \gamma_k\). 

In the particular case where \(u \in W^{1, 2} (\Omega, \manifold{N})\subset W^{1, 2}_{\mathrm{ren}} (\Omega, \manifold{N})\), we have \(\operatorname{sing} (u) = \emptyset\).
In general, we have  \((a, \gamma)\in\operatorname{sing}(u)\) if and only if \(\abs{\Deriv u}^2\) is not integrable near \(a\) and \(\lim_{\rho \to 0}\synhar{\tr_{\Sset^1}u(a_i+\rho\,\cdot)}{\gamma}=0\).

\begin{proof}%
  [Proof of \cref{admissibleMaps}]%
  \resetconstant
  Let \(\mathcal{S}\subset\Omega\) be given by the definition of renormalisable mappings (\cref{def_renormalisable}). Without loss of generality, we assume that \(\mathcal{S}\neq\emptyset\) so that there exist \(k\in \Nset_\ast\) and \((a_1,\dotsc,a_k)\in\Conf{k}\Omega\) such that \(\mathcal{S}=\{a_1,\dotsc,a_k\}\).

For every \(i \in \{1, \dotsc, k\}\), we let \(\Hat{\gamma}_i \in \mathcal{C} (\Sset^1, \manifold{N})\) be a map homotopic in \(\VMO (\Sset^1, \manifold{N})\) to the map \(\tr_{\Sset^1} u(a_i+r\,\cdot)\) for every \(r \in (0,\Bar{\rho} (a_1, \dotsc, a_k))\).
  
\medbreak
\noindent \emph{Step 1. Upper bound near singularities.} 
For every \( i\in\{1,\dotsc,k\}\), \(\rho\in (0,\Bar{\rho}(a_1,\dots,a_k)]\) and for almost every \(r \in (0,\rho)\), we have \(\int_{\partial B_r (a_i)} \frac{\abs{\Deriv u}^2}{2}\ge \frac{\equivnorm{\Hat{\gamma}_i}^2}{4\pi r}\). Hence, by definition of the renormalised energy and by monotone convergence, we have
\begin{equation}
\label{firstLowerRenorm}
\begin{split}
&\mathcal{E}^{\mathrm{ren}}(u)
  + \sum_{i = 1}^k \frac{\equivnorm{\Hat{\gamma}_i}^2}{4 \pi}
  \log \frac{1}{\rho}\\
  &=\lim_{\sigma\to 0}\biggl(
   \int_{\Omega \setminus \bigcup_{i = 1}^k B_\rho (a_i)}
  \frac{\abs{\Deriv u}^2}{2}
  + \sum_{i = 1}^k\int_{ B_\rho (a_i) \setminus B_\sigma (a_i)}
  \frac{\abs{\Deriv u}^2}{2}
   - \sum_{i = 1}^k \frac{\equivnorm{\Hat{\gamma}_i}^2}{4 \pi}
  \log \frac{\rho}{\sigma}\biggr)\\
 & = \int_{\Omega \setminus \bigcup_{i = 1}^k B_\rho (a_i)}
  \frac{\abs{\Deriv u}^2}{2}
  +
   \sum_{i = 1}^k\int_0^\rho\biggl(\int_{\partial B_r (a_i)}
\frac{\abs{\Deriv u}^2}{2} -\frac{\equivnorm{\Hat{\gamma}_i}^2}{4\pi r}\biggr)\dif r.
  \end{split}
\end{equation}
On the other hand, for every \( i\in\{1,\dotsc,k\}\) and for almost every \(r \in (0,\rho)\), we have that \(\int_{\partial B_r (a_i)} \abs{\Deriv u}\ge \equivnorm{\Hat{\gamma}_i}\) and thus
  \[
  \begin{split}
    \int_{\partial B_r (a_i)}
    \Bigl(\abs{\Deriv u} - \frac{\equivnorm{\Hat{\gamma}_i}}{2 \pi r}\Bigr)^2
    & = \int_{\partial B_r (a_i)} \abs{\Deriv u}^2 
    - \int_{\partial B_r (a_i)} \frac{\abs{\Deriv u} \equivnorm{\Hat{\gamma}_i}}{\pi r} 
    + \frac{\equivnorm{\Hat{\gamma}_i}^2}{2 \pi r}\\
    & \le \int_{\partial B_r (a_i)} \abs{\Deriv u}^2
    - \frac{\equivnorm{\Hat{\gamma}_i}^2}{2 \pi r}.
  \end{split}
  \]
 Hence,
\begin{equation}
\label{wtopLwr}
\begin{split}
\mathcal{E}^{\mathrm{ren}} (u)
  + \sum_{i = 1}^k \frac{\equivnorm{\Hat{\gamma}_i}^2}{4 \pi}
  \log \frac{1}{\rho}
  \ge
  \int_{\Omega \setminus \bigcup_{i = 1}^k B_\rho (a_i)}
  \frac{\abs{\Deriv u}^2}{2}
  +\sum_{i=1}^k\int_{B_\rho(a_i)}
  \frac{1}{2}\Bigl(\abs{\Deriv u(x)} - \frac{\equivnorm{\Hat{\gamma}_i}}{2 \pi \abs{x-a_i}}\Bigr)^2\dif x.
  \end{split}
\end{equation}
In particular, if \(\Hat{\gamma}_i\) is homotopic to a constant map, then we have \(\equivnorm{\Hat{\gamma}_i} = 0\) and
\(u \in W^{1, 2}_{\mathrm{loc}} (\Omega \setminus \{a_1, \dotsc, a_{i - 1}, a_{i + 1}, \dotsc, a_k\}, \manifold{N})\) since the right-hand side of \eqref{wtopLwr} is finite.
By redefining the set \(\{a_1, \dotsc, a_k\}\), we can thus assume that for every \(i \in \{1, \dotsc, k\}\), the map \(\Hat{\gamma}_i\) is not homotopic to a constant map.
\medbreak
\noindent\emph{Step 2.
Construction of the singularities \(\gamma_i\).} For almost every \(r \in (0,\Bar\rho (a_1,\dotsc,a_k))\), we have
\begin{equation}
\label{secondRenormLower}
\sum_{i = 1}^k\Big(\int_{\partial B_r (a_i)}
\frac{\abs{\Deriv u}^2}{2} -\frac{\equivnorm{\Hat{\gamma}_i}^2}{4\pi r}
\Big)
\ge 
\frac 1r \sum_{i = 1}^k\Big(\int_{\Sset^1}\frac{\abs{u(a_i+r\,\cdot)'}^2}{2}-\frac{\equivnorm{\Hat{\gamma}_i}^2}{4\pi }\Big)\ge 0.
\end{equation}
By \eqref{firstLowerRenorm}, the left-hand side of \eqref{secondRenormLower} is integrable with respect to \(r\) near \(0\). In particular, there exists a sequence \((\rho_\ell)_{\ell\in\Nset}\) in \((0,\Bar{\rho}(a_1,\dots,a_k))\) converging to \(0\) with for every \(i\in\{1,\dotsc,k\}\),
\begin{equation}
\label{wtopCvNorm}
\lim_{\ell \to \infty}
\int_{\Sset^1}\frac{\abs{u(a_i+\rho_\ell\,\cdot)'}^2}{2}
=
\frac{\equivnorm{\Hat{\gamma}_i}^2}{4\pi}.
\end{equation}
This implies that for each \(i\in\{1,\dotsc,k\}\), the sequence \((u(a_i+\rho_\ell\,\cdot))_{\ell \in \Nset}\) is bounded in \(W^{1,2}(\Sset^1,\mathcal{N})\) and  has a subsequence which converges weakly in \(W^{1,2}(\Sset^1,\Rset^\nu)\) to some map \(\gamma_i\in W^{1,2}(\Sset^1,\manifold{N})\). By Morrey's embedding theorem, the convergence is uniform.
Since the homotopy classes are closed under uniform convergence, \(\gamma_i\) is homotopic to \(\Hat{\gamma}_i\).
Moreover, we have by \eqref{wtopCvNorm}
\begin{equation}
\label{eq_omahz1ied6sheeNahpheej2c}
\int_{\Sset^1}\frac{\abs{\gamma_i'}^2}{2}
\le \frac{\equivnorm{\Hat{\gamma}_i}^2}{4\pi} = \frac{\equivnorm{\gamma_i}^2}{4\pi},
\end{equation}
and thus  \(\gamma_i\) is a minimising geodesic.
By \eqref{wtopCvNorm} and \eqref{eq_omahz1ied6sheeNahpheej2c},
the sequence \((u(a_i+\rho_\ell\,\cdot))_{\ell\in\Nset}\) converges strongly to \(\gamma_i\) in \(W^{1, 2}(\Sset^1,\manifold{N})\).

\medskip
\noindent\emph{Step 3.
Synharmonic convergence.}
We set \(T = - \log \Bar{\rho} (a_1, \dotsc, a_k)\) and for \(i \in \{1, \dotsc, k\}\), we define the function 
\(v_i : \Sset^1 \times (T, +\infty)\rightarrow \mathcal{N}\) by
\(
v_i(x,t)=u(a_i+e^{-t} x)
\) for every \((x,t)\in \Sset^1 \times (T, +\infty)\).
By \eqref{firstLowerRenorm}, we have 
\begin{equation*}
  \int_{T}^{+\infty} \Bigl( \int_{\Sset^1}\frac{\abs{\Deriv v_i}^2}{2}-\frac{\equivnorm{\gamma_{i}}^2}{4\pi}\Bigr)\dif t
 =\int_0^{\Bar\rho(a_1,\dotsc,a_k)}\biggl(\int_{\partial B_r (a_i)}
\frac{\abs{\Deriv u}^2}{2} -\frac{\equivnorm{\gamma_i}^2}{4\pi r}\biggr)\dif r
  < +\infty.
\end{equation*}
In particular, since the pseudometric \(\synhar{\cdot}{\cdot}\) satisfies the triangle inequality (\cref{proposition_synharmonic_pseudometric} \eqref{item_synhar_triangle}),  we have
\begin{equation}
\label{wtopCauchy}
\begin{split}
\limsup_{\rho \to 0} 
\synhar{u(a+\rho\,\cdot)}{\gamma_i}
&=
\limsup_{\rho \to 0}
\limsup_{\ell \to \infty}
\synhar{u(a+\rho\,\cdot)}{u(a+\rho_\ell\,\cdot)}\\
&\le
\limsup_{\rho \to 0}
\limsup_{\ell \to \infty}
\int_{-\log\rho}^{-\log\rho_\ell} \Bigl( \int_{\Sset^1}\frac{\abs{\Deriv v_i(\cdot, t)}^2}{2}-\frac{\equivnorm{\gamma_{i}}^2}{4\pi}\Bigr)\dif t = 0.
\end{split}
\end{equation}

\medskip
\noindent\emph{Step 4.
Geometric renormalised energy.}
By Fubini's theorem, we have \(\tr_{\Sset^1}u(a_i+\sigma\,\cdot)\in W^{1,2}(\Sset^1,\manifold{N})\) for almost every \(\sigma\in (0,\Bar{\rho}(a_1,\dotsc,a_k))\).
Given such a radius \(\sigma\), we take \(T > 0\) and \(u_i \in W^{1, 2} (\Sset^1 \times [0, T], \manifold{N})\) such that \(\tr_{\Sset^1\times\{0\}}u_i = \tr_{\Sset^1}u(a_i+\sigma\,\cdot)\) and \(\tr_{\Sset^1\times\{T\}}u_i  = {\gamma}_i\), we set \(\rho \defeq  e^{-T} \sigma\) and we define the map \(v \in W^{1, 2} (\Omega \setminus \bigcup_{i = 1}^k \Bar{B}_{\rho} (a_i), \manifold{N})\) by
  \[
  v (x)
  =
  \begin{cases}
    u (x) & \text{if \( x \in \Omega \setminus \bigcup_{i = 1}^k \Bar{B}_{\sigma} (a_i)\)},\\
    u_i \Bigl(\frac{x  - a_i}{\abs{x - a_i}},  \log \frac{\sigma}{\abs{x - a_i}}\Bigr)
    & \text{if \(x \in B_{\sigma} (a_i) \setminus \Bar{B}_{\rho} (a_i)\) for some \(i \in \{1, \dotsc, k\}\)}.
  \end{cases}
  \]
  We have then
  \[
  \begin{split}
    \int_{\Omega \setminus \bigcup_{i = 1}^k \Bar{B}_{\rho} (a_i)}
    \frac{\abs{\Deriv v}^2}{2} - \sum_{i =1}^k \frac{\equivnorm{\gamma_i}^2}{4 \pi} \log \frac{1}{\rho}
    = & \int_{\Omega \setminus \bigcup_{i = 1}^k \Bar{B}_{\sigma} (a_i)}
    \frac{\abs{\Deriv u}^2}{2} - \sum_{i =1}^k  \frac{\equivnorm{\gamma_i}^2}{4 \pi} \log \frac{1}{\sigma}\\
    &+ \sum_{i = 1}^k \Biggl(\int_{\Sset^1 \times [0, T]}
    \frac{\abs{\Deriv u_i}^2}{2} - T \frac{\equivnorm{\gamma_i}^2}{4 \pi}\Biggr).
  \end{split}
  \]
  It follows by \eqref{eq_def_renorm_geom} and \eqref{renormalised_energy_u} that
  \[
  \mathcal{E}_{g, \gamma_1, \dotsc, {\gamma}_k}^{\mathrm{geom}}  (a_1, \dotsc, a_k)
  \le \mathcal{E}^{\mathrm{ren}}  (u)
  +\sum_{i = 1}^k \liminf_{\sigma\to 0} \synhar{\gamma_i}{\tr_{\Sset^1} u(a_i+\sigma\,\cdot)}
  = \mathcal{E}^{\mathrm{ren}}  (u),
  \]
  since the infimum in \cref{definition_synharmonic} stays the same under a lower bound on \(T\) when either \(\beta\) or \(\gamma\) is a geodesic.
\end{proof}

\begin{remark}
The assertion \eqref{it_Joh2iexa0shoh8no5xoXuoli} in \cref{admissibleMaps} implies a weak \(L^2\) estimate on \(\bigcup_{i=1}^k B_\rho(a_i)\) for every \(\rho\in(0,\Bar\rho(a_1,\dotsc,a_k))\), Indeed, if \(\frac{\equivnorm{\gamma_i}}{\pi t}<r<\rho\), then
 \[
 \begin{split}
   \mathcal{H}^1
   (\{ x \in \partial B_r (a_i) \st \abs{\Deriv u (x)} > t\})
   &\le \mathcal{H}^1
   \biggl(\biggl\{ x \in \partial B_r (a_i) \st \Bigabs{\abs{\Deriv u (x)}- \frac{\equivnorm{\gamma_i}}{2 \pi r}} >
   \frac{t}{2} \biggr\}\biggr)\\
   & \le \frac{4}{t^2} \int_{\partial B_r (a_i)} \Bigl(\abs{\Deriv u} - \frac{\equivnorm{\gamma_i}}{2 \pi r}\Bigr)^2\dif \mathcal{H}^1.
 \end{split}
 \]
 It follows thus that for every \(t>0\), setting \(r_t\defeq\inf\{\frac{\equivnorm{\gamma_i}}{\pi t},\rho\}\),
 \[
 \begin{split}
 \mathcal{L}^2(\{ x \in B_\rho (a_i) \st \abs{\Deriv u (x)} > t\})
&\le  \mathcal{L}^2(\{ x \in B_\rho (a_i)\setminus B_{r_t} (a_i)\st \abs{\Deriv u (x)} > t\})
+ \mathcal{L}^2(B_{r_t} (a_i))
\\ 
 &\le \frac{4}{t^2} \int_0^\rho  \int_{\partial B_r (a_i)} \Bigl(\abs{\Deriv u}- \frac{\equivnorm{\gamma_i}}{2 \pi r}\Bigr)^2 \dif r 
 +\frac{\equivnorm{\gamma_i}^2}{t^2 \pi}.
 \end{split}
 \]
Hence, by the estimate \eqref{it_Joh2iexa0shoh8no5xoXuoli} in \cref{admissibleMaps}, we have the weak \(L^2\) estimate:
\begin{equation}
  \label{eq_uNuoy5iCah9eseijoochoote}
  \begin{split}
 \abs{\Deriv u}_{L^{2,\infty}(\bigcup_{i = 1}^k B_\rho (a_i),\Rset^{\nu\times 2})}^2
 =\sup_{t>0}t^2\mathcal{L}^2
  \bigl(\{ x \in {\textstyle \bigcup_{i = 1}^k B_\rho (a_i)} \st \abs{\Deriv u (x)} > t\}\bigr)\\
 \le 
 8\biggl( \mathcal{E}^{\mathrm{ren}} (u) - \int_{\Omega \setminus \bigcup_{i = 1}^k B_\rho (a_i)}  \frac{\abs{\Deriv u}^2}{2}
  + \Bigl(\log\frac 1\rho+\frac{1}{2}\Bigr)\sum_{i=1}^k\frac{\equivnorm{\gamma_i}^2}{4\pi}\biggr).
  \end{split}
\end{equation}
In particular, \eqref{it_Joh2iexa0shoh8no5xoXuoli} implies an \(L^p\) estimate for \( 1\leq p<2\) since
\[
\begin{split}
\norm{\Deriv u}_{L^p \bigl(\textstyle\bigcup_{i = 1}^k B_\rho (a_i),\Rset^{\nu\times 2}\bigr)}
&\le 
C\mathcal{L}^2\bigl(\textstyle\bigcup_{i = 1}^k B_\rho (a_i)\bigr)^{\frac 1p -\frac{1}{2}}
\abs{\Deriv u}_{L^{2,\infty}\bigl(\bigcup_{i = 1}^k B_\rho (a_i),\Rset^{\nu\times 2}\bigr)}\\
&\le 
C\rho^{\frac 2p - 1}
\abs{\Deriv u}_{L^{2,\infty}(\bigcup_{i = 1}^k B_\rho (a_i),\Rset^{\nu\times 2})}.
\end{split}
\]
\end{remark}

\subsection{Renormalisable singular harmonic map}
Renormalisable singular harmonic maps are defined as renormalisable maps which are harmonic outside their singular set.

\begin{definition}
\label{definition_rensingharmap}
Let \(\Omega\subset\Rset^2\) be an open set.
The map \(u \in W^{1, 2}_{\mathrm{ren}} (\Omega, \mathcal{N})\) is a \emph{renormalisable singular harmonic map} whenever \(u\) is harmonic on \(\Omega \setminus \{a_1, \dotsc, a_k\}\),
with \(\{(a_1, \gamma_1), \dotsc, (a_k, \gamma_k)\} = \operatorname{sing}(u)\).
\end{definition}

As previously, the charges \(\gamma_1, \dotsc, \gamma_k\) of the singularities are only defined up to synharmony.

\begin{lemma}\label{lemmaWeakequation}
Let \(\Omega\subset\Rset^2\) be an open set, let \(u \in W^{1, 2}_{\mathrm{ren}} (\Omega, \mathcal{N})\) be a renormalisable singular harmonic map, let \(\varphi \in \mathcal{C}^2_c (\Omega, \Rset^\nu)\) and let \(\{(a_1, \gamma_1), \dotsc, (a_k, \gamma_k)\}= \operatorname{sing}(u)\). 
If for every \(i \in \{1, \dotsc, k\}\) one has \(\varphi (a_i) = 0\),
then we have
\begin{equation}
\label{eq_oodooTai0phoabae7joS7aih}
  \int_{\Omega} \Deriv u \cdot \Deriv \varphi = \int_{\Omega} \varphi \cdot A (u)[\Deriv u, \Deriv u].
\end{equation}
\end{lemma}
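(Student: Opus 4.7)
The plan is to exploit the pointwise harmonic map equation on the smooth part of \(u\), integrate by parts on the punctured domain, and pass to the limit as the puncture radius shrinks to zero.

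Since \(u\) is a smooth harmonic map on \(\Omega \setminus \{a_1, \dotsc, a_k\}\), it satisfies there the pointwise Euler--Lagrange equation \(-\Delta u = A(u)[\Deriv u, \Deriv u]\), with \(A\) the second fundamental form of \(\manifold{N}\) in \(\Rset^\nu\). For any \(\rho \in (0, \Bar{\rho}(a_1, \dotsc, a_k))\) with \(\bigcup_{i=1}^k \Bar{B}_\rho(a_i) \subset \Omega\), I would multiply by \(\varphi\) and integrate by parts on \(\Omega_\rho \defeq \Omega \setminus \bigcup_{i=1}^k \Bar{B}_\rho(a_i)\); since \(\varphi\) vanishes near \(\partial \Omega\), only the inner spheres contribute to the boundary integral, giving
\[
\int_{\Omega_\rho} \Deriv u \cdot \Deriv \varphi = \int_{\Omega_\rho} \varphi \cdot A(u)[\Deriv u, \Deriv u] + \sum_{i=1}^k \int_{\partial B_\rho(a_i)} \varphi \cdot \partial_\nu u,
\]
where \(\nu\) denotes the outward unit normal to \(\Omega_\rho\). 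The goal is then to send \(\rho \to 0\) along an appropriate sequence.

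For the bulk terms I would rely on assertion \eqref{it_Joh2iexa0shoh8no5xoXuoli} of \cref{admissibleMaps}. That assertion yields \(u \in W^{1, 1}(\Omega, \manifold{N})\), so \(\Deriv u \cdot \Deriv \varphi \in L^1(\Omega)\) and the left-hand side converges by dominated convergence. For the right-hand side, the same assertion gives \(\int_{B_\rho(a_i)} (\abs{\Deriv u(x)} - \equivnorm{\gamma_i}/(2\pi \abs{x-a_i}))^2 \dif x < +\infty\), so the bound \(\abs{\Deriv u(x)} \le \equivnorm{\gamma_i}/(2\pi\abs{x-a_i}) + h_i(x)\) with \(h_i \in L^2_{\mathrm{loc}}\) holds near \(a_i\). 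Combined with \(\abs{\varphi(x)} \le C\abs{x - a_i}\) near \(a_i\) (since \(\varphi(a_i) = 0\) and \(\varphi \in \mathcal{C}^1\)) and the boundedness of \(A\) on the compact manifold \(\manifold{N}\), this provides an integrable majorant of the form \(C/\abs{x-a_i} + C\abs{x-a_i}\, h_i^2\) near each \(a_i\); dominated convergence then handles the right-hand side as well.

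The main obstacle is the vanishing of the boundary terms along some sequence \(\rho_n \to 0\). From \(\abs{\varphi(x)} \le C\rho\) on \(\partial B_\rho(a_i)\) and the Cauchy--Schwarz inequality,
\[
\biggabs{\int_{\partial B_\rho(a_i)} \varphi \cdot \partial_\nu u} \le C\rho^{3/2} \biggl(\int_{\partial B_\rho(a_i)} \abs{\Deriv u}^2 \biggr)^{1/2}.
\]
Expanding \(\abs{\Deriv u}^2 \le 2(\abs{\Deriv u} - \equivnorm{\gamma_i}/(2\pi r))^2 + \equivnorm{\gamma_i}^2/(2\pi^2 r^2)\) on \(\partial B_r(a_i)\) and integrating over that circle gives \(\int_{\partial B_r(a_i)} \abs{\Deriv u}^2 \le 2 f_i(r) + \equivnorm{\gamma_i}^2/(\pi r)\), where \(f_i(r) \defeq \int_{\partial B_r(a_i)} (\abs{\Deriv u} - \equivnorm{\gamma_i}/(2\pi r))^2\). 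Assertion \eqref{it_Joh2iexa0shoh8no5xoXuoli} of \cref{admissibleMaps} ensures that \(f_i \in L^1(0, \Bar{\rho}(a_1, \dotsc, a_k))\), so a diagonal argument produces a sequence \(\rho_n \to 0\) with \(f_i(\rho_n) \to 0\) for every \(i\). Along this sequence
\[
\rho_n^3 \int_{\partial B_{\rho_n}(a_i)} \abs{\Deriv u}^2 \le 2\rho_n^3 f_i(\rho_n) + \frac{\equivnorm{\gamma_i}^2}{\pi}\rho_n^2 \to 0,
\]
so the boundary contributions vanish and the identity \eqref{eq_oodooTai0phoabae7joS7aih} follows in the limit.
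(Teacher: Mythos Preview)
Your overall strategy---integrate by parts on the punctured domain, use assertion \eqref{it_Joh2iexa0shoh8no5xoXuoli} of \cref{admissibleMaps} to control the bulk terms, and show the boundary terms vanish along a subsequence---is exactly the paper's approach.

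There is, however, a small slip in your treatment of the boundary terms. You claim that \(f_i \in L^1(0,\Bar{\rho})\) yields a sequence \(\rho_n \to 0\) with \(f_i(\rho_n) \to 0\). This is false: \(f(r) = r^{-1/2}\) is integrable on \((0,1)\) yet tends to \(+\infty\). What \(f_i \in L^1\) \emph{does} give is \(\liminf_{r \to 0} r\, f_i(r) = 0\) (otherwise \(f_i(r) \gtrsim 1/r\) near \(0\), which is not integrable), and that is already enough: along a sequence with \(\rho_n f_i(\rho_n) \to 0\) you get \(\rho_n^3 f_i(\rho_n) = \rho_n^2 \cdot \rho_n f_i(\rho_n) \to 0\), and your estimate closes.

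The paper's route to the boundary term is slightly more economical: since \(\abs{\Deriv u} \in L^1(\Omega)\), one has \(\liminf_{\rho \to 0} \rho \int_{\partial B_\rho(a_i)} \abs{\Deriv u} = 0\), and combining this directly with \(\abs{\varphi(x)} \le C\rho\) on \(\partial B_\rho(a_i)\) kills the boundary integral without passing through Cauchy--Schwarz or the function \(f_i\).
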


Here and in the sequel,
\[ A(u)[Du,Du]= \sum_{m+1\leq i \leq \nu} \sum_{1\leq j, k \leq 2} A^i(u) (\partial_j u, \partial_k u) \mathbf{n}_i(u)
\]
with \( \dim \mathcal{N}=m\) and where \( (\mathbf{n}_{m+1}(u), \dotsc, \mathbf{n}_{\nu}(u))\) is a local orthonormal frame of \( T_u \manifold{N}^\perp \subset \Rset^\nu\) and
\(A^i(u)=\nabla \mathbf{n}_i (u)\) is the second fundamental form of \(\manifold{N}\) in the normal direction \(\mathbf{n}_i(u)\).

If the map \(u\) is harmonic in \(\Omega \setminus \{a_1, \dotsc, a_k\}\), then \eqref{eq_oodooTai0phoabae7joS7aih} holds for \(\varphi\) vanishing in a neighbourhood of each \(\{a_1, \dotsc, a_k\}\); the point of \cref{lemmaWeakequation} is that if \(u\) is renormalisable, then one can merely assume that \(\varphi\) vanishes on \(\{a_1, \dotsc, a_k\}\).

\begin{proof}[Proof of \cref{lemmaWeakequation}]Since the map \(u\) is harmonic on \(\Omega\setminus\{a_1, \dotsc, a_k\}\), we have \(u \in \mathcal{C}^\infty (\Omega\setminus\{a_1, \dotsc, a_k\})\) \cite{Helein_1991}, and
\[
 -\Delta u = A (u)[\Deriv u, \Deriv u]
 \qquad
 \text{in \(\Omega\setminus\{a_1, \dotsc, a_k\}\)}.
\]
For every  \(\rho >0\) such that the disks \(\Bar B_\rho(a_1), \dotsc, B_\rho (a_k)\) are disjoint and contained in \(\Omega\), we have by integration by parts
\[
  \int_{\Omega \setminus\bigcup_{i = 1}^k B_\rho (a_i)}
    \Deriv u \cdot \Deriv \varphi
 = \int_{\Omega \setminus \bigcup_{i = 1}^{k} B_\rho (a_i)}
   \varphi \cdot A (u)[\Deriv u, \Deriv u]
   - \sum_{i = 1}^k\int_{\partial B_\rho (a_i)} \varphi \partial_\nu u .
\]
Since the map \(u\) is renormalisable,
we have by \eqref{it_Joh2iexa0shoh8no5xoXuoli} in \cref{admissibleMaps} that \(\abs{\Deriv u} \in L^1(\Omega)\) and so \(\Deriv u \cdot \Deriv \varphi\in L^1(\Omega)\) as \(\Deriv \varphi\) is bounded. Hence,
\[
\lim_{\rho \to 0} \int_{\Omega \setminus \bigcup_{i = 1}^k B_\rho (a_i)}
\Deriv u \cdot \Deriv \varphi
= \int_{\Omega} \Deriv u \cdot \Deriv \varphi.
\]
This also implies that for every \(i \in \{1, \dotsc, k\}\),
\[
\liminf_{\rho \to 0} \rho \int_{\partial B_\rho (a_i)} \abs{\Deriv u}  = 0
\]
which in turn yields, as \(\varphi (a_i) = 0\) and \(\varphi\) is Lipschitz-continuous,
\[
\liminf_{\rho \to 0} \int_{\partial B_\rho (a)} \varphi \partial_\nu u = 0.
\]

Finally, using again \eqref{it_Joh2iexa0shoh8no5xoXuoli} in \cref{admissibleMaps}, we obtain \(\abs{\cdot-a_i}\abs{\Deriv u}^2\in L^1(B_\rho(a_i))\). Hence, since \(A(u)\) is bounded by compactness and smoothness of the manifold \(\manifold{N}\), and using again the fact that \(\varphi (a_i) = 0\) with \(\varphi\) is Lipschitz-continuous, we obtain that \(\varphi \cdot A (u)[\Deriv u, \Deriv u]\in L^1(\Omega)\) and thus
\[
\lim_{\rho\to 0}\int_{\Omega\setminus\bigcup_{i = 1}^k B_\rho(a_i)}\varphi \cdot A (u)[\Deriv u, \Deriv u]=
\int_\Omega \varphi \cdot A (u)[\Deriv u, \Deriv u].\qedhere
\]

\end{proof}

\begin{proposition}
If \(u \in W^{1, 2}_{\mathrm{ren}} (\Omega, \manifold{N})\) is a renormalisable singular harmonic map on an open set \(\Omega\subset\Rset^2\) and \(\operatorname{sing} (u) = \{(a_1,\gamma_1), \dotsc, (a_k,\gamma_k)\}\), then \(u \in \mathcal{C}^\infty (\Omega \setminus \{a_1, \dotsc, a_k\}, \manifold{N})\)
and for every \(i \in \{1, \dotsc, k\}\) and \(\rho\in(0,\Bar{\rho} (a_1, \dotsc, a_k))\),
\begin{equation}\label{eq:estimate_blow_up_near_sing}
\sup_{x \in B_\rho(a_i)\setminus\{a_i\}}
  \abs{x - a_i} \abs{\Deriv u (x)} < +\infty.
  \end{equation}
\end{proposition}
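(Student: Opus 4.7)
The plan is to combine Hélein's regularity theorem for two-dimensional weakly harmonic maps with an \(\varepsilon\)-regularity argument built on top of the excess estimate \eqref{it_Joh2iexa0shoh8no5xoXuoli} of \cref{admissibleMaps}. Smoothness of \(u\) on \(\Omega \setminus \{a_1, \dotsc, a_k\}\) is immediate: by \cref{definition_rensingharmap}, \(u\) is weakly harmonic from a planar domain into the compact Riemannian manifold \(\manifold{N}\), so Hélein's theorem applies; this already yields \eqref{eq:estimate_blow_up_near_sing} for \(x\) bounded away from \(\{a_1, \dotsc, a_k\}\), and only the uniform bound when \(x\) approaches some \(a_i\) remains.

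I will use the standard \(\varepsilon\)-regularity for harmonic maps: there exist constants \(\varepsilon_0, C_* > 0\) depending only on \(\manifold{N}\) such that whenever \(v : B_R \to \manifold{N}\) is weakly harmonic with \(\int_{B_R} \abs{\Deriv v}^2 \le \varepsilon_0\) one has \(R \norm{\Deriv v}_{L^\infty(B_{R/2})} \le C_* \norm{\Deriv v}_{L^2(B_R)}\). Fix \(i\), take \(\rho\) small enough that \eqref{it_Joh2iexa0shoh8no5xoXuoli} holds on \(B_\rho(a_i)\), and for \(x \in B_\rho(a_i) \setminus \{a_i\}\) set \(r \defeq \abs{x-a_i}/2\), \(y_0 \defeq (x - a_i)/r\), so that \(\abs{y_0} = 2\) and the rescaled map \(v(y) \defeq u(a_i + ry)\) is harmonic on a neighbourhood of \(y_0\) disjoint from the origin. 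The aim is to fix some \(\delta \in (0, 1]\) and then take \(\abs{x-a_i}\) small enough to have
\[
 \int_{B_\delta(y_0)} \abs{\Deriv v}^2 = \int_{B_{r\delta}(x)} \abs{\Deriv u}^2 \le \varepsilon_0,
\]
since \(\varepsilon\)-regularity applied on \(B_\delta(y_0)\) will then give \(\abs{\Deriv v(y_0)} \le C_* \varepsilon_0^{1/2}/\delta\), i.e.\ \(\abs{x-a_i}\abs{\Deriv u(x)} = 2\abs{\Deriv v(y_0)} \le 2C_* \varepsilon_0^{1/2}/\delta\).

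To bound \(\int_{B_{r\delta}(x)} \abs{\Deriv u}^2\) I will use the elementary pointwise inequality
\[
 \abs{\Deriv u(z)}^2 \le 2\Bigl(\abs{\Deriv u(z)} - \tfrac{\equivnorm{\gamma_i}}{2 \pi \abs{z - a_i}}\Bigr)^2 + \tfrac{\equivnorm{\gamma_i}^2}{2\pi^2 \abs{z - a_i}^2}.
\]
On \(B_{r\delta}(x) \subset B_{(2+\delta)r}(a_i) \setminus B_{(2-\delta)r}(a_i)\) one has \(\abs{z - a_i} \ge r(2-\delta)\), so the second term integrates to at most \(\equivnorm{\gamma_i}^2 \delta^2/(2\pi(2-\delta)^2)\), which I make smaller than \(\varepsilon_0/2\) by fixing \(\delta\) small (depending only on \(\equivnorm{\gamma_i}\) and \(\varepsilon_0\)). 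The first integrand is in \(L^1(B_\rho(a_i))\) by \eqref{it_Joh2iexa0shoh8no5xoXuoli}, so by absolute continuity of the integral its contribution on \(B_{r\delta}(x)\) (whose Lebesgue measure equals \(\pi r^2 \delta^2\)) becomes smaller than \(\varepsilon_0/2\) as soon as \(\abs{x-a_i}\) is small enough, uniformly in \(x\).

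The main obstacle is that \(\varepsilon\)-regularity cannot be applied directly to \(u\) on balls of radius comparable to \(\abs{x - a_i}\), because the energy there is bounded below by a positive multiple of \(\equivnorm{\gamma_i}^2\) coming from the topological lower bound of \cref{lemma_energy_growth_map_radii}. The resolution is precisely to subtract the explicit model singularity \(\equivnorm{\gamma_i}/(2\pi\abs{z - a_i})\) and exploit the finiteness of the resulting squared excess guaranteed by the renormalisability estimate \eqref{it_Joh2iexa0shoh8no5xoXuoli}: the excess piece is small on any small ball by absolute continuity, while the explicit model piece is small on any ball of bounded aspect ratio lying in an annulus away from \(a_i\).
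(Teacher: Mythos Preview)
Your proof is correct and follows essentially the same approach as the paper: both invoke H\'elein's regularity for smoothness away from the singularities, then apply \(\varepsilon\)-regularity on a ball \(B_{\delta\abs{x-a_i}}(x)\) after splitting \(\abs{\Deriv u}^2\) via the same pointwise inequality into the squared excess (finite in \(L^1\) by \eqref{it_Joh2iexa0shoh8no5xoXuoli}) and the explicit model term \(\equivnorm{\gamma_i}^2/(2\pi^2\abs{z-a_i}^2)\). The only cosmetic difference is that you perform an explicit rescaling and separate the choice of \(\delta\) from the smallness of \(\abs{x-a_i}\), whereas the paper folds both into a single small parameter \(\delta\).
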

\begin{proof}
\resetconstant
 The regularity follows from the classical regularity theory for harmonic maps on planar domains \cite{Helein_1991}.
 
For the estimate, there exist \(\varepsilon > 0 \) and \(\Cl{cst_kuCheo4yohwoh6ig5ahni5Sh} > 0\) such that if \(u \in W^{1, 2} (B_r (x),\mathcal{N})\) is a smooth harmonic map on \(B_{r} (x)\) and \(\norm{D u}_{L^2 (B_r(x))}\leq \varepsilon\), then \(\abs{\Deriv u(x)} \le \Cr{cst_kuCheo4yohwoh6ig5ahni5Sh}/r\) \cite{Schoen_1984}*{Theorem 2.2}. 
For every \(i\in \{1,\dotsc k\}\), if \(B_r (x) \subseteq B_\rho (a_i)\setminus \{a_i\}\) we have
\[
\int_{B_r(x)} |Du|^2 \leq 2\int_{B_r(x)}\left( |Du(y)|-\frac{\lambda(\gamma_i)}{2\pi|y-a_i|} \right)^2\dif y+ 2\int_{B_r(x)} \frac{\lambda(\gamma_i)^2}{4\pi^2|y-a_i|^2} \dif y.
\]
By \cref{admissibleMaps} \eqref{it_Joh2iexa0shoh8no5xoXuoli},  we deduce that there exists \(\delta \in (0, 1)\) such that if \(i \in \{1, \dotsc, k\}\) and \(x \in \Omega \setminus \{a_1,\dotsc,a_k\}\) satisfy \(\abs{x - a_i} \le \delta\) and if \(r=\delta \abs{x - a_i}\), then we have \(B_{r} (x) \subseteq B_\rho (a_i)\setminus \{a_i\} \subseteq \Omega \setminus \{a_1, \dotsc ,a_k\}\) and \( \|Du\|_{L^2(B_r(x))} \leq \varepsilon\) and hence \( \abs{\Deriv u (x)} \le  \Cr{cst_kuCheo4yohwoh6ig5ahni5Sh}/(\delta \abs{x - a_i})\). 
\end{proof}

\begin{definition}
  \label{definition_stationary}
  Let \(\Omega\subset\Rset^2\) be an open set. A map \(u \in W^{1, 2}_{\mathrm{ren}} (\Omega, \mathcal{N})\) is a \emph{stationary renormalisable singular harmonic map}
whenever for every map \(\psi \in \mathcal{C}^2_c (\Omega, \Rset^2)\) which is constant in a neighbourhood of \(a\) for each \( (a, \gamma) \in \operatorname{sing} (u)\), we have
\[
 \int_{\Omega} \Deriv u {\,:\,} (\Deriv u \Deriv \psi) = \frac{1}{2} \int_{\Omega} \abs{\Deriv u}^2 \operatorname{div} \psi.
\]
\end{definition}

In view of the estimate \eqref{it_Joh2iexa0shoh8no5xoXuoli} in \cref{admissibleMaps}, namely, the fact that \(\abs{\cdot-a}\abs{\Deriv u}^2\) is integrable near \(a\), it is equivalent in \cref{definition_stationary} to assume the condition to hold for any \(\psi \in \mathcal{C}^2_c (\Omega, \Rset^2)\) such that
\(\Deriv \psi (a) = 0\) for every \((a,\gamma)\in \operatorname{sing}(u)\).

The following provides a criterion for a renormalisable singular harmonic map to be stationary.

\begin{lemma}
\label{lemma_Stress_energy_tensor}
  Let \(u \in W^{1, 2}_{\mathrm{ren}} (\Omega, \mathcal{N})\) be a renormalisable singular harmonic map defined on an open set \(\Omega\subset\Rset^2\) and define the stress-energy tensor by
  \begin{equation}
  \label{eq_Yo4ooJeeS4Lapaetao9Iawih}
  T\defeq \frac{\abs{\Deriv u}^2}{2}  - \Deriv u^*\, \Deriv u
  =\Big(\frac{\abs{\Deriv u}^2}{2}\delta_{ij}-\partial_i u\cdot\partial_ju\Big)_{i,j\in\{1,2\}}.
  \end{equation}
  and let \(\operatorname{sing} (u) = \{(a_1, \gamma_1), \dotsc, (a_k, \gamma_k)\}\).
  Then \(T\) is smooth in \(\Omega \setminus \{a_1, \dotsc, a_k\}\)
  and
  \[
  \operatorname{div} T = 0
  \qquad \text{in \(\Omega \setminus \{a_1, \dotsc, a_k\}\)}.
  \]
  Moreover, if \(0 < \rho < \Bar{\rho} (a_1, \dotsc, a_k)\) and if \(\psi \in \mathcal{C}^2_c (\Omega, \Rset^2)\) is constant in \(B_\rho (a_i)\) for each \( (a,\gamma) \in \operatorname{sing} (u)\), then
  \[
  \int_{\Omega\setminus\bigcup_{i=1}^k B_{\rho} (a_i)} \Deriv u {\,:\,} (\Deriv u\, \Deriv \psi) - \frac{\abs{\Deriv u}^2}{2}   \operatorname{div} \psi
  = \int_{\partial B_\rho (a_i)} \psi \cdot T \cdot \nu\dif \mathcal{H}^1.
  \]
  In particular, the map \(u\) is a stationary renormalisable singular harmonic map if and only if the flux of \(T\) around any small circle containing a singularity vanishes.
\end{lemma}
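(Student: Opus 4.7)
The plan is to combine three ingredients: the smoothness of $u$ off its singular set, the classical computation that the stress--energy tensor of a harmonic map into a submanifold is divergence-free, and an integration by parts on the perforated domain $\Omega' \defeq \Omega \setminus \bigcup_{i=1}^k \Bar B_\rho(a_i)$. The flux identity will then follow, and the stationarity characterisation will be a direct consequence.

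First I would recall that $u \in \mathcal{C}^\infty(\Omega \setminus \{a_1, \dotsc, a_k\}, \manifold{N})$ by the regularity result invoked earlier in this section, whence $T$ defined through \eqref{eq_Yo4ooJeeS4Lapaetao9Iawih} is smooth there. A direct pointwise calculation yields
\[
 \sum_{j=1}^2 \partial_j T_{ij} = \sum_{k} \partial_k u \cdot \partial_i \partial_k u - \sum_j \partial_i \partial_j u \cdot \partial_j u - \partial_i u \cdot \Delta u = -\partial_i u \cdot \Delta u
\]
on $\Omega \setminus \{a_1, \dotsc, a_k\}$. Because $-\Delta u = A(u)[\Deriv u, \Deriv u]$ lies in $T_u \manifold{N}^\perp$ whereas $\partial_i u$ is tangent to $\manifold{N}$, the right-hand side vanishes and $\operatorname{div} T = 0$ classically on $\Omega \setminus \{a_1, \dotsc, a_k\}$.

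For the integral identity I would rely on the algebraic relation
\[
 \Deriv u {\,:\,} (\Deriv u\, \Deriv \psi) - \frac{\abs{\Deriv u}^2}{2} \operatorname{div}\psi = -\sum_{i,k} T_{ik}\, \partial_i \psi_k
\]
and integrate by parts on $\Omega'$. Since $T$ is smooth up to $\partial \Omega'$, since $\operatorname{div} T = 0$ in $\Omega'$ by the previous step, and since $\psi$ vanishes in a neighbourhood of $\partial \Omega$, the bulk term and the $\partial \Omega$-contribution both vanish; only the circles $\partial B_\rho(a_i)$ survive. Taking into account that the outer normal of $\Omega'$ on $\partial B_\rho(a_i)$ is the inward normal $-\nu$ of the ball produces the announced identity, with the sum of the fluxes $\sum_{i=1}^k \int_{\partial B_\rho(a_i)} \psi \cdot T \cdot \nu \dif \mathcal{H}^1$ on the right-hand side.

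For the last assertion, observe that if $\psi$ is constant on each $B_\rho(a_i)$, then $\Deriv \psi$ and $\operatorname{div}\psi$ vanish there, so the left-hand side of the identity coincides with the quantity tested in \cref{definition_stationary}. Moreover $\psi$ equals $\psi(a_i)$ on $\partial B_\rho(a_i)$, so the flux contribution reduces to $\psi(a_i) \cdot \int_{\partial B_\rho(a_i)} T \cdot \nu \dif \mathcal{H}^1$; by $\operatorname{div} T = 0$ and the divergence theorem applied to annuli, this integral does not depend on $\rho \in (0, \Bar{\rho}(a_1, \dotsc, a_k))$. Localising $\psi$ around a single $a_i$ and letting $\psi(a_i)$ range over $\Rset^2$ shows that stationarity is equivalent to the vanishing of each such flux. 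I do not foresee any serious obstacle; the only mild subtleties are the sign bookkeeping for the outer normals and the verification that the tangency of $\partial_i u$ to $\manifold{N}$ indeed yields $\operatorname{div} T = 0$ classically.
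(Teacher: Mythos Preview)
Your proposal is correct and follows essentially the same line as the paper's own proof: smoothness of $u$ off the singular set, the pointwise computation $(\operatorname{div} T)_i=-\partial_i u\cdot\Delta u=0$ via orthogonality of $\Delta u$ to $T_u\manifold{N}$, and then integration by parts on the perforated domain. The paper's proof is more terse (it simply says ``the last identity follows by integration by parts''), so your added details on the algebraic identity, the sign of the normal, the missing sum $\sum_{i=1}^k$ on the right-hand side, and the explicit derivation of the stationarity characterisation are welcome but do not constitute a different approach.
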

\begin{proof}
  The fact that the stress-energy tensor \(T\) is divergence-free in \(\Omega \setminus \{a_1, \dotsc, a_k\}\) follows from the fact that \(u\) is harmonic and thus smooth in \(\Omega \setminus \{a_1, \dotsc, a_k\}\). Hence, we can see that \((\operatorname{div} T)_i= -\partial_i u\cdot \Delta u=0\) for \(i\in \{1,2\}\), because the Laplacian of \(u\) is orthogonal to the tangent space of \(\mathcal{N}\) at \(u\). The last identity follows by integration by parts.
\end{proof}
\cref{lemma_Stress_energy_tensor} can be reinterpreted in terms of the \emph{Hopf differential} defined by \( \omega_u(z)\defeq|\partial_xu|^2-|\partial_yu|^2-2i\partial_xu \cdot \partial_yu\). The stress-energy tensor \(T\) is divergence-free in an open set if and only if the Hopf differential is holomorphic in this open set. Furthermore \(u\) is a stationary singular harmonic map if and only if the residues of the Hopf differential in the points \(  (a_1,\dotsc,a_k)\) vanish, see \cite{Bethuel_Brezis_Helein_1994}*{Theorem 8.4}.
\begin{definition}
Let \(\Omega\subset\Rset^2\) be an open set. A map \(u \in W^{1, 2}_{\mathrm{ren}} (\Omega, \manifold{N})\) such that
\(\operatorname{sing} (u) = \{(a_1,\gamma_1), \dotsc, (a_k, \gamma_k)\}\) is a \emph{minimal renormalisable singular harmonic map} if for every map \(v \in W^{1, 2}_{\mathrm{ren}} (\Omega, \manifold{N})\) such that \(\operatorname{sing} (v) = \{(b_1, \gamma_1), \dotsc, (b_k,\gamma_k)\}\), one has
\[
 \mathcal{E}_{\mathrm{ren}} (u) \le \mathcal{E}_{\mathrm{ren}} (v).
\]
\end{definition}

Since \(\operatorname{sing} (u)\) and \(\operatorname{sing} (v)\) are only defined up to synharmony, \(\operatorname{sing} (v) = \{(b_1, \gamma_1), \dotsc, (b_k,\gamma_k)\}\) has to be understood in the sense that one can find \(\gamma_1, \dotsc, \gamma_k\) satisfying the definition of both \(\operatorname{sing} (u)\) and \(\operatorname{sing} (v)\). In other words, a minimal renormalisable singular harmonic map is minimal among all renormalisable maps with fixed synharmony classes of charges \(\gamma_i\).

By \cref{admissibleMaps}, this will be the case if \(\mathcal{E}_{\mathrm{ren}} (u) = \mathcal{E}^{\mathrm{geom}}_{g, \gamma_1, \dotsc, \gamma_k}(a_1,\dotsc,a_k)\) and if for every \((b_1,\dots,b_k)\in\Conf{k}\Omega\), one has \(\mathcal{E}^{\mathrm{geom}}_{g, \gamma_1, \dotsc, \gamma_k}(a_1,\dotsc,a_k)
\le \mathcal{E}^{\mathrm{geom}}_{g, \gamma_1, \dotsc, \gamma_k}(b_1, \dotsc, b_k)\).

\begin{proposition}\label{proposition_renormalisable_stationary}
If \(u \in W^{1, 2}_{\mathrm{ren}} (\Omega, \manifold{N})\) is a minimal renormalisable singular harmonic map on an open set \(\Omega\subset\Rset^2\),
then \(u\) is a stationary renormalisable singular harmonic map.
\end{proposition}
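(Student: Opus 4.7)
The plan is an inner variation argument, exploiting the fact that in the definition of a minimal renormalisable singular harmonic map the positions of the singularities are not frozen---only the charges \(\gamma_1,\dotsc,\gamma_k\) (up to synharmony) are. Given \(\psi \in \mathcal{C}^2_c(\Omega,\Rset^2)\) constant on some ball \(B_{r_0}(a_i)\) around each singularity \(a_i\), set \(\Phi_t(x) \defeq x + t\psi(x)\). For \(\abs{t}\) small enough, \(\Phi_t\) is a diffeomorphism of \(\Omega\) onto itself that coincides with the identity near \(\partial\Omega\) and with the rigid translation \(y \mapsto y + t\psi(a_i)\) on each \(B_{r_0}(a_i)\). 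The candidate competitor is \(u_t \defeq u \compose \Phi_t^{-1}\).

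First I would verify that \(u_t\) is admissible in the definition of minimality. Near each \(\Phi_t(a_i) = a_i + t\psi(a_i)\), the map \(u_t\) is a rigid translate of \(u\) near \(a_i\); hence \(u_t\) is renormalisable with
\[
 \operatorname{sing}(u_t) = \{(\Phi_t(a_1), \gamma_1), \dotsc, (\Phi_t(a_k), \gamma_k)\},
\]
preserving the synharmony classes of the charges. The minimality of \(u\) then yields \(\mathcal{E}^{\mathrm{ren}}(u) \le \mathcal{E}^{\mathrm{ren}}(u_t)\) for every small \(t\).

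The core computation is the change of variables \(y = \Phi_t(x)\) in the definition of \(\mathcal{E}^{\mathrm{ren}}(u_t)\). Since \(\Phi_t\) is a translation near each \(a_i\), one has \(\Phi_t(\Bar B_\rho(a_i)) = \Bar B_\rho(\Phi_t(a_i))\) for \(\rho < r_0\), so the counterterms \(\sum_i \frac{\equivnorm{\gamma_i}^2}{4\pi}\log (1/\rho)\) cancel exactly in the difference of renormalised energies. After cancellation, one is left with
\begin{equation*}
 \mathcal{E}^{\mathrm{ren}}(u_t) - \mathcal{E}^{\mathrm{ren}}(u) = \int_\Omega \biggl( \frac{\abs{\Deriv u \, (\Deriv\Phi_t)^{-1}}^2}{2} \abs{\det \Deriv\Phi_t} - \frac{\abs{\Deriv u}^2}{2} \biggr) \dif x,
\end{equation*}
the integrand being supported where \(\Deriv\psi \ne 0\), in particular on a compact subset of \(\Omega \setminus \{a_1, \dotsc, a_k\}\) on which \(\abs{\Deriv u}^2\) is integrable.

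Expanding \((\Deriv\Phi_t)^{-1} = \id - t\Deriv\psi + O(t^2)\) and \(\det \Deriv\Phi_t = 1 + t \dive \psi + O(t^2)\), and using the majorant \(C\abs{\Deriv u}^2 \mathbf{1}_{\operatorname{supp} \Deriv\psi} \in L^1(\Omega)\), the dominated convergence theorem justifies differentiation under the integral and yields
\begin{equation*}
 \frac{d}{dt}\bigg|_{t=0} \mathcal{E}^{\mathrm{ren}}(u_t) = \int_\Omega \biggl( \frac{\abs{\Deriv u}^2}{2} \dive\psi - \Deriv u {\,:\,} (\Deriv u \, \Deriv\psi) \biggr) \dif x.
\end{equation*}
Applying minimality to both \(\psi\) and \(-\psi\) forces this derivative to vanish, which is exactly the identity of \cref{definition_stationary}. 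The main subtlety is the exact cancellation of the logarithmic counterterms under the change of variables; this is precisely why \(\psi\) must be taken constant in a full neighbourhood of each \(a_i\), so that the singular set is rigidly translated and the geodesic charge data remain intact.
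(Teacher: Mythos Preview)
Your proof is correct and is essentially the same inner-variation argument as the paper's: the paper packages the computation of \(\frac{d}{dt}\big|_{t=0}\mathcal{E}^{\mathrm{ren}}(u\compose\Psi_t)\) into a separate statement (\cref{proposition_perturbation_stationary_identity}) and then invokes minimality, while you carry out the same computation inline with the competitor \(u\compose\Phi_t^{-1}\) instead of \(u\compose\Psi_t\) (which amounts only to replacing \(t\) by \(-t\)). Your observation that the logarithmic counterterms cancel exactly because \(\Phi_t\) is a rigid translation on each \(B_{r_0}(a_i)\) is precisely the point the paper uses as well.
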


\Cref{proposition_renormalisable_stationary} follows from the next proposition and the fact that a minimal renormalisable singular harmonic map is in particular a minimising harmonic map away from its singularities:

\begin{proposition}
\label{proposition_perturbation_stationary_identity}
If \(u \in W^{1, 2}_{\mathrm{ren}} (\Omega, \manifold{N})\) is a renormalisable singular harmonic map on an open set \(\Omega\subset\Rset^2\), which is minimising away from the singularities \(a_1,\dots,a_k\in\Omega\), where \(\operatorname{sing} (u) = \{(a_1, \gamma_1), \dotsc, (a_k,\gamma_k)\}\), then for every \(\psi \in \mathcal{C}^2_c (\Omega,\Rset^2)\) which is constant on the disks \(B_\rho (a_i)\) for some \(\rho>0\), one has
\[
 \lim_{t \to 0}  \frac{\mathcal{E}_{\mathrm{ren}} (u \compose \Psi_t) - \mathcal{E}_{\mathrm{ren}} (u)}{t}
 =   \int_{\Omega\setminus\bigcup_{i=1}^k B_\rho(a_i)} \Deriv u {\,:\,} (\Deriv u \, \Deriv \psi) - \frac{\abs{\Deriv u}^2}{2}   \operatorname{div} \psi,
\]
where \(\Psi_t : \Omega \to \Omega\) is defined for \(t \in \Rset\) when \(\abs{t}\) is small enough and \(x \in \Omega\) by \(\Psi_t (x) \defeq x +t \psi (x)\).
\end{proposition}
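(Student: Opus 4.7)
\emph{Proof proposal.} The plan is to reduce the comparison \(\mathcal{E}_{\mathrm{ren}}(u\compose\Psi_t)-\mathcal{E}_{\mathrm{ren}}(u)\) to a difference of Dirichlet integrals on a fixed region avoiding the singularities, and then to perform a change of variables together with a first-order Taylor expansion in \(t\). The minimising assumption is not used in the computation of the limit itself; it ensures that the identity is useful when combined with \cref{lemmaWeakequation} to prove stationarity.

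First, since \(\psi\) is constant on each \(B_\rho(a_i)\) with value \(\psi(a_i)\) and \(\Deriv\psi\equiv 0\) there, for \(\abs{t}\) small enough the map \(\Psi_t\) restricted to \(B_\rho(a_i)\) is the translation \(x\mapsto x+t\psi(a_i)\). Consequently \(\Psi_t^{-1}(a_i)=a_i-t\psi(a_i)\in B_\rho(a_i)\), and for every \(\rho'<\rho-\abs{t\psi(a_i)}\) the map \(\Psi_t\) sends \(B_{\rho'}(a_i-t\psi(a_i))\) bijectively onto \(B_{\rho'}(a_i)\) as a pure translation. The trace of \(u\compose\Psi_t\) on \(\partial B_{\rho'}(\Psi_t^{-1}(a_i))\) is therefore a translated reparametrisation of \(\tr_{\partial B_{\rho'}(a_i)}u\) and so belongs to the same homotopy class, showing that \(u\compose\Psi_t\) is a renormalisable map whose singularities carry the same charges \(\gamma_i\) and in particular the same \(\equivnorm{\gamma_i}\).

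Using the characterisation \eqref{renormalised_energy_u}, for any \(0<\rho''<\rho'<\rho\) the annular contributions over \(B_{\rho'}(a_i)\setminus B_{\rho''}(a_i)\) and \(B_{\rho'}(\Psi_t^{-1}(a_i))\setminus B_{\rho''}(\Psi_t^{-1}(a_i))\) agree by translation invariance, and the logarithmic corrections \(\equivnorm{\gamma_i}^2/(4\pi)\log(1/\rho'')\) coincide and cancel. Passing to the limit \(\rho''\to 0\) yields the exact identity
\[
\mathcal{E}_{\mathrm{ren}}(u\compose\Psi_t)-\mathcal{E}_{\mathrm{ren}}(u)=\int_{\Omega\setminus\bigcup_{i=1}^k B_{\rho'}(\Psi_t^{-1}(a_i))}\frac{\abs{\Deriv(u\compose\Psi_t)}^2}{2}-\int_{\Omega\setminus\bigcup_{i=1}^k B_{\rho'}(a_i)}\frac{\abs{\Deriv u}^2}{2},
\]
valid for every \(\rho'\in(0,\rho)\) and every \(\abs{t}\) small enough. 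I would then change variables \(y=\Psi_t(x)\) in the first integral, using that \(\Psi_t\) maps \(\Omega\setminus\bigcup_i B_{\rho'}(\Psi_t^{-1}(a_i))\) diffeomorphically onto \(\Omega\setminus\bigcup_i B_{\rho'}(a_i)\). Expanding \(\Deriv\Psi_t=\mathrm{id}+t\Deriv\psi\) and \(\det \Deriv\Psi_t^{-1}=1-t\operatorname{div}\psi+O(t^2)\) gives the pointwise Taylor expansion
\[
\frac{\abs{\Deriv u(y)\,\Deriv\Psi_t(\Psi_t^{-1}(y))}^2}{2}\abs{\det \Deriv\Psi_t^{-1}(y)}=\frac{\abs{\Deriv u}^2}{2}+t\Bigl(\Deriv u{\,:\,}(\Deriv u\,\Deriv\psi)-\frac{\abs{\Deriv u}^2}{2}\operatorname{div}\psi\Bigr)+R_t,
\]
with \(\abs{R_t}\le C\,t^2\abs{\Deriv u}^2\) uniformly in \(y\).

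Since \(\abs{\Deriv u}^2\) is integrable on \(\Omega\setminus\bigcup_i B_{\rho'}(a_i)\), dividing by \(t\) and applying dominated convergence yields the limit. As the first-order integrand vanishes on \(B_\rho(a_i)\setminus B_{\rho'}(a_i)\) (where \(\Deriv\psi=0\)), the domain of integration can be enlarged from \(\Omega\setminus\bigcup_i B_{\rho'}(a_i)\) to \(\Omega\setminus\bigcup_i B_\rho(a_i)\) without changing the value, matching the statement. The delicate point is the cancellation in the second step, which relies crucially on the local constancy of \(\psi\) near each singularity; without this hypothesis the singular contributions of the two renormalised energies would not cancel exactly and a finer analysis would be required.
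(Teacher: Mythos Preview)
Your proposal is correct and follows essentially the same approach as the paper: both exploit that \(\Psi_t\) is a pure translation near each \(a_i\) to reduce the renormalised-energy difference to a difference of Dirichlet integrals on a fixed region away from the singularities, then change variables and Taylor-expand in \(t\). You are in fact slightly more careful than the paper in working with a radius \(\rho'<\rho\) so that the disks around the shifted singularities lie inside \(B_\rho(a_i)\) where \(\Psi_t\) is exactly a translation; the paper writes the excluded sets as \(\Psi_t^{-1}(B_\rho(a_i))\), which are not literally disks, but this only introduces an \(O(t^2)\) discrepancy and does not affect the limit.
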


\begin{proof}
First, we observe that as a minimising harmonic map, \(u\) is smooth outside these singularities and \(u\) is a renormalisable singular harmonic map. We have \(\Psi_t (x) = x + t \psi (a_i)\) for \(x \in B_\rho (a_i)\) and \(\Psi_t\) is a diffeomorphism onto \(\Omega\) provided \(\abs{t}\) is small enough. If follows then that
\[
\begin{split}
  \mathcal{E}_{\mathrm{ren}} (u \compose \Psi_t) - \mathcal{E}_{\mathrm{ren}} (u)
  &= \int_{\Omega \setminus \bigcup_{i = 1}^k \psi_t^{-1}(B_\rho (a_i))} \frac{\abs{\Deriv  (u \compose \Psi_t)}^2}{2}
- \int_{\Omega \setminus \bigcup_{i = 1}^k B_\rho (a_i)} \frac{\abs{\Deriv u}^2}{2}\\
&=\int_{\Omega \setminus \bigcup_{i = 1}^k B_\rho (a_i)}
\frac{\abs{(\Deriv u)((D \Psi_t) \compose \Psi_t^{-1})}^2}{2} \det (D \Psi_t^{-1}) - \frac{\abs{\Deriv u}^2}{2}.
\end{split}
\]
The conclusion follows then by computing the derivative with respect to \(t\) at \(0\).
\end{proof}

\section{Renormalisable harmonic maps with prescribed singularities}

\subsection{Topologically prescribed singularities}

The topological renormalised energy \(\mathcal{E}_{g, \gamma_1, \dotsc, \gamma_k}^{\mathrm{top}}\) is defined as a supremum of infima over classes of maps.
For given points \(a_1, \dotsc, a_k \in \Omega\), it can be computed in terms of a limit of energies of a minimal harmonic map with prescribed singularities.

\begin{proposition}
\label{proposition_renormalised_singular}
Let \(\Omega\subset\Rset^2\) be a Lipschitz bounded domain, \(g \in W^{1/2,2}(\partial \Omega,\manifold{N})\), \(k\in\Nset_\ast\), \((a_1, \dotsc, a_k)\in\Conf{k}\Omega\) and \((\gamma_1, \dotsc, \gamma_k)\in \mathcal{C}^1 (\Sset^1, \manifold{N})^k\) be a topological resolution of \(g\) such that \(\gamma_i\) is not homotopic to constant map for every \(i\in\{1,\dotsc,k\}\).
Then, there exists a map \(u \in W^{1, 2}_{\mathrm{ren}} (\Omega, \manifold{N})\) such that
\begin{enumerate}[(i)]
\item \(\tr_{\partial \Omega}u = g\),
\item \(\operatorname{sing}(u)=\{(a_1,\Tilde{\gamma}_1), \dotsc,(a_k,\Tilde{\gamma}_k)\}\) for some  minimising geodesics \(\Tilde{\gamma}_i\) homotopic to \(\gamma_i\),
 \item
for every \(\rho > 0\), \(u\) is an \(\manifold{N}\)-valued minimising harmonic map inside \(\Omega \setminus \bigcup_{i = 1}^k \Bar{B}_{\rho} (a_i)\) with respect to its own boundary conditions,
\item  \( \mathcal{E}^{\mathrm{ren}}(u) = \mathcal{E}_{g, \gamma_1, \dotsc, \gamma_k}^{\mathrm{top}}  (a_1, \dotsc, a_k)\).
\end{enumerate}
\end{proposition}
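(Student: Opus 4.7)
The plan is to construct \(u\) as a diagonal limit of minimisers \(u_\rho\) of the constrained problems defining \(\mathcal{E}_{g,\gamma_1,\dotsc,\gamma_k}^{\mathrm{top},\rho}(a_1,\dotsc,a_k)\) as \(\rho\to 0\), and then to verify the four required properties in turn. For fixed \(\rho\in(0,\Bar{\rho}(a_1,\dotsc,a_k))\), the direct method of the calculus of variations produces such a minimiser: a minimising sequence is \(W^{1,2}\)-bounded since \(\manifold{N}\) is compact, it admits a weak limit, the \(\VMO\) homotopy class of its traces on each \(\partial B_\rho(a_i)\) is preserved along a suitable subsequence, and the Dirichlet energy is weakly lower semicontinuous. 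By elliptic regularity of harmonic maps in dimension two \cite{Helein_1991}, \(u_\rho\) is smooth and is a minimising \(\manifold{N}\)-valued harmonic map on \(\Omega\setminus\bigcup_{i=1}^k\Bar{B}_\rho(a_i)\) with respect to its own boundary conditions.

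For \(0<\rho<\rho_0<\Bar{\rho}(a_1,\dotsc,a_k)\), combining \cref{lemma_energy_growth_map_radii} applied to \(u_\rho\) with the monotonicity of \cref{proposition_topological_non-increasing} yields the uniform bound
\[
\int_{\Omega\setminus\bigcup_{i=1}^k \Bar{B}_{\rho_0}(a_i)}\frac{\abs{\Deriv u_\rho}^2}{2}\le \mathcal{E}_{g,\gamma_1,\dotsc,\gamma_k}^{\mathrm{top},\rho_0}(a_1,\dotsc,a_k).
\]
Picking a sequence \(\rho_n\to 0\) and extracting diagonally over \(\rho_0=1/j\), we obtain along a subsequence a limit \(u\in W^{1,2}_{\mathrm{loc}}(\Bar{\Omega}\setminus\{a_1,\dotsc,a_k\},\manifold{N})\) with \(\tr_{\partial\Omega}u=g\); since each \(u_{\rho_n}\) is harmonic and hence smooth away from its inner boundary, interior elliptic estimates upgrade the weak convergence to local smooth convergence on \(\Omega\setminus\{a_1,\dotsc,a_k\}\). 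A standard cut-and-paste argument on a thin annulus around \(\partial B_{\rho_0}(a_i)\) then shows that \(u\) minimises the Dirichlet energy on \(\Omega\setminus\bigcup_{i=1}^k\Bar{B}_{\rho_0}(a_i)\) with respect to its own boundary conditions for every \(\rho_0>0\); in particular \(u\) is harmonic in \(\Omega\setminus\{a_1,\dotsc,a_k\}\). Weak lower semicontinuity together with the monotonicity inequality gives \(\mathcal{E}^{\mathrm{ren}}(u)\le \mathcal{E}_{g,\gamma_1,\dotsc,\gamma_k}^{\mathrm{top}}(a_1,\dotsc,a_k)\), so \(u\) is renormalisable.

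The main obstacle is to identify the charges of \(u\) at each singular point, since a priori nothing in the weak-limit procedure prevents the limiting charges from lying in a different homotopy class than \(\gamma_i\) (typically a shorter minimising geodesic). By \cref{admissibleMaps}, there exist minimising closed geodesics \(\Tilde{\gamma}_1,\dotsc,\Tilde{\gamma}_k\in\mathcal{C}^1(\Sset^1,\manifold{N})\) such that the singularities of \(u\) form a subset of \(\{(a_i,\Tilde{\gamma}_i)\st 1\le i\le k\}\). Using the local smooth convergence of \(u_{\rho_n}\) to \(u\) away from the \(a_i\)'s, for small enough \(\rho>0\) and \(n\) large enough we have \(\tr_{\Sset^1}u_{\rho_n}(a_i+\rho\,\cdot)\to \tr_{\Sset^1}u(a_i+\rho\,\cdot)\) in \(\mathcal{C}(\Sset^1)\), and for \(\rho_n<\rho\) the restriction of \(u_{\rho_n}\) to \(B_\rho(a_i)\setminus\Bar{B}_{\rho_n}(a_i)\) provides a continuous homotopy from \(\tr_{\Sset^1}u_{\rho_n}(a_i+\rho\,\cdot)\) to \(\gamma_i\); hence \(\Tilde{\gamma}_i\) is homotopic to \(\gamma_i\), and since each \(\gamma_i\) is not homotopic to a constant, \(\operatorname{sing}(u)=\{(a_1,\Tilde{\gamma}_1),\dotsc,(a_k,\Tilde{\gamma}_k)\}\). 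Finally, \(u\) is then an admissible competitor in the topological problem at every radius \(\rho\in(0,\Bar{\rho}(a_1,\dotsc,a_k))\), so \eqref{eq_def_renorm_top} yields the reverse inequality \(\mathcal{E}^{\mathrm{ren}}(u)\ge \mathcal{E}_{g,\gamma_1,\dotsc,\gamma_k}^{\mathrm{top}}(a_1,\dotsc,a_k)\), completing the proof.
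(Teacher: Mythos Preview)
Your overall strategy matches the paper's: take minimisers \(u_\rho\), pass to a diagonal weak limit \(u\), identify the charges at each \(a_i\), and sandwich \(\mathcal{E}^{\mathrm{ren}}(u)\) between the two monotone quantities. The differences lie in how you justify two intermediate steps, and one of them is a genuine gap.

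The sentence ``since each \(u_{\rho_n}\) is harmonic and hence smooth away from its inner boundary, interior elliptic estimates upgrade the weak convergence to local smooth convergence'' is not correct as stated. The harmonic map equation is nonlinear, and a weakly convergent sequence of smooth harmonic maps with bounded energy can bubble: energy may concentrate at interior points, and the convergence is then only weak there. What rules this out here is not an elliptic estimate but the fact that the \(u_{\rho_n}\) are \emph{minimising}; Luckhaus's theorem \cite{Luckhaus_1993} says that weak limits of minimising harmonic maps are minimising and that the convergence is strong in \(W^{1,2}\), after which \(\varepsilon\)-regularity does give local smooth convergence. You invoke a ``cut-and-paste'' argument only after claiming smooth convergence, so the logic is circular as written. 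The fix is either to reorder (Luckhaus first, then strong and hence smooth convergence), or to bypass smooth convergence entirely.

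The paper takes the second route, which is more elementary at this point: it never claims smooth convergence. To show that \(\tr_{\Sset^1}u(a_i+r\,\cdot)\) is homotopic to \(\gamma_i\), it uses Fatou's lemma on the uniform bound \eqref{ineq_u_rho_W} to find, for almost every radius \(r\), a subsequence along which \(\int_{\partial B_r(a_i)}\abs{\Deriv u_{\rho_n}}^2\) stays bounded; Morrey's embedding and Ascoli then give uniform convergence of the traces on that single circle, which is enough to pass the homotopy class to the limit. Minimality of \(u\) is obtained by a direct citation of Luckhaus. Your argument becomes correct once you either plug in Luckhaus before asserting smooth convergence, or replace the smooth-convergence step by this Fatou-on-circles argument.
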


For the classical Ginzburg--Landau problem with \(\manifold{N} = \Sset^1\) and \(\Omega\) simply-connected, the map \(u\), known as the \emph{canonical harmonic map}, is unique \cite{Bethuel_Brezis_Helein_1994}*{Corollary I.1}. In this case, the energy of \(u\) outside small disks was already known to give asymptotically the renormalised energy \cite{Bethuel_Brezis_Helein_1994}*{Theorem I.8} and \(u\) is the limit of harmonic maps outside small disks \cite{Bethuel_Brezis_Helein_1994}*{Theorem I.6}.

\begin{proof}%
[Proof of \cref{proposition_renormalised_singular}]%
\resetconstant%
By \eqref{eq_def_renorm_top_rho}, for every \(\rho\in(0, \Bar{\rho} (a_1, \dotsc, a_k))\), there exists a map  \(u_\rho \in W^{1, 2} (\Omega \setminus \bigcup_{i = 1}^k \Bar{B}_\rho (a_i), \manifold{N})\) such that \(\tr_{\partial \Omega}u_\rho  = g\) and for every \(i \in \{1, \dotsc, k\}\) the maps \(\tr_{\Sset^1} u_\rho (a_i + \rho\,\cdot)\) and \(\gamma_i\) are homotopic in \(\VMO (\Sset^1, \manifold{N})\) and
\[
\int_{\Omega \setminus \bigcup_{i = 1}^k \Bar{B}_\rho (a_i)} \frac{\abs{\Deriv u_\rho}^2}{2}
  = \mathcal{E}_{g, \gamma_1, \dotsc, \gamma_k}^{\mathrm{top},\rho} (a_1, \dotsc, a_k).
\]
By \cref{lemma_energy_growth_map_radii}, if \(\rho < \sigma < \Bar{\rho} (a_1, \dotsc, a_k)\), we have
\[
\int_{\Omega \setminus \bigcup_{i = 1}^k \Bar{B}_\sigma (a_i)} \frac{\abs{\Deriv u_\rho}^2 }{2}
  \le \mathcal{E}_{g, \gamma_1, \dotsc, \gamma_k}^{\mathrm{top},\rho} (a_1, \dotsc, a_k)
   - \sum_{i = 1}^k \frac{\equivnorm{\gamma_i}^2}{4 \pi}  \log \frac{\sigma}{\rho},
\]
and thus, by definition \eqref{eq_def_renorm_top} of the topological renormalised energy \(\mathcal{E}^{\mathrm{top}}_{g, \gamma_1, \dotsc, \gamma_k} (a_1, \dotsc, a_k)\), we have
\begin{equation}
\label{ineq_u_rho_W}
\limsup_{\rho \to 0}  \int_{\Omega \setminus \bigcup_{i = 1}^k \Bar{B}_\sigma (a_i)} \frac{\abs{\Deriv u_\rho}^2 }{2}
 - \sum_{i = 1}^k \frac{\equivnorm{\gamma_i}^2}{4 \pi}  \log \frac{1}{\sigma}
 \le \mathcal{E}^{\mathrm{top}}_{g, \gamma_1, \dotsc, \gamma_k} (a_1, \dotsc, a_k).
\end{equation}
By the boundedness condition \eqref{ineq_u_rho_W} and by a diagonal argument, there exists a sequence \((\rho_n)_{n \in \Nset}\to 0\) and a map \(u : \Omega \setminus \{a_1, \dotsc, a_k\} \to \manifold{N}\) such that for every \(\sigma \in (0, \Bar{\rho} (a_1, \dotsc, a_k))\), the sequence \((u_{\rho_n}\vert_{\Omega \setminus \bigcup_{i = 1}^k \Bar{B}_\sigma (a_i)})_{n \in \Nset}\) converges weakly to \(u \vert_{\Omega \setminus \bigcup_{i = 1}^k \Bar{B}_\sigma (a_i)}\) in \(W^{1, 2} (\Omega \setminus \bigcup_{i = 1}^k \Bar{B}_\sigma (a_i), \manifold{N})\).
By weak lower semi-continuity of the Dirichlet integral, we deduce from \eqref{ineq_u_rho_W} that for every \(\sigma \in (0, \Bar{\rho} (a_1, \dotsc, a_k))\),
\begin{equation}
\label{eq_u_star_renormalised_upper}
\int_{\Omega \setminus \bigcup_{i = 1}^k \Bar{B}_\sigma (a_i)} \frac{\abs{\Deriv u}^2}{2}
 - \sum_{i = 1}^k \frac{\equivnorm{\gamma_i}^2}{4 \pi}  \log \frac{1}{\sigma}
 \le \mathcal{E}_{g, \gamma_1, \dotsc, \gamma_k}^{\mathrm{top}} (a_1, \dotsc, a_k).
\end{equation}
By \eqref{ineq_u_rho_W} and Fatou's lemma we have for every \(i \in \{1, \dotsc, k\}\), if \(0 < \sigma < \tau < \Bar{\rho} (a_1, \dotsc, a_k)\),
\[
 \int_\sigma^\tau
 \biggl(\liminf_{n \to \infty} \int_{\partial B_r (a_i)} \frac{\abs{\Deriv u_{\rho_n}}^2}{2}\biggr) \dif r
 \le \mathcal{E}_{g, \gamma_1, \dotsc, \gamma_k}^{\mathrm{top}} (a_1, \dotsc, a_k) + \sum_{i = 1}^k \frac{\equivnorm{\gamma_i}^2}{4 \pi}  \log \frac{1}{\sigma},
\]
and thus for almost every \(r \in (0,\Bar{\rho} (a_1, \dotsc, a_k))\),
\[
  \liminf_{n \to \infty} \int_{\partial B_r (a_i)} \abs{\Deriv u_{\rho_n}}^2  < + \infty.
\]
By Morrey's inequality and the Ascoli criterion for compactness, there exists a sequence of integers \((n_\ell^r)_{\ell \in \Nset}\) divenging to \(\infty\) such that \((\tr_{\Sset^1}u_{\rho_{n_\ell^r}} (a_i + r\,\cdot))_{\ell \in \Nset}\) converges in \(L^\infty (\Sset^1, \manifold{N})\) to \(\tr_{\Sset^1}u(a_i + r\,\cdot)\).
Therefore, the maps \(\tr_{\Sset^1}u(a_i + r\,\cdot)\) and \(\gamma_i\) are homotopic in \(\VMO (\Sset^1, \manifold{N})\).
By \eqref{eq_u_star_renormalised_upper} and by definition of renormalised maps and their remormalised energies (see \cref{def_renormalisable}), we have that \(u \in W^{1, 2}_{\mathrm{ren}} (\Omega, \manifold{N})\) and \( \mathcal{E}^\mathrm{ren} (u) \le \mathcal{E}^\mathrm{top}_{g, \gamma_1, \dotsc, \gamma_k} (a_1, \dotsc, a_k)
\). By \cref{admissibleMaps}, we have \(\operatorname{sing}(u)=\{(a_1,\Tilde{\gamma}_1), \dotsc,(a_k,\Tilde{\gamma}_k)\}\); \eqref{synhamonicConvergenceSing} in \cref{admissibleMaps} together with \eqref{item_synhar_finite} in \cref{proposition_synharmonic_pseudometric} ensures that
for each \(i\), the map \(\Tilde{\gamma}_i\) is homotopic to \(\gamma_i\) and \eqref{it_bohp7shaephei0eeT} in \cref{admissibleMaps} yields
\[
 \mathcal{E}^\mathrm{ren} (u) = \mathcal{E}^\mathrm{top}_{g, \gamma_1, \dotsc, \gamma_k} (a_1, \dotsc, a_k).
\]
Since \(u \vert_{\Omega \setminus \bigcup_{i = 1}^k \Bar{B}_{\sigma} (a_i)}\) is a weak limit of a sequence of minimising harmonic maps, it is a minimising harmonic map \cite{Luckhaus_1993}.
\end{proof}

\subsection{Geometrically prescribed singularities}
We now state the analogous of \cref{proposition_renormalised_singular} for the geometrical renormalised energy \(\mathcal{E}_{g, \gamma_1, \dotsc, \gamma_k}^{\mathrm{geom}}\).

\begin{proposition}
\label{proposition_geometric_renormalised_singular}
Let \(\Omega\subset\Rset^2\) be a Lipschitz bounded domain, \(g \in W^{1/2,2}(\partial \Omega,\manifold{N})\), \(k\in\Nset_\ast\), \((a_1, \dotsc, a_k)\in \Conf{k}\Omega\) and \((\gamma_1, \dotsc, \gamma_k) \in \mathcal{C}^1 (\Sset^1, \manifold{N})^k\) be a minimal topological resolution of \(g\) such that each \(\gamma_i\) is a minimising geodesic. Then, there exists a map \(u \in W^{1, 2}_{\mathrm{ren}} (\Omega, \manifold{N})\) such that
\begin{enumerate}[(i)]
\item \(\tr_{\partial \Omega} u = g\),
\item
\(\operatorname{sing}(u)=\{(a_1, \Tilde{\gamma}_1), \dotsc, (a_k, \Tilde{\gamma}_k)\}\), for some minimising geodesics \(\Tilde{\gamma}_i\) homotopic to \(\gamma_i\),

 \item
for every \(\rho > 0\), \(u\) is a \(\manifold{N}\)-valued minimising harmonic map inside \(\Omega \setminus \bigcup_{i = 1}^k \Bar{B}_{\rho} (a_i)\) with respect to its own boundary conditions,
\item \label{synharmonicGeometricEstimate} \(\mathcal{E}_{g, \gamma_1, \dotsc, \gamma_k}^{\mathrm{geom}}  (a_1, \dotsc, a_k)= \mathcal{E}^\mathrm{ren}(u)+\sum_{i=1}^k \synhar{\gamma_i}{\Tilde{\gamma}_i}\).
\end{enumerate}
\end{proposition}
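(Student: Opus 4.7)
The plan is to mirror the strategy of \cref{proposition_renormalised_singular}: I would construct \(u\) as a weak limit of minimisers of the variational problem in \eqref{eq_def_renorm_geom_rho} at shrinking scales, and then account via the synharmony for the mismatch between the rigid trace \(\gamma_i\) imposed at scale \(\rho\) and the asymptotic charge \(\Tilde{\gamma}_i\) that the limit \(u\) carries at \(a_i\).

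For each \(\rho \in (0, \Bar{\rho}(a_1, \dotsc, a_k))\), I would pick a minimiser \(u_\rho\) of \eqref{eq_def_renorm_geom_rho} (existence by the direct method and compactness of \(\manifold{N}\)), so that \(\tr_{\partial\Omega} u_\rho = g\) and \(\tr_{\Sset^1} u_\rho(a_i + \rho\,\cdot) = \gamma_i\) exactly. \Cref{proposition_geometric_non-decreasing} combined with \eqref{eq_def_renorm_geom} yields, for \(\rho < \sigma < \Bar{\rho}(a_1,\dotsc,a_k)\),
\[
 \int_{\Omega \setminus \bigcup_{i=1}^k \Bar{B}_\sigma(a_i)} \frac{\abs{\Deriv u_\rho}^2}{2} \le \mathcal{E}^{\mathrm{geom}}_{g, \gamma_1, \dotsc, \gamma_k}(a_1, \dotsc, a_k) + \sum_{i=1}^k \frac{\equivnorm{\gamma_i}^2}{4\pi} \log \frac{1}{\sigma}.
\]
A diagonal argument then provides \(\rho_n \to 0\) and a map \(u : \Omega \setminus \{a_1, \dotsc, a_k\} \to \manifold{N}\) with \(u_{\rho_n} \weakto u\) in \(W^{1,2}\) locally on \(\Bar{\Omega} \setminus \{a_1, \dotsc, a_k\}\). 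Since each restriction of \(u_{\rho_n}\) to \(\Omega \setminus \bigcup_i \Bar{B}_\sigma(a_i)\) is minimising for its own trace data, Luckhaus's stability of the minimising property \cite{Luckhaus_1993} yields (iii) and the smoothness of \(u\) off the singular set; weak lower semicontinuity gives \(u \in W^{1,2}_{\mathrm{ren}}(\Omega, \manifold{N})\), and \cref{admissibleMaps} then produces \(\operatorname{sing}(u) = \{(a_1, \Tilde{\gamma}_1), \dotsc, (a_k, \Tilde{\gamma}_k)\}\) with minimising geodesics \(\Tilde{\gamma}_i\) together with \(\tr_{\partial\Omega} u = g\). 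Locally uniform convergence \(u_{\rho_n} \to u\) off the singularities (via the elliptic regularity of minimising harmonic maps) preserves the homotopy classes of circular traces, so \(\Tilde{\gamma}_i\) is homotopic to \(\gamma_i\); this establishes (i) and (ii).

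The inequality \(\le\) in \eqref{synharmonicGeometricEstimate} is obtained by combining \cref{proposition_renorm_geom_dep_gamma} (paying the synharmony cost to replace the charges \(\gamma_i\) by \(\Tilde{\gamma}_i\)) with \cref{admissibleMaps}~\eqref{it_bohp7shaephei0eeT}, which gives \(\mathcal{E}^{\mathrm{geom}}_{g, \Tilde{\gamma}_1, \dotsc, \Tilde{\gamma}_k}(a_1, \dotsc, a_k) \le \mathcal{E}^{\mathrm{ren}}(u)\).

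The reverse inequality is the heart of the argument. For \(\sigma \in (\rho_n, \Bar{\rho}(a_1,\dotsc,a_k))\), I would split the Dirichlet integral of \(u_{\rho_n}\) into a bulk piece on \(\Omega \setminus \bigcup_i \Bar{B}_\sigma(a_i)\) and annular pieces on \(B_\sigma(a_i) \setminus \Bar{B}_{\rho_n}(a_i)\), and conformally pull back each annulus to the cylinder \(\Sset^1 \times [0, \log(\sigma/\rho_n)]\) via \(x = a_i + \rho_n e^t \omega\), with traces \(\gamma_i\) at \(t=0\) and \(\tr_{\Sset^1} u_{\rho_n}(a_i+\sigma\,\cdot)\) at \(t=\log(\sigma/\rho_n)\). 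The definition of synharmony (\cref{definition_synharmonic}) then gives
\[
 \int_{B_\sigma(a_i) \setminus \Bar{B}_{\rho_n}(a_i)} \frac{\abs{\Deriv u_{\rho_n}}^2}{2} \ge \frac{\equivnorm{\gamma_i}^2}{4\pi}\log\frac{\sigma}{\rho_n} + \synhar{\gamma_i}{\tr_{\Sset^1} u_{\rho_n}(a_i+\sigma\,\cdot)}.
\]
Letting \(n \to \infty\) (weak lower semicontinuity for the bulk; strong \(W^{1/2,2}\) convergence \(\tr_{\Sset^1} u_{\rho_n}(a_i+\sigma\,\cdot) \to \tr_{\Sset^1} u(a_i+\sigma\,\cdot)\) for generic \(\sigma\), combined with the triangle inequality and \cref{proposition_synharmonic_pseudometric}~\eqref{item_H1weaker} for the synharmony term) and then \(\sigma \to 0\) (the bulk converging to \(\mathcal{E}^{\mathrm{ren}}(u)\) through \eqref{renormalised_energy_u} and the identity \(\equivnorm{\gamma_i} = \equivnorm{\Tilde{\gamma}_i}\); the synharmony converging to \(\synhar{\gamma_i}{\Tilde{\gamma}_i}\) via \cref{admissibleMaps}~\eqref{synhamonicConvergenceSing} and the triangle inequality) yields \(\mathcal{E}^{\mathrm{geom}}_{g,\gamma_1,\dotsc,\gamma_k}(a_1, \dotsc, a_k) \ge \mathcal{E}^{\mathrm{ren}}(u) + \sum_i \synhar{\gamma_i}{\Tilde{\gamma}_i}\), completing \eqref{synharmonicGeometricEstimate}. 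The main obstacle is the strong \(W^{1/2,2}\) convergence of the circular traces: it requires strong local \(W^{1,2}\) convergence of the minimising harmonic maps \(u_{\rho_n}\) near \(\partial B_\sigma(a_i)\), available for almost every \(\sigma\) from the \(\varepsilon\)-regularity and concentration-compactness theory for minimising harmonic maps on planar domains, which allows one to avoid the at most finite set of radii along which energy could concentrate along the sequence.
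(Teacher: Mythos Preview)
Your approach is essentially the same as the paper's: take minimisers \(u_\rho\) of \eqref{eq_def_renorm_geom_rho}, pass to a weak limit \(u\), invoke \cref{admissibleMaps} and Luckhaus for (i)--(iii), and for (iv) combine the annular synharmony lower bound with \cref{proposition_renorm_geom_dep_gamma} and \cref{admissibleMaps}~\eqref{it_bohp7shaephei0eeT}. Two small remarks. First, the displayed bound you attribute to \cref{proposition_geometric_non-decreasing} is slightly misstated: since \(\rho \mapsto \mathcal{E}^{\mathrm{geom},\rho} - \sum_i \frac{\equivnorm{\gamma_i}^2}{4\pi}\log\frac{1}{\rho}\) is non-decreasing and tends to \(\mathcal{E}^{\mathrm{geom}}\) from above, the inequality you wrote holds only in the \(\limsup_{\rho\to 0}\), not for each fixed \(\rho\); this is harmless for the compactness step (use \cref{lemma_energy_growth_map_radii} directly, as the paper does in \cref{proposition_renormalised_singular}). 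Second, where you work with a generic \(\sigma\) and invoke strong trace convergence via \(\varepsilon\)-regularity, the paper instead selects the specific sequence \((\sigma_\ell)_\ell\) furnished by \cref{admissibleMaps}~(iii) along which \(\tr_{\Sset^1} u(a_i+\sigma_\ell\,\cdot)\to\Tilde{\gamma}_i\) strongly in \(W^{1,2}\); the limit \(n\to\infty\) in the synharmony term is then handled via \cref{proposition_synharmonic_pseudometric}. Both routes require the same strong convergence of circular traces of \(u_{\rho_n}\) at fixed radius, which you correctly flag and justify.
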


\begin{proof}%
[Proof of \cref{proposition_geometric_renormalised_singular}]%
\resetconstant%
For every \(\rho \in (0, \Bar{\rho} (a_1, \dotsc, a_k))\), by definition of \(\mathcal{E}_{g, \gamma_1, \dotsc, \gamma_k}^{\mathrm{geom},\rho}\), there exists a map  \(u_\rho\in W^{1, 2} (\Omega \setminus \bigcup_{i = 1}^k \Bar{B}_\rho (a_i), \manifold{N})\) such that \(\tr_{\partial \Omega}u_\rho  = g\), \(\tr_{\Sset^1}u_\rho (a_i + \rho\,\cdot)=\gamma_i\) for every \(i \in \{1, \dotsc, k\}\), and
\[
\int_{\Omega \setminus \bigcup_{i = 1}^k \Bar{B}_\rho (a_i)} \frac{\abs{\Deriv u_\rho}^2}{2}
  = \mathcal{E}_{g, \gamma_1, \dotsc, \gamma_k}^{\mathrm{geom},\rho} (a_1, \dotsc, a_k).
\]
We continue as in the proof of \cref{proposition_renormalised_singular} and we obtain in view of \cref{admissibleMaps} a map \(u \in W^{1, 2}_\mathrm{ren} (\Omega, \manifold{N})\), which is the weak limit in \(W^{1,2}\) of \((u_{\rho_n})_{n\in\Nset}\) away from the singularities with \((\rho_n)_{n\in\Nset}\to 0\), and minimising geodesics \(\Tilde{\gamma}_1,\dotsc,\Tilde{\gamma}_k\)
such that \(\operatorname{sing} (u) = \{(a_1, \Tilde{\gamma}_1), \dotsc, (a_k, \Tilde{\gamma}_k)\}\), where for every \(i \in \{1, \dotsc, k\}\), the map \(\gamma_i\) is homotopic to \(\Tilde{\gamma}_i\), and \(u\) is a minimising harmonic map away from the singularities.

 Moreover, there exists a sequence \((\sigma_\ell)_{\ell \in \Nset}\) converging to \(0\)
such that for every \(\ell \in \Nset\) and every \(i \in \{1, \dotsc, k\}\),
the sequence \((\tr_{\Sset^1} u(a_i + \sigma_\ell\, \cdot))_{\ell \in \Nset}\) converges strongly to 
\(\Tilde{\gamma}_i\) in \(W^{1,2}(\Sset^1,\manifold{N})\).

Now, for every \(n,\ell\in\Nset\) such that \(\rho_n<\sigma_\ell<\Bar{\rho}(a_1,\dotsc,a_k)\), we observe that by definition of the synharmony (\cref{definition_synharmonic}) and by a change of variable, we have for every \(i\in\{1,\dotsc,k\}\),
\begin{equation}
\label{synharmonicity_est}
\int_{B_{\sigma_\ell}(a_i)\setminus B_{\rho_n}(a_i)}\frac{\abs{\Deriv u_{\rho_n}}^2}{2}
- \frac{\equivnorm{\gamma_i}^2}{4\pi} \log\frac{\sigma_\ell}{\rho_n}
\ge \synhar{\tr_{\Sset^1}u_{\rho_n}(a_i+\sigma_\ell\,\cdot)}{\gamma_i}.
\end{equation}
Hence, by \eqref{synharmonicity_est}, by weak lower semi-continuity of the Dirichlet integral and by \cref{proposition_synharmonic_pseudometric},
\begin{multline*}
 \mathcal{E}^{\mathrm{geom}}_{g, \gamma_1, \dotsc, \gamma_k} (a_1, \dotsc, a_k)
 = \lim_{n\to\infty}
 \int_{\Omega \setminus \bigcup_{i = 1}^k \Bar{B}_{\rho_n} (a_i)} \frac{\abs{\Deriv u_{\rho_n}}^2}{2}-\sum_{i = 1}^k \frac{\equivnorm{\gamma_i}^2}{4 \pi}  \log \frac{1}{\rho_n}\\
 \ge \lim_{n\to\infty}
\int_{\Omega \setminus \bigcup_{i = 1}^k \Bar{B}_{\sigma_\ell} (a_i)} \frac{\abs{\Deriv u_{\rho_n}}^2}{2} -\sum_{i = 1}^k \frac{\equivnorm{\gamma_i}^2}{4 \pi}  \log \frac{1}{\sigma_\ell}
+\sum_{i=1}^k \synhar{\tr_{\Sset^1} u_{\rho_n}(a_i+\sigma_\ell\,\cdot)}{\gamma_i}\\
\ge \int_{\Omega \setminus \bigcup_{i = 1}^k \Bar{B}_{\sigma_\ell} (a_i)} \frac{\abs{\Deriv u}^2}{2}
-\sum_{i = 1}^k \frac{\equivnorm{\gamma_i}^2}{4 \pi}  \log \frac{1}{\sigma_\ell}
+\sum_{i=1}^k \synhar{\tr_{\Sset^1} u(a_i+\sigma_\ell\,\cdot)}{\gamma_i}.
\end{multline*}
By definition of \(\mathcal{E}^\mathrm{ren}\) (\cref{def_renormalisable}) and by \cref{proposition_synharmonic_pseudometric}, we obtain in the limit 
\(\ell\to \infty\),
\[
 \mathcal{E}^{\mathrm{geom}}_{g, \gamma_1, \dotsc, \gamma_k} (a_1, \dotsc, a_k)
 \ge\mathcal{E}^\mathrm{ren}(u)+\sum_{i=1}^k \synhar{\Tilde{\gamma}_i}{\gamma_i}.
\]
For the reverse inequality, we have by \cref{admissibleMaps} \eqref{it_bohp7shaephei0eeT} and \cref{proposition_renorm_geom_dep_gamma} 
\begin{equation*}
\mathcal{E}^\mathrm{ren}(u)
\ge\mathcal{E}^{\mathrm{geom}}_{g, \Tilde\gamma_1, \dotsc, \Tilde\gamma_k} (a_1, \dotsc, a_k)
\ge \mathcal{E}^{\mathrm{geom}}_{g, \gamma_1, \dotsc, \gamma_k} (a_1, \dotsc, a_k) - \sum_{i=1}^k \synhar{\Tilde{\gamma}_i}{\gamma_i}.
\qedhere
\end{equation*}
\end{proof}

\subsection{Relationship between renormalised energies}

By the definitions \eqref{eq_def_renorm_top_rho} and \eqref{eq_def_renorm_geom_rho} we have immediately the lower bound on the geometric energy \eqref{eq_comparison_top_geom}.
Under the condition that \(\gamma_1, \dotsc, \gamma_k\) are minimising geodesics which are synharmonic to all geodesics that are homotopic to it, we prove that the geometrical and topological renormalised energies coincide.

\begin{proposition}
\label{proposition_renormalised_prescribed}
Let \(\Omega\subset\Rset^2\) be a Lipschitz bounded domain, \(g \in  W^{1/2,2}(\partial \Omega, \manifold{N}) \), \( k\in \Nset_\ast\), \((a_1,\dotsc,a_k) \in \Conf{k} \Omega\) and \( (\gamma_1,\dotsc,\gamma_k) \in \mathcal{C}^1(\mathbb{S}^1,\manifold{N})^k\) be a topological resolution of \(g\). Assume that for each \(i\), \(\gamma_i\) is a non-trivial minimising geodesic. 
Then, there exists \(u \in W^{1,2}_{\mathrm{ren}}(\Omega,\manifold{N}) \cap \mathcal{C}^\infty(\Omega \setminus \{a_1,\dotsc,a_k \},\manifold{N})\) and minimising geodesics \(\Tilde\gamma_i\) homotopic to \(\gamma_i\) such that 
\begin{align*}
&\operatorname{sing} (u) = \{(a_1, \Tilde{\gamma}_1), \dotsc, (a_k, \Tilde{\gamma}_k)\},\\
&\mathcal{E}^{\mathrm{ren}}(u) =  \mathcal{E}^{\mathrm{top}}_{g,\gamma_1,\dotsc,\gamma_k}(a_1,\dotsc,a_k)=\mathcal{E}^{\mathrm{geom}}_{g,\tilde{\gamma}_1,\dotsc,\tilde{\gamma}_k}(a_1,\dotsc,a_k).
\end{align*}
\end{proposition}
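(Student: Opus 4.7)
The plan is to feed the topological resolution $(\gamma_1,\dotsc,\gamma_k)$ into \cref{proposition_renormalised_singular} and then show that the inequalities relating the topological renormalised energy, the geometric renormalised energy and $\mathcal{E}^{\mathrm{ren}}(u)$ form a closed loop, forcing all three to coincide. Note first that the hypotheses of \cref{proposition_renormalised_singular} are met: each $\gamma_i$ is a non-trivial minimising geodesic and therefore not homotopic to a constant. It provides a map $u\in W^{1,2}_{\mathrm{ren}}(\Omega,\manifold{N})$ satisfying $\tr_{\partial\Omega}u=g$, with $\operatorname{sing}(u)=\{(a_1,\Tilde{\gamma}_1),\dotsc,(a_k,\Tilde{\gamma}_k)\}$ for some minimising geodesics $\Tilde{\gamma}_i$ homotopic to $\gamma_i$, and which is minimising harmonic in $\Omega\setminus\bigcup_{i=1}^k \Bar{B}_\rho(a_i)$ for every $\rho>0$ with respect to its own boundary condition, with
\[
\mathcal{E}^{\mathrm{ren}}(u)=\mathcal{E}^{\mathrm{top}}_{g,\gamma_1,\dotsc,\gamma_k}(a_1,\dotsc,a_k).
\]
The smoothness of $u$ on $\Omega\setminus\{a_1,\dotsc,a_k\}$ then follows from the classical regularity theory for minimising harmonic maps in planar domains, as already invoked in the proof of \cref{admissibleMaps}.

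I would then chain four facts. First, since $\operatorname{sing}(u)=\{(a_i,\Tilde\gamma_i)\}_{i=1}^k$, \cref{admissibleMaps} \eqref{it_bohp7shaephei0eeT} applied to $u$ gives
\[
\mathcal{E}^{\mathrm{ren}}(u)\;\ge\;\mathcal{E}^{\mathrm{geom}}_{g,\Tilde{\gamma}_1,\dotsc,\Tilde{\gamma}_k}(a_1,\dotsc,a_k).
\]
Second, the trivial comparison \eqref{eq_comparison_top_geom} between the two renormalised energies at the same charges yields
\[
\mathcal{E}^{\mathrm{geom}}_{g,\Tilde{\gamma}_1,\dotsc,\Tilde{\gamma}_k}(a_1,\dotsc,a_k)\;\ge\;\mathcal{E}^{\mathrm{top}}_{g,\Tilde{\gamma}_1,\dotsc,\Tilde{\gamma}_k}(a_1,\dotsc,a_k).
\]
Third, the homotopy invariance of the topological renormalised energy (the first proposition of the subsection on the dependence of $\mathcal{E}^{\mathrm{top}}$ on charges) combined with $\Tilde{\gamma}_i\sim\gamma_i$ gives
\[
\mathcal{E}^{\mathrm{top}}_{g,\Tilde{\gamma}_1,\dotsc,\Tilde{\gamma}_k}(a_1,\dotsc,a_k)=\mathcal{E}^{\mathrm{top}}_{g,\gamma_1,\dotsc,\gamma_k}(a_1,\dotsc,a_k).
\]
Plugging in the equality $\mathcal{E}^{\mathrm{ren}}(u)=\mathcal{E}^{\mathrm{top}}_{g,\gamma_1,\dotsc,\gamma_k}(a_1,\dotsc,a_k)$ produced in the first step closes the loop, so every inequality is in fact an equality, which is precisely the double equality claimed in the statement.

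There is no real technical obstruction here: the proof is essentially a bookkeeping argument combining \cref{proposition_renormalised_singular}, \cref{admissibleMaps}\eqref{it_bohp7shaephei0eeT}, the comparison \eqref{eq_comparison_top_geom} and the homotopy invariance of $\mathcal{E}^{\mathrm{top}}$. The only points that must be verified carefully are that the geodesics $\Tilde{\gamma}_i$ produced in Step~1 are minimising (so that $\mathcal{E}^{\mathrm{geom}}_{g,\Tilde{\gamma}_1,\dotsc,\Tilde{\gamma}_k}$ is well-defined and the comparison \eqref{eq_comparison_top_geom} applies) and homotopic to the original $\gamma_i$ (so that the homotopy invariance of $\mathcal{E}^{\mathrm{top}}$ can be used), both of which are explicitly part of the conclusion of \cref{proposition_renormalised_singular}.
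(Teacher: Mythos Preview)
Your proof is correct and follows exactly the same approach as the paper: invoke \cref{proposition_renormalised_singular} to obtain $u$ with $\mathcal{E}^{\mathrm{ren}}(u)=\mathcal{E}^{\mathrm{top}}_{g,\gamma_1,\dotsc,\gamma_k}(a_1,\dotsc,a_k)$, then close the chain of inequalities via \cref{admissibleMaps}\eqref{it_bohp7shaephei0eeT}, \eqref{eq_comparison_top_geom}, and the homotopy invariance of $\mathcal{E}^{\mathrm{top}}$. The paper's proof is slightly more terse but the logic is identical.
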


In view of \eqref{eq_comparison_top_geom}, \cref{proposition_renormalised_prescribed} means that
\begin{multline}\label{linkRenormalisedEnergies}
\mathcal{E}_{g, \gamma_1, \dotsc, \gamma_k}^{\mathrm{top}} (a_1, \dotsc, a_k)
= \inf\, \bigl\{\mathcal{E}_{g, \Tilde{\gamma}_1, \dotsc, \Tilde{\gamma}_k}^{\mathrm{geom}} (a_1, \dotsc, a_k)
\st \text{for each \(i \in \{1, \dotsc, k\}\),}\\
\text{\(\Tilde{\gamma}_i\) is a minimising geodesic homotopic to \(\gamma_i\)}\bigr\}.
\end{multline}
\begin{proof}[Proof of \cref{proposition_renormalised_prescribed}]
  We let \(u \in W^{1,2}_{\mathrm{ren}}(\Omega,\manifold{N}) \cap \mathcal{C}^\infty(\Omega \setminus \{a_1,\dotsc,a_k \},\manifold{N})\) be a singular minimising harmonic map given by \cref{proposition_renormalised_singular}, so that in particular,
  \[
  \mathcal{E}^{\mathrm{ren}}(u) =  \mathcal{E}^{\mathrm{top}}_{g,\gamma_1,\dotsc,\gamma_k}(a_1,\dotsc,a_k).
  \]
Let  \(\operatorname{sing} (u) = \{(a_1, \Tilde{\gamma}_1), \dotsc, (a_k, \Tilde{\gamma}_k)\}\). By construction, \(\Tilde\gamma_i\) and \(\gamma_i\) are homotopic. Hence, by \cref{admissibleMaps} and \eqref{eq_comparison_top_geom}, we have 
  \[
   \mathcal{E}^{\mathrm{ren}}(u) 
   \ge \mathcal{E}^{\mathrm{geom}}_{g,\tilde{\gamma}_1,\dotsc,\tilde{\gamma}_k}(a_1,\dotsc,a_k)
   \ge \mathcal{E}^{\mathrm{top}}_{g,\Tilde\gamma_1,\dotsc,\Tilde\gamma_k}(a_1,\dotsc,a_k)
   = \mathcal{E}^{\mathrm{top}}_{g,\gamma_1,\dotsc,\gamma_k}(a_1,\dotsc,a_k),
  \]
  and the conclusion follows.
\end{proof}

In particular, if for every \(i \in \{1, \dotsc, k\}\), all minimising geodesics that are homotopic to \(\gamma_i\) are synharmonic to \(\gamma_i\), then by \cref{proposition_renormalised_prescribed} and \cref{proposition_renorm_geom_dep_gamma}, \(\mathcal{E}_{g, \gamma_1, \dotsc, \gamma_k}^{\mathrm{geom}}  (a_1, \dotsc, a_k)
  = \mathcal{E}_{g, \gamma_1 \dotsc, \gamma_k}^{\mathrm{top}}  (a_1, \dotsc, a_k)\).
This is the case in particular for the classical Ginzburg--Landau problem for \(\manifold{N} = \Sset^1\) for which the equivalence between topological and geometric renormalised energies was proved by Bethuel, Brezis and H\'elein \cite{Bethuel_Brezis_Helein_1994}*{Theorem I.9 and Remark I.5}.

\subsection{Superdifferentiability}
We have seen in \cref{theorem_continuity_renormalised}, that the geometric renormalised energy is Lipschitz-continuous. The definition as an infimum, does not give hope for much more regularity. 
Using the expression of the geometrical renormalised energy as the renormalised energy of some renormalisable singular harmonic map \cref{proposition_geometric_renormalised_singular}, we obtain the next proposition.

\begin{proposition}
\label{proposition_superdifferentiability}
Let \(\Omega\subset\Rset^2\) be a Lipschitz bounded domain, \(g \in W^{1/2,2}(\partial\Omega,\manifold{N})\), \(k\in\Nset_\ast\) and \((\gamma_1, \dotsc, \gamma_k) \in \mathcal{C}^1 (\Sset^1, \manifold{N})^k\) be a minimal topological resolution of \(g\) such that each \(\gamma_i\) is a minimising geodesic. 

For every \((a_1, \dotsc, a_k)\in\Conf{k}\Omega\), one has
\[
 \limsup_{(b_1, \dotsc, b_k) \to (a_1, \dotsc, a_k)}
 \frac{\mathcal{E}^\mathrm{geom}_{g, \gamma_1, \dotsc, \gamma_k} (b_1, \dotsc, b_k) - \mathcal{E}^\mathrm{geom}_{g, \gamma_1, \dotsc, \gamma_k} (a_1, \dotsc, a_k) - \tau_i \cdot (b_i - a_i)}{\max_{1 \le i \le k} \abs{b_i - a_i}} \le 0,
\]
where for each \(i \in \{1, \dotsc, k\}\),
\[
\tau_i \defeq - \int_{\partial B_\rho(a_i)} T \cdot \nu,
\]
with\footnote{Note that the the flux of the stress-energy tensor through a small circle centered at the singularity \(a_i\) is independant of the (small) radius since the stress-energy tensor is divergence-free away from singularities.} \(\rho\in(0,\Bar\rho(a_1,\dotsc,a_k))\), and where \(T\) is the stress-energy tensor -- defined in \eqref{eq_Yo4ooJeeS4Lapaetao9Iawih} -- associated to some map \(u \in W^{1, 2}_{\mathrm{ren}} (\Omega, \manifold{N})\) -- given by \Cref{proposition_geometric_renormalised_singular} -- such that 
\begin{align*}
&\operatorname{sing}(u)=\{(a_1, \Tilde{\gamma}_1), \dotsc, (a_k, \Tilde{\gamma}_k)\},\\
\mathcal{E}_{g, \gamma_1, \dotsc, \gamma_k}^{\mathrm{geom}}  &(a_1, \dotsc, a_k)= \mathcal{E}^\mathrm{ren}(u)+\sum_{i=1}^k \synhar{\gamma_i}{\Tilde{\gamma}_i}.
\end{align*}
\end{proposition}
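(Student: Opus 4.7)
The plan is to construct an explicit competitor at a nearby configuration \((b_1, \dotsc, b_k)\) by transporting the optimal map from \(a\) to \(b\) via the diffeomorphism of \cref{lemma_deformation_renormalised}, and then to extract the first-order term of the resulting energy difference from the stress-energy tensor identity of \cref{lemma_Stress_energy_tensor}. First, I would invoke \cref{proposition_geometric_renormalised_singular} to obtain \(u\in W^{1,2}_{\mathrm{ren}}(\Omega,\manifold{N})\) with \(\operatorname{sing}(u)=\{(a_i,\Tilde\gamma_i)\}_{i=1}^k\) and \(\mathcal{E}^{\mathrm{geom}}_{g,\gamma_1,\dotsc,\gamma_k}(a_1,\dotsc,a_k) = \mathcal{E}^{\mathrm{ren}}(u) + \sum_i \synhar{\gamma_i}{\Tilde\gamma_i}\). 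Fixing \(\tau\in(0,\Bar\rho(a_1,\dotsc,a_k)/3)\) small enough that the disks \(\Bar B_{3\tau}(a_i)\) are disjoint and lie in \(\Omega\), and for \(b\) with \(2\max_i|b_i-a_i|<\tau\) letting \(\Phi_b\) be the diffeomorphism of \cref{lemma_deformation_renormalised} (with the roles of \(a_i\) and \(b_i\) swapped so that \(\Phi_b(b_i)=a_i\) and \(\Phi_b(y)=y+a_i-b_i\) on \(B_\tau(b_i)\)), I would set \(v\defeq u\compose\Phi_b\); since \(\Phi_b\) is a translation near each \(b_i\) and the identity near \(\partial\Omega\), the map \(v\) lies in \(W^{1,2}_{\mathrm{ren}}(\Omega,\manifold{N})\) with \(\tr_{\partial\Omega}v=g\) and \(\operatorname{sing}(v)=\{(b_i,\Tilde\gamma_i)\}_{i=1}^k\). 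Combining \eqref{it_bohp7shaephei0eeT} in \cref{admissibleMaps} applied to \(v\) with \cref{proposition_renorm_geom_dep_gamma} then yields \(\mathcal{E}^{\mathrm{geom}}_{g,\gamma_1,\dotsc,\gamma_k}(b) \le \mathcal{E}^{\mathrm{ren}}(v) + \sum_i \synhar{\gamma_i}{\Tilde\gamma_i}\), so it is enough to prove that \(\mathcal{E}^{\mathrm{ren}}(v) - \mathcal{E}^{\mathrm{ren}}(u) \le \sum_i \tau_i\cdot (b_i-a_i) + o(\max_i|b_i-a_i|)\).

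Next, because \(\Phi_b\) sends \(B_\rho(b_i)\) onto \(B_\rho(a_i)\) by translation for any \(\rho\in(0,\tau)\) and preserves the charges, the logarithmic singular contributions to \(\mathcal{E}^{\mathrm{ren}}\) cancel, and the change of variable \(x=\Phi_b(y)\) yields
\begin{equation*}
\mathcal{E}^{\mathrm{ren}}(v)-\mathcal{E}^{\mathrm{ren}}(u) = \int_{\Omega\setminus\bigcup_i\Bar B_\rho(a_i)} \biggl(\frac{\abs{\Deriv u\cdot A}^2}{2\det A} - \frac{\abs{\Deriv u}^2}{2}\biggr),
\end{equation*}
where \(A(x)\defeq\Deriv\Phi_b(\Phi_b^{-1}(x))\) differs from the identity only on an annular region on which \(\abs{\Deriv u}\) is bounded by the interior regularity of harmonic maps away from the singularities and on which \(\abs{A-\operatorname{id}}\le 2\max_i|b_i-a_i|/\tau\) by \eqref{phi_estimate} of \cref{lemma_deformation_renormalised}. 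Introducing the Lipschitz vector field \(\eta(x)\defeq x-\Phi_b^{-1}(x)\), which is compactly supported in \(\Omega\), equals \(a_i-b_i\) on \(B_\tau(a_i)\supset B_\rho(a_i)\) and satisfies \(\Deriv\eta = \operatorname{id} - A^{-1} = (A-\operatorname{id}) + O(\abs{A-\operatorname{id}}^2)\), a second-order expansion of the smooth map \(A\mapsto\abs{\Deriv u\cdot A}^2/\det A\) around \(\operatorname{id}\) rewrites the above as
\begin{equation*}
\int_{\Omega\setminus\bigcup_i\Bar B_\rho(a_i)}\biggl(\Deriv u:(\Deriv u\cdot\Deriv\eta) - \frac{\abs{\Deriv u}^2}{2}\dive\eta\biggr) + O(\max_i\abs{b_i-a_i}^2).
\end{equation*}
Since \(\eta\) is constant equal to \(a_i-b_i\) on each \(B_\rho(a_i)\), \cref{lemma_Stress_energy_tensor} identifies the leading integral as \(\sum_i(a_i-b_i)\cdot\int_{\partial B_\rho(a_i)}T\cdot\nu=\sum_i(b_i-a_i)\cdot\tau_i\) by the definition of \(\tau_i\), which completes the argument.

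The main technical hurdle will be the rigorous control of the quadratic remainder \(O(\abs{A-\operatorname{id}}^2\abs{\Deriv u}^2)\): at \(\tau\) held fixed, this integrates to \(O(\max_i\abs{b_i-a_i}^2/\tau^2)\) times the Dirichlet energy of \(u\) on the annuli \(B_{2\tau}(a_i)\setminus B_\tau(a_i)\), and the latter is finite and independent of \(b\) since \(u\) is smooth away from \(\{a_1,\dotsc,a_k\}\); this makes the remainder \(o(\max_i|b_i-a_i|)\) as \(b\to a\), while the expansion \(A^{-1}=\operatorname{id}-(A-\operatorname{id})+O(\abs{A-\operatorname{id}}^2)\) routinely justifies the substitution of \(\Deriv\eta\) for \(A-\operatorname{id}\) in the linear term. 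A secondary bookkeeping point is that the singularity charges of \(v\) are the \(\Tilde\gamma_i\) rather than the \(\gamma_i\); this mismatch is absorbed cleanly by the synharmony terms appearing in the identity from \cref{proposition_geometric_renormalised_singular} together with the estimate of \cref{proposition_renorm_geom_dep_gamma} used in the reduction.
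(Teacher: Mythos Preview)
Your proposal is correct and follows essentially the same route as the paper: compose \(u\) with a diffeomorphism that translates each singularity, bound \(\mathcal{E}^{\mathrm{geom}}\) at \(b\) via \cref{admissibleMaps}\eqref{it_bohp7shaephei0eeT} and \cref{proposition_renorm_geom_dep_gamma}, then identify the first-order variation with the flux of the stress-energy tensor through \cref{lemma_Stress_energy_tensor}. The only presentational difference is that the paper packages your explicit Taylor expansion of \(\abs{\Deriv u\,A}^2/\det A\) as an appeal to a vector-parameter variant of \cref{proposition_perturbation_stationary_identity}; your direct computation of the linear term via \(\eta = \operatorname{id} - \Phi_b^{-1}\) is equivalent and arguably cleaner.
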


In summary, \cref{proposition_superdifferentiability} states that the derivative of the geometric renormalised energy, when it exists, is given by the flux of the stress-energy tensor of the associated renormalisable singular harmonic map. In particular, critical points of the geometric renormalised energy are characterised by points at which this flux vanishes so that, in this sense, the stress energy-tensor is divergence-free on the whole domain \(\Omega\).

In view of \cref{proposition_renormalised_prescribed}, the topological renormalised energy has the same property.

\begin{proof}[Proof of \cref{proposition_superdifferentiability}]
We choose \(\varphi \in \mathcal{C}^\infty (\Rset^2)\) such that \(\varphi = 1\) on \(B_{\rho/2}\) and \(\varphi = 0\) on \(\Rset^2\setminus B_\rho\). We then define \(\Phi_{b_1, \dotsc, b_k}: \Omega \to \Omega\) for \((b_1, \dotsc, b_k)\) close enough to \((a_1, \dotsc, a_k)\) and \(x \in \Omega\) by 
\[
\Phi_{b_1, \dotsc, b_k} (x) 
\defeq x + \sum_{i = 1}^k (b_i - a_i) \varphi (x - a_i).
\]
By \cref{proposition_renorm_geom_dep_gamma} and \cref{admissibleMaps}, we have 
\[
 \mathcal{E}_{g, \gamma_1, \dotsc, \gamma_k}^{\mathrm{geom}}  (b_1, \dotsc, b_k)
 \le \mathcal{E}^\mathrm{ren}(u \compose \Phi_{b_1, \dotsc, b_k})+\sum_{i=1}^k \synhar{\gamma_i}{\Tilde{\gamma}_i}.
\]
The conclusion then follows from a variant of \cref{proposition_perturbation_stationary_identity} (where \(t \in \Rset^{2 k}\) becomes a vector parameter and \(\psi\) is at each point a linear mapping from \(\Rset^{2k}\) to \(\Rset^2\)) 
and from \cref{lemma_Stress_energy_tensor}.
\end{proof}

\section{Computing some singular energies}
\label{section_examples}

In the examples we have in mind from condensed matter physics and computer graphics (meshing and cross-fields theory), the
manifolds can be obtained as quotients of \(SU(2)\) by some of its subgroups.
We first review briefly the common properties of such spaces. We indicate how
to compute their fundamental group and the conjugacy classes in the non-abelian case. Then we describe the closed minimising geodesics in free homotopy classes and describe the minimal topological resolution
\eqref{def_loose_equiv_norm} of some maps from \(\partial \Omega \rightarrow \manifold{N}\) when \(\partial \Omega\) is connected.

\subsection{Fundamental group and geodesics on homogeneous spaces}

Given a Lie group \(G\) and a closed subgroup \(H \subseteq G\), we consider the manifold \(G/H = \{gH \st g \in G\}\) obtained by taking the quotient of \(G\) by its subgroup \(H\). The group \(G\) acts transitively on \(G/H\) by left multiplication, making it a \emph{homogeneous space}, see for example \cite{Lee_2013}*{Theorem 21.17}. 

The fundamental group of \(G/H\) can be computed by using the following:
\begin{proposition}[Fundamental group on homogeneous spaces \cite{Mostow_1950}*{\S 8}]
\label{proposition_fund_group_homog_space}
Let \(G\) be a simply connected topological group, let \(H\) be a closed subgroup of \(G\)
and let \(H_0\) be the connected component of the identity of \(H\),
then \(H_0\) is a normal subgroup of \(H\) and \(\pi_1(G/H) \simeq H/H_0\). 
\end{proposition}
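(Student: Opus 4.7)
The plan is to derive the result from the long exact sequence of homotopy groups associated to the principal fibration $H \hookrightarrow G \twoheadrightarrow G/H$, combined with a direct topological inspection of the connected components of $H$.

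First I would record that the quotient map $\pi: G \to G/H$ is a fiber bundle with fiber $H$. In the Lie-group setting this is standard (the exponential map transverse to the Lie algebra of $H$ provides local sections), and in the topological-group generality of Mostow it follows from the existence of local cross-sections for $\pi$ when $H$ is closed. Assuming this, the tail of the long exact sequence reads
\[
\pi_1(H) \longrightarrow \pi_1(G) \longrightarrow \pi_1(G/H) \overset{\partial}{\longrightarrow} \pi_0(H) \longrightarrow \pi_0(G).
\]
Since $G$ is simply connected and in particular path-connected, both $\pi_1(G)$ and $\pi_0(G)$ vanish, so the connecting map $\partial$ is a bijection of pointed sets.

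Second, I would show that $H_0$ is normal in $H$, so that $H/H_0$ carries a group structure. For any $h \in H$ the set $h H_0 h^{-1}$ is the image of the connected set $H_0$ under the continuous map $x \mapsto h x h^{-1}$, hence connected; it contains the neutral element and is contained in $H$, so it lies in $H_0$. Therefore $H_0 \lhd H$. Moreover, the connected components of $H$ are exactly the cosets $h H_0$ (each is connected as a translate of $H_0$ by the homeomorphism $x \mapsto h x$, and they partition $H$), so the set-theoretic identification $\pi_0(H) \simeq H/H_0$ is immediate.

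Third, I would verify that $\partial$ is a group homomorphism when $\pi_0(H)$ is equipped with the group structure coming from $H/H_0$. Concretely, given a loop $\alpha$ in $G/H$ based at $eH$, one lifts it to a path $\Tilde{\alpha}$ in $G$ starting at $e$, and then $\partial[\alpha]$ is the class of $\Tilde{\alpha}(1) \in H$ in $H/H_0$. Given two loops $\alpha, \beta$, a lift of the concatenation $\alpha \ast \beta$ is obtained by following $\Tilde{\alpha}$ and then the translate $\Tilde{\alpha}(1) \cdot \Tilde{\beta}$, so the endpoint is $\Tilde{\alpha}(1)\,\Tilde{\beta}(1)$; taking classes modulo $H_0$ shows $\partial([\alpha][\beta]) = \partial[\alpha]\,\partial[\beta]$. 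Combined with the bijectivity obtained from the exact sequence this yields the announced isomorphism $\pi_1(G/H) \simeq H/H_0$.

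The main obstacle is the compatibility step in the third paragraph: the long exact sequence only gives $\partial$ as a bijection of pointed sets, and one has to relate the topological concatenation of loops downstairs with the algebraic multiplication in $H$ upstairs. This is where the hypothesis that $G$ is a \emph{topological} group enters crucially, through the fact that left multiplications in $G$ are homeomorphisms preserving the fibration structure; the rest of the argument is purely a formal consequence of the fibration long exact sequence.
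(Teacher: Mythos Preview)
The paper does not actually prove this proposition: it is stated with a citation to Mostow~\cite{Mostow_1950}*{\S 8} and used as a black box, with no accompanying argument. So there is no ``paper's own proof'' to compare against.

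Your argument via the long exact sequence of the fibration \(H \hookrightarrow G \to G/H\) is the standard modern route and is correct in the Lie-group setting relevant to the paper. The only point that deserves a caveat is your parenthetical claim that, for general topological groups, closedness of \(H\) suffices for \(\pi\) to admit local cross-sections; this is not automatic without further hypotheses (local compactness, finite dimensionality, etc.), which is precisely what Mostow's paper addresses. In the context at hand \(G\) is a compact Lie group, so the bundle structure is unproblematic and the rest of your proof goes through as written.
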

The group \(H/H_0\) is referred to as the group of connected components of \(H\) in the literature.

The restriction that \(G\) is simply connected can be overcome by replacing \(G\) by its universal covering.

When the Lie group \(G\) is endowed with a doubly invariant Riemannian metric we can endow \(G/H\) with a Riemannian structure thanks to the following:

\begin{proposition}[Riemannian structure on homogeneous spaces \cite{Cheeger_Ebin_1975}*{Propositions 3.14 and 3.16}]
  \label{proposition_riem_struct}
Assume that \(G\) is endowed with a Riemannian metric which is left invariant by the action of \(G\) and right invariant by the action of \(H\).
Then, the left coset space \(G/H\) is a Riemannian manifold
of dimension \(\dim G - \dim H\) , and has a unique Riemannian structure such that the canonical projection \(\pi: G \rightarrow G/H\) is a Riemannian submersion and \(G\) acts isometrically on \(G/H\) by left multiplication.
\end{proposition}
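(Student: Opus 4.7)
The plan is to use the standard machinery of Riemannian submersions, where the essential input is the right-\(H\)-invariance of the metric on \(G\). First, I would identify, at each point \(g\in G\), the \emph{vertical subspace} \(V_g \defeq \ker d\pi_g = T_g (gH)\), which equals \((dL_g)_e(\mathfrak{h})\) for \(\mathfrak{h}\) the Lie algebra of \(H\). Its orthogonal complement \(\mathcal{H}_g \subset T_g G\) with respect to the given metric is the \emph{horizontal subspace}, and the restriction \(d\pi_g\vert_{\mathcal{H}_g}: \mathcal{H}_g \to T_{\pi(g)}(G/H)\) is a linear isomorphism of vector spaces of dimension \(\dim G - \dim H\). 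The candidate Riemannian structure on \(G/H\) is then defined by transporting the restriction of the inner product on \(\mathcal{H}_g\) along this isomorphism.

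The central step, and the main technical point, is to show that this definition does not depend on the choice of representative \(g\) of the coset \([g]=gH\). Given any \(h\in H\), the right translation \(R_h\) is by hypothesis an isometry of \(G\) and preserves each fiber \(gH\), so it maps \(V_g\) to \(V_{gh}\) and thus \(\mathcal{H}_g\) to \(\mathcal{H}_{gh}\). Since \(\pi \compose R_h = \pi\), we have \(d\pi_{gh}\compose dR_h = d\pi_g\) on \(\mathcal{H}_g\), which means the two identifications of \(T_{[g]}(G/H)\) obtained from \(\mathcal{H}_g\) and \(\mathcal{H}_{gh}\) differ precisely by the isometry \(dR_h\vert_{\mathcal{H}_g}\); the induced inner products therefore agree. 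This is the only place where the right-\(H\)-invariance hypothesis is genuinely needed, and without it the construction would fail to descend.

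For smoothness, I would use that \(H\) is a closed subgroup of \(G\), so by the closed subgroup theorem the projection \(\pi\) is a smooth submersion with smooth local sections, and \(V\) and \(\mathcal{H}\) are smooth subbundles of \(TG\). Choosing local smooth horizontal frames yields smooth coefficients of the metric on \(G/H\). The isometric left action of \(G\) follows from the identity \(\pi\compose L_{g'} = L_{g'}\compose \pi\) combined with the left-invariance of the metric on \(G\): since \(L_{g'}\) is an isometry of \(G\) that permutes fibers, it sends horizontal to horizontal and descends to an isometry of \(G/H\). Uniqueness is immediate from the submersion requirement, which forces the inner product at \([g]\) to equal the pushforward of the restricted inner product on \(\mathcal{H}_g\), leaving no freedom whatsoever.
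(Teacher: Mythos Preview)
Your sketch is a correct outline of the standard argument for this result. Note, however, that the paper does not give its own proof: the proposition is stated with a citation to Cheeger--Ebin \cite{Cheeger_Ebin_1975}*{Propositions 3.14 and 3.16} and used as a known black box, so there is no proof in the paper to compare against. Your argument is essentially the one found in that reference.
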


The fact that \(\pi\) is a Riemannian submersion is equivalent to the fact \(d\pi (e)\) is an isometry from the orthogonal \(\mathfrak{h}^\perp\) in the Lie algebra \(\mathfrak{g}\) of \(G\) of the Lie algebra \(\mathfrak{h}\) of \(H\) into \(T_{eH} G/H\), where \(e\) denotes the identity element of \(G\). We recall that the geodesics in compact Lie groups endowed with a bi-invariant metric are described by the following

\begin{proposition}\label{prop:geodesics_compact_Lie_groups}
Let \(G\) be a compact Lie group with a bi-invariant Riemannian
metric. Then the geodesics have the form \(\gamma(t) = \exp(t \xi)g\) for \(g \in G\) and
\(\xi \in \mathfrak{g}\).
\end{proposition}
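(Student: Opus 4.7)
The plan is to prove the classical characterisation by showing that the integral curves of right-invariant vector fields on \(G\) are geodesics; since \(t \mapsto \exp(t\xi)g\) is the integral curve through \(g\) of the right-invariant extension of \(\xi \in \mathfrak{g}\), and since the map \((dR_g)_e : \mathfrak{g} \to T_g G\) is a linear isomorphism, every possible pair of initial conditions for the geodesic equation at \(g\) will be realised by some curve of the stated form, so that uniqueness of the Cauchy problem for geodesics will exhaust all maximal geodesics.

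The key step will be a standard algebraic computation. First I would exploit the bi-invariance to derive skew-symmetry of the adjoint action of \(\mathfrak{g}\) on itself: differentiating the identity \(\langle \operatorname{Ad}(\exp(t\xi))\eta, \operatorname{Ad}(\exp(t\xi))\zeta\rangle = \langle \eta, \zeta\rangle\) at \(t = 0\) yields
\[
\langle [\xi, \eta], \zeta\rangle + \langle \eta, [\xi, \zeta]\rangle = 0 \qquad \text{for all } \xi, \eta, \zeta \in \mathfrak{g}.
\]
Next, extending \(\xi, \eta\) to right-invariant vector fields \(X, Y\) on \(G\), their pointwise inner product is constant (by right-invariance of the metric), so the Koszul formula, applied to \(X, Y\) and a third right-invariant field \(Z\), collapses to an expression involving only Lie brackets. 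The skew-adjointness above then gives a clean formula of the form \(\nabla_X Y = c\,[X, Y]\) with a universal constant \(c = \pm \tfrac{1}{2}\) depending on the sign convention for the bracket of right-invariant fields. In particular \(\nabla_X X = 0\), so the integral curves of right-invariant fields are geodesics.

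To conclude, I would observe that the integral curve of the right-invariant extension of \(\xi\) through \(g\) at \(t = 0\) is precisely \(t \mapsto \exp(t\xi)g\), with initial velocity \((dR_g)_e\xi\). Varying \((g, \xi)\) over \(G \times \mathfrak{g}\) covers every point of \(TG\), so uniqueness of geodesics with prescribed initial data implies that every maximal geodesic of \(G\) has the stated form. Completeness of these curves for all \(t \in \Rset\) follows from the compactness of \(G\) via Hopf--Rinow, though it can equivalently be deduced directly from the fact that \(\exp\) is defined on all of \(\mathfrak{g}\).

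I expect the main technical delicacy to lie in the sign conventions, since brackets of right-invariant vector fields carry the opposite sign from the bracket they induce on \(\mathfrak{g}\) (the convention usually adopted via left-invariant fields). Working instead with left-invariant fields would produce geodesics of the form \(t \mapsto g\exp(t\xi)\); the two forms describe the same family of curves because both exhaust the geodesic flow, but the book-keeping in the Koszul computation must be carried out consistently on whichever side one chooses.
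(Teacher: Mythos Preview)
Your proof is correct and follows the standard argument (bi-invariance $\Rightarrow$ skew-adjointness of $\operatorname{ad}$ $\Rightarrow$ $\nabla_X X = 0$ for one-sided invariant fields via Koszul), which is essentially what one finds in the reference the paper cites. Note, however, that the paper does not give its own proof of this proposition at all: it simply refers the reader to Helgason, \emph{Differential geometry, Lie groups, and symmetric spaces}, Ch.~2, \S 1.3. So there is no in-paper proof to compare against; your write-up supplies what the authors chose to outsource.
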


The exponential is understood in the sense of Lie groups; we refer to \citelist{\cite{Helgason_2001}*{Ch. 2 \S 1.3}} for the proof of \cref{prop:geodesics_compact_Lie_groups}.


We now turn  to the classification of geodesics in order to describe their synharmony. We first have a lifting property of geodesics under the Riemannian submersion \(\pi:G\to G/H\) (see \cite{gallotRiemannianGeometry2004}*{\S 2.C.6}, or \cite{Michor_2008}*{\S 26}). We say that a map \(g \in \mathcal{C}^1 ([0, 2\pi], G)\) is horizontal whenever for every \(t \in [0, 2\pi]\), \(g'(t) \in (\ker d \pi (g (t)))^\perp\), or equivalently, \(g(t)^{-1}g'(t)\in\mathfrak{h}^\perp\). We say that \(g\) is a horizontal lift of a map \(\gamma\in\mathcal{C}^1 ([0, 2\pi],G/H)\) if \(g\) is horizontal and \(\pi\compose g=\gamma\). The map \(\gamma\) has a unique horizontal lift \(g\) such that \(g(0)=g_0\) whenever \(\pi(g_0)=\gamma(0)\). The horizontal lifting preserves the length; in particular, the map \(\gamma\) is a geodesic in \(G/H\) if and only if its horizontal lifts are geodesics in \(G\).

We also have a horizontal lifting property for homotopies:

\begin{proposition}
\label{prop_homot_geodesics_lifting}
If \(\gamma \in \mathcal{C}^1 (\Sset^1 \times [0, 1], G/H)\),
then there exist \(\sigma\in\mathcal{C}^1([0,1],G)\) and \(g \in \mathcal{C}^1 ([0, 2\pi] \times [0, 1], G)\) such that the following holds:
\begin{enumerate}[(i)]
 \item for every \((s,t) \in [0, 2\pi]\times [0, 1]\), one has
 \(\gamma (s, t) = \pi (\sigma(t)g(s,t))\) under the identification \(\Sset^1 \simeq [0, 2\pi]/\{0, 2 \pi\}\),
 \item for every  \(t \in [0, 1]\), the map \(g(\cdot, t)\) is horizontal,
 \item \(g (0, \cdot) = e\) and \(g (2 \pi, \cdot) \in H\),
 \item the map \(\sigma\) is horizontal.
\end{enumerate}
\end{proposition}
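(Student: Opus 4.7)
My approach relies on the theory of horizontal lifts for the Riemannian submersion $\pi : G \to G/H$. Since the metric on $G$ is left-invariant and right $H$-invariant, the horizontal distribution in $TG$ defined at each $g\in G$ as the left translate of $\mathfrak{h}^\perp$ is orthogonal to $\ker d\pi$ and is preserved by left translations. Any $\mathcal{C}^1$ curve in $G/H$ admits a unique horizontal lift to $G$ once an initial lift of its starting point has been prescribed, obtained by solving a first-order ODE; standard smooth dependence of ODE solutions on parameters then implies joint $\mathcal{C}^1$ regularity of horizontal lifts for $\mathcal{C}^1$ families of curves.

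The construction then proceeds in three steps. First, I would let $\sigma : [0, 1] \to G$ be the unique horizontal lift of the curve $t \mapsto \gamma (0, t)$ with $\sigma (0) = e$; this immediately yields (iv). Second, for each $t \in [0, 1]$, I would let $h (\cdot, t) : [0, 2\pi] \to G$ be the unique horizontal lift of the loop $s \mapsto \gamma (s, t)$ with initial point $h (0, t) = \sigma (t)$. Finally, I would set $g (s, t) \defeq \sigma (t)^{-1} h (s, t)$.

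The four properties then follow quickly. Property (i) amounts to $\pi (\sigma (t) g (s, t)) = \pi (h (s, t)) = \gamma (s, t)$. For (iii), $g (0, t) = e$ by construction, while the identification $\Sset^1 \simeq [0, 2\pi] / \{0, 2\pi\}$ gives $\pi (h (2\pi, t)) = \gamma (2\pi, t) = \gamma (0, t) = \pi (\sigma (t))$, which forces $g (2\pi, t) = \sigma(t)^{-1} h(2\pi, t) \in H$. For (ii), since left translations preserve the horizontal distribution, left-multiplying the horizontal curve $s \mapsto h (s, t)$ by $\sigma (t)^{-1}$ produces a horizontal curve; equivalently, a direct computation yields
\[
  g (s, t)^{-1} \partial_s g (s, t) = h (s, t)^{-1} \partial_s h (s, t) \in \mathfrak{h}^\perp.
\]

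The only delicate point is the joint $\mathcal{C}^1$ regularity of $g$ in $(s, t)$, which I expect to be the main technical step. This follows from classical smooth dependence of ODE solutions on both initial conditions and parameters, applied to the ODE characterising horizontal lifts, together with the $\mathcal{C}^1$ regularity of $\gamma$ and of its partial derivatives on the compact cylinder $\Sset^1 \times [0, 1]$.
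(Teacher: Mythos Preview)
Your approach is essentially identical to the paper's: both take \(\sigma\) to be a horizontal lift of \(t \mapsto \gamma(0,t)\) and then define \(g(\cdot,t)\) as the horizontal lift of \(\sigma(t)^{-1}\gamma(\cdot,t)\) starting at \(e\) (your intermediate \(h\) just makes the left-translation step explicit, and by uniqueness your \(g\) coincides with the paper's). One small slip: prescribing \(\sigma(0) = e\) forces \(\gamma(0,0) = eH\), which is not assumed; you should instead take \(\sigma(0)\) to be any point of \(\pi^{-1}(\gamma(0,0))\).
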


\begin{remark}
In the particular case where the group \(H\) is discrete (\(\dim H = 0\)), then \(g (2 \pi, \cdot)\) is constant in \(H\); otherwise it remains in the same connected component of \(H\).
\end{remark}

\begin{proof}[Proof of \cref{prop_homot_geodesics_lifting}]
We let \(\sigma\in\mathcal{C}^1([0,1],G)\) be a horizontal map such that \(\gamma(0,\cdot)=\pi\compose\sigma\), and, for every \(t \in [0, 1]\), we let \(g (\cdot, t) \in \mathcal{C}^1 ([0, 2\pi], G)\) be the (unique) horizontal map such that \(\pi \compose g (\cdot, t) = \sigma(t)^{-1}\gamma (\cdot, t)\) and \(g(0,t)=e\). This way, for every \(t\in[0,1]\), we have \(\gamma(\cdot,t)=\pi(\sigma(t)g(\cdot,t))\). Since \(\gamma(0,\cdot)=\gamma(2\pi,\cdot)\), we have \(g(2\pi,t)\in\pi^{-1}(\{0\})=H\). Moreover, by uniqueness of the horizontal lifting, \(g \in \mathcal{C}^1 ([0, 2\pi]\times[0,1],G)\).
\end{proof}

By \Cref{prop_homot_geodesics_lifting}, since the horizontal lifting preserves the length, classifying homotopy classes of minimising closed geodesics in \(G/H\) reduces to the question of classifying homotopy classes of geodesics in \(G\) that minimise the distance between the identity \(e\) and a given connected component of \(H\). Our main tool to study this on examples will be the following proposition that classifies geodesics on \(SU(d)\), \(d \geq 2\).

\begin{proposition}
  \label{proposition_geodesicsSUd}
Let \(SU (d)\) be endowed with its bi-invariant metric.
If \(\gamma_0, \gamma_1:[0,1]\to SU(d)\) are two minimising geodesics between \(\operatorname{id}\) and some point \(g\in SU(d)\), then there exists \(\xi \in \mathfrak{su} (d)=\{ M \in M_d(\mathbb{C})\st M^*=-M \text{ and }\operatorname{tr}(M)=0\}\) such that for every \(t \in [0, 1]\), \(g = \exp (t \xi ) g \exp (-t \xi )\) and for every \(s \in [0, 1]\),
\[
 \gamma_1(s) = \exp (\xi) \gamma_0(s) \exp (-\xi).
\]
In particular, \(\gamma_0\) is homotopied to \(\gamma_1\) by the geodesics \(\gamma_t(s)=\exp (t \xi )\gamma_0(s)\exp (-t \xi )\), \(t\in[0,1]\).
\end{proposition}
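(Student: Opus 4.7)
The plan is to apply \cref{prop:geodesics_compact_Lie_groups} to write both minimising geodesics as one-parameter subgroups, and then to reduce the problem to conjugating two elements of \(\mathfrak{su}(d)\) that exponentiate to \(g\). I would first write \(\gamma_0(s)=\exp(s\xi_0)\) and \(\gamma_1(s)=\exp(s\xi_1)\) with \(\xi_0,\xi_1\in\mathfrak{su}(d)\), \(\exp(\xi_0)=\exp(\xi_1)=g\) and \(\abs{\xi_0}=\abs{\xi_1}=d(e,g)\). Since each \(\xi_i\) commutes with \(g=\exp(\xi_i)\), it preserves every eigenspace \(V_\lambda\) of \(g\), and on \(V_\lambda\) its eigenvalues lie in \(i\Rset\) and exponentiate to \(\lambda\). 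Minimising \(\abs{\xi_i}^2\) forces this eigenvalue to be \(i\arg\lambda\in i(-\pi,\pi)\) whenever \(\lambda\neq-1\); the only freedom is on \(V_{-1}\), where eigenvalues \(\pm i\pi\) are both admissible, with multiplicities that are constrained --- and in fact uniquely fixed --- by the trace-zero condition \(\operatorname{tr}\xi_i=0\).

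It follows that \(\xi_0\) and \(\xi_1\) agree on every \(V_\lambda\) with \(\lambda\neq-1\) and have identical spectra on \(V_{-1}\). Standard linear algebra then produces \(U_{-1}\in U(V_{-1})\) conjugating \(\xi_0\vert_{V_{-1}}\) into \(\xi_1\vert_{V_{-1}}\); multiplying \(U_{-1}\) by a scalar phase to bring its determinant to \(1\) and extending by the identity on the other eigenspaces gives an element \(W\in SU(d)\) that commutes with \(g\) and satisfies \(W\xi_0 W^{-1}=\xi_1\).

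To upgrade \(W\) to an exponential, I would verify that the centraliser \(C_{SU(d)}(g)\) is a compact connected Lie group. Indeed \(C_{U(d)}(g)=\prod_j U(V_{\lambda_j})\) is connected, the determinant map \(C_{U(d)}(g)\to U(1)\) is surjective, and the induced map \(\pi_1 C_{U(d)}(g)\to\pi_1 U(1)\) is surjective, so the homotopy long exact sequence yields that the kernel \(C_{SU(d)}(g)\) is connected. Its exponential map is therefore surjective, giving \(W=\exp(\xi)\) for some \(\xi\) in its Lie algebra \(\{\eta\in\mathfrak{su}(d)\st [\eta,g]=0\}\). Then \(\exp(t\xi)\,g\,\exp(-t\xi)=g\) for every \(t\in[0,1]\) and
\[
\exp(\xi)\,\gamma_0(s)\,\exp(-\xi)=\exp\bigl(s\operatorname{Ad}(\exp\xi)\xi_0\bigr)=\exp(s\xi_1)=\gamma_1(s),
\]
while the homotopy \(\gamma_t(s)\defeq\exp(t\xi)\,\gamma_0(s)\,\exp(-t\xi)=\exp\bigl(s\operatorname{Ad}(\exp t\xi)\xi_0\bigr)\) consists, for each \(t\in[0,1]\), of a geodesic from \(e\) to \(g\) of length \(\abs{\operatorname{Ad}(\exp t\xi)\xi_0}=\abs{\xi_0}\), hence minimising.

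The step I expect to be the main obstacle is the spectral analysis at the eigenvalue \(\lambda=-1\): although the multivalued logarithm admits two preimages of equal norm (\(\pm i\pi\)), one must verify that the trace-zero constraint rigidly fixes the multiplicities of each preimage, so that \(\xi_0\) and \(\xi_1\) actually share a common spectrum on \(V_{-1}\) and can be conjugated one into the other inside \(U(V_{-1})\). Once this is established, the connectedness of \(C_{SU(d)}(g)\) and the construction of the homotopy through geodesics follow from standard Lie-theoretic arguments.
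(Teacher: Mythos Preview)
Your proof is correct and follows essentially the same route as the paper: write \(\gamma_k(s)=\exp(s\sigma_k)\), show that minimality forces the spectrum of \(\sigma_k\) into \(i[-\pi,\pi]\), deduce that \(\sigma_0\) and \(\sigma_1\) agree on \(V_{-1}^\perp\) and have the same spectrum on \(V_{-1}\) by the trace-zero constraint, and conjugate by a special unitary acting as the identity on \(V_{-1}^\perp\). The only difference is in the exponentiation step: the paper observes directly that since the conjugator \(U\) is the identity on \(V_{-1}^\perp\) and lies in \(SU(V_{-1})\), one may take \(\xi\) supported on \(V_{-1}\), where \(g=-\operatorname{id}\) is scalar and hence commutes with \(\xi\) automatically; your homotopy-exact-sequence argument for the connectedness of \(C_{SU(d)}(g)\) is valid but unnecessary given that your \(W\) is already supported on \(V_{-1}\).
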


\begin{proof}
Since \(\gamma_k\) is a geodesic, there exists \(\sigma_k \in \mathfrak{su} (d)\) such that for every \(s\) in \([0,1]\), \(\gamma_k (s) = \exp (s \sigma_k)\). Since \(\gamma_k\) is minimising from \(\operatorname{id}\) to \(g\), we have \(\abs{\sigma_k} = \dist_{SU (d)} (\operatorname{id}, g)\) and the spectrum of \(\sigma_k\) is contained in the interval \([-i\pi,i\pi]\) of the imaginary axis in the complex plane. Let \(V \defeq  \ker (g + \operatorname{id}) \subset \Cset^d\). Since \(g=\exp ({\sigma_k})\) and \(\operatorname{spec}(\sigma_k)\subset[-i\pi,i\pi]\), we have
\[
 V = \ker (\sigma_k + i \pi \operatorname{id})
 \oplus \ker (\sigma_k - i \pi \operatorname{id}),\quad k=0,1.
\]
Therefore, the spaces \(V\) and \(V^\perp\) are invariant subspaces of the linear operators \(g\), \(\sigma_0\) and \(\sigma_1\) on \(\Cset^d\).
Since the complex exponential is injective on the segment \((-i\pi,i\pi)\), we have \(\ker (\sigma_k - \lambda \operatorname{id}) = \ker (g - e^{\lambda} \operatorname{id})\) for every \(\lambda\in (-i\pi,i\pi)\), and therefore \(\sigma_0 \vert_{V^\perp}
= \sigma_1 \vert_{V^\perp}\).
On the other hand, since \(\tr\sigma_k=0\), we have
\[
 \tr \sigma_0 \vert_V
 = - \tr \sigma_0 \vert_{V^\perp}
 = - \tr \sigma_1 \vert_{V^\perp}
 = \tr \sigma_1 \vert_V.
\]
Since the spectra of both \(\sigma_0 \vert_V\) and \(\sigma_1 \vert_V\) are contained in \(\{-i\pi,i\pi\}\), \(\sigma_0\) and \(\sigma_1\) have thus the same eigenvalues with the same multiplicity. There exists thus \(U\in SU(d)\) such that \(U\vert_{V^\perp}=\operatorname{id}_{V^\perp}\), \(U(V)\subset V\) and \(\sigma_1 = U \sigma_0 U^\ast\). We write \(U=\exp(\xi)\) with \(\xi\in\mathfrak{su}(d)\) such that \(\xi(V)\subset V\) and \(\xi=0\) on \(V^\perp\). It follows by exponentiation that \(\gamma_1(s)=\exp(\xi)\gamma_0(s)\exp(-\xi)\) for every \(s\). Now, using that \(V\) and \(V^\perp\) are invariant subspaces of \(g\) and \(\xi\) with \(g\vert_V=-\operatorname{id}_V\) and \(\xi\vert_{V^\perp}=0\), one sees that \(g\xi=\xi g\). It follows by exponentiation that \(g \exp (t \xi )= \exp (t \xi ) g \) for every \(t\).
\end{proof}

\begin{corollary}
\label{corollary_discreteSUdQuotient}
Let \(SU (d)\) be endowed with its bi-invariant metric and \(H\) be a discrete subgroup of \(SU(d)\). Then any two homotopic closed geodesics \(\gamma_0, \gamma_1 \in \mathcal{C}^1 (\Sset^1, SU(d)/H)\) that are minimising in their homotopy class are synharmonic.
\end{corollary}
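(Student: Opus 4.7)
The plan is to lift everything to $SU(d)$ through the projection $\pi: SU(d) \to SU(d)/H$, which, because $H$ is discrete, is a Riemannian covering (the horizontality condition is vacuous since $\mathfrak{h} = \{0\}$ and so $\mathfrak{h}^\perp = \mathfrak{su}(d)$) and identifies $\pi_1(SU(d)/H)$ with $H$ via \cref{proposition_fund_group_homog_space}. First I would, after an inessential rotation of the parametrisations, lift $\gamma_0$ and $\gamma_1$ to unique paths $g_0, g_1: [0, 2\pi] \to SU(d)$ with $g_i(0) = e$, setting $h_i := g_i(2\pi) \in H$. Each $g_i$ is itself a minimising geodesic in $SU(d)$ from $e$ to $h_i$: any strictly shorter path would project to a closed loop through $\pi(e)$ representing the same element $h_i \in \pi_1(SU(d)/H,\pi(e))$, hence freely homotopic to $\gamma_i$, contradicting the minimality of $\gamma_i$ in its free homotopy class.

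Next I would align the two endpoints. Since $\gamma_0$ and $\gamma_1$ are freely homotopic, $h_0$ and $h_1$ lie in the same conjugacy class of $H$, so there is $h' \in H$ with $h' h_1 h'^{-1} = h_0$. Then $\hat g_1(s) := h' g_1(s) h'^{-1}$ is still a minimising geodesic in $SU(d)$, now from $e$ to $h_0$, since conjugation by $h'$ is an isometry for the bi-invariant metric. Moreover, right-multiplication by $h'^{-1} \in H$ does not affect $\pi$, so $\pi \circ \hat g_1$ equals the left-translate $h' \cdot \gamma_1$ of $\gamma_1$ by $h' \in SU(d)$; by \cref{proposition_riem_struct} this is a minimising closed geodesic in the free homotopy class of $\gamma_1$.

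At this point $g_0$ and $\hat g_1$ are two minimising geodesics in $SU(d)$ between the same pair of endpoints, so \cref{proposition_geodesicsSUd} produces $\xi \in \mathfrak{su}(d)$ with $\exp(t\xi) h_0 \exp(-t\xi) = h_0$ for all $t \in [0,1]$ and $\hat g_1(s) = \exp(\xi) g_0(s) \exp(-\xi)$, together with the $\mathcal{C}^1$ family of minimising geodesics $G(s,t) := \exp(t\xi) g_0(s) \exp(-t\xi)$ between them. Projecting, $(s,t) \mapsto \pi(G(s,t))$ is a $\mathcal{C}^1$ homotopy of closed minimising geodesics in $SU(d)/H$ from $\gamma_0$ to $h' \cdot \gamma_1$ (closedness at each slice follows from $\exp(t\xi) h_0 \exp(-t\xi) \in H$). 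Separately, choosing a path $k(r)$ from $e$ to $h'$ inside the connected group $SU(d)$, the map $(s,r) \mapsto k(r) \cdot \gamma_1(s)$ is a homotopy of minimising closed geodesics from $\gamma_1$ to $h' \cdot \gamma_1$, since $SU(d)$ acts isometrically on $SU(d)/H$ and preserves free homotopy classes. Applying \cref{proposition_homotopy_synharmonic} to each of these two homotopies and invoking the triangle inequality in \cref{proposition_synharmonic_pseudometric} \eqref{item_synhar_triangle}, I would conclude
\[
  \synhar{\gamma_0}{\gamma_1} \le \synhar{\gamma_0}{h' \cdot \gamma_1} + \synhar{h' \cdot \gamma_1}{\gamma_1} = 0.
\]

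The only real difficulty is the bookkeeping to convert a free-homotopy statement in the quotient into the fixed-endpoint setting required by \cref{proposition_geodesicsSUd}; conjugating $g_1$ by a suitable $h' \in H$ to match the lifted endpoints and then compensating by a homotopy through the connected group $SU(d)$ is, as far as I can see, the cleanest bridge. Once this step is set up, the remaining arguments are direct applications of results already established in the paper.
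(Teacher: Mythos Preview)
Your overall strategy---lift to $SU(d)$, invoke \cref{proposition_geodesicsSUd}, project back through minimising geodesics---is correct and matches the paper's. There is, however, one slip in the set-up: a rotation of the parametrisation of $\gamma_i$ can only move $\gamma_i(0)$ along the \emph{image} of $\gamma_i$, which need not contain $eH$, so you cannot in general arrange a lift with $g_i(0)=e$ this way. The fix is exactly the device you already use for $h'$: left-translate each $\gamma_i$ by $\sigma_i^{-1}$ for some $\sigma_i\in\pi^{-1}(\gamma_i(0))$ to obtain loops through $eH$, and observe via a path from $e$ to $\sigma_i$ in the connected group $SU(d)$ together with \cref{proposition_homotopy_synharmonic} that this does not change synharmony. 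With this adjustment your argument goes through.

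The paper's route is a bit more economical. Rather than lifting $\gamma_0$ and $\gamma_1$ separately and then reconciling the endpoints $h_0,h_1\in H$ by conjugation, it picks a free homotopy between $\gamma_0$ and $\gamma_1$ and lifts it in one shot via \cref{prop_homot_geodesics_lifting}. Because $H$ is discrete, the lifted endpoint $g(2\pi,t)\in H$ is automatically constant, so the two lifted geodesics $\mu_0,\mu_1$ share the \emph{same} endpoint $h$ from the outset; a single application of \cref{proposition_geodesicsSUd}, combined with the path $\sigma(t)$ already supplied by the lifting, then yields the required homotopy of minimising closed geodesics directly. Your approach trades the homotopy-lifting proposition for two (or three, after the fix above) uses of the triangle inequality; both reach the same conclusion.
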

In view of \eqref{linkRenormalisedEnergies}, \cref{corollary_discreteSUdQuotient} tells us in particular that the topological and geometrical renormalised energies coincide in the case of \(\manifold{N}=SU(d)/H\).
\begin{proof}
By \cref{prop_homot_geodesics_lifting}, since \(\gamma_0\) and \(\gamma_1\) are homotopic, there exist a map \(\sigma\in\mathcal{C}^1([0,1],SU(d))\) and a homotopy \(\mu \in \mathcal{C}^1 ([0, 2\pi] \times [0, 1], SU(d))\) such that if we set \(\mu_0(\cdot)\defeq\mu(\cdot,0)\) and \(\mu_1(\cdot)\defeq\mu(\cdot,1)\), then we have \(\gamma_0(\cdot)=\pi (\sigma(0)\mu_0(\cdot))\), \(\gamma_1(\cdot)=\pi (\sigma(1)\mu_1(\cdot))\), \(\mu (0, \cdot) = \operatorname{id}\), \(\mu (2 \pi, \cdot) \in H\). Since \(H\) is discrete, there exists \(h\in H\) such that \(\mu (2 \pi, \cdot)\equiv h\). In particular, \(\mu_0(2\pi)=\mu_1(2\pi)=h\). Since the horizontal lifting preserves the length, the maps \(\mu_0\) and \(\mu_1\) are minimising geodesics in \(SU(d)\). Hence, by \cref{proposition_geodesicsSUd}, there exists \(\xi \in \mathfrak{su} (d)\) such that for every \(t \in [0, 1]\), \(h = \exp (t\xi ) h \exp (-t\xi )\) and for every \(s \in [0, 2\pi]\), \(\mu_1(s) = \exp (\xi) \mu_0(s) \exp (-\xi)\). We now set
\[
\gamma(s,t)=\pi(\sigma(t)\exp (t \xi )\mu_0(s)\exp (-t \xi )).
\]
Then, \(\gamma\in\mathcal{C}^1([0,2\pi]\times[0,1],SU(d)/H)\) is a homotopy between \(\gamma_0=\gamma(\cdot,0)\) and \(\gamma_1=\gamma(\cdot,1)\) such that \(\gamma(0,\cdot)=\gamma(2\pi,\cdot)\), i.e. it is a homotopy of loops. Moreover, the maps \((\gamma(\cdot,t))_{t\in[0,1]}\) have all the same length -- by bi-invariance of the metric -- and are thus minimising in their homotopy class since this is the case of \(\gamma_0\) and \(\gamma_1\). Hence, by \cref{proposition_homotopy_synharmonic}, \(\gamma_0\) and \(\gamma_1\) are synharmonic.
\end{proof}

\subsection{Working with \texorpdfstring{\(SO(3)\)}{SO(3)} and \texorpdfstring{\(SU(2)\)}{SU(2)}}
The group of rotations of \(\Rset^3\), denoted by
\[ SO(3)=\Bigl\{ g\in \Rset^{3 \times 3} \st g^* g=\id \text{ and } \det(g)=1 \Bigr\} \]
is a compact Lie group whose Lie algebra is given by
\[ \mathfrak{so}(3)= \Bigl\{ X \in \Rset^{3 \times 3}\st X    ^*=-X \Bigr\}.\]
We endow \( \mathfrak{so}(3)\) with the bi-invariant metric defined for \(X,Y \in \mathfrak{so}(3)\) by 
\[
 \frac{\tr (X^*Y)}{2};
\]
this choice ensures that the distance of a rotation of amplitude \(\theta \in [-\pi, \pi]\) from the identity is \(\abs{\theta}\).

The universal covering of \(SO(3)\) is the unitary group \(SU (2)\), which is a compact Lie group defined as
\begin{equation*}
\begin{split}
 SU(2)&= \Bigl\{ g \in \Cset^{2 \times 2}\st g^*g= \id \text{ and } \det(g)=1 \Bigr\} \\
&= \Bigl\{ x_0\mathbf{1} +x_1\mathbf{i}+x_2 \mathbf{j}+x_3\mathbf{k}\st x_0, \dotsc, x_3 \in \Rset \text{ and } \sum_{\ell = 0}^3 x_\ell^2 = 1\Bigr\},
\end{split}
\end{equation*} 
where we have used the matrices
\[ 
\mathbf{1}= \begin{pmatrix}
1 & 0 \\
0& 1
\end{pmatrix}, \quad 
\mathbf{i}= \begin{pmatrix}
i & 0 \\
0& -i
\end{pmatrix}, \quad \mathbf{j}=\begin{pmatrix}
0&1 \\
-1 & 0
\end{pmatrix}, \quad 
\mathbf{k}= \begin{pmatrix}
0& i \\
i& 0
\end{pmatrix}.
\]
Since \( \mathbf{i}, \mathbf{j}, \mathbf{k}\) satisfy \( \mathbf{i}^2=\mathbf{j}^2=\mathbf{k}^2=-\mathbf{1}\) and \(\mathbf{i}\mathbf{j}\mathbf{k}=-1\),
they generate in \(\Cset^{2 \times 2}\) an algebra that can be identified with the quaternions \(\Hset\); \(SU(2)\) can be identified as the unit sphere in \(\Hset\). The matrices \(\mathbf{i}\), \(\mathbf{j}\) and \(\mathbf{k}\), also known as the Pauli matrices, form a basis of the Lie algebra of \(SU (2)\), given by
\begin{equation}
T_{\id} SU(2)= \mathfrak{su}(2)= \Bigl\{ X \in \Cset^{2 \times 2}; X^*=-X \text{ and } \tr(X)=0\Bigr\}.
\end{equation}
Any \(g \in SU (2)\) defines a rotation \(p(g)\) on \(\Rset^3 \simeq \{x \in \Hset \st \operatorname{Re}x = 0\}\) defined by
\[
 p (g) x = g x g^{-1};
\]
the Lie group homomorphism \(p: SU(2)\to SO(3)\) is a double cover: \(p(g) = p (h)\) if and only if \(g = \pm h\); \(p\) is an isometry provided we endow \(SU(2)\) with the bi-invariant metric defined for \(X, Y \in  \mathfrak{su}(2) \subset \Hset\) by
\[ 
  2 \tr(X^*Y) = 4 \Re (\Bar{X} Y),
\]
that is, the distance on \(SU(2)\) is twice the distance of the points on the corresponding unit sphere \(\Sset^3 \subset \Hset\equiv\Rset^4\).

\subsection{Concrete examples}
We review several examples of vacuum manifold \(\manifold{N}\) that we consider either for their relevance in applications in physics or computer graphics or for the illustration of the wide spectrum of behaviours that can arise.

In all what follows, we will say that a finite collection of free homotopy classes, or equivalently a finite collection of conjugacy classes in \(\pi_1(\manifold{N})\), is a \emph{free decomposition} of a given free homotopy class if corresponding loops in \(\manifold{N}\) are topologically compatible. We will also say that the decomposition is minimal when the corresponding topological resolutions are minimal.

\subsubsection{The circle \(\Sset^1\)}
The case \(\manifold{N}=\mathbb{S}^1\) arises in superconductivity models
and in two-dimensional cross-fields generation.
In view of \(\Sset^1 \simeq \Rset/\Zset\), one has \(\pi_1(\mathbb{S}^1) =\Zset\).
Moreover, since the closed geodesics on \(\mathbb{S}^1\) can be written as \(t \in [0, 2\pi] \simeq \Sset^1 \mapsto e^{i \alpha}e^{i n t}\), for some \(\alpha \in \Rset\) and for some \(n \in \mathbb{Z}\) that corresponds to the degree of the geodesic, any two homotopic geodesics are automatically synharmonic.
Moreover, the systolic geodesics are the geodesics of degree \(n \in \{-1, 1\}\). We finally have for a map \(\gamma\) of degree \(n\) that \(\Esing (\gamma) = \pi \abs{n}\), with the only minimal free decomposition of a map of degree \(n\) being into \(\abs{n}\) maps of degree \(\operatorname{sgn} (n)\).

\subsubsection{Flat torus}
For the flat torus \(\Sset^1 \times \Sset^1\), we have \(\pi_1 (\Sset^1 \times \Sset^1) \simeq  \pi_1 (\Sset^1)  \times \pi_1(\Sset^1) \simeq \Zset \times \Zset\).
Each closed geodesic in \(\Sset^1 \times \Sset^1\) is a pair of closed geodesic into \(\Sset^1\) and thus all closed geodesics in a given homotopy class are synharmonic.
For every \((n, m) \in \pi_1(\Sset^1 \times \Sset^1)\), \(\equivnorm{(n, m)} = 2 \pi \sqrt{n^2 + m^2}\) while  \(\Esing{(n, m)} = (\abs{n} + \abs{m}) \pi\).
The systoles correspond to \((1, 0)\), \((-1, 0)\), \((0, 1)\) and \((0, -1)\).
Each of the elements \((1, 1)\), \((-1, 1)\), \((1, -1)\) and \((-1, -1)\) have two minimal free decompositions in either two systolic elements or in itself.
In particular, the orthogonal singularities \((1, 0)\) and \((0, 1)\) do not repulse each other in the renormalised energy (see \cref{proposition_coercivity}).

\subsubsection{Equilateral torus}
The equilateral torus is \(\manifold{N} = \Cset / H\) with \(H = \{ n + m e^{2 \pi i/3} \st n, m \in \Zset\}\).
One has then \(\pi_1 (\manifold{N}) = H\)
and for every \(h \in \pi_1 (\manifold{N})\), \(\equivnorm{h} = \abs{h}\) (the Euclidean norm).
Closed geodesics in the homotopy class of some \(h\in H\) lift into line segments whose endpoints differ by the vector \(h\).
In particular, all closed geodesics in a given homotopy class are synharmonic and there are six atomic minimising closed geodesics, corresponding to the neighbouring vectors \(e^{2 k i \pi/3} \in \pi_1 (\manifold{N})\) with \(k \in \{0, \dotsc, 5\}\) and achieving the length of the systole.
Other homotopy classes have minimal free decomposition into these six classes; each minimal free decomposition contains only two neighbouring vectors.
This example shows that the geometry of \(\manifold{N}\) matters in the decompositions of homotopy classes.

\subsubsection{Projective spaces and orthogonal group}
We consider the case of real projective spaces \(\Rset \mathbb{P}^n\) with \(n \ge 2\).
In particular \(\Rset \mathbb{P}^2\) arises in nematic liquid crystals models, \(\Rset \mathbb{P}^3 \simeq SO(3)\)  appears in models of superfluid Helium 3 (\(\prescript{3}{}{\mathrm{He}}\)) in the dipole-locked A phase.

The projective space \(\Rset\mathbb{P}^n\) can be realized as the set of orthogonal projections of \(\Rset^n\) whose trace is \(1\), or equivalently as polynomials
\begin{multline*}
  \{P : \Rset^n \to \Rset \st \text{\(P\) is a homogeneous polynomial of degree \(2\), } \Delta P = 1 \text{ and } \abs{\Deriv  P}^2 = P \}.
\end{multline*}
This latter space is a subset of an affine space of dimension \(\frac{(n - 1)(n + 2)}{2}\).
The \(\mathcal{Q}\)-tensor representation \cite{Ball_Zarnescu_2011} corresponds to the corresponding trace-less tensors.

More usually, the projective space is obtained from the sphere by identifying antipodal points, that is \(\Rset \mathbb{P}^n = \Sset^n / \{+\id,-\id\}\). Hence, when \(n \ge 2\), \(\pi_1 (\Rset \mathbb{P}^n) = \Zset_2 = \Zset/2\Zset\).
Closed geodesics in \(\Rset \mathbb{P}^n\) are the images of geodesics between antipodal points in \(\mathbb{S}^n\) by the canonical projection \(\pi: \mathbb{S}^n \rightarrow \Rset \mathbb{P}^n\).
Minimising closed geodesic that are topologically non-trivial are all homotopic, synharmonic and systolic, since they all have the same length.
The minimal free decompositions of the different homotopy classes is summarized in \cref{talbe_RPn}.

\begin{table}
\caption{Decomposition of closed geodesics of the real projective space \(\Rset \mathbb{P}^n = \Sset^n/\{\id, -\id\}\)}
\label{talbe_RPn}

\renewcommand{\arraystretch}{1.2}
\begin{tabular}{ccc c c c}
\toprule
\(\gamma\) & Description & Conjugates & \(\equivnorm{\gamma}\) & Decompositions & \(\Esing(\gamma)\)\\
\midrule
\(\gamma_{\mathrm{c}}\) & constant & 1 & 0 &  & 0\\
\(\gamma_\mathrm{a}\) & geodesic between antipodal points& 1 & \(\pi\) & \(\gamma_\mathrm{a}\) & \(\frac{\pi}{4}\)\\
\bottomrule
\end{tabular}

\end{table}

\subsubsection{Superfluid Helium 3 in the dipole-free A phase}
\label{sect_gooWae7aNu2roatoozahb0al}

In superfluid \ce{^3He} in the dipole-free A phase, one considers the manifold \(\manifold{N} = (SU (2)\times SU(2))/H\), obtained by quotienting the Lie group \(SU (2)\times SU(2)\), endowed with the product \((h_1,h_2)(g_1,g_2)=(h_1g_1,h_2g_2)\), by its closed subgroup
\[H\defeq  \bigcup_{k=0}^3 (\mathbf{k}, \mathbf{i})^k H_0, \quad \text{ with } H_0=\{(\cos \theta  \mathbf{1}+ \sin \theta\,\mathbf{i},\mathbf{1})\st \theta \in [0,2\pi] \}, \]
under the identification of  \(SU (2)\) with the unit sphere in the field of quaternions \(\Hset \).
The connected component of the identity in \(H\) is \(H_0\).
By \cref{proposition_fund_group_homog_space}, \(\pi_{1} (\manifold{N}) \simeq H/H_0\simeq \mathbb{Z}_4 \), as \((\mathbf{k},\mathbf{i})\) is an element of order \(4\). Since \(\pi_1 (\manifold{N})\) is abelian, we have four homotopy classes of maps from \(\Sset^1\) to \(\manifold{N}\).
By \cref{prop_homot_geodesics_lifting}, we can reduce our study of minimal lengths in homotopy classes to geodesics in \(SU (2) \times SU (2)\) from \((\mathbf{1},\mathbf{1})\) to points of \(H\) that minimise the distance from \( (\mathbf{1},\mathbf{1})\) to a connected component of \(H\).
For the first homotopy class, corresponding to the element \(1 \in \Zset_4 \simeq \pi_1(\manifold{N})\), we consider a geodesic to the connected component \((\mathbf{k},\mathbf{i})H_0\).
For points in \((\mathbf{k},\mathbf{i})H_0\), the second component is fixed to \( \mathbf{i}\), which is at distance \(\pi\) from \(\mathbf{1}\). The first components can be written \(\cos \theta\,\mathbf{k}+\sin\theta\,\mathbf{j}\), which are all at equal distance \(\pi\) from \(\mathbf{1}\). This means that geodesics from \((\mathbf{1},\mathbf{1})\) to any point of \((\mathbf{k},\mathbf{i})H_0\) all minimise the distance, and their lengths are equal to
\(\sqrt{\pi^2 + \pi^2} = \sqrt{2}\,\pi\).
The same applies to \(-1 \in \Zset_4 \simeq \pi_1(\manifold{N})\), since it is obtained by reversing the orientation of curves.
For the element \(2 \in \Zset_4\simeq\pi_1(\manifold{N})\), we have \( (\mathbf{k}^2, \mathbf{i}^2)H_0=(-\mathbf{1},-\mathbf{1})H_0=\{( -\cos \theta\,\mathbf{1} -\sin \theta\,\mathbf{i},-\mathbf{1}); \theta \in [0,2\pi)\}\). The geodesic connecting \( (\mathbf{1},\mathbf{1})\) and \( (\mathbf{1},-\mathbf{1})\), corresponding to \(\theta = \pi\), has length \(2\pi\).
The length of the systole corresponds to \(\sqrt{2} \pi\). All the corresponding geodesics can be obtained from each other by conjugation with elements of \(H_0\), and are thus all synharmonic by \cref{proposition_homotopy_synharmonic}.
The properties of the minimal free decomposition are summarized in \cref{talbe_3He_free}.
Interestingly, every homotopy class is atomic and there is an atomic class that has two alternate minimal decompositions.

\begin{table}
\caption{Decomposition of closed geodesics of \((SU (2) \times SU (2)) / H\)}
\label{talbe_3He_free}

\renewcommand{\arraystretch}{1.2}
\begin{tabular}{ccc c c c}
\toprule
\(\gamma\) & Description & Conjugates & \(\equivnorm{\gamma}\) & Decompositions & \(\Esing(\gamma)\)\\
\midrule
\(\gamma_{0}\) & constant & 1 & 0 &  & 0\\
\(\gamma_{\pm 1}\) & \(180\degree\) rotation & 1 & \(\sqrt{2}\pi\) & \(\gamma_{\pm 1}\) & \(\frac{ \pi}{2}\)\\[1ex]
\(\gamma_{2}\) & \(360\degree\) rotation & 1 & \(2\pi\) & 
 \parbox[c]{2cm}{\begin{center}\(\gamma_2\)\\
 \(\gamma_{+1} \;\gamma_{+1}\)\\
 \(\gamma_{-1} \;\gamma_{-1}\)\end{center}}
 & \(\pi\)\\
\bottomrule
\end{tabular}
\end{table}

\subsubsection{Orthorombic space}
We consider the case where \(\manifold{N}= SO(3)/{D_2}\) where \(D_2 \subset SO (3)\) is the
group with four elements corresponding to the identity and \(3\) rotations of \(180 \degree\) with respect to three mutually perpendicular axes.
This manifold arises in biaxial nematics models.
Under the double covering of \(SO (3)\) by \(SU (2) \simeq \Sset^3 \subset \Hset\), we have
\(\manifold{N}= SU(2)/Q\), where \(Q = \{\pm 1, \pm \mathbf{i}, \pm \mathbf{j}, \pm \mathbf{k}\}\) is the \emph{quaternion group}.
By \cref{proposition_fund_group_homog_space}, we have \(\pi_1(SO(3)/D_2)=\pi_1(SU(2)/Q)=Q\), which is non-abelian.
By the anticommutativity of the fundamental quaternion units \(\mathbf{i}\), \(\mathbf{j}\) and \(\mathbf{k}\), the quaternion group \(Q\) contains five conjugacy classes
\(\{1\}\), \(\{-1\}\), \(\{\pm \mathbf{i}\}\), \(\{\pm \mathbf{j}\}\) and  \(\{\pm \mathbf{k}\}\),
corresponding geometrically to the trivial geodesic, the \(360\degree\) rotation corresponding to the generator of \(\pi_1 (SO(3))\) and the \(180\degree\) rotations with respect to the coordinate axes. Minimising closed geodesics in \(\manifold{N}=SU(2)/Q\) lift to arcs of great circles in \(\Sset^3\simeq SU(2)\).
We deduce therefrom, or by \cref{corollary_discreteSUdQuotient}, that homotopic geodesics are  synharmonic.
The geodesics corresponding to the \(180\degree\) rotations have length \( \pi\) and are systolic;
since \((2\pi)^2 > 2 \pi^2\), the homotopy class of the \(360\degree\) rotation is not atomic and has \emph{three minimal free decompositions} in twice a \(180\degree\) rotation.
Interestingly, in this case we have a homotopy class that has multiple minimal free decompositions.
\begin{table}
\caption{Decomposition of closed geodesics of \(SO(3)/D_2 \simeq SU (2)/Q\).}

\renewcommand{\arraystretch}{1.2} \
\begin{tabular}{ccc c c c}
\toprule
\(\gamma\) & Description & Conjugates in \(Q\) & \(\equivnorm{\gamma}\) & Decompositions & \(\Esing(\gamma)\)\\
\midrule
\(\gamma_{\mathrm{c}}\) & constant & 1 & 0 &  & 0\\
\(\gamma_{\mathrm{x}}\) & \(180\degree\) rotation around the \(x\)-axis& 2 & \(\pi\) & \(\gamma_{\mathrm{x}}\) & \(\frac{\pi}{4}\)\\
\(\gamma_{\mathrm{y}}\) & \(180\degree\) rotation around the \(y\)-axis& 2 & \(\pi\) & \(\gamma_{\mathrm{y}}\) & \(\frac{\pi}{4}\)\\
\(\gamma_\mathrm{z}\) & \(180\degree\) rotation around the \(z\)-axis& 2 & \(\pi\) & \(\gamma_\mathrm{z}\) & \(\frac{\pi}{4}\)\\[.5em]
\(\gamma_{\mathrm{w}}\) & \(360\degree\) rotation & 1 & \(2 \pi\) &
\parbox[c]{2cm}{\begin{center}\(\gamma_\mathrm{x} \; \gamma_\mathrm{x}\)\\
\(\gamma_\mathrm{y} \;\gamma_\mathrm{y}\)\\
\(\gamma_\mathrm{z} \;\gamma_\mathrm{z}\)\end{center}}
& \(\frac{\pi}{2}\)\\
\bottomrule
\end{tabular}

\end{table}
\subsubsection{Tetrahedral space}

The configurations of a regular tetrahedron in \(\Rset^3\) are parametrized by the manifold \(SO (3)/T\), where \(T\) is the tetrahedral group of direct isometries that preserve a tetrahedron.
Under the double covering of \(SO (3)\) by \(SU (2)\), \(T\) is the image of the \emph{binary tetrahedral group} \(2T\), a group of order \(24\) which is generated by \(\frac{1}{2} (1 +  \mathbf{i} + \mathbf{j} + \mathbf{k})\) and
\(\frac{1}{2} (1 +  \mathbf{i} + \mathbf{j} - \mathbf{k})\).
By \cref{proposition_fund_group_homog_space}, we have \(\pi_1 (\manifold{N}) \simeq 2T\).
The conjugacy classes corresponding to the possible rotations of a face (or equivalently a vertex) and of an edge are described on \cref{table_tetrahedral}.
The systole corresponds to rotations of faces by \(\pm 120\degree\).
These two homotopy classes are atomic.
The \(240\degree\) rotations can be decomposed into two \( 120\degree\) rotations, and are not atomic because \((\frac{4\pi}{3})^2 > 2 (\frac{2\pi}{3})^2\).
The \(180\degree\) rotations of an edge decompose into \(120\degree\) and \(-120\degree\) rotation,
this decomposition is minimal since \((\pi)^2 > 2 (\frac{2\pi}{3})^2\).
Similarly, since \((2\pi)^2 > 3 (\frac{2\pi}{3})^2\), the \(360\degree\) decomposes in either three \(120\degree\) rotations or three \(-120\degree\) rotations.
By \cref{corollary_discreteSUdQuotient}, all homotopic geodesics are synharmonic.
Interestingly, this example features two atomic systolic homotopy classes.

\begin{table}
\caption{Decomposition of closed geodesics of \(SO(3)/ T \simeq SU (2)/2 T\).}
\label{table_tetrahedral}
\renewcommand{\arraystretch}{1.2}
\begin{tabular}{ccc c c c}
\toprule
\(\gamma\) & Description & Conjugates in \(2T\) & \(\equivnorm{\gamma}\) & Decompositions & \(\Esing(\gamma)\)\\
\midrule
\(\gamma_{\mathrm{c}}\) & constant & 1 & 0 &  & 0\\
\(\gamma_{+}\) & \(120\degree\) rotation of a face& 4 & \(\frac{2\pi}{3}\) & \(\gamma_{+}\) & \(\frac{\pi}{9}\)\\
\(\gamma_{-}\) & \(-120\degree\) rotation of a face & 4 & \(\frac{2\pi}{3}\) & \(\gamma_{-}\) & \(\frac{\pi}{9}\)\\
\(\gamma_{+}^2\) & \(240\degree\) rotation of a face& 4 & \(\frac{4\pi}{3}\) & \(\gamma_{+} \; \gamma_{+}\) & \(\frac{2\pi}{9}\)\\
\(\gamma_{-}^2\) & \(-240\degree\) rotation of a face& 4 & \(\frac{4\pi}{3}\) & \(\gamma_{-} \; \gamma_{-}\) & \(\frac{2\pi}{9}\)\\
\(\gamma_{e}\) & \(180\degree\) rotation of an edge & 6 & \(\pi\) & \(\gamma_{+} \; \gamma_{-}\) & \(\frac{2\pi}{9}\)\\[1ex]
\(\gamma_{\mathrm{w}}\) & \(360\degree\) rotation & 1 & \(2 \pi\) &
\parbox[c]{2cm}{\begin{center}\(\gamma_{+} \; \gamma_{+} \; \gamma_{+}\)\\
\(\gamma_{-} \; \gamma_{-} \; \gamma_{-}\)
\end{center}}
& \(\frac{\pi}{3}\)\\
\bottomrule
\end{tabular}

\end{table}
\subsubsection{Octahedral space}

The octahedral space describes the configurations of a regular octahedron centered at the origin in the Euclidean space \(\Rset^3\) up to the symmetries of the octahedron, or equivalently of a cube up to its symmetries.
It is parametrised by \(\manifold{N} = SO(3)/O\), where \(O\) is the octahedral group of direct isometries that preserve an octahedron or, equivalently, a cube.
The action of \(O\) on the four pairs of antipodal faces of the octahedron, identifies \(O\) with the symmetric group \(S_4\).
The space \(\manifold{N}\) can be obtained as a subset of an affine \(9\)--dimensional subspace of fourth order tensors satisfying a quadratic condition \cite{Chemin_Henrotte_Remacle_VanSchaftingen}:
\begin{multline*}
\{ P : \Rset^3 \to \Rset \st
P \text{ is a homogeneous polynomial of degree \(4\)},\\
  \text{for every \(x \in \Rset^3\) }
\Delta P (x)= \tfrac{1}{12} \abs{x}^2
\text{ and } \abs{\Deriv ^2 P (x)}^2 = P(x)
\}.
\end{multline*}
Under the universal covering of \(SO (3)\) by \(SU (2)\), the octahedral group \(O\) lifts to the \emph{binary octahedral} group \(2O\), which is generated by the quaternions \(\frac{1}{2} (1 +  \mathbf{i} + \mathbf{j} + \mathbf{k})\)
and \(\frac{1}{\sqrt{2}} (1 +  \mathbf{i})\).
There are 8 conjugacy classes described in \cref{table_octahedron}.
There is one systolic homotopy class corresponding to the shortest rotation of \(90\degree\) of a vertex of the octahedron.
The second shortest is the \(120\degree\) rotation around a face.
It can be decomposed into two \(90\degree\) rotations of vertices, but this decomposition is not minimal since \(2(\frac{\pi}{2})^2 > (\frac{2\pi}{3})^2\); hence the \(120\degree\) rotation is also atomic. The \(180\degree\) rotation around an edge is not atomic since it can be decomposed into \(90\degree\) rotation of a vertex and \(120\degree\) rotation of a face, and \((\frac{\pi}{2})^2+(\frac{2\pi}{3})^2<\pi^2\).
The other homotopy classes can be treated by a combinatorial analysis of the different possible decompositions whose results are summarized in \cref{table_octahedron}. 
By \cref{corollary_discreteSUdQuotient}, all homotopic geodesics are synharmonic. An interesting feature of the octahedral space is that there are two atomic homotopy classes.
\begin{table}
\caption{Decomposition of closed geodesics in the octahedral space \(SO(3)/O \simeq SU (2)/2O\).}
\label{table_octahedron}

\renewcommand{\arraystretch}{1.2}
\begin{tabular}{ccc c c c}
\toprule
\(\gamma\) & Description & Conjugates in \(2 O\) & \(\equivnorm{\gamma}\) & Decomposition & \(\Esing(\gamma)\)\\
\midrule
\(\gamma_{\mathrm{c}}\) & constant & 1 & 0 &  & 0\\
\(\gamma_\mathrm{v}\) & \(90\degree\) rotation of a vertex & 6 & \(\frac{\pi}{2}\) & \(\gamma_\mathrm{v}\) & \(\frac{\pi}{16}\)\\
\(\gamma_\mathrm{f}\) & \(120\degree\) rotation of a face & 8 & \(\frac{2 \pi}{3}\) & \(\gamma_\mathrm{f}\) & \(\frac{\pi}{9}\)\\
\(\gamma_\mathrm{v}{}^2\) & \(180\degree\) rotation of a vertex & 6 & \(\pi\) & \(\gamma_\mathrm{v} \, \gamma_\mathrm{v}\) & \(\frac{\pi}{8}\)\\
\(\gamma_\mathrm{e}\) & \(180\degree\) rotation of an edge & 12 & \(\pi\) & \(\gamma_\mathrm{v} \, \gamma_\mathrm{f}\) & \(\frac{25 \pi}{144}\)\\
\(\gamma_\mathrm{v}{}^3\) & \(270\degree\) rotation of a vertex & 6 & \(\frac{3\pi}{2}\) & \(\gamma_\mathrm{v} \, \gamma_\mathrm{v} \, \gamma_\mathrm{v}\) & \(\frac{3\pi}{16}\)\\
\(\gamma_\mathrm{f}{}^2\) & \(240\degree\) rotation of a face & 8 & \(\frac{4\pi}{3}\) & \(\gamma_\mathrm{f} \, \gamma_\mathrm{f}\) & \(\frac{2\pi}{9}\)\\
\(\gamma_{\mathrm{w}}\) & \(360\degree\) rotation & 1 & \(2 \pi\) & \(\gamma_\mathrm{v} \,\gamma_\mathrm{v} \, \gamma_\mathrm{v} \, \gamma_\mathrm{v}\) & \(\frac{\pi}{4}\)\\
\bottomrule
\end{tabular}

\end{table}
\subsubsection{Icosahedral space}
\begin{table}
\caption{Decomposition of closed in the icosahedral space  \(SO(3)/I\simeq SU (2)/2I\).}
\label{table_dodecahedral_space}

\renewcommand{\arraystretch}{1.2}
\begin{tabular}{ccc c c c}
\toprule
\(\gamma\) & Description & Conjugates in \(2I\) & \(\equivnorm{\gamma}\) & Decomposition & \(\Esing(\gamma)\)\\
\midrule
\(\gamma_{\mathrm{c}}\) & constant & 1 & 0 &  & 0\\
\(\gamma_\mathrm{v}\) & \(72\degree\) rotation of a vertex & 12 & \(\frac{2\pi}{5}\) & \(\gamma_\mathrm{v}\) & \(\frac{\pi}{25}\)\\
\(\gamma_\mathrm{f}\) & \(120\degree\) rotation of a face & 20 & \(\frac{2 \pi}{3}\) & \(\gamma_\mathrm{v} \, \gamma_\mathrm{v}\) & \(\frac{2\pi}{25}\)\\
\(\gamma_\mathrm{v}{}^2\) & \(144\degree\) rotation of a vertex & 12 & \(\frac{4\pi}{5}\) & \(\gamma_\mathrm{v} \, \gamma_\mathrm{v}\) & \(\frac{2\pi}{25}\)\\
\(\gamma_\mathrm{e}\) & \(180\degree\) rotation of an edge & 30 & \(\pi\) & \(\gamma_\mathrm{v} \, \gamma_\mathrm{v} \, \gamma_\mathrm{v}\) & \(\frac{3\pi}{25}\)\\
\(\gamma_\mathrm{v}{}^3\) & \(216\degree\) rotation of a vertex & 12 & \(\frac{6\pi}{5}\) & \(\gamma_\mathrm{v} \, \gamma_\mathrm{v} \, \gamma_\mathrm{v}\) & \(\frac{3\pi}{25}\)\\
\(\gamma_\mathrm{f}{}^2\) & \(240\degree\) rotation of a face & 20 & \(\frac{4\pi}{3}\) & \(\gamma_\mathrm{v} \, \gamma_\mathrm{v} \, \gamma_\mathrm{v} \, \gamma_\mathrm{v}\) & \(\frac{4\pi}{25}\)\\
\(\gamma_\mathrm{v}{}^4\) & \(288\degree\) rotation of a vertex & 12 & \(\frac{8\pi}{5}\) & \(\gamma_\mathrm{v} \, \gamma_\mathrm{v} \, \gamma_\mathrm{v} \, \gamma_\mathrm{v}\) & \(\frac{4\pi}{25}\)\\
\(\gamma_{\mathrm{w}}\) & \(360\degree\) rotation & 1 & \(2 \pi\) & \(\gamma_\mathrm{v} \,\gamma_\mathrm{v} \, \gamma_\mathrm{v} \, \gamma_\mathrm{v} \, \gamma_\mathrm{v}\) & \(\frac{\pi}{5}\)\\
\bottomrule
\end{tabular}

\end{table}

This space \(\manifold{N} = SO (3)/I\), also known as the \emph{Poincar\'e homology sphere}, is obtained by quotienting the rotation group \(SO (3)\) by the group \(I\) of direct isometries preserving the icasahedron or, equivalently, the dodecahedron.
The group \(I\) is isomorphic to the alternating group \(A_5\) of even permutations of \(5\) elements.
Under the double covering by \(SU(2)\) of \(SO(3)\),
we have \(SO(3)/I \simeq SU (2) / 2I\), where \(2I\) is the binary icosahedral group, generated by the quaternions \(\frac{1}{2} (1 +  \mathbf{i} + \mathbf{j} + \mathbf{k})\) and \(\tfrac{1}{2} (\frac{\sqrt{5} + 1}{2} + \frac{\sqrt{5} - 1}{2} \mathbf{i} + \mathbf{j})\).
The geodesic corresponding to \(72\degree\) rotation around the vertex of the icosahedron or, equivalently, of a face of the dodecahedron, is systolic and is the only atomic conjugacy class.
The conjugacy classes and their minimal decompositions are described in \cref{table_dodecahedral_space}.
By \cref{corollary_discreteSUdQuotient}, all homotopic geodesics are synharmonic.

\section{Computing some renormalised energies}
\label{section_simple_boundary_conditions}
The next result shows that the renormalised energy of simple boundary conditions  can be computed in terms of the renormalised energy of Bethuel--Brezis--Hélein for the circle \(\mathbb{S}^1\).

\begin{theorem}%
\label{proposition_explicit_renormalised_energy}
Let \(\Omega\subset \Rset^2\) be a simply connected bounded Lipschitz domain. If \(\sigma : \Sset^1 \to \manifold{N}\) is a minimising geodesic, 
\(g \in (BV \cap W^{1/2, 2}) (\partial \Omega, \Sset^1)\) is length minimising, and \(\gamma_1 \in \mathcal{C}^1 (\Sset^1, \Sset^1)\) is a minimising geodesic, then for every \(a \in \Omega\),
\[
    \mathcal{E}^{\mathrm{geom}}_{\sigma \compose g, \sigma \compose \gamma_1} (a)
  =
    \biggl(\frac{\equivnorm{\sigma}}{2 \pi}\biggr)^2
    \mathcal{E}^{\mathrm{geom}}_{g, \gamma_1} (a).
\]
\end{theorem}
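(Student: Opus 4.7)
Set $c\defeq\equivnorm{\sigma}/(2\pi)$, which by \eqref{eq_ji7Aire7IeVaesi5eiX6dahJ} is the constant value of $\abs{\sigma'}$ along the minimising closed geodesic $\sigma$. The plan is to prove the two scaling inequalities separately, the non-trivial direction being a lifting argument that exploits the length-minimising hypothesis on $g$.

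The upper bound $\le$ would follow by direct post-composition: given any admissible map $u\in W^{1,2}(\Omega\setminus\Bar B_\rho(a),\Sset^1)$ in \eqref{eq_def_renorm_geom_rho} for $\mathcal{E}^{\mathrm{geom},\rho}_{g,\gamma_1}(a)$, the map $v\defeq\sigma\compose u$ is admissible for the $\manifold{N}$-valued problem with boundary datum $\sigma\compose g$ and prescribed geometric singularity $\sigma\compose\gamma_1$ (itself a minimising geodesic in $\manifold{N}$ of length $\equivnorm{\sigma\compose\gamma_1}=c\,\equivnorm{\gamma_1}$), and the pointwise identity $\abs{\Deriv v}^2=c^2\abs{\Deriv u}^2$ holds. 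Passing to the infimum, subtracting $c^2\,\equivnorm{\gamma_1}^2\log(1/\rho)/(4\pi)$, and letting $\rho\to 0$ via \eqref{eq_def_renorm_geom} yields $\mathcal{E}^{\mathrm{geom}}_{\sigma\compose g,\sigma\compose\gamma_1}(a)\le c^2\mathcal{E}^{\mathrm{geom}}_{g,\gamma_1}(a)$.

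For the reverse inequality the strategy is to realise some optimal $\manifold{N}$-valued competitor as $\sigma\compose u_*$ for an $\Sset^1$-valued lift $u_*$. Applying \cref{proposition_geometric_renormalised_singular} one obtains a renormalisable singular harmonic map $v_*\in W^{1,2}_{\mathrm{ren}}(\Omega,\manifold{N})$ with $\operatorname{sing}(v_*)=\{(a,\Tilde\gamma)\}$ for some minimising geodesic $\Tilde\gamma$ homotopic to $\sigma\compose\gamma_1$ and
\[
\mathcal{E}^{\mathrm{geom}}_{\sigma\compose g,\sigma\compose\gamma_1}(a)=\mathcal{E}^{\mathrm{ren}}(v_*)+\synhar{\sigma\compose\gamma_1}{\Tilde\gamma}.
\]
The central claim is then that one can arrange for $v_*$ to take values in $\Gamma\defeq\sigma(\Sset^1)$. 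Granting this, since $\sigma:\Sset^1\to\Gamma$ is (up to the scaling factor $c$) a Riemannian covering onto a one-dimensional totally geodesic submanifold, $v_*$ lifts to $u_*\in W^{1,2}_{\mathrm{ren}}(\Omega,\Sset^1)$ with $v_*=\sigma\compose u_*$; the identity $\abs{\Deriv v_*}^2=c^2\abs{\Deriv u_*}^2$ together with \cref{admissibleMaps}\eqref{it_bohp7shaephei0eeT} gives $\mathcal{E}^{\mathrm{ren}}(v_*)=c^2\mathcal{E}^{\mathrm{ren}}(u_*)\ge c^2\mathcal{E}^{\mathrm{geom}}_{g,\gamma_1}(a)$, and the synharmony term vanishes because homotopic minimising geodesics in $\Sset^1$ are mutually synharmonic and $\sigma$ transports this synharmony into $\Gamma$.

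The main obstacle is the confinement step $v_*(\Omega)\subset\Gamma$, which is not automatic: an optimal $\manifold{N}$-valued extension could a priori explore directions transverse to $\Gamma$. I would tackle it by combining two ingredients. First, on a tubular neighbourhood $U$ of $\Gamma$ the nearest-point retraction $\Pi_\Gamma:U\to\Gamma$ is smooth and, since $\Gamma$ is the image of a geodesic and thus totally geodesic in $\manifold{N}$, the composition $\Pi_\Gamma\compose v_*$ preserves the boundary and trace conditions (which already lie in $\Gamma$) and does not increase the Dirichlet energy on $\{x\st v_*(x)\in U\}$ by the standard second fundamental form calculation. Second, the length-minimising assumption on $g$ (which forces $g$ to parametrise $\Sset^1$ monotonically) together with the fact that $\sigma\compose\gamma_1$ is itself a minimising geodesic should prevent excursions of $v_*$ away from $U$: any such excursion would add a bubble of energy incompatible with the scaled lower bound. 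Making this last point rigorous is the heart of the matter, and I expect it to go through a flux/coercivity argument using the stress-energy identity of \cref{lemma_Stress_energy_tensor} combined with the sharp lower bound \cref{prop_lower_bound_compact} applied on geodesic tubes of $\Gamma$.
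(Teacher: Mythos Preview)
Your upper bound is correct and is exactly what the paper does.

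Your lower bound has a genuine gap at the confinement step, and the paper avoids it by a completely different route. The difficulty is real: \cref{proposition_geometric_renormalised_singular} only yields a minimising geodesic $\Tilde\gamma$ in $\manifold{N}$ homotopic to $\sigma\compose\gamma_1$, and \cref{proposition_counter_synharm} shows such a $\Tilde\gamma$ need not lie in $\Gamma=\sigma(\Sset^1)$ nor be synharmonic to $\sigma\compose\gamma_1$. More importantly, the nearest-point retraction $\Pi_\Gamma$ is only defined on a tubular neighbourhood, and your proposal to rule out large excursions via ``stress-energy plus \cref{prop_lower_bound_compact} on tubes of $\Gamma$'' is not an argument: \cref{prop_lower_bound_compact} controls only the leading logarithmic term, which is already saturated by competitors lying in $\Gamma$, so it cannot detect the $O(1)$ cost of an excursion. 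Nothing in the hypotheses forbids $v_*$ from leaving the tubular neighbourhood on a set of positive measure, and once it does, neither the retraction nor the lifting makes sense.

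The paper's approach sidesteps confinement entirely with a calibration argument. One first solves the $\Sset^1$-valued problem on $\Omega\setminus\Bar B_\rho(a)$ with the inner trace free up to a rotation, obtaining a harmonic $v$, and then builds a scalar harmonic potential $H$ via $\nabla^\perp H=\frac{\equivnorm{\sigma}}{2\pi}\,v^{-1}\nabla v$, so that $\partial_n H=\abs{\partial_t(\sigma\compose g)}$ on $\partial\Omega$ and $\partial_n H=\abs{\partial_t(\sigma\compose\gamma_1)}$ on $\partial B_\rho(a)$. The length-minimising hypothesis on $g$ is used precisely here, to match $\partial_n H$ with the tangential speed of $w$ on the boundary. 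A coarea-formula lemma (\cref{lemma_lower_bound_H}) then shows, for \emph{any} smooth $\manifold{N}$-valued $w$ with these boundary data, that $\int\abs{dH\wedge dw}\ge\int\abs{\nabla H}^2$: each level set $\{H=s\}$, together with a boundary arc, bounds a region, so $w$ restricted to it is homotopic to $\sigma\compose g$ and therefore has length at least $\equivnorm{\sigma\compose g}$. Cauchy--Schwarz gives
\[
\int_{\Omega\setminus\Bar B_\rho(a)}\frac{\abs{Dw}^2}{2}\ \ge\ \int_{\Omega\setminus\Bar B_\rho(a)}\frac{\abs{\nabla H}^2}{2}\ =\ \Bigl(\tfrac{\equivnorm{\sigma}}{2\pi}\Bigr)^2\int_{\Omega\setminus\Bar B_\rho(a)}\frac{\abs{Dv}^2}{2},
\]
and the residual rotation $\tau$ on the inner circle is removed at the end via \cref{proposition_homotopy_synharmonic} and \cref{proposition_renorm_geom_dep_gamma}. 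No knowledge of where $w$ lands in $\manifold{N}$ is ever used.
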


When \(\Omega=B_1(0)\) and \(\gamma_1(x)=g(x)=x\) for every \(x\in\Sset^1\), \(\mathcal{E}^{\mathrm{geom}}_{g, \gamma_1}\) has a unique minimiser at \(a=0\) \cite{Bethuel_Brezis_Helein_1994}*{\S VIII.4.} (see also \cite{Lefter_Radulescu_1996}). Thus, the same holds for the renormalised energy in \(\manifold{N}\): \(\mathcal{E}^{\mathrm{geom}}_{\sigma , \sigma} (a)\) has a unique minimiser at \(a=0\).

\begin{lemma}
\label{lemma_lower_bound_H}
Let \(\Omega\subset \Rset^2\) be a simply connected bounded open set with smooth boundary and let \(v \in \mathcal{C}^1 (\Bar{\Omega} \setminus B_\rho (a),\manifold{N})\) with \(a\in\Omega\) and \(0<\rho<\dist(a,\partial\Omega)\).
Assume that
\[
    \int_{\partial \Omega}
      \abs{\partial_t v}
  =
    \int_{\partial B_{\rho} (a)}
      \abs{\partial_t v}
  =
    \equivnorm{v \vert_{\partial \Omega}},
\]
where \(\abs{\partial_tv}\) denotes the norm of the tangential derivative. If \(H \in \mathcal{C}^1 (\Bar{\Omega}\setminus B_\rho(a), \Rset) \cap \mathcal{C}^2 (\Omega\setminus \bar{B}_\rho(a), \Rset)\) satisfies
\begin{equation}
\label{eq_Choozeu6ooZ8Mih9soh1iePe}
 \left\{
 \begin{aligned}
   \Delta H &= 0 &&\text{in \(\Omega \setminus B_{\rho} (a)\)},\\
   \partial_{n} H &= \abs{\partial_t v} &&\text{on \(\partial \Omega\)},\\
   \partial_{n} H &= \abs{\partial_t v} &&\text{on \(\partial B_{\rho} (a)\)},
 \end{aligned}
 \right.
 \end{equation}
where \(\partial_n\) denotes the exterior normal derivative to \(\Omega\) and to \(B_{\rho} (a)\), then,
\[
  \int_{\Omega\setminus B_\rho(a)} \abs{\Deriv  H \wedge dv} \ge \int_{\Omega\setminus B_\rho(a)} \abs{\nabla H}^2,
\]
where \(\Deriv H \wedge dv = \partial_1 H \partial_2 v - \partial_2 H \partial_1 v\).
\end{lemma}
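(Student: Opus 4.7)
Plan: The proof will rely on the coarea formula applied to the harmonic function \(H\). The starting observation is the pointwise identity \(\Deriv H \wedge dv = Dv(\nabla^\perp H)\), which gives \(|\Deriv H \wedge dv| = |\nabla H|\,|\partial_T v|\), where \(T = \nabla^\perp H/|\nabla H|\) is the unit tangent to the level curves of \(H\) (well defined a.e.\ since the critical set of a harmonic function has measure zero). Applying the coarea formula to \(H\) rewrites both sides of the desired inequality as integrals over level sets,
\[
\int_{\Omega \setminus \Bar{B}_\rho(a)} |\Deriv H \wedge dv| \dif x = \int_{\Rset} L(s) \dif s,
\qquad
\int_{\Omega \setminus \Bar{B}_\rho(a)} |\nabla H|^2 \dif x = \int_{\Rset} F(s) \dif s,
\]
with \(L(s) \defeq \int_{\{H=s\}} |\partial_T v| \dif\mathcal{H}^1\) the length of \(v\) restricted to the level curve \(\{H=s\}\) and \(F(s) \defeq \int_{\{H=s\}} |\nabla H| \dif\mathcal{H}^1\). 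It thus suffices to establish \(L(s) \geq F(s)\) for a.e.\ regular value \(s\).

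The core step is a topological analysis of the level curves. For a regular \(s\) (by Sard's theorem), \(\{H=s\} \cap (\Omega \setminus \Bar{B}_\rho(a))\) is a smooth \(1\)-manifold. Any closed component \(C_s\) of it lying in the interior must encircle \(B_\rho(a)\): otherwise it would bound a disk in \(\Omega \setminus \Bar{B}_\rho(a)\) on which the maximum principle forces \(H \equiv s\), contradicting regularity. Such a loop \(C_s\) separates \(\partial B_\rho(a)\) from \(\partial \Omega\), and together with \(\partial B_\rho(a)\) it bounds an annular region inside which \(v\) provides a free homotopy between \(v|_{C_s}\) and \(v|_{\partial B_\rho(a)}\). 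The minimality hypothesis then yields
\[
\mathrm{length}(v|_{C_s}) \geq \equivnorm{v|_{C_s}} = \equivnorm{v|_{\partial B_\rho(a)}} = \equivnorm{v|_{\partial\Omega}}.
\]
On the same annular region, the divergence theorem applied to the harmonic \(H\) together with the Neumann condition \(\partial_n H = |\partial_t v|\) on \(\partial B_\rho(a)\) gives
\[
\int_{C_s} |\nabla H| \dif\mathcal{H}^1 = \int_{\partial B_\rho(a)} |\partial_t v| \dif\mathcal{H}^1 = \equivnorm{v|_{\partial\Omega}}.
\]
For \(s\) in the \emph{middle range} \((\max_{\partial B_\rho(a)} H, \min_{\partial \Omega} H)\), where the level set consists solely of such an essential loop, this immediately yields \(L(s) \geq F(s)\).

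For \(s\) outside the middle range, the level set may also contain arcs with endpoints on \(\partial \Omega\) or \(\partial B_\rho(a)\). In this case I would apply the divergence theorem to the sublevel set \(U_s = \{H < s\} \cap (\Omega \setminus \Bar{B}_\rho(a))\), obtaining
\[
F(s) = \int_{\partial B_\rho(a) \cap \Bar{U}_s} |\partial_t v| \dif\mathcal{H}^1 - \int_{\partial \Omega \cap \Bar{U}_s} |\partial_t v| \dif\mathcal{H}^1,
\]
and then use that the loop \(v|_{\partial U_s}\) is null-homotopic in \(\manifold{N}\) (as it bounds the image \(v(U_s)\)) together with the minimising property of \(v\) on each boundary component to bound \(L(s)\) from below by \(F(s)\). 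Integrating the slicewise inequality over \(s\) then yields the lemma. The hard part will be this last case: the careful topological decomposition of \(\partial U_s\) and a rel-endpoint homotopy argument are needed to compare the length of the interior arcs of \(v|_{\{H=s\}}\) against the portions of \(v\) traced on the boundary, given that arcs of a minimising closed geodesic need not themselves be minimising rel endpoints.
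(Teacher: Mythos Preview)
Your coarea framework and the reduction to the slicewise inequality \(L(s)\ge F(s)\) are exactly right, and your divergence-theorem computation of \(F(s)\) is correct. The gap is in the topological step. First, the ``middle range'' \((\max_{\partial B_\rho(a)}H,\min_{\partial\Omega}H)\) need not exist: nothing forces \(\max_{\partial B_\rho(a)}H<\min_{\partial\Omega}H\), so the case you call hard is in fact the only case. Second, the claim that \(v|_{\partial U_s}\) is null-homotopic ``because it bounds \(v(U_s)\)'' is not valid: \(\partial U_s\) can have several components, \(U_s\) need not be simply connected, and a map restricted to a boundary is null-homotopic only when the domain it bounds is a disk. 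This is why you are led to worry about rel-endpoint arguments---but that worry is a symptom of the wrong set-up, not an obstacle to be overcome.

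The paper's key observation, which you are missing, removes the difficulty entirely. One shows that every connected component of the sublevel set \(A_s=\{H<s\}\) meets \(\partial B_\rho(a)\), and every component of \(\{H\ge s\}\) meets \(\partial\Omega\). This follows from the sign condition \(\partial_n H=|\partial_t v|\ge 0\): if a component \(C\) of \(A_s\) missed \(\partial B_\rho(a)\), the divergence theorem on \(C\) would give \(0=\int_{\partial C}\partial_n H\), but on \(\Gamma_s\cap\partial C\) one has \(\partial_n H=|\nabla H|>0\) and on \(\partial\Omega\cap\partial C\) one has \(\partial_n H=|\partial_t v|\ge 0\), a contradiction. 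Consequently \(A_s\cup\Bar B_\rho(a)\) and its complement in \(\Bar\Omega\) are both connected, so \(A_s\cup\Bar B_\rho(a)\) is simply connected and its boundary is a \emph{single} Jordan curve freely homotopic in \(\Bar\Omega\setminus(A_s\cup B_\rho(a))\) to \(\partial\Omega\). Hence \(v|_{\partial(A_s\cup\Bar B_\rho(a))}\) is freely homotopic in \(\manifold N\) to \(v|_{\partial\Omega}\), giving
\[
\int_{\partial(A_s\cup\Bar B_\rho(a))}|\partial_t v|\;\ge\;\equivnorm{v|_{\partial\Omega}}\;=\;\int_{\partial\Omega}\partial_n H\;=\;\int_{\partial(A_s\cup\Bar B_\rho(a))}\partial_n H,
\]
the last equality by harmonicity of \(H\). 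Now subtract the contributions of \(\partial(A_s\cup\Bar B_\rho(a))\cap(\partial\Omega\cup\partial B_\rho(a))\) from both sides: on these pieces \(|\partial_t v|=\partial_n H\) \emph{pointwise} by the Neumann condition, so the subtraction preserves the inequality and leaves exactly \(\int_{\Gamma_s}|\partial_t v|\ge\int_{\Gamma_s}|\nabla H|\). No rel-endpoint minimality is ever invoked; the equality \(\partial_n H=|\partial_t v|\) on the physical boundary is what makes the arcs disappear cleanly.
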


\begin{proof}[Proof of \cref{lemma_lower_bound_H}]
Without loss of generality we can assume that \( \equivnorm{v \vert_{\partial \Omega}}>0\). By Sard's lemma, for almost every \(s \in \Rset\), we have 
\begin{equation}
\label{sard}
\begin{split}
&\Deriv H \ne 0 \quad\text{on } \Gamma_s\defeq \{x\in\Omega\setminus\Bar{B}_\rho(a)\st H(x)=s\},\\
&\partial_t H\ne 0\quad \text{on }\{x\in\partial(\Omega\setminus\Bar{B}_\rho(a))\st H(x)=s\},
\end{split}
\end{equation}
where \(\partial_t\) denotes the tangential derivative. We have thus by the coarea formula:
\begin{equation}\label{coareaFirst}
  \int_{\Omega\setminus\Bar{B}_\rho(a)} |dH \wedge dv| =\int_\Rset \int_{\Gamma_s}\frac{|dH \wedge d v|}{|d H|}\dif s
  =
  \int_{\Rset} \biggl(\int_{\Gamma_s} \abs{\partial_t v} \biggr) \dif s,
\end{equation}
where \(\partial_t v\) denotes the tangential derivative to the locally smooth curve \(\Gamma_s\) for almost every \(s\).

Fix \(s\in\Rset\) satisfying \eqref{sard} and define the following subsets of \(\Omega\setminus\Bar{B}_\rho(a)\) by \(A_s\defeq H^{-1}((-\infty, s))\) and \(A^s\defeq H^{-1}([s, +\infty))\). By \eqref{sard}, the intersections of \(\partial A_s\) and \(\partial A^s\) with \(\Omega \setminus \Bar{B}_{\rho} (a)\) coincide with \(\Gamma_s\). Moreover, by \eqref{sard} and by the implicit function theorem, \(\Gamma_s\) is a locally finite union of smooth curves that cross \(\partial (\Omega\setminus \Bar{B}_\rho(a))\)transversally; hence, \(\Gamma_s\) is a finite union of smooth curves in \(\Omega\setminus \Bar{B}_\rho(a)\) which, by harmonicity, converge at their endpoints to points on \(\partial (\Omega\setminus \Bar{B}_\rho(a))\). In particular, we obtain that the open set \(A_s\cup\Bar{B}_\rho (a)\) has Lipschitz boundary.

Now, we claim that  the boundary of every nonempty connected component of \(A_s\) touches \(\partial B_\rho (a)\) and the boundary of every nonempty connected component of \(A^s\) touches \(\partial \Omega\). Indeed, let \(C\subset A_s\) be a non-empty connected component of \(A_s\), since \(A_s\) is open, then \(C\) is also open. We also observe from what precedes that \(C\) has Lipschitz boundary. If \(C\) does not meet \(\partial (\Omega \setminus \bar{B}_\rho(a))\) then we have that \(H\) is harmonic in \(C\) and \(H =s\) on \(\partial C\). We then deduce from the maximum principle that \(H\) is constant in \(C\) and, by the isolated zeros property, \(H\) is constant in all \(\Omega \setminus \bar{B}_\rho(a)\). This is a contradiction with our assumption that \(  \equivnorm{v \vert_{\partial \Omega}}>0\) and hence \( C\) meets \(\partial (\Omega \setminus \bar{B}_\rho(a))\). Now, by contradiction we assume that \(C\) does not meet \(\partial B_\rho(a)\). Then we can decompose \(\partial C = (\partial C)_1 \cup (\partial C)_2\) where the union is disjoint and \((\partial C)_1 \subset \Gamma_s\) and \((\partial C)_2 \subset \partial \Omega\). From the divergence theorem we find that
\begin{align*}
0&= \int_C \Delta H =\int_{\partial C} \partial_n H=  \int_{(\partial C)_1} \partial_n H +\int_{(\partial C)_2} \partial_n H \\
&= \int_{ (\partial C)_1} \nabla H \cdot \frac{\nabla H}{|\nabla H|}+\int_{(\partial C)_2} |\partial_t v|.
\end{align*}
We have used that \(\nabla H \) is normal to \( \Gamma_s\) and points in the outer direction of \(A_s\). We again arrive at the contradiction since \( \nabla H \neq 0\) on \(\Gamma_s\). Hence every nonempty connected component of \(A_s\) touches \(\partial B_\rho (a)\). In a similar way we can prove that every nonempty connected component of \( A^s\) touches \(\partial \Omega\).
This implies that \(A_s\cup\Bar{B}_\rho (a)\) and its complement \(A^s\cup(\Rset^2\setminus\Omega)\) are connected. Since the open set \(A_s\cup B_\rho (a)\) has Lipschitz boundary, we deduce that \(A_s\cup B_\rho (a)\) is a simply connected domain \cite{ahlforsComplexAnalysisIntroduction1978}*{\S 4.2}. Since \(\Omega\) is also simply connected, we get that the parametrizations of \(\partial\Omega\) and of \(\partial (A_s\cup\Bar{B}_\rho (a))\) are homotopic in \(\Bar{\Omega}\setminus(A_s\cup\Bar{B}_\rho (a))\). Hence, \(\tr_{\partial (\Bar{B}_\rho (a) \cup A_s)}v\) and \(\tr_{\partial \Omega}v\) are the image of free-homotopic Lipschitz-continuous curves, and thus we have, since \(H\) is harmonic,
\begin{equation}
\label{eq_nePhupheiL2eiquinoom2uon}
\begin{split}
    \int_{\partial (B_\rho (a)\cup A_s)}
        \abs{\partial_t v}
  &\ge
   \equivnorm{v \vert_{\partial (B_\rho (a) \cup A_s)}}\\
   &= 
      \equivnorm{v \vert_{\partial \Omega}}
=\int_{\partial \Omega} \abs{\partial_t v}       
      = \int_{\partial \Omega} \partial_n H 
      = \int_{\partial (B_\rho (a) \cup A_s)} \partial_n H.
    \end{split}
\end{equation}
We deduce from \eqref{eq_Choozeu6ooZ8Mih9soh1iePe} and \eqref{eq_nePhupheiL2eiquinoom2uon} that for almost every \(s \in \Rset\),
\begin{equation}
\label{eq_nie5iephex7Koolahw5aequo}
\begin{split}
\int_{\Gamma_s}\abs{\partial_t v}&=\int_{\partial (B_\rho (a) \cup A_s)}\abs{\partial_t v}-\int_{\bigl(\partial B_\rho(a)\cup \,\partial \Omega\bigr)\cap\,\partial\bigl(B_\rho (a) \cup A_s\bigr)}\abs{\partial_t v}\\
&\ge\int_{\partial (B_\rho (a) \cup A_s)}
\partial_n H-\int_{\bigl(\partial B_\rho(a)\cup\, \partial \Omega\bigr)\cap\,\partial\bigl(B_\rho (a) \cup A_s\bigr)}\partial_n H=\int_{\Gamma_s} \partial_n H=\int_{\Gamma_s}\abs{\nabla H},
\end{split}
\end{equation}
where \(\partial_n H\) is the normal derivative with respect to the normal \(\nabla H/\abs{\nabla H}\).
Therefore, by \eqref{coareaFirst}, in view of the coarea formula again,
\[
    \int_{\Omega \setminus B_\rho(a) }
      \abs{\Deriv  H \wedge d v}
  \ge
    \int_{\Rset}
      \biggl(\int_{H^{-1} (\{s\})} \abs{\nabla H} \biggr)\dif s
  =
    \int_{\Omega \setminus B_\rho(a)}
      \abs{\nabla H}^2.\qedhere
\]
\end{proof}

\begin{lemma}
\label{lemma_explicit_lower_bound_free_sphere}
Let \(\Omega\subset \Rset^2\) be a simply connected bounded Lipschitz domain, and let \(a\in\Omega\) with \(0<\rho<\dist(a,\partial\Omega)\). If \(g \in W^{1/2, 2} (\partial \Omega, \Sset^1)\) is length-minimising and \(\gamma_1 : \Sset^1 \to \Sset^1\) is a minimising geodesic, then for every minimising geodesic \(\sigma: \Sset^1 \rightarrow \mathcal{N}\) and \(w \in W^{1, 2} (\Omega\setminus \bar{B}_\rho(a), \manifold{N})\) such that \(\operatorname{tr}_{\partial\Omega} w = \sigma \compose g\) and \(\operatorname{tr}_{\partial B_\rho(a)} w = \sigma \compose \gamma_1\), we have
\begin{multline}\label{defMinv}
    \int_{\Omega \setminus \Bar{B}_\rho (a)}
    \frac{\abs{\Deriv  w}^2}{2}
  \ge
     \biggl(\frac{\equivnorm{\sigma}}{2 \pi}\biggr)^2
     \inf \biggl\{
        \int_{\Omega \setminus \Bar{B}_\rho (a)}
        \frac{\abs{\Deriv v}^2}{2}
        \st
          v \in W^{1, 2} (\Omega  \setminus \Bar{B}_\rho (a), \Sset^1), \\                   
         \operatorname{tr}_{\partial \Omega} v = g
          \text{ and }
        \operatorname{tr}_{\Sset^1}  v(a+\rho\,\cdot) = \tau \compose \gamma_1, \  \tau \in SO (2)
     \biggr\}
     .
\end{multline}
\end{lemma}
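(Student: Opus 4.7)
The plan is to apply \cref{lemma_lower_bound_H} with \(v=w\) and a carefully chosen harmonic potential \(H\), and then to invoke Cauchy--Schwarz in order to convert the resulting lower bound \(\int\abs{\nabla H}^{2}\le\int\abs{\Deriv H\wedge d w}\) into the sought-after lower bound on \(\int\abs{\Deriv w}^{2}\). Because \(\sigma\) is a minimising geodesic, \(\abs{\sigma'}\) is constantly equal to \(\equivnorm{\sigma}/(2\pi)\), so \(\abs{\partial_t w}=\tfrac{\equivnorm{\sigma}}{2\pi}\abs{\partial_t g}\) on \(\partial\Omega\) and \(\abs{\partial_t w}=\tfrac{\equivnorm{\sigma}\,\equivnorm{\gamma_1}}{(2\pi)^{2}\rho}\) on \(\partial B_\rho(a)\). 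Using that \(g\) is length-minimising, \(\gamma_1\) is a minimising geodesic, and that \(g\) and \(\gamma_1\) must share the same \(\Sset^1\)-degree for \(w\) to exist at all, one obtains the compatibility identity \(\int_{\partial\Omega}\abs{\partial_t w}=\int_{\partial B_\rho(a)}\abs{\partial_t w}=\tfrac{\equivnorm{\sigma}}{2\pi}\equivnorm{g}\).

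I would then let \(H\) be a harmonic solution of the Neumann problem \(\partial_n H=\abs{\partial_t w}\) on \(\partial\Omega\cup\partial B_\rho(a)\), which is compatible by what precedes. After a routine density reduction to the case \(w\in\mathcal{C}^{1}(\Bar{\Omega}\setminus B_\rho(a),\manifold{N})\), \cref{lemma_lower_bound_H} applied with \(v=w\) gives \(\int\abs{\nabla H}^{2}\le\int\abs{\Deriv H\wedge d w}\). Combining with the pointwise inequality \(\abs{\Deriv H\wedge d w}\le\abs{\nabla H}\,\abs{\Deriv w}\) and the integral Cauchy--Schwarz inequality yields
\[
 \int_{\Omega\setminus B_\rho(a)}\abs{\Deriv w}^{2}\ge\int_{\Omega\setminus B_\rho(a)}\abs{\nabla H}^{2}.
\]

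To identify the right-hand side with the infimum in the statement, I would exploit the linearity of the Neumann problem: letting \(H^{\ast}\) solve \(\partial_n H^{\ast}=\abs{\partial_t g}\) on \(\partial\Omega\) and \(\partial_n H^{\ast}=\abs{\partial_t(\gamma_1((\cdot-a)/\rho))}\) on \(\partial B_\rho(a)\), we have \(H=\tfrac{\equivnorm{\sigma}}{2\pi}\,H^{\ast}\), and hence \(\int\abs{\nabla H}^{2}=\bigl(\tfrac{\equivnorm{\sigma}}{2\pi}\bigr)^{2}\int\abs{\nabla H^{\ast}}^{2}\). Via the classical Bethuel--Brezis--H\'elein harmonic-conjugate construction on the annulus, one produces an \(\Sset^1\)-valued harmonic map \(v^{\ast}=e^{i\phi^{\ast}}\), where \(\nabla\phi^{\ast}\) is the rotated gradient of \(H^{\ast}\), whose trace on \(\partial\Omega\) agrees with \(g\) (up to a global phase that is absorbed by fixing a constant of integration) and whose trace on \(\partial B_\rho(a)\) is \(\tau\compose\gamma_1\) for some \(\tau\in SO(2)\). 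By construction \(\int\abs{\Deriv v^{\ast}}^{2}=\int\abs{\nabla\phi^{\ast}}^{2}=\int\abs{\nabla H^{\ast}}^{2}\), so that \(v^{\ast}\) is admissible in the infimum on the right-hand side, which completes the proof.

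The chief obstacle will be verifying the length-minimality hypothesis of \cref{lemma_lower_bound_H} for \(w\), namely \(\int_{\partial\Omega}\abs{\partial_t w}=\equivnorm{w\vert_{\partial\Omega}}=\equivnorm{\sigma\compose g}\); this amounts to the assertion that \(\sigma\compose g\) remains length-minimising in its free homotopy class in \(\manifold{N}\). Such an identity is automatic when the common degree \(d\) of \(g\) and \(\gamma_1\) satisfies \(\abs{d}\le 1\), but may fail on manifolds where some power \(\sigma^{d}\) of the minimising geodesic \(\sigma\) can be shortened (for instance \(\manifold{N}=\Rset P^{2}\) with \(\abs{d}\ge 2\)); one must therefore either restrict to configurations where iterations of \(\sigma\) remain minimising or supply a supplementary topological argument deriving the equality \(\equivnorm{\sigma\compose g}=\tfrac{\equivnorm{\sigma}}{2\pi}\equivnorm{g}\) from the hypothesis that \(\sigma\) is a minimising geodesic with minimal singular contribution. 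A secondary technical point, of lesser difficulty, is the smooth approximation needed to access the \(\mathcal{C}^{1}\) regularity demanded by the auxiliary lemma.
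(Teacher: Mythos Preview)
Your strategy is the paper's: apply \cref{lemma_lower_bound_H} to \(w\) with a harmonic potential \(H\) having Neumann data \(\abs{\partial_t w}\), then Cauchy--Schwarz. The paper merely reverses your order of construction. Rather than solving the Neumann problem for \(H\) and then manufacturing an admissible \(v^\ast\) from its harmonic conjugate, it first lets \(v\) be the \emph{minimiser} of the right-hand variational problem (with the rotation \(\tau\) free), derives \(\dive(v^{-1}\nabla v)=0\) together with the free-boundary condition \(\int_{\partial B_\rho(a)}v^{-1}\partial_n v=0\), and then \emph{defines} \(H\) by \(\nabla^\perp H=\tfrac{\equivnorm{\sigma}}{2\pi}\,v^{-1}\nabla v\). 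The Neumann data \(\partial_n H=\abs{\partial_t w}\) and the identity \(\int\abs{\nabla H}^2=\bigl(\tfrac{\equivnorm{\sigma}}{2\pi}\bigr)^2\int\abs{\Deriv v}^2\) then come for free, and the chain closes as \(\int\tfrac{\abs{\Deriv w}^2}{2}\ge\int\tfrac{\abs{\nabla H}^2}{2}=\bigl(\tfrac{\equivnorm{\sigma}}{2\pi}\bigr)^2\int\tfrac{\abs{\Deriv v}^2}{2}\) with \(v\) already the minimiser. This direction spares you the verification that your reconstructed \(v^\ast\) has trace exactly \(g\) on \(\partial\Omega\) (and not merely a rotation of \(g\)); otherwise the two routes are equivalent.

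Your worry about the hypothesis \(\int_{\partial\Omega}\abs{\partial_t w}=\equivnorm{w\vert_{\partial\Omega}}\) of \cref{lemma_lower_bound_H} is legitimate, and the paper's proof passes over it just as silently; it is not avoided by building \(H\) from \(v\) rather than from the Neumann data. It is handled by context: the ambient \cref{proposition_explicit_renormalised_energy} concerns \(\mathcal{E}^{\mathrm{geom}}_{\sigma\compose g,\,\sigma\compose\gamma_1}\), which by \eqref{eq_def_renorm_geom} is only defined when \(\sigma\compose\gamma_1\) is a minimising closed geodesic in \(\manifold{N}\), i.e.\ when the \(d\)-fold iterate of \(\sigma\) (with \(d=\deg g=\deg\gamma_1\)) remains length-minimising in its class. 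Under that standing assumption \(\sigma\compose g\) is length-minimising as well, and the obstacle disappears; your \(\Rset P^2\) example with \(\abs{d}\ge 2\) lies precisely outside this regime.
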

\begin{proof}
By approximation it suffices to prove the lemma when \(w:\Omega\setminus \Bar{B}_\rho(a)\to\manifold{N}\) is smooth. Similarly by an approximation and extension argument, considering a harmonic extension of \(g:\partial\Omega\to\Sset^1\) to a neighborhood of \(\partial\Omega\) in \(\Rset^2\), we can also assume that \(\Omega\) has smooth boundary, and that \(g:\partial\Omega\to\Sset^1\) is smooth.

Let \(v\) be a minimiser for the variational problem on the right-hand side. In particular, \(v\) is harmonic, hence smooth in \(\Omega\setminus \Bar{B}_\rho (a)\) up to the boundary. Assume that \(\varphi \in \mathcal{C}^1 (\Bar{\Omega}, \Rset)\), \(\varphi = 0\) on \(\partial \Omega\) and \(\varphi\) is constant on \( \partial B_\rho(a) \).
We have 
\begin{equation}
  \label{eq_ochaish0woh5xaimu5aeJooM}
\begin{split}
    0
  &=
    \frac{d}{d \lambda}
      \int_{\Omega \setminus \Bar{B}_\rho (a)}
      \frac{\abs{\Deriv  (e^{i \lambda \varphi} v)^2}}{2}
      \bigg \vert_{\lambda = 0}
  =
  \int_{\Omega \setminus \Bar{B}_\rho (a)}\sum_{k=1,2}
  \operatorname{Re} (\partial_k v\,\overline{\partial_k(i\varphi v)})\\
 &=\int_{\Omega \setminus \Bar{B}_\rho (a)}\sum_{k=1,2}
  \operatorname{Im}(\partial_k v\,\partial_k\varphi\,\overline{v})
   = - i \int_{\Omega \setminus \Bar{B}_{\rho} (a)}
  \nabla \varphi \cdot v^{-1} \nabla v,
  \end{split}
\end{equation}
where the product corresponds to the complex product in \(\Cset \supset \Sset^1\), and we have used that \(v\in\Sset^1\) so that \(v^{-1}\partial_kv\in i\Rset\).
It follows then that the map \(v\) is a harmonic map and satisfies the equation
\begin{equation}
\label{eq_iChaich9Eongohchootheivu}
\dive(v^{-1} \nabla v) = 0 \text{ in \(\Omega \setminus \Bar{B}_{\rho} (a)\)}.
\end{equation}
By integrating by parts in \eqref{eq_ochaish0woh5xaimu5aeJooM} with \(\varphi\) vanishing on \(\partial \Omega\) and non-zero constant on \(\partial \Omega\), we have 
\begin{equation}
\label{eq_harmonic_free_boundary}
    \int_{\partial \Bar{B}_\rho (a)}
    (v^{-1} \partial_n v) \varphi = 0.
\end{equation}

We define the potential \(H : \Omega \setminus \Bar{B}_{\rho} (a)\to \Rset\) by setting,
\(
\nabla^\perp H = \frac{\equivnorm{\sigma}}{2 \pi} v^{-1} \nabla v\).
In view of the Poincaré lemma and \eqref{eq_harmonic_free_boundary}, it follows that \(H\) is well-defined. Moreover, \(H\) is harmonic and it is smooth in \(\Omega \setminus \Bar{B}_{\rho} (a)\) up to the boundary. Now, we compute,
\[
\left\{
\begin{aligned} 
&\abs{\nabla H} = \frac{\equivnorm{\sigma}}{2 \pi}\abs{\nabla v} &&\text{in }\Omega \setminus B_{\rho} (a),\\
&\partial_n H = \frac{\equivnorm{\sigma}}{2 \pi}v^{-1} \partial_t v = \frac{\equivnorm{\sigma}}{2 \pi}\abs{\partial_t v} =  \abs{\partial_t w} && \text{on }\partial \Omega,\\
&\partial_n H = - \frac{\equivnorm{\sigma}}{2 \pi} v^{-1} \partial_t v = - \frac{\equivnorm{\sigma}}{2 \pi} \abs{\partial_t v} = -\abs{\partial_t w}&&\text{on }\partial B_\rho(a).
\end{aligned}
\right.
\]
Here \(n\) denotes the outward unit normal to \( \Omega \setminus \bar{B}_\rho(a)\). By \cref{lemma_lower_bound_H} and the Cauchy--Schwarz inequality, we deduce that
\begin{align*}
  \int_{\Omega \setminus \Bar{B}_\rho (a)}
  \frac{\abs{\Deriv  w}^2}{2}
 &\ge
  \left(\int_{\Omega \setminus \Bar{B}_\rho (a)} |dH \wedge dw| \right)^2 \left( \int_{\Omega \setminus \Bar{B}_\rho (a)} |DH|^2\right)^{-1}
   \\
&   
   \ge
   \int_{\Omega \setminus \Bar{B}_\rho (a)}
   \frac{\abs{\Deriv  H}^2}{2}
 =
   \biggl(\frac{\equivnorm{\sigma}}{2 \pi}\biggr)^2
   \int_{\Omega \setminus \Bar{B}_\rho (a)}
   \frac{\abs{\Deriv v}^2}{2}.\qedhere
\end{align*}
\end{proof}
\begin{proof}%
[Proof of \cref{proposition_explicit_renormalised_energy}]%
We first observe that if \(B_\rho (a) \subset \Omega\) and if \(v \in W^{1, 2} (\Omega \setminus \Bar{B}_{\rho} (a), \Sset^1)\), then by the chain rule and the fact that \(\sigma : \Sset^1 \to \manifold{N}\) is a minimising geodesic, we have
\[
\begin{split}
    \int_{\Omega \setminus B_\rho (a)}
    \frac{\abs{\Deriv  (\sigma \compose v)}^2}{2}
  &
  =
    \int_{\Omega \setminus B_\rho (a)}
    \frac{\abs{\sigma' \compose v}^2
      \abs{\Deriv v }^2}{2}
  =
    \biggl(\frac{\equivnorm{\sigma}}{2 \pi}\biggr)^2
    \int_{\Omega \setminus B_\rho (a)}
    \frac{\abs{\Deriv v }^2}{2}.
\end{split}
\]
It follows then that
\[
    \mathcal{E}^{\mathrm{geom}, \rho}_{\sigma \compose g, \sigma \compose \gamma_1} (a)
  \le
    \biggl(\frac{\equivnorm{\sigma}}{2 \pi}\biggr)^2
    \mathcal{E}^{\mathrm{geom}, \rho}_{g, \gamma_1} (a),
\]
and thus, by \eqref{eq_def_renorm_geom} and the identity \(\equivnorm{\sigma\compose\gamma_1}=\equivnorm{\sigma}\equivnorm{\gamma_1}\),
\begin{equation}
     \mathcal{E}^{\mathrm{geom}}_{\sigma \compose g, \sigma \compose \gamma_1} (a)
  \le
    \biggl(\frac{\equivnorm{\sigma}}{2 \pi}\biggr)^2
    \mathcal{E}^{\mathrm{geom}}_{g, \gamma_1} (a).
\end{equation}

Conversely, assume that \(w \in W^{1, 2} (\Omega \setminus \Bar{B}_\rho (a), \manifold{N})\) satisfies \(\operatorname{tr}_{\partial \Omega} w = \sigma \compose g\) and \(\operatorname{tr}_{\Sset^1}w(a+\rho\,\cdot) = \sigma \compose \gamma_1\). We have by \cref{lemma_explicit_lower_bound_free_sphere}, and with \(v\) a minimiser in the right-hand side of \eqref{defMinv}:
\[
    \int_{\Omega \setminus B_\rho (a)}
    \frac{\abs{\Deriv  w}^2}{2}
  \ge
    \biggl(\frac{\equivnorm{\sigma}}{2 \pi}\biggr)^2
    \int_{\Omega \setminus B_\rho (a)}
    \frac{\abs{\Deriv v}^2}{2},
\]
with \(\operatorname{tr}_{\partial \Omega} v = g\) and \(\operatorname{tr}_{\Sset^1}  v(a+\rho\,\cdot) = \tau \compose \gamma_1\), \(\tau\in SO(2)\). It follows thus, in view of \eqref{eq_def_renorm_geom},
\[
    \mathcal{E}^{\mathrm{geom}}_{\sigma \compose g, \sigma \compose \gamma_1} (a)
  \ge
    \biggl(\frac{\equivnorm{\sigma}}{2 \pi}\biggr)^2
    \mathcal{E}^{\mathrm{geom}}_{g, \tau \compose \gamma_1} (a).
\]
Moreover, by \cref{proposition_homotopy_synharmonic}, we have \(\synhar{\gamma_1}{\tau \compose \gamma_1} = 0\), and we deduce by \cref{proposition_renorm_geom_dep_gamma} that
\[
    \mathcal{E}^{\mathrm{geom}}_{\sigma \compose g, \sigma \compose \gamma_1} (a)
  \ge
     \biggl(\frac{\equivnorm{\sigma}}{2 \pi}\biggr)^2 \mathcal{E}^{\mathrm{geom}}_{g, \gamma_1} (a).\qedhere
\]
\end{proof}

\section*{Conflicts of interest}
On behalf of all authors, the corresponding author states that there is no conflict of interest. 
\begin{bibdiv}
\begin{biblist}

\bib{ahlforsComplexAnalysisIntroduction1978}{book}{
	address = {New York},
	edition = {Third edition},
	series = {International {Series} in {Pure} and {Applied} {Mathematics}},
	title = {Complex analysis. {An} introduction to the theory of analytic functions of one complex variable},
	publisher = {McGraw-Hill, Inc.},
	author = {Ahlfors, Lars V.},
	date = {1979},
}

\bib{Ball_Zarnescu_2011}{article}{
  author={Ball, John M.},
  author={Zarnescu, Arghir},
  title={Orientability and energy minimisation in liquid crystal models},
  journal={Arch. Ration. Mech. Anal.},
  volume={202},
  date={2011},
  number={2},
  pages={493--535},
  issn={0003-9527},
  doi={10.1007/s00205-011-0421-3},
}

\bib{Bauman_Park_Phillips_2012}{article}{
   author={Bauman, Patricia},
   author={Park, Jinhae},
   author={Phillips, Daniel},
   title={Analysis of nematic liquid crystals with disclination lines},
   journal={Arch. Ration. Mech. Anal.},
   volume={205},
   date={2012},
   number={3},
   pages={795--826},
   issn={0003-9527},
   doi={10.1007/s00205-012-0530-7},
}

\bib{Beaufort_Lambrechts_Henrotte_Geuzaine_Remacle_2017}{article}{
  title={Computing cross fields A PDE approach based on the Ginzburg--Landau theory},
  author={Beaufort, Pierre-Alexandre},
  author={Lambrechts, Jonathan},
  author={Henrotte, Fran{\c{c}}ois},
  author={Geuzaine, Christophe},
  author={Remacle, Jean-Fran{\c{c}}ois},
  journal={Procedia Engineering},
  volume={203},
  pages={219--231},
  year={2017},
  publisher={Elsevier}
}

\bib{Berlyand_Mironescu_Rybalko_Sandier_2014}{article}{
   author={Berlyand, Leonid},
   author={Mironescu, Petru},
   author={Rybalko, Volodymyr},
   author={Sandier, Etienne},
   title={Minimax critical points in Ginzburg--Landau problems with
   semi-stiff boundary conditions: existence and bubbling},
   journal={Comm. Partial Differential Equations},
   volume={39},
   date={2014},
   number={5},
   pages={946--1005},
   issn={0360-5302},
   doi={10.1080/03605302.2013.851214},
}	

\bib{Bethuel_Brezis_Helein_1994}{book}{
   author={Bethuel, Fabrice},
   author={Brezis, Ha\"\i m},
   author={H\'elein, Fr\'ed\'eric},
   title={Ginzburg--Landau vortices},
   series={Progress in Nonlinear Differential Equations and their
   Applications},
   volume={13},
   publisher={Birkh\"auser},
   address={Boston, Mass.},
   date={1994},
   doi={10.1007/978-3-319-66673-0},
}

\bib{Bethuel_Demengel_1995}{article}{
   author={Bethuel, F.},
   author={Demengel, F.},
   title={Extensions for Sobolev mappings between manifolds},
   journal={Calc. Var. Partial Differential Equations},
   volume={3},
   date={1995},
   number={4},
   pages={475--491},
   issn={0944-2669},
   doi={10.1007/BF01187897},
}

\bib{Brezis_Nirenberg_1995}{article}{
   author={Brezis, Ha\"\i m},
   author={Nirenberg, Louis},
   title={Degree theory and BMO},
   part={I},
   subtitle={Compact manifolds without boundaries},
   journal={Selecta Math. (N.S.)},
   doi={10.1007/BF01671566},
   volume={1},
   date={1995},
   number={2},
   pages={197--263},
   issn={1022-1824},
}
\bib{Brezis_Nirenberg_1996}{article}{
   author={Brezis, Ha\"\i m},
   author={Nirenberg, Louis},
   title={Degree theory and BMO},
   part={II},
   subtitle={Compact manifolds with boundaries},
   contribution={with an appendix by the authors and Petru Mironescu},
   journal={Selecta Math. (N.S.)},
   volume={2},
   date={1996},
   number={3},
   pages={309--368},
   issn={1022-1824},
   doi={10.1007/BF01587948},
 }

 \bib{Campaigne_1940}{article}{
   author={Campaigne, Howard},
   title={Partition hypergroups},
   journal={Amer. J. Math.},
   volume={62},
   date={1940},
   pages={599--612},
   issn={0002-9327},
   doi={10.2307/2371470},
 }

\bib{Canevari_2015}{article}{
   author={Canevari, Giacomo},
   title={Biaxiality in the asymptotic analysis of a 2D Landau--de Gennes
   model for liquid crystals},
   journal={ESAIM Control Optim. Calc. Var.},
   volume={21},
   date={2015},
   number={1},
   pages={101--137},
   issn={1292-8119},
   doi={10.1051/cocv/2014025},
}

\bib{Canevari_Orlandi}{article}{
  title={Topological singular set of vector-valued maps, II: \(\Gamma\)-convergence for Ginzburg--Landau type functionals},
  author={Canevari, Giacomo},
  author={Orlandi, Giandomenico},
  eprint={arXiv:2003.01354},
} 

\bib{Cheeger_Ebin_1975}{book}{
   author={Cheeger, Jeff},
   author={Ebin, David G.},
   title={Comparison theorems in Riemannian geometry},
   series={North-Holland Mathematical Library}, 
   volume={9},
   publisher={North-Holland, Amsterdam-Oxford \& Elsevier, New York},
   date={1975},
   pages={viii+174},
}

\bib{Chemin_Henrotte_Remacle_VanSchaftingen}{article}{
  title={Representing three-dimensional cross fields using 4th order tensors},
  author={Chemin, Alexandre},
  author={Henrotte, Fran\c cois},
  author={Remacle, Jean-Fran\c cois},
  author={Van Schaftingen, Jean},
  book={
  editor={Roca, X.},
  editor={Loseille,  A.},
  publisher={Springer}, 
  address={Cham},
  series={Lecture Notes in Computational Science and Engineering},
  volume={127},
  date={2019},
  },
  pages={89--108},
  conference={
  title={IMR2018: 27th International Meshing Roundtable},
  },
}

\bib{Dietzman_1946}{article}{
  author={Dietzman, A. P.},
  title={On the multigroups of complete conjugate sets of elements of a group},
  journal={C. R. (Doklady) Acad. Sci. URSS (N.S.)},
  volume={49},
  date={1946},
  pages={315--317},
}

\bib{Evans_Gariepy_2015}{book}{
author={Evans, Lawrence C.},
author={Gariepy, Ronald F.},
title={Measure Theory and Fine Properties of Functions},
publisher={CRC press},
date={2015},
edition={2},
address={Boca Raton, Fla.},
series={Textbooks in Mathematics}
}

\bib{gallotRiemannianGeometry2004}{book}{
author = {Gallot, Sylvestre},
author={Hulin, Dominique},
author={Lafontaine, Jacques},
	address = {Berlin Heidelberg},
	edition = {3},
	series = {Universitext},
	title = {Riemannian {Geometry}},
	publisher = {Springer-Verlag},
	date = {2004},
}

\bib{goldmanGinzburgLandauModelTopologically2017}{article}{
	title = {A {Ginzburg}-{Landau} model with topologically induced free discontinuities},
	author = {Goldman, Michael},
	author={Merlet, Beno\^ it}, 
	author={Millot, Vincent},
	eprint = {arXiv:1711.08668}
}

\bib{Hardt_Lin_1994}{article}{
  author={Hardt, Robert},
  author={Lin, Fanghua},
  title={Singularities for \(p\)-energy minimizing unit vectorfields on planar domains},
  journal={Calc. Var. Partial Differential Equations},
  volume={3},
  date={1995},
  number={3},
  pages={311--341},
  issn={0944-2669},
  doi={10.1007/BF01189395},
}

\bib{Hardt_Lin_Wang_1997}{article}{
  author={Hardt, Robert},
  author={Lin, Fanghua},
  author={Wang, Changyou},
  title={Singularities of \(p\)-energy minimizing maps},
  journal={Comm. Pure Appl. Math.},
  volume={50},
  date={1997},
  number={5},
  pages={399--447},
  issn={0010-3640},
  doi={10.1002/(SICI)1097-0312(199705)50:5<399::AID-CPA1>3.0.CO;2-4},
}
\bib{Hatcher_2002}{book}{
   author={Hatcher, Allen},
   title={Algebraic topology},
   publisher={Cambridge University Press, Cambridge},
   date={2002},
   pages={xii+544},
   isbn={0-521-79160-X},
   isbn={0-521-79540-0},
}

\bib{Helein_1991}{article}{
   author={H\'{e}lein, Fr\'{e}d\'{e}ric},
   title={R\'{e}gularit\'{e} des applications faiblement harmoniques entre une
   surface et une vari\'{e}t\'{e} riemannienne},
   journal={C. R. Acad. Sci. Paris S\'{e}r. I Math.},
   volume={312},
   date={1991},
   number={8},
   pages={591--596},
   issn={0764-4442},
}

\bib{Helgason_2001}{book}{
  author={Helgason, Sigurdur},
  title={Differential geometry, Lie groups, and symmetric spaces},
  series={Graduate Studies in Mathematics},
  volume={34},
  publisher={American Mathematical Society},
  address={Providence, R.I.},
  date={1978},
  pages={xxvi+641},
  isbn={0-8218-2848-7},
  doi={10.1090/gsm/034},
}

\bib{Ignat_Jerrard_2017}{article}{
   author={Ignat, Radu},
   author={Jerrard, Robert L.},
   title={Interaction energy between vortices of vector fields on Riemannian
   surfaces},
   journal={C. R. Math. Acad. Sci. Paris},
   volume={355},
   date={2017},
   number={5},
   pages={515--521},
   issn={1631-073X},
   doi={10.1016/j.crma.2017.04.004},
}

\bib{Ignat_Jerrard_2020}{article}{
   author={Ignat, Radu},
   author={Jerrard, Robert L.},
   title={Renormalized Energy Between Vortices in Some Ginzburg--Landau Models on 2-Dimensional Riemannian Manifolds},
   journal={Arch. Ration. Mech. Anal.},
   volume={239},
   number={3},
   date={2021},
   pages={1577-–1666}
}

\bib{Jerrard_1999}{article}{
   author={Jerrard, Robert L.},
   title={Lower bounds for generalized Ginzburg--Landau functionals},
   journal={SIAM J. Math. Anal.},
   volume={30},
   date={1999},
   number={4},
   pages={721--746},
   issn={0036-1410},
}

\bib{Lee_2013}{book}{
   author={Lee, John M.},
   title={Introduction to smooth manifolds},
   series={Graduate Texts in Mathematics},
   volume={218},
   edition={2},
   publisher={Springer, New York},
   date={2013},
   pages={xvi+708},
   isbn={978-1-4419-9981-8},
}

\bib{Lefter_Radulescu_1996}{article}{
   author={Lefter, C\u{a}t\u{a}lin},
   author={R\u{a}dulescu, Vicen\c{t}iu},
   title={Minimization problems and corresponding renormalized energies},
   journal={Differential Integral Equations},
   volume={9},
   date={1996},
   number={5},
   pages={903--917},
   issn={0893-4983},
}

\bib{Luckhaus_1993}{article}{
   author={Luckhaus, Stephan},
   title={Convergence of minimisers for the \(p\)-Dirichlet integral},
   journal={Math. Z.},
   volume={213},
   date={1993},
   number={3},
   pages={449--456},
   issn={0025-5874},
   doi={10.1007/BF03025730},
}

\bib{Mermin_1979}{article}{
   author={Mermin, N. D.},
   title={The topological theory of defects in ordered media},
   journal={Rev. Modern Phys.},
   volume={51},
   date={1979},
   number={3},
   pages={591--648},
   issn={0034-6861},
   doi={10.1103/RevModPhys.51.591},
}

\bib{Michor_2008}{book}{
   author={Michor, Peter W.},
   title={Topics in differential geometry},
   series={Graduate Studies in Mathematics},
   volume={93},
   publisher={American Mathematical Society},
   address={Providence, R.I.},
   date={2008},
   pages={xii+494},
   isbn={978-0-8218-2003-2},
   doi={10.1090/gsm/093},
}

\bib{Mironescu_VanSchaftingen_Trace}{article}{
    author={Mironescu,  Petru},
    author={Van Schaftingen, Jean},
    title={Trace theory for Sobolev mappings into a manifold},
    journal={to appear in Ann. Fac. Sci. Toulouse Math. (6)},
    eprint={arXiv:2001.022256},
}

\bib{Monteil_Rodiac_VanSchaftingen_GL}{article}{
   author={Monteil, Antonin},
   author={Rodiac, R\'emy},
   author={Van Schaftingen, Jean},
   title={Ginzburg--Landau relaxation for harmonic maps on planar domains into a general compact vacuum manifold},
   eprint={arXiv:2008.13512},
}

\bib{Morrey_1948}{article}{
   author={Morrey, Charles B., Jr.},
   title={The problem of Plateau on a Riemannian manifold},
   journal={Ann. of Math. (2)},
   volume={49},
   date={1948},
   pages={807--851},
   issn={0003-486X},
   doi={10.2307/1969401},
}

\bib{Mostow_1950}{article}{
  author={Mostow, George Daniel},
  title={The extensibility of local Lie groups of transformations and groups on surfaces},
  journal={Ann. of Math. (2)},
  volume={52},
  date={1950},
  pages={606--636},
  issn={0003-486X},
  doi={10.2307/1969437},
}
\bib{Nash_1956}{article}{
   author={Nash, John},
   title={The imbedding problem for Riemannian manifolds},
   journal={Ann. of Math. (2)},
   volume={63},
   date={1956},
   pages={20--63},
   issn={0003-486X},
   doi={10.2307/1969989},
}

\bib{Rodiac_Ubillus}{article}{
  title = {Renormalized energies for unit-valued harmonic maps in multiply connected domains},
  author = {Rodiac, Rémy},
  author = {Ubillús, Paúl},
  eprint = {arXiv:2011.02992},
}

\bib{Sandier_1998}{article}{
   author={Sandier, Etienne},
   title={Lower bounds for the energy of unit vector fields and applications},
   journal={J. Funct. Anal.},
   volume={152},
   date={1998},
   number={2},
   pages={379--403},
   doi={10.1006/jfan.1997.3170},
   issn={0022-1236},
}

\bib{Sandier_Serfaty_2007}{book}{
   author={Sandier, Etienne},
   author={Serfaty, Sylvia},
   title={Vortices in the magnetic Ginzburg--Landau model},
   series={Progress in Nonlinear Differential Equations and their Applications},
   volume={70},
   publisher={Birkh\"auser},
   address={Boston, Mass.},
   date={2007},
   pages={xii+322},
   isbn={978-0-8176-4316-4},
   isbn={0-8176-4316-8},
}

\bib{Schoen_1984}{article}{
   author={Schoen, Richard M.},
   title={Analytic aspects of the harmonic map problem},
   conference={
      title={Seminar on nonlinear partial differential equations},
      address={Berkeley, Calif.},
      date={1983},
   },
   book={
      series={Math. Sci. Res. Inst. Publ.},
      volume={2},
      publisher={Springer}, 
      address={New York},
   },
   date={1984},
   pages={321--358},
   doi={10.1007/978-1-4612-1110-5\_17},
}
		
    \bib{SchoenUhlenbeck1982}{article}{
      author={Schoen, Richard},
      author={Uhlenbeck, Karen},
      title={A regularity theory for harmonic maps},
      journal={J. Differential Geom.},
      volume={17},
      date={1982},
      number={2},
      pages={307--335},
      issn={0022-040X},
    }
    
\bib{Serfaty_Tice_2008}{article}{
  author={Serfaty, Sylvia},
  author={Tice, Ian},
  title={Lorentz space estimates for the Ginzburg--Landau energy},
  journal={J. Funct. Anal.},
  volume={254},
  date={2008},
  number={3},
  pages={773--825},
  issn={0022-1236},
  doi={10.1016/j.jfa.2007.11.010},
}

\bib{VanSchaftingen_2019}{article}{
  author={Van Schaftingen, Jean},
  title={Estimates by gap potentials of free homotopy decompositions of critical Sobolev maps},
  journal={Adv. Nonlinear Anal.},
  volume={9},
  date={2019},
  number={1},
  pages={1214--1250},
  doi={10.1515/anona-2020-0047},
}

\bib{Viertel_Osting_2017}{article}{
   author={Viertel, Ryan},
   author={Osting, Braxton},
   title={An approach to quad meshing based on harmonic cross-valued maps
   and the Ginzburg--Landau theory},
   journal={SIAM J. Sci. Comput.},
   volume={41},
   date={2019},
   number={1},
   pages={A452--A479},
   issn={1064-8275},
   doi={10.1137/17M1142703},
}

\end{biblist}

\end{bibdiv}

\end{document}